\numberwithin{equation}{section}
\numberwithin{figure}{section}
\theoremstyle{plain}
\newtheorem{thm}{\protect\theoremname}
  \theoremstyle{definition}
  \newtheorem{defn}[thm]{\protect\definitionname}
  \theoremstyle{plain}
  \newtheorem{lem}[thm]{\protect\lemmaname}
  \theoremstyle{remark}
  \newtheorem{claim}[thm]{\protect\claimname}
  \theoremstyle{remark}
  \newtheorem{rem}[thm]{\protect\remarkname}
  \theoremstyle{plain}
  \newtheorem{cor}[thm]{\protect\corollaryname}
  \theoremstyle{plain}
  \newtheorem{prop}[thm]{\protect\propositionname}
  \theoremstyle{definition}
  \newtheorem{example}[thm]{\protect\examplename}
\newcommand*\leftdash{\rotatebox[origin=c]{-45}{$\dabar@\dabar@\dabar@$}}
\newcommand*\rightdash{\rotatebox[origin=c]{45}{$\dabar@\dabar@\dabar@$}}
  \providecommand{\claimname}{Claim}
  \providecommand{\corollaryname}{Corollary}
  \providecommand{\definitionname}{Definition}
  \providecommand{\examplename}{Example}
  \providecommand{\lemmaname}{Lemma}
  \providecommand{\propositionname}{Proposition}
  \providecommand{\remarkname}{Remark}
\providecommand{\theoremname}{Theorem}
\begin{document}

\title{Equivariant Coherent Sheaves, Soergel Bimodules, and Categorification
of Affine Hecke Algebras}

\author{Christopher Dodd}
\begin{abstract}
We give a description of certain categories of equivariant coherent
sheaves on Grothendieck's resolution in terms of the categorical affine
Hecke algebra of Soergel. As an application, we deduce a relationship
of these coherent sheaf categories to the categories of perverse sheaves
considered in \cite{key-12}, generalizing results of \cite{key-1}.
In addition, we deduce that the weak braid group action on sheaves
of \cite{key-11} can be upgraded to a strict braid group action.
\end{abstract}
\maketitle
\tableofcontents{}

\section{Introduction. }

\subsection{Kazhdan-Lusztig Equivalence}

Categories of equivariant coherent sheaves play a crucial role in
geometric representation theory dating back at least to the work of
Kazhdan and Lusztig in the 1980's. The subject of their seminal work
\cite{key-23} was a description of irreducible representations of
the affine Hecke algebra associated to a given root datum. In order
to accomplish this, they first gave a geometric construction of the
affine Hecke algebra, which we shall now describe. 

Let $G$ be a complex reductive algebraic group with simply connected
derived group, and let $\check{G}$ be its (complex reductive) Langlands
dual group. In particular, let us fix a pinning of $G$, consisting
of the choice of a Borel subgroup $B$, and a maximal torus $T$.
To this pinning there is associated the root datum $(\mathbb{X},\mathbb{Y},\Phi,\check{\Phi})$,
and the Weyl group $W$, with $S$ its set of simple reflections.
Then, the root datum $(\mathbb{Y},\mathbb{X},\check{\Phi},\Phi)$
also comes from an algebraic group, denoted $\check{G}$, with its
pinning $\check{B}$ and $\check{T}$ . We let $\mathfrak{g}$ and
$\check{\mathfrak{g}}$ denote the lie algebras of $G$ and $\check{G}$,
respectively, and $\mathfrak{b}$, $\check{\mathfrak{b}}$, $\mathfrak{h}$,
$\check{\mathfrak{h}}$ the lie algebras of our fixed Borel and Cartan
subalgebras.

Now, we let $H_{\mathbb{X}}$ denote the affine Hecke algebra associated
to $\mathbb{X}$. Let us recall Bernstein's presentation of this algebra: 
\begin{defn}
$H_{\mathbb{Y}}$ is the free $\mathbb{Z}[q,q^{-1}]$-module with
basis $\{e^{\lambda}T_{w}|\lambda\in\mathbb{X},w\in W\}$ satisfying 

1) The $T_{w}$ span a $\mathbb{Z}[q,q^{-1}]$-subalgebra isomorphic
to the finite Hecke algebra $H_{W}$. 

2) The $\{e^{\lambda}\}$ (by which we mean $\{T_{e}e^{\lambda}\}$)
satisfy $e^{\lambda_{1}}e^{\lambda_{2}}=e^{\lambda_{1}+\lambda_{2}}$). 

3) Let $\alpha$ be a simple root and $s_{\alpha}$ its simple reflection.
Then for $\lambda\in\mathbb{X}$ 
\[
T_{s_{\alpha}}e^{s_{\alpha}(\lambda)}-e^{\lambda}T_{s_{\alpha}}=(1-q)\frac{e^{\lambda}-e^{s_{\alpha}(\lambda)}}{1-e^{-\alpha}}
\]

\end{defn}
Kazhdan and Lusztig have constructed $H_{\mathbb{X}}$ entirely in
terms of the geometry of the group $G$. We first recall the flag
variety of $G$, denoted $\mathcal{B}$, which we shall regard as
the variety of all Borel subalgebras of $\mathfrak{g}$. Of course,
given the pinning chosen in the previous paragraph, we can identify
$\mathcal{B}\tilde{=}G/B$ as homogeneous spaces. We next recall that
there is associated to $G$ a morphism $\tilde{\mathcal{N}}\to\mathcal{N}$
called the springer resolution. Here $\mathcal{N}$ denotes the nilpotent
cone associated to $G$, which is defined as follows: we first define
\[
\mathcal{N}^{*}=\{x\in\mathfrak{g}|ad(x)^{dim\mathfrak{g}}=0\}
\]
 and then we define $\mathcal{N}\subset\mathfrak{g}^{*}$ as the transport
of $\mathcal{N}^{*}$ under the natural isomorphism (given by the
killing form) $\mathfrak{g}\tilde{=}\mathfrak{g}^{*}$. 

Next, $\tilde{\mathcal{N}}$ denotes the incidence variety defined
by 
\[
\tilde{\mathcal{N}}=\{(x,\mathfrak{b})\in\mathfrak{g}^{*}\times\mathcal{B}|x|_{\mathfrak{b}}=0\}
\]
Then the morphism $\tilde{\mathcal{N}}\to\mathcal{N}$ is given by
the first projection.

Finally, we can construct from here the Steinberg variety, defined
as 
\[
St_{G}=\tilde{\mathcal{N}}\times_{\mathcal{N}}\tilde{\mathcal{N}}
\]

We see immediately from the definition that $St_{G}$ has an action
by $G\times\mathbb{C}^{*}$, where $G$ acts via its obvious action
on $\tilde{\mathcal{N}}$ and $\mathcal{N}$, and the action of $\mathbb{C}^{*}$
is given by dilation of the first coordinate in $\tilde{\mathcal{N}}$.
Therefore we can consider the (complexified) $K$ group $K^{G\times\mathbb{C}^{*}}(St_{G})$,
which is naturally a $\mathbb{C}[q,q^{-1}]$-module, where the parameter
$q$ acts by shifting the grading induced by the $\mathbb{C}^{*}$-action. 

Further, this module also has the structure of an algebra, where the
product is given via the exterior product of sheaves: let $\mathcal{F}$
and $\mathcal{G}$ be two $G\times\mathbb{C}^{*}$-equivariant sheaves
on $St_{G}$. Consider the variety $\tilde{\mathcal{N}}\times_{\mathcal{N}}\tilde{\mathcal{N}}\times_{\mathcal{N}}\tilde{\mathcal{N}}$,
which has three projections to $St_{G}$, which we shall denote $p_{i}$
where $i=\{1,2,3\}$ is the omitted factor. Then we can consider the
complex in $D^{b,G\times\mathbb{C}^{*}}(Coh(St_{G}))$ (the bounded
derived category of equivariant coherent sheaves; full details about
such objects are recalled in section 2 below) 
\[
p_{2*}(p_{3}^{*}(\mathcal{F})\otimes p_{1}^{*}(\mathcal{G}))
\]
 where all the functors are taken to be derived. This complex defines
a class in $K$ theory, which is then the product of $[\mathcal{F}]$
and $[\mathcal{G}]$. 

With all of this is hand, we can state the theorem of Kazhdan and
Lusztig (actually the slightly stronger version proved in \cite{key-13}),
which says that there is an isomorphism: 
\[
H_{\mathbb{X}}\tilde{=}K^{G\times\mathbb{C}^{*}}(St_{G})
\]
 Let us explain how this isomorphism works, in terms of the presentation
of $H_{\mathbb{X}}$ given above. We shall follow the explanation
of Riche \cite{key-28}. First, given $\lambda\in\mathbb{X}$, we
associate the bundle $O(\lambda)$ on the flag variety $\mathcal{B}$,
and then via pullback, a bundle $O_{\tilde{\mathcal{N}}}(\lambda)$.
Now, we have the diagonal embedding $\Delta\tilde{\mathcal{N}}\to St_{G}$.
So we can consider the equivariant coherent sheaf $O_{\Delta\tilde{\mathcal{N}}}(\lambda)$
on $St_{G}$; the map will send $e^{\lambda}$ to the class $[O_{\Delta\tilde{\mathcal{N}}}(\lambda)]\in K^{G\times\mathbb{C}^{*}}(St_{G})$. 

Next, consider $s\in S$. We associate to $s$ a partial flag variety
of $G$, denoted $\mathcal{P}_{s}=G/P_{s}$, where $P_{s}$ is the
standard parabolic in $G$ of type $s$ containing $B$. Then we can
form the variety $\mathcal{B}\times_{\mathcal{P}_{s}}\mathcal{B}$,
which is naturally a closed subscheme of $\mathcal{B}\times\mathcal{B}$.
From here we define the variety 
\[
S_{\alpha}^{'}=\{(X,g_{1}B,g_{2}B)\in\mathfrak{g}^{*}\times\mathcal{B}\times_{\mathcal{P}_{\alpha}}\mathcal{B}|\, X|_{g_{1}\mathfrak{b}+g_{2}\mathfrak{b}}=0\}
\]
 It is easy to see that this is a subvariety of $St_{G}$, which is
$G\times\mathbb{C}^{*}$ equivariant. Then we have our map send $T_{s}$
to $-q^{-1}[O_{S_{\alpha}^{'}}]$. The fact that this really is an
isomorphism of algebras is checked in \cite{key-13}, c.f. also the
main result of Riche in \cite{key-28} (shown in full generality in
\cite{key-11}). We shall discuss the results of these papers in more
detail later. 

For our purposes in this work, the interest in this result lies in
the fact that the Steinberg variety can now be viewed as a {}``categorification''
of the affine Hecke algebra.

\subsection{Soergel Bimodules}

In this section, we discuss a different categorification of the affine
Hecke algebra, which can be found in the works of Wolfgang Soergel
\cite{key-30,key-31,key-32} and Rafael Rouquier \cite{key-29}. For
this categorification, it is appropriate to consider the general case
where $(W,S)$ is any Coxeter system for which the generating set
of involutions $S$ is finite.

Let us recall that in this generality, the Hecke algebra is defined
as the $\mathbb{Z}[q,q^{-1}]$-algebra generated by symbols $\{T_{s}\}_{s\in S}$,
which satisfy the braid relations for $S$, and also the additional
relation $(T_{s}+1)(T_{s}-q)=0$. 

Then, over an algebraically closed field of sufficiently large characteristic
(including zero), $k$, we have the geometric representation of $W$,
on a finite dimensional vector space $V$. Then there is a categorification
which can be constructed from this purely combinatorial set-up. We
shall follow closely the notation and constructions in \cite{key-29}. 

So, we let $\{e_{s}\}_{s\in S}$ be the natural basis of $V$, and
we let $\{\alpha_{s}\}_{s\in S}$ be its dual basis. Therefore we
have that 
\[
ker(\alpha_{s})=ker(s-id)
\]
for all $s\in S$. We define $A$ to be $Sym(V^{*})$, i.e., the algebra
of polynomial functions on $V$, and we let $A^{en}=A\otimes_{k}A$.
We consider both of these as graded rings by putting $V^{*}$ in degree
$2$. We shall work with certain graded $A^{en}$-modules which are
known as Soergel bimodules (they first appeared in the paper \cite{key-31}). 

To define these bimodules, we first note that for any $s\in S$, we
have the subalgebra $A^{s}$ of elements fixed under the action of
$s\in S$. We have a decomposition 
\[
A=A^{s}\oplus A^{s}\alpha_{s}
\]
as $A^{s}$-modules (this follows immediately from the fact that $s$
has order two). 

Now, let $w\in W$ be any element. We can write $w=\Pi_{i=1}^{m}s_{i}$,
a minimal length decomposition (where $s_{i}\in S$). We associate
to any such decomposition the graded $A^{en}$-module 
\[
A\otimes_{A^{S_{1}}}A\otimes_{A^{s_{2}}}\otimes_{A^{s_{3}}}\cdot\cdot\cdot\otimes_{A^{s_{m}}}A
\]
 We note that the element $e\in W$ gets the {}``diagonal'' bimodule
$A$ by definition. 

Then, as shown in the works of Soergel, each of these bimodules admits
a decomposition as a direct sum of indecomposable graded $A-A$ bimodules,
called the Soergel bimodules. In fact, it is even shown that to each
such $w\in W$, one can associate a unique $B_{w}$ which occurs as
a summand (with multiplicity one) in $A\otimes_{A^{S_{1}}}A\otimes_{A^{s_{2}}}\otimes_{A^{s_{3}}}\cdot\cdot\cdot\otimes_{A^{s_{m}}}A$
and which occurs in no bimodule $A\otimes_{A^{S_{1}^{'}}}A\otimes_{A^{s_{2}^{'}}}\otimes_{A^{s_{3}^{'}}}\cdot\cdot\cdot\otimes_{A^{s_{n}^{'}}}A$
which corresponds to a lower element in the Bruhat ordering.

Therefore, one makes the following 
\begin{defn}
\cite{key-32} The category $\mathcal{H}(W)$ is the smallest category
of $A^{en}$ bimodules containing the Soergel bimodules and closed
under direct sums, summands, and tensor product. This is an additive
category, which has a monoidal structure via 
\[
(M,N)\to M\otimes_{A}N
\]

\end{defn}
Next, we consider the (complexified) split Grothendieck group of this
category, which is a $\mathbb{C}[q,q^{-1}]$-algebra, where the parameter
$q$ acts by shifting the grading, and the multiplication is the image
in $K$-theory of the above monoidal structure. We state the main
result of \cite{key-32} as 
\[
K(\mathcal{H}(W))\tilde{=}H(W)
\]
 as $\mathbb{C}[q,q^{-1}]$ algebras. The map $H(W)\to K(\mathcal{H}(W))$
is given by $T_{s}\to[A\otimes_{A^{s}}A]-1$, and $q\to A[1]$. 

Thus the category $\mathcal{H}(W)$ provides another categorification
of the Hecke algebra of a Coxeter group, of which the affine Hecke
algebra is a particular example (where $W$ is the affine Weyl group
of a given root datum).

\subsection{Perverse Sheaves}

We now discuss a third categorical realization of the affine Hecke
algebra- the one that, in fact, is closest to the actual definition
of $H_{aff}$. This realization follows from applying Grothendieck's
{}``sheaves-functions'' correspondence to the affine flag variety. 

In particular, for our given reductive group $G$, we consider the
formal loop group $G((t))$, and the Iwahori subgroup $I$ (see section
5). 

Since the affine Hecke algebra, by definition, is the algebra of $I$-equivariant
functions on $\mathcal{F}l:=G((t))/I$, the sheaves-functions philosophy
would predict that it can be categorified as the category of $I$-equivariant
mixed perverse sheaves on $\mathcal{F}l$, and this is indeed a theorem
of Kazhdan and Lusztig \cite{key-24}. So in particular we have an
isomorphism of $\mathbb{C}[q,q^{-1}]$-algebras 
\[
K(Perv_{I}^{mix}(G((t))/I)\tilde{=}H_{\mathbb{Y}}
\]
 where the group on the left is the complexified $K$-group, and the
graded element $q$ acts by a the Tate twist of mixed sheaves (c.f.,
e.g.,\cite{key-5}). Note that the Hecke algebra on the right is the
one associated to the dual group of $G$. 

We explain how the map works, in the case that $\mathbb{Y}$ is the
root lattice $\mathbb{Z}\Phi$ (in the general case the variety $\mathcal{F}l$
is a finite cover of this one). By the standard Bruhat decomposition,
the $I$-orbits on $G((t))/I$ are parametrized by the group $W_{aff}$.
As is well known, each of these orbits is isomorphic to an affine
space. For $w\in W_{aff}$, let $Z_{w}$ denote its orbit. Then we
can consider the trivial sheaf $\bar{\mathbb{Q}}_{l,Z_{w}}$. If we
let $\bar{Z_{w}}$ denote the closure of $Z_{w}$ in $G((t))/I$,
and $j_{w}:Z_{w}\to\bar{Z_{w}}$ the natural inclusion, then we can
also consider $j_{w!}(\bar{\mathbb{Q}}_{l,Z_{w}}[dimZ_{w}])$. These
classes of these objects in $K(Perv_{I}^{mix}(G((t))/I)$ form a natural
$\mathbb{C}[q,q^{-1}]$-basis, and the map is then given by $T_{w}\to q^{-l(w)}[j_{w!}(\bar{\mathbb{Q}}_{l,Z_{w}}[dimZ_{w}])]$
(here $l(w)$ denotes the length).

\subsection{Description of the Main Theorems}

In order to explain our goal, we first explain a variant of the above
results, which is the categorification of the standard (or aspherical)
module for a given affine Hecke algebra, $H_{\mathbb{X}}$. This module,
denoted $M_{asp}$, is defined by the induction 
\[
M_{asp}=H\otimes_{H_{fin}}sgn
\]
 where $H_{fin}\subseteq H$ denotes the inclusion of the finite Hecke
algebra, and $sgn$ is the one-dimensional sign representation of
$H_{fin}$. The main result of \cite{key-1} is a categorification
of this module (and the action of $H$ on it) as follows (c.f. \cite{key-1}
section 1 for all definitions): 

We work with the Langlands dual reductive group $\check{G}$. Let
$I^{-}$ denote the opposite Iwahori subgroup, and let $\psi$ be
a generic character on it. Then we define $D_{IW}$ to be the $(I^{-},\psi)$-equivariant
derived category of constructible sheaves on $\mathcal{F}l$; it admits
a mixed version $D_{IW,m}$. There is an action of the category $D^{b}(Perv_{I}(\mathcal{F}l))$
on $D_{IW}$, defined by convolution%
\footnote{In this case, by an action we simply mean that to each object of $D^{b}(Perv_{I}(\mathcal{F}l))$,
there is a assigned an endofunctor of $D_{IW}$, and this assignment
is natural. %
}. This works as follows: by the definition of $\mathcal{F}l$ as a
quotient, there is a map $a:\mathcal{F}l\times_{I}\mathcal{F}l\to\mathcal{F}l$
(the image of group multiplication). So, for $\mathcal{G}\in D_{IW}$
and $\mathcal{K}\in D^{b}(Perv_{I}(\mathcal{F}l))$, we take $a_{*}(\mathcal{G}\boxtimes\mathcal{K})\in D_{IW}$.
We note that, by using the same formulae, this action extends to the
mixed versions of these categories. 

Then the claim is that (the mixed version of) this action is a categorification
of the action of $H_{\mathbb{X}}$ on $M_{asp}$ in the sense of the
above sections (i.e., after taking graded $K$-groups one recovers
this action). 

Next, there is another categorification of $M_{asp}$ in terms of
coherent sheaves. We work with the group $G$, over the field $\bar{\mathbb{Q}}_{l}$.
There is a convolution action of the Steinberg variety $St=\tilde{\mathcal{N}}\times_{\mathcal{N}}\tilde{\mathcal{N}}$
on the variety $\tilde{\mathcal{N}}$. This induces an action (in
the same sense as above) of $D^{b,G\times\mathbb{G}_{m}}(Coh(St))$
on $D^{b,G\times\mathbb{G}_{m}}(Coh(\tilde{\mathcal{N}}))$ which
categorifies the action of $H_{\mathbb{X}}$ on $M_{asp}$. Then the
main result of \cite{key-1} states that there is an equivalence of
triangulated categories 
\[
D^{b,G\times\mathbb{G}_{m}}(Coh(\tilde{\mathcal{N}}))\tilde{\to}D_{IW,m}
\]
 such that the shift in grading on the left corresponds to the shift
in mixed structure on the right. The proof there relies on the main
result of \cite{key-15} and the geometric Satake equivalence. 

Below, we shall present a different approach to this result and several
{}``deformed'' versions of it, by relating both sides to relevant
categories of Soergel bimodules. In particular, the category on the
left admits two natural deformations, explained in detail in section
2 below, denoted $D^{b,G\times\mathbb{G}_{m}}(Coh(\tilde{\mathfrak{g}}))$
and $D^{b,G\times\mathbb{G}_{m}}(Mod(\tilde{D}_{h}))$. The first
is a deformation purely in the world of commutative algebraic geometry,
while the second is a noncommutative deformation quantization. We
shall study these categories in detail, and give a bimodule description
of them in section 4 below.

On the perverse sheaves side, several deformations of the category
$D_{IW}$ have been defined and studied in the paper \cite{key-12}.
There, they develop a Koszul duality formalism for these categories,
the major technical part of which is to relate these categories to
appropriate Soergel bimodules. Combining this with our description,
we arrive at equivalences of categories generalizing the one above
(see section 5 for details).

\subsection{Summary of the Major Argument}

Our strategy is to relate the three different species of categorification
of the affine Hecke algebra (or rather its standard module). We shall
indicate how the argument works. The first task is to define a \emph{tilting
collection }for this triangulated category. 

Following \cite{key-25}, we recall that an algebraic triangulated
category is one which can be constructed as the stable triangulated
category of a Frobenius exact category. We shall not recall the details
of these definitions, but the important thing for us is that all triangulated
categories arising in algebra and algebraic geometry (e.g., derived
and homotopy categories of abelian and dg categories) are algebraic
triangulated categories.

Then we begin with the: 
\begin{lem}
\cite{key-25} Let $\mathcal{C}$ be an algebraic triangulated category.
Let $T\in\mathcal{C}$ be an object such that 

1) $T$ generates $\mathcal{C}$ (in the sense that the full subcategory
of $\mathcal{C}$ containing $T$ and closed under extensions, shifts,
and direct summands is $\mathcal{C}$ itself).

2) We have $Hom_{\mathcal{C}}(T,T[n])=0$ for all $n\neq0$. 

Then the functor $M\to Hom_{\mathcal{C}}(T,M)$ is an equivalence
of categories $\mathcal{C}\to Perf(End_{\mathcal{C}}(T))$ where the
latter denotes the homotopy category of perfect complexes over $End_{\mathcal{C}}(T)$. 
\end{lem}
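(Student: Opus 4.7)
The plan is to reduce the statement to a version of Keller's Morita-type theorem for dg-enhanced triangulated categories. The key point of the hypothesis that $\mathcal{C}$ is algebraic is that it furnishes a dg enhancement: that is, a pretriangulated dg category $\widetilde{\mathcal{C}}$ together with an equivalence $H^{0}(\widetilde{\mathcal{C}}) \simeq \mathcal{C}$. (Concretely, if $\mathcal{C}$ is the stable category of a Frobenius exact category $\mathcal{E}$, one takes the dg quotient of $C^{b}(\mathcal{E})$ by acyclics, or equivalently the Heller--Keller dg category of bounded complexes with projective-injective objects as homotopy kernels.) First I would invoke this to lift $T$ to an object $\widetilde{T}\in\widetilde{\mathcal{C}}$, and form the endomorphism dg algebra $\mathcal{E}:=\mathrm{End}_{\widetilde{\mathcal{C}}}(\widetilde{T})$.

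The next step is to observe that by hypothesis (2) the cohomology of $\mathcal{E}$ is concentrated in degree $0$, where it equals $\mathrm{End}_{\mathcal{C}}(T)$. Hence $\mathcal{E}$ is intrinsically formal, and there is a canonical chain of quasi-isomorphisms identifying $\mathcal{E}$ with $\mathrm{End}_{\mathcal{C}}(T)$ viewed as a dg algebra concentrated in degree zero. Consequently, $\mathrm{Perf}(\mathcal{E})$ and $\mathrm{Perf}(\mathrm{End}_{\mathcal{C}}(T))$ are equivalent as triangulated categories.

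Now I would apply the standard Keller theorem on deriving dg categories: the dg functor
\[
\mathrm{RHom}_{\widetilde{\mathcal{C}}}(\widetilde{T},-)\colon \widetilde{\mathcal{C}} \longrightarrow \mathcal{D}(\mathcal{E})
\]
induces, on the level of homotopy categories, a fully faithful triangulated functor from the thick triangulated subcategory of $\mathcal{C}$ generated by $T$ into $\mathrm{Perf}(\mathcal{E})$, sending $T$ to $\mathcal{E}$ itself. Full faithfulness on morphisms out of shifts of $T$ is immediate from the construction, and then extends by d\'evissage along triangles, direct sums, and summands, since both sides are triangulated with split idempotents. Essential surjectivity onto $\mathrm{Perf}(\mathcal{E})$ reduces to the fact that $\mathcal{E}$ itself generates $\mathrm{Perf}(\mathcal{E})$ as a thick subcategory.

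Finally I would use hypothesis (1) to conclude: the thick subcategory of $\mathcal{C}$ generated by $T$ is all of $\mathcal{C}$, so the functor above realizes an equivalence between $\mathcal{C}$ and $\mathrm{Perf}(\mathrm{End}_{\mathcal{C}}(T))$. The main obstacle is really the enhancement step: one needs to know that the \emph{triangulated} Hom sets $\mathrm{Hom}_{\mathcal{C}}(T,T[n])$ actually compute the cohomology of a genuine dg algebra, for otherwise one cannot even define the perfect derived category on the right in a way compatible with the functor. That is the whole content of the algebraicity assumption, and once it is in place the remainder is a standard formal argument.
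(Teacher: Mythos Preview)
The paper does not give its own proof of this lemma; it simply cites Keller \cite{key-25} and uses the result as a black box. Your sketch is correct and is essentially the argument one finds in Keller's survey: pass to a dg enhancement, observe that the endomorphism dg algebra of the lift of $T$ has cohomology concentrated in degree zero and is therefore formal (via the truncation zig-zag $\mathcal{E}\leftarrow\tau_{\le 0}\mathcal{E}\to H^{0}\mathcal{E}$), and then invoke Keller's derived Morita theory. One small terminological quibble: what you want to say is that $\mathcal{E}$ is \emph{formal}, not that it is \emph{intrinsically formal}; the latter is the stronger statement that every dg algebra with that cohomology algebra is formal. It happens to be true here as well, but you only need ordinary formality of this particular $\mathcal{E}$, which follows immediately from the vanishing of cohomology outside degree zero.
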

In this case, the object $T$ is called a tilting object of $\mathcal{C}$. 

Unfortunately, our categories will not admit such a simple description.
However, they will be generated by infinite collections of such objects.
Therefore we make the 
\begin{defn}
Let $\mathcal{C}$ be an algebraic triangulated category. Let $\mathcal{T}$
be a full subcategory such that 

1) $\mathcal{T}$ generates $\mathcal{C}$ in the following sense:
for each finite collection $I$ of objects of $\mathcal{T}$, let
$P_{I}=\bigoplus_{i\in I}T_{i}$, and let $\mathcal{C}_{I}$ be the
full subcategory generated by $\mathcal{P}_{I}$ in the sense of the
lemma above; then $\lim_{I}\mathcal{C}_{I}\tilde{\to}\mathcal{C}$
(i.e. the natural inclusion is essentially surjective). 

2) We have $Hom_{\mathcal{C}}(\mathcal{T},\mathcal{T}[n])=0$ for
all $n\neq0$. 

Then $\mathcal{T}$ is called a tilting subcategory of $\mathcal{C}$. 
\end{defn}
In the cases relevant to us, we can assume that $Ob(\mathcal{T})$
is a countable, totally ordered set. Then for each $i\in\mathbb{N}$,
we define $P_{i}=\bigoplus_{j\leq i}T_{j}$. Then we can define the
categories $\mathcal{D}_{i}=Perf(End(P_{i}))$ with the natural inclusion
functors $D_{i}\to D_{j}$ for $i\leq j$. Then the assumptions on
$\mathcal{T}$ and the previous lemma imply the
\begin{claim}
There is an equivalence of categories $\mathcal{C}\to\lim_{i}(D_{i})$
which is given by the stable image of the functors $F_{i}:\mathcal{C}\to\mathcal{D}_{i}$,
$F_{i}(M)=Hom_{\mathcal{C}}(P_{i},M)$. We shall denote this limit
by $Perf(\mathcal{B})$ where $\mathcal{B}=\oplus_{i,j}(Hom(T_{i},T_{j}))$. \end{claim}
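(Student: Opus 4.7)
The plan is to bootstrap from the tilting lemma (Lemma~1) applied to each finite summand $P_i$, and then pass to the limit. First I would verify that each $P_i = \bigoplus_{j \le i} T_j$ satisfies the hypotheses of the tilting lemma inside the subcategory $\mathcal{C}_i$ it generates: the generation clause is built into the definition of $\mathcal{C}_i$, and the $\mathrm{Ext}$-vanishing
\[
\mathrm{Hom}_{\mathcal{C}}(P_i, P_i[n]) \;=\; \bigoplus_{j,k \le i} \mathrm{Hom}_{\mathcal{C}}(T_j, T_k[n]) \;=\; 0 \quad (n \neq 0)
\]
follows from the tilting condition on $\mathcal{T}$. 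Thus Lemma~1 yields equivalences $F_i : \mathcal{C}_i \tilde{\to} \mathcal{D}_i = \mathrm{Perf}(\mathrm{End}_{\mathcal{C}}(P_i))$.

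Next I would identify the transition functors. For $i \le j$, the object $P_i$ is a direct summand of $P_j$ via an idempotent $e_{i,j} \in \mathrm{End}_{\mathcal{C}}(P_j)$, so $\mathrm{End}_{\mathcal{C}}(P_i) = e_{i,j}\mathrm{End}_{\mathcal{C}}(P_j) e_{i,j}$. One checks by direct computation on the generators that the square
\[
\begin{CD}
\mathcal{C}_i @>F_i>> \mathcal{D}_i \\
@VVV @VVV \\
\mathcal{C}_j @>F_j>> \mathcal{D}_j
\end{CD}
\]
commutes up to natural isomorphism, where the right vertical arrow is the induction functor $M \mapsto \mathrm{End}_{\mathcal{C}}(P_j) e_{i,j} \otimes^{\mathbb{L}}_{\mathrm{End}_{\mathcal{C}}(P_i)} M$. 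These transition functors are fully faithful (since $P_i$ really is a summand of $P_j$), so the inductive $2$-colimit of the $\mathcal{D}_i$ makes sense and is precisely $\mathrm{Perf}(\mathcal{B})$, by the standard description of modules over a locally unital ring $\mathcal{B} = \bigoplus_{i,j} \mathrm{Hom}_{\mathcal{C}}(T_i, T_j)$ as a colimit of module categories over its finite corners.

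With this set-up, the functors $F_i$ assemble into a single functor $F : \mathcal{C} \to \mathrm{Perf}(\mathcal{B})$ in the stable limit: an object $M \in \mathcal{C}$ lies in some $\mathcal{C}_I$ (hence in some $\mathcal{C}_i$) by condition (1) of the definition of tilting subcategory, so we may define $F(M) = F_i(M)$ for any sufficiently large $i$, and the compatibility of the $F_i$ shows this is well-defined. Essential surjectivity of $F$ is immediate from essential surjectivity of each $F_i$ onto $\mathcal{D}_i$ and the description of $\mathrm{Perf}(\mathcal{B})$ as the colimit. Fully faithfulness reduces to: for $M, N \in \mathcal{C}_i$, the maps $\mathrm{Hom}_{\mathcal{C}}(M,N) \to \mathrm{Hom}_{\mathcal{D}_i}(F_i M, F_i N) \to \mathrm{Hom}_{\mathcal{D}_j}(F_j M, F_j N)$ are compatible isomorphisms, which again comes from Lemma~1.

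The main obstacle I anticipate is condition (1) in the definition of a tilting subcategory, i.e., making sure that the informal statement ``$\lim_I \mathcal{C}_I \tilde{\to} \mathcal{C}$'' is used in the correct $2$-categorical sense and that every object of $\mathcal{C}$ genuinely lies in some finite $\mathcal{C}_i$ rather than merely being a homotopy limit of such. Once this is granted, the rest is a bookkeeping exercise in passing a levelwise equivalence of filtered diagrams of triangulated categories to their colimit, using the compatibility of the functors induced by idempotent inclusions $\mathrm{End}(P_i) \hookrightarrow \mathrm{End}(P_j)$.
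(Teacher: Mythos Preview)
Your proposal is correct and follows precisely the approach the paper intends: the paper does not give a detailed proof of this claim at all, but simply asserts that ``the assumptions on $\mathcal{T}$ and the previous lemma imply'' it, so you have in fact written out the details the paper omits. Your levelwise application of the tilting lemma to each $P_i$, together with the idempotent-compatibility argument for the transition functors and passage to the colimit, is exactly the bookkeeping the paper leaves to the reader.
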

\begin{rem}
Let us note that the objects of $\mathcal{T}$ correspond to (direct
sums of) the summands of $\mathcal{B}$. Thus we also have an equivalence
$Perf(\mathcal{B})\tilde{\to}K^{b}(\mathcal{T})$ where the category
on the right denotes the homotopy category of complexes of objects
in $\mathcal{T}$. 
\end{rem}
In the main body of the paper, we shall explicitly identify tilting
collections for each of the categories $\mathcal{C}$ of interest
to us. We shall then define a functor 
\[
\kappa:\mathcal{T}\to(Mod^{gr}(O(\mathfrak{h}^{*}\times\mathbb{A}^{1}\times\mathfrak{h}^{*}/W))
\]

(notation for algebraic groups as above) which is called the Kostant-Whittaker
reduction. This functor is based on computing the action of the center
of each of the above categories; in this sense, it is a generalization
of the fundamental work of Soergel \cite{key-30}. A prototype also
appears in the paper \cite{key-8}.

We next show that when restricted to tilting modules, $\kappa$ is
fully faithful. We shall describe explicitly the image of $\kappa$,
and show that the sheaves that appear are a version of Soergel bimodules
(for the module $M_{asp}$); thus obtaining, by the remarks above,
a description of the entire category $\mathcal{C}$. 3

On the other hand, there is already a description, contained in the
paper \cite{key-12}, of the constructible categories under consideration
in terms of sheaves on the space $\mathfrak{h}^{*}\times\mathbb{A}^{1}\times\mathfrak{h}^{*}/W$.
This description also goes by finding a tilting collection, and also
explicitly describes the image in terms of Soergel bimodules. From
these compatible descriptions, we deduce the equivalence of categories
above.

\subsection{Further Results and Future Work}

Given the results outlined above, it is natural to ask about the categorifications
of the other standard modules. In particular, we know that the regular
representation of $H_{aff}$ on itself is categorified by certain
$G$-equivariant sheaves on the Steinberg variety (as discussed in
section 1.1), and also by the usual category of Soergel bimodules.
In a future work, we shall extend the results of this paper to show
that there is a fully-faithful functor $\kappa$ which takes a certain
subcategory of $D^{b}(Coh^{G\times\mathbb{G}_{m}}(\tilde{D}_{h}\boxtimes\tilde{D}_{h}))$
onto the category of Soergel bimodules for $W_{aff}$ (see section
2 below for definitions). In addition, we shall show the comparable
results for the categories of equivariant coherent sheaves on the
deformations of partial flag varieties $\tilde{\mathfrak{g}}_{\mathcal{P}}$
(see section 2 below). The method will be to describe these categories
in terms of the ones already considered here via the Barr-Beck theorem
applied to the appropriate push-pull functors. This will allow us
to upgrade the functor $\kappa$ of this paper to functors on these
other categories; and similar methods to the ones here can then be
used to deduce the full-faithfulness on the appropriate objects. The
details will appear in a forthcoming work.

\subsection{Acknowledgements}

I would like to extend my heartfelt thanks to my advisor, Roman Bezrukavnikov,
for his help and support over many years. The work presented here
is largely an outgrowth of his insights and perspective. In addition,
I would like to thank N. Rozenblyum, D. Jordan, J. Pascaleff, N. Sheridan,
Q. Lin, B. Singh, T-H Chen, and all of my fellow graduate students
at MIT for innumerable helpful conversations and insights. Finally,
some of this work was accomplished while visiting the Hebrew University
of Jerusalem. I would like to thank the University for providing an
excellent working atmosphere.

\section{Main Players-Coherent Side}

In this section, we shall give detailed explanations of the {}``coherent''
categories that we shall use. The constructions in this section make
sense over an arbitrary algebraically closed field $k$ of characteristic
greater than the Coxeter number of $G$.

\subsection{Varieties}

We start with the varieties $\tilde{\mathfrak{g}}$ and $\tilde{\mathfrak{g}}_{\mathcal{P}}$-
details on these can be found in many places, e.g. \cite{key-10}
and \cite{key-2}.

\subsubsection{Full Flag Varieties}

The variety $\tilde{\mathfrak{g}}$ is the total space of a bundle
over $\tilde{\mathcal{N}}$, and is defined as 
\[
\tilde{\mathfrak{g}}=\{(x,\mathfrak{b})\in\mathfrak{g}^{*}\times\mathcal{B}|x|_{[\mathfrak{b},\mathfrak{b}]}=0\}
\]
the first projection defines a morphism $\tilde{\mathfrak{g}}\to\mathfrak{g}^{*}$,
and so we have a map $O(\mathfrak{g}^{*})\to O(\tilde{\mathfrak{g}})$. 

Further, we can explicitly identify the global sections $\Gamma(O(\tilde{\mathfrak{g}}))$
as follows. Recall the Harish-Chandra isomorphism 
\[
O(\mathfrak{g}^{*})^{G}\tilde{\to}O(\mathfrak{h}^{*}/W)
\]
 This makes $O(\mathfrak{h}^{*}/W)$ into a subalgebra of $O(\mathfrak{g}^{*})$.
Then we have 
\begin{equation}
\Gamma(O(\tilde{\mathfrak{g}}))\tilde{=}O(\mathfrak{g}^{*})\otimes_{O(\mathfrak{h}^{*}/W)}O(\mathfrak{h}^{*})
\end{equation}

Thus the map $\mu:\tilde{\mathfrak{g}}\to\mathfrak{g}^{*}$ factors
through another map, which by abuse of notation we shall also call
$\mu:\tilde{\mathfrak{g}}\to\mathfrak{g}^{*}\times_{\mathfrak{h}^{*}/W}\mathfrak{h}^{*}$.
We shall state an important property of this map: we consider the
locus of regular elements $\mathfrak{g}^{*,reg}$, defined as follows:
one first defines $\mathfrak{g}^{reg}$ to be the set $\{x\in\mathfrak{g}|dim(\mathfrak{c}_{\mathfrak{g}}(x))=rank(\mathfrak{g})\}$,
and then transfers this open set to $\mathfrak{g}^{*}$ via the killing
isomorphism.

Now, define $(\tilde{\mathfrak{g}})^{reg}=\mu^{-1}(\mathfrak{g}^{*,reg})$.
Then we have the following 
\begin{lem}
\label{lem:Regiso}The map $\mu:(\tilde{\mathfrak{g}})^{reg}\to\mathfrak{h}^{*}\times_{\mathfrak{h}^{*}/W}\mathfrak{g}^{*,reg}$
is an isomorphism . 
\end{lem}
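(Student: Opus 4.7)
The strategy is to exhibit $\mu|_{(\tilde{\mathfrak{g}})^{reg}}$ as a proper, birational morphism with finite fibers onto a smooth irreducible target, and then conclude via Zariski's main theorem that it is an isomorphism. Recall that $\mu$ sends $(x,\mathfrak{b})\mapsto(x,\bar{x})$, where $\bar{x}\in\mathfrak{h}^{*}$ is the image of $x|_{\mathfrak{b}}$ under the canonical quotient $\mathfrak{b}\twoheadrightarrow\mathfrak{b}/[\mathfrak{b},\mathfrak{b}]\cong\mathfrak{h}$; well-definedness into the fiber product is just the Harish-Chandra identity (2.1.1). For the target, Kostant's theorem asserts that the adjoint quotient $\chi:\mathfrak{g}^{*,reg}\to\mathfrak{h}^{*}/W$ is smooth with single-orbit (hence connected) fibers; base-changing along the finite flat cover $\mathfrak{h}^{*}\to\mathfrak{h}^{*}/W$ therefore yields a smooth, irreducible variety $\mathfrak{g}^{*,reg}\times_{\mathfrak{h}^{*}/W}\mathfrak{h}^{*}$. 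Properness of $\mu$ comes for free: $\tilde{\mathfrak{g}}\to\mathfrak{g}^{*}$ is projective (being a closed subscheme of $\mathfrak{g}^{*}\times\mathcal{B}$ with $\mathcal{B}$ proper) and factors through the separated morphism $\mathfrak{g}^{*}\times_{\mathfrak{h}^{*}/W}\mathfrak{h}^{*}\to\mathfrak{g}^{*}$, so $\mu$ itself is proper.

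The heart of the argument is a fiberwise bijection on closed points. Fix $x\in\mathfrak{g}^{*,reg}$ and use the Killing form to transfer to $\mathfrak{g}$; write the Jordan decomposition $x=x_{s}+x_{n}$. Regularity of $x$ forces $x_{n}$ to be a regular nilpotent element of the Levi $\mathfrak{l}:=\mathfrak{z}_{\mathfrak{g}}(x_{s})$. Any Borel $\mathfrak{b}$ with $x\in\mathfrak{b}$ contains both Jordan parts (since Jordan decomposition is preserved in algebraic Lie subalgebras), whence $\mathfrak{b}\cap\mathfrak{l}$ is a Borel of $\mathfrak{l}$ containing the regular nilpotent $x_{n}$; there is a unique such Borel, call it $\mathfrak{b}_{L}$. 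Conversely, the Borels of $\mathfrak{g}$ restricting to $\mathfrak{b}_{L}$ on $\mathfrak{l}$ are canonically parametrized by $W/W_{L}$ (where $W_{L}\subset W$ is the Weyl group of $L$), and under $\mu$ they map bijectively onto the $W$-orbit of the canonical image of $x_{s}$ in $\mathfrak{h}^{*}$. This orbit, of size $|W/W_{L}|$ since $\mathrm{Stab}_{W}(\bar{x}_{s})=W_{L}$, is precisely the fiber of the target over $x$. Hence $\mu$ is bijective on closed points.

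Assembling: $\mu|_{(\tilde{\mathfrak{g}})^{reg}}$ is proper with finite fibers, so finite. Over the dense open regular-semisimple locus $\mathfrak{g}^{*,rs}$ both sides become \'etale $W$-torsors over $\mathfrak{g}^{*,rs}$ and the induced map is a $W$-equivariant bijection, hence an isomorphism; so $\mu$ is birational. A finite birational morphism onto a normal (in fact smooth) variety is an isomorphism. The main obstacle is the fiber analysis at regular but non-regular-semisimple $x$: here $W_{L}$ is nontrivial and one must use Jordan decomposition together with the combinatorics of Weyl groups of Levi subgroups to identify the $|W/W_{L}|$-point sets on both sides of $\mu$ in a compatible way. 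The remaining ingredients---Kostant's smoothness theorem, properness by cancellation, and Zariski's main theorem---are entirely standard.
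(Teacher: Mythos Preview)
The paper does not supply its own proof of this lemma; it simply cites Gaitsgory's notes \cite{key-14}, section~7. So there is nothing to compare against directly, and your task reduces to whether your argument stands on its own.

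It does. The standard route to this result is exactly the one you take: show $\mu$ is proper (cancellation from the projective map $\tilde{\mathfrak{g}}\to\mathfrak{g}^{*}$), check the target is smooth and irreducible (Kostant's theorem plus base change along the finite flat $\mathfrak{h}^{*}\to\mathfrak{h}^{*}/W$), verify birationality on the regular-semisimple locus where both sides are visibly \'etale $W$-covers of $\mathfrak{g}^{*,rs}$, and conclude by Zariski's main theorem. This is essentially what one finds in the cited reference.

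Two small remarks. First, your detailed fiber count at regular non-semisimple points is correct but, as you implicitly acknowledge, not logically necessary: once you have finiteness and birationality onto a normal target, you are done without ever touching those fibers. Second, in the sentence ``the Borels of $\mathfrak{g}$ restricting to $\mathfrak{b}_{L}$ on $\mathfrak{l}$'' you are tacitly restricting attention to Borels containing $x_{s}$ (otherwise the intersection with $\mathfrak{l}$ need not be a Borel of $\mathfrak{l}$); this is of course what you mean, since any Borel containing $x$ contains $x_{s}$, but it would read more cleanly if stated. Neither point affects the validity of the argument.
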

The proof can be found in \cite{key-14}, section 7. We shall see
below that the regular elements play a major role in capturing the
$G$-equivariant geometry of $\tilde{\mathfrak{g}}$.

\subsubsection{Partial Flag Varieties}

Now we wish to extend this definition by having parabolic subalgebras
of different types play the role of the Borel subalgebra. Thus we
let $\mathcal{P}$ be a given partial flag variety. For a parabolic
$\mathfrak{p}\in\mathcal{P}$, we let $\mathfrak{u}(\mathfrak{p})$
denote its nilpotent radical. Then we define 
\[
\tilde{\mathfrak{g}}_{\mathcal{P}}\tilde{=}\{(x,\mathfrak{p})\in\mathfrak{g}^{*}\times\mathcal{P}|x|_{\mathfrak{u}(\mathfrak{p})}=0\}
\]

In the case $\mathcal{P}=\mathcal{B}$, this recovers $\tilde{\mathfrak{g}}$.
In case $\mathcal{P}=pt$ (i.e., $\mathfrak{p}=\mathfrak{g}$), this
is simply $\mathfrak{g}^{*}$. 

As before, we have a map $\tilde{\mathfrak{g}}_{\mathcal{P}}\to\mathfrak{g}^{*}$,
and we now have the isomorphism 
\begin{equation}
\Gamma(O(\tilde{\mathfrak{g}}_{\mathcal{P}}))\tilde{=}O(\mathfrak{g}^{*})\otimes_{O(\mathfrak{h}^{*}/W)}O(\mathfrak{h}^{*}/W(\mathcal{P}))
\end{equation}

Where $W(\mathcal{P})$ is the Weyl group of parabolic type associated
to this partial flag variety. 

Let us note that there are certain natural projection maps between
these varieties. In particular, let $\mathfrak{p}\subset\mathfrak{q}$
be two parabolic subalgebras. Then there is a natural projection map
of partial flag varieties $\pi:\mathcal{P}\to\mathcal{Q}$. This then
induces a map 
\[
\pi_{\mathcal{P}\mathcal{Q}}:\tilde{\mathfrak{g}}_{\mathcal{P}}\to\tilde{\mathfrak{g}}_{\mathcal{Q}}
\]
 defined by sending $(x,\mathfrak{p})$ to $(x,\pi(\mathfrak{p}))$. 

Some special cases will be of interest to us. First, let us note that
for any $\mathfrak{p}$, taking $\mathfrak{q}=\mathfrak{g}$ yields
the natural map $\tilde{\mathfrak{g}}_{\mathcal{P}}\to\mathfrak{g}^{*}$
considered above. 

Next, we want to consider the case of the projection $\mathcal{B}\to\mathcal{P}_{s}$,
where $s$ is a simple reflection. We shall record two important properties
of the map $\pi_{\mathcal{B}\mathcal{P}_{s}}=\pi_{s}$. First, we
have an isomorphism 
\begin{equation}
\pi_{s*}(O(\tilde{\mathfrak{g}}))=O(\tilde{\mathfrak{g}}_{s})\otimes_{O(\mathfrak{h}^{*}/W(s))}O(\mathfrak{h}^{*})
\end{equation}

thus this pushforward is a locally free sheaf of rank $2$. The second,
related fact, is that one can consider the restriction of this map
to the regular locus as follows: the map $\mu=\pi_{\mathcal{B}pt}:\tilde{\mathfrak{g}}\to\mathfrak{g}^{*}$
factors as 
\[
\tilde{\mathfrak{g}}\to\mathfrak{\tilde{g}}_{s}\to\mathfrak{g}^{*}
\]
 and so, taking the inverse image of the regular locus, we have a
map 
\[
(\tilde{\mathfrak{g}})^{reg}\to(\mathfrak{\tilde{g}}_{s})^{reg}
\]
 Then this map is a two sheeted covering map, with fibres are naturally
isomorphic to $W/W(s)$. 

In fact, we shall state the somewhat more general 
\begin{lem}
\label{lem:Regiso2}For all parabolic subgroups, the projection map
$\tilde{\mathfrak{g}}_{\mathcal{P}}^{reg}\to\mathfrak{g}^{*,}{}^{reg}$
induces an isomorphism 
\[
\tilde{\mathfrak{g}}_{\mathcal{P}}^{reg}\tilde{=}\mathfrak{h}^{*}/W_{\mathcal{P}}\times_{\mathfrak{h}^{*}/W}\mathfrak{g}^{reg}
\]
 
\end{lem}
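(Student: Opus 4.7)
The plan is to reduce to Lemma \ref{lem:Regiso} by realizing $\tilde{\mathfrak{g}}_{\mathcal{P}}^{reg}$ as the $W(\mathcal{P})$-quotient of $\tilde{\mathfrak{g}}^{reg}$, where $W(\mathcal{P})$ acts through its action on $\mathfrak{h}^{*}$ under the identification of that lemma. Formula (2.2) furnishes a canonical morphism
\[
\tilde{\mathfrak{g}}_{\mathcal{P}} \longrightarrow \mathfrak{g}^{*} \times_{\mathfrak{h}^{*}/W} \mathfrak{h}^{*}/W(\mathcal{P}),
\]
and restricting over $\mathfrak{g}^{*,reg}$ yields the map $\Phi: \tilde{\mathfrak{g}}_{\mathcal{P}}^{reg} \to \mathfrak{h}^{*}/W(\mathcal{P}) \times_{\mathfrak{h}^{*}/W} \mathfrak{g}^{*,reg}$ that I want to prove is an isomorphism. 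I fit $\Phi$ into the commutative square
\[
\begin{CD}
\tilde{\mathfrak{g}}^{reg} @>{\pi}>> \tilde{\mathfrak{g}}_{\mathcal{P}}^{reg} \\
@V{\sim}VV @VV{\Phi}V \\
\mathfrak{h}^{*} \times_{\mathfrak{h}^{*}/W} \mathfrak{g}^{*,reg} @>{q}>> \mathfrak{h}^{*}/W(\mathcal{P}) \times_{\mathfrak{h}^{*}/W} \mathfrak{g}^{*,reg}
\end{CD}
\]
whose left vertical is the isomorphism of Lemma \ref{lem:Regiso}, $\pi = \pi_{\mathcal{B}\mathcal{P}}$ is restricted to the regular loci, and $q$ is the base change of the finite flat categorical quotient map $\mathfrak{h}^{*} \to \mathfrak{h}^{*}/W(\mathcal{P})$.

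Since $q$ is the categorical $W(\mathcal{P})$-quotient on the bottom row, it suffices to prove that $\pi$ is also a categorical $W(\mathcal{P})$-quotient for the transported action; uniqueness of categorical quotients will then force $\Phi$ to be an isomorphism. I would verify this in two steps. First, $W(\mathcal{P})$-invariance: the transported action only modifies the $\mathfrak{h}^{*}$-factor, which geometrically corresponds to replacing $\mathfrak{b}$ by a different Borel contained in the same parabolic $\mathfrak{p} \in \mathcal{P}$; hence $\pi$ is constant on $W(\mathcal{P})$-orbits. Second, over the dense open regular semisimple stratum, a point $(x, \mathfrak{p})$ determines the unique Cartan equal to the centralizer of $x$, and the Borels of $\mathfrak{p}$ containing this Cartan form a torsor under $W(\mathcal{P})$; combined with smoothness of the source and normality of the target, this gives the required universal property after invoking Zariski's main theorem applied to the finite birational morphism $\Phi$.

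The main obstacle is to control the situation outside of the regular semisimple locus: one needs the fiber of $\pi$ over a general regular (possibly non-semisimple) point of $\tilde{\mathfrak{g}}_{\mathcal{P}}^{reg}$ to remain a single $W(\mathcal{P})$-orbit scheme-theoretically, not merely set-theoretically on a dense open. I expect to handle this by factoring the map $\mathcal{B} \to \mathcal{P}$ through a chain $\mathcal{B} \to \mathcal{P}_{s_{1}} \to \cdots \to \mathcal{P}$ of simple-reflection parabolics and iterating formula (2.3), which realizes each intermediate $\pi_{s*}O$ as a finite locally free rank-two extension corresponding to the degree-two map $\mathfrak{h}^{*} \to \mathfrak{h}^{*}/W(s)$. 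Composing these extensions globally identifies $\pi_{*}O_{\tilde{\mathfrak{g}}^{reg}}$ with $O_{\tilde{\mathfrak{g}}_{\mathcal{P}}^{reg}} \otimes_{O(\mathfrak{h}^{*}/W(\mathcal{P}))} O(\mathfrak{h}^{*})$, which is exactly the statement that $\Phi$ is an isomorphism.
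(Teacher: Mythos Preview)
The paper does not actually prove this lemma: it only remarks that ``the proof of this is similar to that of the case $\mathcal{P}=\mathcal{B}$ discussed above,'' referring to Lemma~\ref{lem:Regiso}, whose proof is in turn deferred to \cite{key-14}. So the intended argument is a direct one, run in parallel to the Borel case, not a reduction to it. Your approach --- deducing the parabolic case from the Borel case by exhibiting $\tilde{\mathfrak{g}}_{\mathcal{P}}^{reg}$ as the $W(\mathcal{P})$-quotient of $\tilde{\mathfrak{g}}^{reg}$ via the commutative square and Zariski's main theorem --- is genuinely different and perfectly reasonable. It has the virtue of using Lemma~\ref{lem:Regiso} as a black box rather than reopening its proof.

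Your Zariski's main theorem argument is already enough, and you are making your life harder than necessary in the last paragraph. Once you know the target is normal (it is the $W(\mathcal{P})$-quotient of the smooth variety $\tilde{\mathfrak{g}}^{reg}$, via the bottom row of your square) and that $\Phi$ is proper with finite fibers (properness is inherited from $\tilde{\mathfrak{g}}_{\mathcal{P}}\to\mathfrak{g}^{*}$; finiteness of fibers over regular elements is standard), birationality on the dense regular semisimple locus suffices. You do \emph{not} need to know that the fibers of $\pi$ over arbitrary regular points are single $W(\mathcal{P})$-orbits scheme-theoretically --- that is a consequence, not an input.

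By contrast, the iteration of (2.3) you propose is problematic. Formula (2.3) is only stated for the map $\tilde{\mathfrak{g}}\to\tilde{\mathfrak{g}}_{s}$ with $s$ a simple reflection; to iterate along a chain $\mathcal{B}\to\mathcal{P}_{s_{1}}\to\cdots\to\mathcal{P}$ you would need analogues of (2.3) for each intermediate step $\tilde{\mathfrak{g}}_{\mathcal{P}_{i}}\to\tilde{\mathfrak{g}}_{\mathcal{P}_{i+1}}$, and those intermediate maps are not $\mathbb{P}^{1}$-bundles. Establishing such analogues is essentially equivalent to the lemma you are trying to prove, so this route is circular. Drop it and rely on the Zariski argument alone.
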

The proof of this is similar to that of the case $\mathcal{P}=\mathcal{B}$
discussed above.

\subsubsection{Equivariance}

Here we would like to note that there is a natural action of the reductive
group $G\times\mathbb{G}_{m}$ on all the varieties we have considered.
The $G$ action comes essentially from the construction of the varieties-
for any $\tilde{\mathfrak{g}}_{\mathcal{P}}$, we can set 
\[
g\cdot(x,\mathfrak{p})=(ad^{*}(g)(x),ad(g)(\mathfrak{p}))
\]
where $ad^{*}$ and $ad$ are the coadjoint and adjoint action, respectively. 

The $\mathbb{G}_{m}$-action comes from the natural dilation action
on the lie algebra, i.e., 
\[
c\cdot(x,\mathfrak{p})=(cx,\mathfrak{p})
\]
 which obviously commutes with the action of $G$. 

Let us note that 
\[
\Gamma(O(\tilde{\mathfrak{g}}_{\mathcal{P}}))^{G}=O(\mathfrak{h}^{*}/W(\mathcal{P}))
\]
 simply by taking $G$-invariants on both sides of equation 2 above.

With this action defined, there are now abelian categories of equivariant
coherent sheaves $Coh^{G}(\tilde{\mathfrak{g}}_{\mathcal{P}})$ and
$Coh^{G\times\mathbb{G}_{m}}(\tilde{\mathfrak{g}}_{\mathcal{P}})$
(c.f. \cite{key-13}, chapter 5 for general information about equivariant
coherent sheaves) , and their derived categories, which will be some
of the main players in the paper. As it turns out, these categories
admit a very nice {}``affine'' description, as in the following: 
\begin{lem}
\label{lem:Coh-generation}The line bundles $O_{\tilde{\mathfrak{g}}}(\lambda)$
generate%
\footnote{In the sense that the smallest subcategory containing the line bundles
and closed under shifts, extensions, and direct factors is the entire
category%
} the category $D^{b}Coh^{G}(\tilde{\mathfrak{g}})$. The analogous
statement holds for the graded version.\end{lem}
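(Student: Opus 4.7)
The plan is to exploit the structure of $\tilde{\mathfrak{g}}$ as a $G$-equivariant vector bundle over the flag variety $\mathcal{B}$. The first projection $\pi:\tilde{\mathfrak{g}}\to \mathcal{B}$ exhibits $\tilde{\mathfrak{g}}$ as the total space of the $G$-equivariant bundle with fiber $[\mathfrak{b},\mathfrak{b}]^{\perp}\subset\mathfrak{g}^{*}$ over $\mathfrak{b}$; equivalently, $\tilde{\mathfrak{g}}\cong G\times^{B}[\mathfrak{b},\mathfrak{b}]^{\perp}$. This yields an equivalence $Coh^{G}(\tilde{\mathfrak{g}})\cong Coh^{B}([\mathfrak{b},\mathfrak{b}]^{\perp})$, the category of finitely generated $B$-equivariant modules over the polynomial algebra $S:=O([\mathfrak{b},\mathfrak{b}]^{\perp})$. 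Under this equivalence, the pullback $\pi^{*}\mathcal{V}$ of a $G$-equivariant vector bundle $\mathcal{V}=G\times^{B}V$ on $\mathcal{B}$ corresponds to the free $S$-module $S\otimes_{k}V$.

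The argument proceeds in two reductions. First, I will show that every finitely generated $B$-equivariant $S$-module $M$ admits a finite $B$-equivariant free resolution whose terms are of the form $S\otimes_{k}V_{i}$ for finite-dimensional $B$-representations $V_{i}$. To build the first term, pick a finite set of $S$-generators of $M$ and let $V_{0}\subset M$ be the (finite-dimensional) $B$-subrepresentation they generate; the resulting map $S\otimes V_{0}\twoheadrightarrow M$ is $B$-equivariant and surjective. By Noetherianity of $S$ the kernel remains finitely generated, so one iterates. By Hilbert's syzygy theorem the process terminates in at most $\dim[\mathfrak{b},\mathfrak{b}]^{\perp}$ steps with a $B$-equivariantly free syzygy; in the graded case this follows immediately from graded Nakayama, and the ungraded case can be handled either by an analogous filtration argument or by reducing to the graded case. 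This shows that $D^{b}Coh^{G}(\tilde{\mathfrak{g}})$ is generated, as a triangulated subcategory closed under summands, shifts, and extensions, by the pullbacks $\pi^{*}\mathcal{V}$ for $\mathcal{V}\in Coh^{G}(\mathcal{B})$.

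For the second reduction, the standard equivalence $Coh^{G}(\mathcal{B})\cong Rep^{fd}(B)$ sends $O_{\mathcal{B}}(\lambda)$ to the one-dimensional character $k_{\lambda}$ of $T$ (pulled back to $B$ via $B\twoheadrightarrow T$). Any finite-dimensional $B$-representation admits a composition series whose subquotients are $T$-characters, since by the Lie--Kolchin theorem the unipotent radical $U\subset B$ acts by unipotent endomorphisms. Composing the two reductions exhibits every $\mathcal{F}\in Coh^{G}(\tilde{\mathfrak{g}})$ as a successive extension of line bundles $O_{\tilde{\mathfrak{g}}}(\lambda)$, proving the lemma. The same argument, applied to graded objects throughout, handles the $G\times\mathbb{G}_{m}$-equivariant version.

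The principal technical subtlety is ensuring that the $B$-equivariant free resolution in the first step has finite length. In the graded case this is a direct consequence of Hilbert's syzygy theorem together with graded Nakayama; in the ungraded case one must either adapt the Nakayama argument using a filtration on the terminal syzygy or else reduce to the graded case via a Rees-type deformation. Everything else is standard once this resolution is in hand.
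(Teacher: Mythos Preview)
Your proof is correct and follows essentially the same route as the paper's: reduce via the induction equivalence $Coh^{G}(\tilde{\mathfrak{g}})\cong Coh^{B}(\mathfrak{b}^{*})$, then combine a filtration of $B$-representations by $T$-characters with a finite free resolution over the polynomial ring. The only difference is the order of the two reductions: the paper first filters the generating space $V$ so that $N$ acts trivially and only then resolves, which makes the module genuinely multigraded and so lets Hilbert's syzygy theorem apply directly, cleanly bypassing the ungraded-case subtlety you flag at the end.
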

\begin{proof}
We first recall that by definition $\tilde{\mathfrak{g}}=G\times_{B}\mathfrak{b}^{*}$.
Therefore there are equivalences of categories 
\[
i^{*}:Coh^{G}(\tilde{\mathfrak{g}})\tilde{\to}Coh^{B}(\mathfrak{b}^{*})
\]
and 
\[
i^{*}:Coh^{G\times\mathbb{G}_{m}}(\tilde{\mathfrak{g}})\tilde{\to}Coh^{B\times\mathbb{G}_{m}}(\mathfrak{b}^{*})
\]
given by restriction to the fibre over the base point of $\mathcal{B}$.
The inverse of this functor is given by taking the associated sheaf
of a $B$-module $M$ (c.f. \cite{key-21}, chapter 5), which yields
a quasicoherent sheaf on $\mathcal{B}$, and then noting that the
additional compatible structure of a $Sym(\mathfrak{b})$-module on
$M$ is equivalent to an action of $p_{*}(\tilde{\mathfrak{g}})$
on the associated sheaf.

So, to prove the lemma, we consider any finitely generated $B$-equivariant
module $M$ over $O(\mathfrak{b}^{*})$, and show that $M$ is in
the triangulated category generated by $i^{*}O_{\tilde{\mathfrak{g}}}(\lambda)$.
We choose a finite dimensional $B$-stable generating space for $M$,
called $V$. Recalling that $B=N\ltimes T$, we reduce to the case
that $N$ acts trivially on $V$ by considering a filtration of $V$
such that $N$ acts trivially on the subquotients. But then the proof
comes down to the statement is just that if we have a multigraded
polynomial ring, then any module has a finite resolution by graded
projective modules. The result for the entire bounded derived category
follows by induction on the length of complex. 
\end{proof}
Finally, we would like to end by describing one crucial property possessed
by equivariant coherent sheaves (in a general context), as explained
in \cite{key-22}. In particular, any equivariant coherent sheaf $M$
comes with a morphism of Lie algebras 
\[
L_{v}:\mathfrak{g}\to End_{k}(M)
\]
 obtained, essentially, by {}``differentiating the $G$-action''
(c.f. \cite{key-22} pg. 23 for details on the algebraic definition);
thus $M$ can be considered a sheaf of $\mathfrak{g}$-modules, and
hence a sheaf of $U(\mathfrak{g})$-modules. Further, for $A\in\mathfrak{g}$,
the operator $L_{v}(A)$ is a derivation on $M$.

\subsection{Deformations}

Now we shall consider certain non-commutative deformations of the
various varieties and maps considered above. Again these objects are
more or less well known, c.f. \cite{key-2}; in addition \cite{key-10}
consider the version in positive characteristic, and \cite{key-27}
has much of the material of the first subsection.

\subsubsection{Full Flag Varieties}

We shall start with $\tilde{\mathfrak{g}}$. By definition, $\tilde{\mathfrak{g}}$
is a vector bundle over $\mathcal{B}$. At a given point $\mathfrak{b}\in\mathcal{B}$,
the fibre of this bundle, $\tilde{\mathfrak{g}}_{\mathfrak{b}}$,
is equal to $\{x\in\mathfrak{g}^{*}|x|_{[\mathfrak{b},\mathfrak{b}]}=0\}$.
This is a vector space that can naturally be considered the dual of
$\mathfrak{b}$. 

We can quantize this situation, following \cite{key-27}. We start
with the sheaf $U^{0}=U(\mathfrak{g})\otimes O(\mathcal{B})$- a trivial
sheaf on $\mathcal{B}$. Let us note that the multiplication in this
sheaf is not the obvious one, but is instead given by the formula
\[
(f\otimes\xi)(g\otimes\eta)=f(\xi\cdot g)\otimes\eta+fg\otimes\xi\eta
\]
 where $\xi\cdot g$ denotes the action of a vector field on a function. 

The PBW filtration on $U(\mathfrak{g})$ gives a filtration on this
sheaf. It is clear that with respect to this filtration we have 
\[
gr(U(\mathfrak{g})\otimes O(\mathcal{B}))\tilde{=}O(\mathfrak{g}^{*})\otimes O(\mathcal{B})
\]
Further, the sheaf on the right is equal to $p_{*}(O(\mathfrak{g}^{*}\times\mathcal{B}))$,
where $p:\mathfrak{g}^{*}\times\mathcal{B}\to\mathcal{B}$ is the
obvious projection. Thus we can consider $U(\mathfrak{g})\otimes O(\mathcal{B})$
as a quantization of $\mathfrak{g}^{*}\times\mathcal{B}$. 

Now, for a given point $\mathfrak{b}\in\mathcal{B}$, we can consider
$\mathfrak{n}(\mathfrak{b})=[\mathfrak{b},\mathfrak{b}]$, the nilpotent
radical of $\mathfrak{b}$. We can define $\mathfrak{n}^{0}$ to be
the ideal sheaf generated at each point $\mathfrak{b}$ by the subalgebra
$\mathfrak{n}(\mathfrak{b})$. Then we can form the quotient sheaf
$U^{0}/\mathfrak{n}^{0}$. This sheaf inherits the PBW filtration
from $U^{0}$, and it is immediate from the definitions that 
\[
gr(U^{0}/\mathfrak{n}^{0})\tilde{=}p_{*}O(\tilde{\mathfrak{g}})
\]
where we have here used $p:\tilde{\mathfrak{g}}\to\mathcal{B}$ to
denote the projection. 

To give the quantization in its final form, let us recall that to
any filtered sheaf of $k$-algebras $\mathcal{A}$ on a space, we
can associate the Rees algebra, as was done, e.g., in \cite{key-35}.
In particular, $Rees(\mathcal{A})$ is a graded sheaf of $k[h]$-algebras,
such that $Rees(\mathcal{A})/h\tilde{=}gr(\mathcal{A})$ (the associated
graded algebra of $\mathcal{A}$). 

So, we finally make the 
\begin{defn}
The sheaf $\tilde{D}_{h}$ on $\mathcal{B}$ is $Rees(U^{0}/\mathfrak{n}^{0})$. 
\end{defn}
Thus we have that $\tilde{D}_{h}/h\tilde{=}p_{*}(O(\tilde{\mathfrak{g}}))$
by construction. 

We wish to consider the global sections of this object. To that end,
we note that the algebra 
\[
U_{h}(\mathfrak{g}):=Rees(U(\mathfrak{g}))
\]
 (where the PBW filtration is used) maps naturally to $\Gamma(\tilde{D}_{h})$,
simply by following the chain of filtration preserving maps: 
\[
U(\mathfrak{g})\to\Gamma(U^{0})\to\Gamma(U^{0}/\mathfrak{n}^{0})
\]
 Then, we also have a natural map 
\[
O(\mathbb{A}^{1}\times\mathfrak{h}^{*})\tilde{\to}k[h]\otimes U(\mathfrak{h})\to\Gamma(\tilde{D}_{h})
\]
simply by the fact that $\mathfrak{h}\subset\mathfrak{g}$. 

Further, there are embeddings $O(\mathbb{A}^{1}\times\mathfrak{h}^{*}/W)\to U_{h}(\mathfrak{g})$
and $O(\mathbb{A}^{1}\times\mathfrak{h}^{*}/W)\to O(\mathbb{A}^{1}\times\mathfrak{h}^{*})$;
the first as the inclusion of the center, the second as the natural
inclusion. Then, from \cite{key-27}, page 21, we have the 
\begin{claim}
The natural maps $U_{h}(\mathfrak{g})\to\Gamma(\tilde{D}_{h})$ and
$O(\mathbb{A}^{1}\times\mathfrak{h}^{*})\to\Gamma(\tilde{D}_{h})$
agree upon restriction to $O(\mathbb{A}^{1}\times\mathfrak{h}^{*}/W)$. 
\end{claim}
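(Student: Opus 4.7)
The plan is to reduce the statement modulo $h$, where it becomes the classical identification in equation (2.1), and then to lift it to $\Gamma(\tilde{D}_{h})$ using the $h$-torsion-freeness coming from the Rees construction.

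To set things up, I would first observe that both maps are graded $k[h]$-algebra homomorphisms whose images land in the center of $\Gamma(\tilde{D}_{h})$. For the map coming from $U_{h}(\mathfrak{g})$, this is automatic since $O(\mathbb{A}^{1}\times\mathfrak{h}^{*}/W)$ is by definition the center of $U_{h}(\mathfrak{g})$, and the natural map $U_{h}(\mathfrak{g})\to\Gamma(\tilde{D}_{h})$ carries the center into the center. For the map coming from $O(\mathbb{A}^{1}\times\mathfrak{h}^{*})$, the abstract Cartan $\mathfrak{h}$ acts on each fiber $U/U\mathfrak{n}(\mathfrak{b})$ of $\tilde{D}_{h}$ centrally once the canonical identification $\mathfrak{h}\cong\mathfrak{b}/\mathfrak{n}(\mathfrak{b})$ is made; the conventions adopted in defining $\tilde{D}_{h}$ (c.f. \cite{key-27}) absorb the usual $\rho$-shift.

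Next I would reduce modulo $h$. By the Rees construction, $\tilde{D}_{h}/h\cong p_{*}O(\tilde{\mathfrak{g}})$, so the two compositions $O(\mathbb{A}^{1}\times\mathfrak{h}^{*}/W)\to\Gamma(\tilde{D}_{h})$ become, modulo $h$, the two maps $O(\mathfrak{h}^{*}/W)\to\Gamma(O(\tilde{\mathfrak{g}}))$ obtained from the factorizations $O(\mathfrak{h}^{*}/W)\hookrightarrow O(\mathfrak{g}^{*})\to\Gamma(O(\tilde{\mathfrak{g}}))$ and $O(\mathfrak{h}^{*}/W)\hookrightarrow O(\mathfrak{h}^{*})\to\Gamma(O(\tilde{\mathfrak{g}}))$ respectively. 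Equation (2.1) asserts $\Gamma(O(\tilde{\mathfrak{g}}))\cong O(\mathfrak{g}^{*})\otimes_{O(\mathfrak{h}^{*}/W)}O(\mathfrak{h}^{*})$, so these two maps agree identically on the scalar subalgebra $O(\mathfrak{h}^{*}/W)$.

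The final step will be to promote this mod-$h$ equality to the full statement. I would write $\delta=a-b$ for the difference of the two maps; by the preceding, $\delta(f)\in h\cdot\Gamma(\tilde{D}_{h})$ for every $f$, so by $h$-torsion-freeness we can set $\delta_{1}(f)=\delta(f)/h$. The key identity will be the cocycle relation $\delta(fg)=a(f)\delta(g)+\delta(f)b(g)$, which forces $\delta_{1}\bmod h$ to be a derivation of $O(\mathfrak{h}^{*}/W)$ into $\Gamma(O(\tilde{\mathfrak{g}}))$. The main obstacle will be closing the induction, i.e., showing $\delta_{1}\equiv 0\pmod h$: one appeals to $G$-equivariance to force $\delta_{1}$ to take values in $\Gamma(O(\tilde{\mathfrak{g}}))^{G}=O(\mathfrak{h}^{*}/W)$, to central valuedness, and to the non-negative grading (which bounds the number of iterations), concluding iteratively that $\delta=0$. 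Alternatively, one may appeal directly to the explicit description of $\Gamma(\tilde{D}_{h})$ available in \cite{key-27}, which gives the statement at once.
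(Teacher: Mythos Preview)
The paper does not give its own proof of this claim; it simply cites \cite{key-27}, page 21. Your final sentence (``appeal directly to the explicit description of $\Gamma(\tilde{D}_{h})$ available in \cite{key-27}'') is therefore exactly what the paper does, and is correct.

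Your main argument, however, has a genuine gap in the lifting step. First a small slip: $\Gamma(O(\tilde{\mathfrak{g}}))^{G}=O(\mathfrak{h}^{*})$, not $O(\mathfrak{h}^{*}/W)$ (this is the case $\mathcal{P}=\mathcal{B}$ of the identity $\Gamma(O(\tilde{\mathfrak{g}}_{\mathcal{P}}))^{G}=O(\mathfrak{h}^{*}/W(\mathcal{P}))$, and $W(\mathcal{B})$ is trivial). More seriously, even after using $G$-equivariance to force $\bar{\delta}_{1}$ to be a degree $-2$ derivation $O(\mathfrak{h}^{*}/W)\to O(\mathfrak{h}^{*})$, such derivations are \emph{not} zero in general: for $\mathfrak{sl}_{2}$ the map sending the quadratic invariant to a nonzero linear element of $O(\mathfrak{h}^{*})$ is a perfectly good derivation of degree $-2$. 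So the induction does not close on grading grounds alone. In fact your argument, if it worked, would show that \emph{any} graded $k[h]$-algebra map $O(\mathbb{A}^{1}\times\mathfrak{h}^{*}/W)\to O(\mathbb{A}^{1}\times\mathfrak{h}^{*})$ which reduces mod $h$ to the natural inclusion must equal the inclusion; but the $\rho$-shift $\lambda\mapsto\lambda+h\rho$ gives a nontrivial such map. The content of the claim is precisely that the Harish-Chandra identification (with its built-in $\rho$-shift) matches the abstract-Cartan map in $\Gamma(\tilde{D}_{h})$, and this requires an honest computation rather than a deformation argument. That computation is what \cite{key-27} carries out.
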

Thus, we in fact have a morphism 
\[
O(\mathbb{A}^{1}\times\mathfrak{h}^{*})\otimes_{O(\mathbb{A}^{1}\times\mathfrak{h}^{*}/W)}U_{h}(\mathfrak{g})\to\Gamma(\tilde{D}_{h})
\]
 which is actually an isomorphism-this is proved in \cite{key-27},
Theorem 5, page 37. Upon taking $h\to0$, we get isomorphism $1$
(c.f. section 2.2.4 below for details about {}``taking $h\to0$'').

\subsubsection{Partial Flag Varieties}

Now we wish to quantize the varieties $\tilde{\mathfrak{g}}_{\mathcal{P}}$,
by a similar explicit strategy. So we start with the sheaf 
\[
U_{\mathcal{P}}^{0}:=U(\mathfrak{g}^{*})\otimes O(\mathcal{P})
\]
with the multiplication as for $U^{0}$ above. For each point $\mathfrak{p}\in\mathcal{P}$,
we have the sub-lie algebra $\mathfrak{u}(\mathfrak{p})\subset\mathfrak{g}$.
We can now define the sheaf of ideals $\mathfrak{u}^{0}$ to be the
sheaf generated at each point $\mathfrak{p}$ by $\mathfrak{u}(\mathfrak{p})$.
Then we have the sheaf $U_{\mathcal{P}}^{0}/\mathfrak{u}^{0}$, and
it is immediate from the definition that 
\[
gr(U_{\mathcal{P}}^{0}/\mathfrak{u}^{0})\tilde{=}p_{*}O(\tilde{\mathfrak{g}}_{\mathcal{P}})
\]
where $p:\tilde{\mathfrak{g}}_{\mathcal{P}}\to\mathcal{P}$ is the
natural projection. So we define 
\[
\tilde{D}_{h,\mathcal{P}}:=Rees(U_{\mathcal{P}}^{0}/\mathfrak{u}^{0})
\]
Of course, we have that $\tilde{D}_{h}=\tilde{D}_{h,\mathcal{B}}$.
It also follows from the definition that in the case $\mathcal{P}=pt$,
$\tilde{D}_{h,\mathcal{P}}=U_{h}(\mathfrak{g})$. 

Next, we can explain the behavior of these sheaves under the natural
pushforward maps. In particular, let $\pi_{s}:\mathcal{B}\to\mathcal{P}_{s}$
be the natural projection morphism (this is a slight abuse of notation
from the previous section). We wish to calculate $\pi_{s*}(\tilde{D}_{h})$,
following \cite{key-10}, \cite{key-2} (the answer will be a deformation
of equation $2$). 

To proceed, let $\mathfrak{p}\in\mathcal{P}_{s}$, and let $\mathfrak{p}^{-}$
be the opposite parabolic, with levi decomposition 
\[
\mathfrak{p}^{-}=\mathfrak{u}(\mathfrak{p}^{-})\oplus\mathfrak{j}^{-}
\]

Under our assumptions, we have that $\mathfrak{j}^{-}\tilde{=}\mathfrak{sl}_{2}\oplus\mathfrak{h}^{s}$. 

Then we have the open subset 
\[
J^{-}\cdot\mathfrak{p}\subset\mathcal{P}
\]
 (and $\mathcal{P}$ is covered by such subsets). Further we have
that 
\[
\pi_{s}^{-1}(J^{-}\cdot\mathfrak{p})\tilde{=}P_{s}/B\times(J^{-}\cdot\mathfrak{p})=\mathbb{P}^{1}\times(J^{-}\cdot\mathfrak{p})
\]
 and the map $\pi_{s}$ becomes the projection to the second factor.

So, the above decompositions imply that we see that 
\[
\pi_{s*}(\tilde{D}_{h})|_{(J^{-1}\cdot\mathfrak{p})}\tilde{=}\Gamma(\tilde{D}_{h}(P/B))\otimes_{\mathbb{C}}O(\mathcal{P})\otimes_{\mathbb{C}}U_{h}[(\mathfrak{h}^{*})^{s}\oplus\mathfrak{u}^{-}(\mathfrak{p})]
\]
 where $\tilde{D}_{h}(P/B)$ denotes $\tilde{D}_{h}$ in the case
of the reductive group $SL_{2}$, with flag variety $\mathbb{P}^{1}$.
But we already know the global sections of this sheaf: 
\[
\Gamma(\tilde{D}_{h}(P/B))\tilde{=}U_{h}(\mathfrak{sl}_{2})\otimes_{O(\mathfrak{t}^{*}/<s>)}O(\mathfrak{t}^{*})
\]
where $\mathfrak{t}$ denotes the Cartan subalgebra for this $\mathfrak{sl}_{2}$,
whose Weyl group is $<s>$. So we see that 
\begin{equation}
\pi_{s*}(\tilde{D}_{h})\tilde{=}\tilde{D}_{h,\mathcal{P}}\otimes_{O(\mathfrak{h}^{*}/<s>)}O(\mathfrak{h}^{*})
\end{equation}
 which becomes equation 2 after letting $h\to0$ (c.f. section 2.2.4
below).

\subsubsection{Equivariance}

We would like to now explain how the $G\times\mathbb{G}_{m}$ action
discussed above can be quantized. We start with the action of $G$
on $\mathcal{P}$, which is of course a map 
\[
a:G\times\mathcal{P}\to\mathcal{P}
\]
 such that for each $g\in G$, $a(g):\mathcal{P}\to\mathcal{P}$ is
an isomorphism, yielding an isomorphism of sheaves $a(g)^{*}:O(\mathcal{P})\to O(\mathcal{P})$.
This collection of isomorphisms satisfies the unit, associativity,
and inverse properties, as with any group action. 

Speaking in loose terms, we would like a $G$-equivariant $\tilde{D}_{h,\mathcal{P}}$-module
to be a $\tilde{D}_{h,\mathcal{P}}$-module $M$ equipped with isomorphisms
\[
a(g)^{*}M\tilde{\to}M
\]
(where this is the quasicoherent pullback), which satisfy these compatibilities,
and which {}``depend algebraically'' on $g\in G$. 

Formally speaking, we shall give the definition of \cite{key-22}.
Firstly, we define 
\[
O_{G}\boxtimes\tilde{D}_{h,\mathcal{P}}:=O_{G\times\mathcal{P}}\otimes_{pr^{-1}O_{\mathcal{P}}}pr^{-1}\tilde{D}_{h,\mathcal{P}}
\]
 where $pr:G\times\mathcal{P}\to\mathcal{P}$ is the second projection.
This is naturally a subsheaf of 
\[
D_{G}\boxtimes\tilde{D}_{h,\mathcal{P}}:=pr_{2}^{-1}(D_{G})\otimes pr^{-1}(\tilde{D}_{h,\mathcal{P}})
\]

Next, let us recall that the maps $a$ and $pr$ induce pullback functors
\[
a^{*},pr^{*}:Mod(\tilde{D}_{h,\mathcal{P}})\to Mod(D_{G}\boxtimes\tilde{D}_{h,\mathcal{P}})
\]
These functors are simply the quasicoherent pullback of sheaves, but
one endows them with the action of vector fields on $G$ by pushforward
of vector fields as usual (c.f. \cite{key-19}, chapter 1). Given
this, we make the 
\begin{defn}
The category of quasi-$G$ equivariant-coherent $\tilde{D}_{h,\mathcal{P}}$-modules,
$Mod^{G}(\tilde{D}_{h,\mathcal{P}})$ has consists of finitely generated
$\tilde{D}_{h,\mathcal{P}}$-modules $M$ equipped with an isomorphism
of $O_{G}\boxtimes\tilde{D}_{h,\mathcal{P}}$-modules 
\[
a^{*}(M)\tilde{\to}pr^{*}(M)
\]
 Further, we demand the usual cocycle compatibility spelled out, e.g.,
in \cite{key-22}. 

The morphisms in this category are those which respect all structures. 
\end{defn}
We note that $\tilde{D}_{h,\mathcal{P}}$ has the structure of a quasi-equivariant
coherent module by the simple computation 
\[
a^{*}(\tilde{D}_{h,\mathcal{P}})\tilde{=}O_{G}\boxtimes\tilde{D}_{h,\mathcal{P}}\tilde{=}pr^{*}(\tilde{D}_{h,\mathcal{P}})
\]
 In addition to the formal definition, it will be extremely useful
for us to use one of the basic properties of equivariant coherent
$D$-modules, following the discussion in \cite{key-22}. Since any
$M\in Mod^{G}(\tilde{D}_{h,\mathcal{P}})$ is a quasi-coherent equivariant
$\mathcal{P}$-module, we have the natural map 
\[
L_{v}:\mathfrak{g}\to End_{k}(M)
\]
 as described above. However, we also have another map 
\[
\alpha:\mathfrak{g}\to End_{k}(M)
\]
 given by using the natural map $\mathfrak{g}\to\Gamma(\tilde{D}_{h,\mathcal{P}})$.
These can be considered as the {}``adjoint action'' and {}``left
action'' of $\mathfrak{g}$. Thus we can define a third action 
\[
\gamma=h\cdot L-\alpha:\mathfrak{g}\to End_{k}(M)
\]
which will in fact commute with the action of $\tilde{D}_{h,\mathcal{P}}$,
i.e., 
\[
\gamma:\mathfrak{g}\to End_{\tilde{D}_{h,\mathcal{P}}}(M)
\]
can be considered a {}``right action'' of $\mathfrak{g}$ (and hence
of $U(\mathfrak{g})$). 

We can extend this definition to define the category of $G\times\mathbb{G}_{m}$-equivariant
coherent modules $Mod^{G\times\mathbb{G}_{m}}(\tilde{D}_{h,\mathcal{P}})$
simply by demanding that the modules be graded, and the action respect
the grading (we note that $\tilde{D}_{h,\mathcal{P}}$ is graded by
virtue of being a Rees algebra, but that we put $h$ in degree $2$).
These categories and their derived versions will be the other major
players in our story. We note that the subcategory of modules in $Mod^{G}(\tilde{D}_{h,\mathcal{P}})$
(resp.$Mod^{G\times\mathbb{G}_{m}}(\tilde{D}_{h,\mathcal{P}})$) where
$h$ acts as zero is precisely the category $Mod^{G}(\tilde{\mathfrak{g}}_{\mathcal{P}})$
(resp. $Mod^{G\times\mathbb{G}_{m}}(\tilde{\mathfrak{g}}_{\mathcal{P}})$). 

To end this subsection, we shall state a result analogous to \prettyref{lem:Coh-generation}
at the end of the previous subsection:
\begin{lem}
The functor $i_{e}^{*}$ gives an equivalence of categories 
\[
Mod^{G}(\tilde{D}_{h})\to Mod^{B}(U_{h}(\mathfrak{b}))
\]
 where the category on the right consists of $U_{h}(\mathfrak{b})$-modules
equipped with an algebraic action of $B$ satisfying the natural compatibilities.
The same is true of the graded versions of these categories. 
\end{lem}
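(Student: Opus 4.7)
The plan is to mimic the argument of Lemma~\ref{lem:Coh-generation} in the coherent setting, adapted to $\tilde{D}_h$-modules via equivariant descent along $\mathcal{B} = G/B$. The general principle, valid for any $G$-equivariant sheaf of algebras $\mathcal{A}$ on $G/B$, is that restriction to the fibre at the basepoint $e$ yields an equivalence $Mod^G(\mathcal{A}) \tilde{\to} Mod^B(\mathcal{A}|_e)$, with inverse the associated-bundle functor $N \mapsto G \times^B N$ equipped with the natural action of $\mathcal{A} \cong G \times^B (\mathcal{A}|_e)$. Applied to $\mathcal{A} = \tilde{D}_h$, the lemma therefore reduces to identifying the fibre $\tilde{D}_h|_e$ with $U_h(\mathfrak{b})$ as $B$-equivariant algebras.

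To carry this out, I would proceed as follows. Since $\tilde{D}_h = Rees(U^0/\mathfrak{n}^0)$ with $U^0 = U(\mathfrak{g}) \otimes O(\mathcal{B})$ and $\mathfrak{n}^0$ is the left ideal sheaf generated pointwise by $\mathfrak{n}(\mathfrak{b}) \subset \mathfrak{g}$, the fibre of $U^0/\mathfrak{n}^0$ at $e$ (as an $O(\mathcal{B})$-module) equals $U(\mathfrak{g})/\mathfrak{n}\cdot U(\mathfrak{g})$. The PBW decomposition provides a $B$-equivariant vector-space isomorphism between this quotient and $U(\mathfrak{b})$ (up to a choice of opposite-ordering convention), which transfers the algebra structure and the $B$-equivariance (via the adjoint action); passing to Rees algebras then gives the identification $\tilde{D}_h|_e \tilde{=} U_h(\mathfrak{b})$. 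With this in hand, the functor $i_e^*(M) = M/m_e M$ yields a $B$-representation (since $B$ stabilizes $e$) endowed with a $U_h(\mathfrak{b})$-action inherited from $U_h(\mathfrak{b}) \hookrightarrow \Gamma(\tilde{D}_h)$; this action descends to the fibre because for $\xi \in \mathfrak{b}$ and $f \in m_e$ the vector field $\xi \cdot f$ vanishes at $e$ (as $B$ fixes $e$), so $\xi$ preserves $m_e M$. The required compatibility between the $U_h(\mathfrak{b})$-action on $i_e^* M$ and the differentiated $B$-action is precisely the identity $\gamma = hL - \alpha$ already discussed in the preceding text.

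The inverse functor sends $N \in Mod^B(U_h(\mathfrak{b}))$ to $G \times^B N$, with its natural $\tilde{D}_h$-action arising from $\tilde{D}_h \cong G \times^B U_h(\mathfrak{b})$; verifying the mutual-inverse property reduces to classical equivariant descent on $G/B$. The graded version follows by carrying along the Rees grading (viewed as a $\mathbb{G}_m$-equivariant structure) throughout. The main obstacle is the algebra identification $\tilde{D}_h|_e \tilde{=} U_h(\mathfrak{b})$: the fibre of a non-commutative sheaf of algebras is not a priori an algebra, and recovering the correct algebra structure requires careful use of PBW together with the $B$-equivariance to trade the a-priori left-module structure for an algebra structure on the nose.
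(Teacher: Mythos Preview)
Your proposal is correct and follows the same approach as the paper, which gives only a one-line proof: ``The proof is exactly the same as that of the coherent case; the inverse is the induction functor.'' You have supplied more detail than the paper does, in particular the PBW identification of the fibre $\tilde{D}_h|_e$ with $U_h(\mathfrak{b})$ and the description of the inverse as the associated-bundle (induction) construction $N\mapsto G\times^B N$; this is exactly what the paper intends by its reference to the coherent argument and to the induction functor.
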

The proof is exactly the same as that of the coherent case; the inverse
is the induction functor. There is also a result concerning generation
of this category, but it shall have to wait until the next section
where we define the natural deformations of the sheaves $O_{\tilde{\mathfrak{g}}}(\lambda)$.

\subsubsection{Cohomology results}

In this subsection we shall gather several results that we need concerning
cohomology of modules in $Mod(\tilde{D}_{h,\mathcal{P}})$. First
of all, there is the following base change result: 
\begin{lem}
\label{lem:Cohomology1}Let $M\in Mod^{\mathbb{G}_{m}}(\tilde{D}_{h,\mathcal{P}})$.
Then we have an isomorphism in the derived category 
\[
R\Gamma(M)\otimes_{k[h]}^{L}k_{0}\tilde{\to}R\Gamma(M\otimes_{k[h]}^{L}k_{0})
\]
where $k_{0}$ denotes the trivial $k[h]$-module. \end{lem}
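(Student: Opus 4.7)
The plan is to compute both sides of the purported isomorphism directly by means of the two-term flat resolution of $k_{0}$ as a $k[h]$-module, namely
\[
0 \to k[h] \xrightarrow{\,\cdot h\,} k[h] \to k_{0} \to 0.
\]
Since $k[h]$ is flat over itself, this is a flat (indeed free) resolution, so the derived tensor product $(-)\otimes^{L}_{k[h]} k_{0}$ can be computed by tensoring with this two-term complex.

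First I would observe that, applied to the $\tilde{D}_{h,\mathcal{P}}$-module $M$, the resolution represents $M\otimes^{L}_{k[h]}k_{0}$ by the complex $[M\xrightarrow{\,\cdot h\,}M]$ concentrated in degrees $-1,0$. Equivalently, there is a distinguished triangle in $D(\tilde{D}_{h,\mathcal{P}})$
\[
M \xrightarrow{\,\cdot h\,} M \longrightarrow M\otimes^{L}_{k[h]}k_{0} \xrightarrow{+1}.
\]
Next, applying the triangulated functor $R\Gamma$ yields
\[
R\Gamma(M) \xrightarrow{\,\cdot h\,} R\Gamma(M) \longrightarrow R\Gamma\!\left(M\otimes^{L}_{k[h]}k_{0}\right) \xrightarrow{+1}.
\]
On the other hand, tensoring the same resolution against the $k[h]$-module $R\Gamma(M)$ expresses $R\Gamma(M)\otimes^{L}_{k[h]}k_{0}$ as the cone of $R\Gamma(M)\xrightarrow{\,\cdot h\,}R\Gamma(M)$. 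The natural base change morphism is a map of distinguished triangles which is the identity on the first two terms; the third map is therefore an isomorphism, giving the desired identification.

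The only genuine point requiring care, and the step I expect to be the main technical obstacle, is to verify that the natural base change map really does fit into a morphism of these two triangles in a compatible way. This requires choosing a functorial resolution computing $R\Gamma$ (e.g.\ a $\mathbb{G}_{m}$-equivariant Čech or K-injective resolution) such that multiplication by $h$ lifts to a strict endomorphism of the resolving complex; the $\mathbb{G}_{m}$-equivariance hypothesis is exactly what guarantees this, since it lets us work in the category of graded $k[h]$-modules where $h$ is a homogeneous central element of degree $2$ acting termwise on any equivariant resolution. Once the strict lift is in place, the two triangles are obtained as cones of the same chain map and the base change isomorphism follows.
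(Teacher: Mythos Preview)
Your argument is correct, and it is genuinely different from the paper's. The paper proceeds by splitting $M$ into its $h$-torsion subsheaf $M_{tors}$ and the quotient $M/M_{tors}$; it then uses the $\mathbb{G}_{m}$-grading to argue that $M/M_{tors}$ is actually $k[h]$-flat (since $h$ is homogeneous of positive degree it can have no nonzero eigenvalues, and torsion-freeness rules out the zero eigenvalue), handles the flat case by observing that the \v{C}ech complex then consists of $k[h]$-flat modules, and handles the torsion case by filtering by sheaves on which $h$ acts trivially. Your route---resolving $k_{0}$ rather than $M$---bypasses this decomposition entirely: since $k[h]\xrightarrow{\cdot h}k[h]$ is already a free resolution of $k_{0}$, both sides of the base change map are computed by the totalization of $C^{\cdot}(\{U_{i}\},M)\xrightarrow{\cdot h}C^{\cdot}(\{U_{i}\},M)$, and the map between them is the identity.

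One remark: you flag the $\mathbb{G}_{m}$-equivariance as the device that makes multiplication by $h$ lift strictly to the resolution, but in fact this is automatic with no grading hypothesis at all. The element $h$ is central in $\tilde{D}_{h,\mathcal{P}}$, so multiplication by $h$ is a $\tilde{D}_{h,\mathcal{P}}$-module endomorphism of $M$, and hence acts termwise on any functorial resolution (\v{C}ech or injective) commuting with the differentials. So your argument actually proves the lemma without the $\mathbb{G}_{m}$-hypothesis, whereas the paper's flat/torsion dichotomy genuinely uses the grading to establish flatness of $M/M_{tors}$. What your approach buys is a shorter, hypothesis-free proof; what the paper's approach buys is an explicit structural statement about graded $\tilde{D}_{h,\mathcal{P}}$-modules (the flat/torsion decomposition) that is used elsewhere.
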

\begin{proof}
First, note that there is a natural base change map 
\[
R\Gamma(M)\otimes_{k[h]}^{L}k_{0}\to R\Gamma(M\otimes_{k[h]}^{L}k_{0})
\]
which comes from the map of sheaves $M\to M/hM$. We shall spit the
problem of showing this is an isomorphism into two cases.

Let $M_{tors}$ denote the subsheaf of $h$-torsion sections of $M$.
Then we have an exact sequence
\[
0\to M_{tors}\to M\to M/M_{tors}\to0
\]
 we claim that $M/M_{tors}$ is actually a flat $k[h]$-sheaf. To
see this, consider $M/M_{tors}(U)$ where $U$ is affine. Choose any
finite $k[h]$-submodule, $V$. Then by the usual classification of
modules over a PID, $V$ is the direct sum of free and finite-dimensional
components. The existence of a component of the form $k[h]/(h-\lambda)^{n}$
implies that $h$ has eigenvalue $\lambda$ somewhere in $V$. Since
$V$ is $h$-torsion free, we have $\lambda\neq0$. However, since
$h$ acts as a graded operator of degree $2$ on $M$, one sees that
there are no nonzero eigenvalues for $h$ either. So $V$ is a free
$k[h]$ module, and we get that $M/M_{tors}(U)$ is a direct limit
of flat $k[h]$-modules, and hence flat. 

So we must consider two cases- $M$ is $h$-torsion, and $M$ is flat.
Suppose $M$ is flat. Then $R\Gamma(M)$ is equivalent to the Cech
complex for $M$ for a given covering $\{U_{i}\}$, which is thus
a complex of $h$-flat modules. So we have 
\[
R\Gamma(M)\otimes_{k[h]}^{L}k_{0}\tilde{\to}C^{\cdot}(\{U_{i}\},M)\otimes_{k[h]}k\tilde{\to}C^{\cdot}(\{U_{i}),M/hM)
\]
 where the last isomorphism is from the definition of the Cech complex.
This takes care of the flat case. 

For the torsion case, we note that there is a natural finite filtration
of any torsion sheaf by $O(\tilde{\mathfrak{g}})$-modules (i.e. modules
$M$ where $h$ acts trivially). In this case, we have an isomorphism
\[
M\otimes_{k[h]}^{L}k_{0}\tilde{=}M\oplus M[1]
\]

and the same for the global sections $\Gamma(M)$, since $h$ also
acts trivially on them. The result for such sheaves follows immediately,
as does the general result by walking up the filtration.
\end{proof}
Let us note that the same result holds if $M$ is any bounded complex,
since we can reduce to the case where $M$ is concentrated in a single
degree by using cutoff functors and exact triangles. Now we can give
the important 
\begin{cor}
\label{cor:Cohomology2}Let $M\in D^{b,\mathbb{G}_{m}}(\tilde{D}_{h})$.
Suppose that $R\Gamma(M\otimes_{k[h]}^{L}k_{0})$ is concentrated
in degree zero. Then the same is true of $R\Gamma(M)$. \end{cor}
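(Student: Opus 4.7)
The plan is to invoke the preceding base change lemma to reduce the question to one about $C := R\Gamma(M)$, viewed as a bounded complex of graded $k[h]$-modules, and then to conclude by graded Nakayama. By the lemma (applied to bounded complexes, as remarked immediately after it), the canonical map
\[
R\Gamma(M) \otimes_{k[h]}^{L} k_{0} \;\xrightarrow{\sim}\; R\Gamma\bigl(M \otimes_{k[h]}^{L} k_{0}\bigr)
\]
is an isomorphism, so the hypothesis becomes: $C \otimes_{k[h]}^{L} k_{0}$ is concentrated in cohomological degree zero.

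I would then apply $C \otimes_{k[h]}^{L}(-)$ to the resolution $0 \to k[h] \xrightarrow{h} k[h] \to k_{0} \to 0$ to obtain the distinguished triangle $C \xrightarrow{h} C \to C \otimes_{k[h]}^{L} k_{0} \to C[1]$. The associated long exact sequence yields, for every integer $i$, a short exact sequence of graded $k[h]$-modules
\[
0 \to H^{i}(C)/h\, H^{i}(C) \to H^{i}\bigl(C \otimes_{k[h]}^{L} k_{0}\bigr) \to \ker\!\bigl(h \colon H^{i+1}(C) \to H^{i+1}(C)\bigr) \to 0,
\]
from which the assumption forces $H^{i}(C)/h\, H^{i}(C) = 0$ for every $i \neq 0$.

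The remaining step is a graded Nakayama argument. Since $M$ is coherent and $\mathcal{B}$ is proper, I would compute $R\Gamma(M)$ via a finite \v{C}ech complex on a $\mathbb{G}_{m}$-stable affine cover; each term is a finitely generated graded module over the nonnegatively graded algebra $\Gamma(\tilde{D}_{h}|_{U})$ (with $h$ placed in degree $2$), hence bounded below in grading, and this boundedness is inherited by the cohomology groups $H^{i}(C)$. Any graded $k[h]$-module $N$ that is bounded below in grading and satisfies $N = hN$ must vanish, since $h$ acts with strictly positive degree and therefore leaves the bottom nonzero graded piece of $N$ with no preimage. Applying this to $N = H^{i}(C)$ for each $i \neq 0$ gives $H^{i}(R\Gamma(M)) = 0$, which is the desired conclusion. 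I expect the only non-formal step to be the verification that the cohomology of $R\Gamma(M)$ is bounded below in grading; granting that, the rest is pure homological algebra over $k[h]$.
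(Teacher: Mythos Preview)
Your proof is correct and follows essentially the same strategy as the paper: apply the base change lemma (Lemma~\ref{lem:Cohomology1}) to reduce to a statement about $R\Gamma(M)$ as a complex of graded $k[h]$-modules with cohomology bounded below, and then conclude by graded Nakayama. The only difference is cosmetic: the paper packages the Nakayama step as a separate proposition (Proposition~\ref{pro:GrNak}), proved by splitting the complex as a direct sum of its shifted cohomology modules using that $k[h]$ has global dimension one, whereas you extract $H^{i}(C)/hH^{i}(C)=0$ directly from the long exact sequence of the triangle $C\xrightarrow{h}C\to C\otimes^{L}_{k[h]}k_{0}$ and then apply module-level Nakayama.
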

\begin{proof}
$R\Gamma(M)$ is a complex of graded, finitely generated $O(\mathfrak{h}^{*}\times\mathbb{A}^{1})$-modules,
whose reduction modulo an ideal of positively graded elements is concentrated
in a single degree. Given this, the result follows from the graded
Nakayama lemma for complexes- an appropriate version of which is the
next proposition.\end{proof}
\begin{prop}
\label{pro:GrNak}Let $M$ be a bounded complex in the category of
graded $k[h]$ modules ($deg(h)>0$) whose cohomology sheaves all
have grading bounded below. Let $k_{0}$ denote the trivial graded
$k[h]$-module. Then we have 

1) If $M\otimes_{k[h]}^{L}k_{0}=0$, then $M=0$. 

2) If $M\otimes_{k[h]}^{L}k_{0}$ is concentrated in a single degree,
then the same is true of $M$. \end{prop}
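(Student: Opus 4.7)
The plan is to derive both statements from a single distinguished triangle together with a graded Nakayama argument for ordinary modules. Writing $d = \deg(h) > 0$, the short exact sequence
\[
0 \to k[h](d) \xrightarrow{\,h\,} k[h] \to k_0 \to 0
\]
is a free resolution of $k_0$, so tensoring against $M$ produces a distinguished triangle
\[
M(d) \xrightarrow{\,h\,} M \longrightarrow M \otimes_{k[h]}^L k_0 \xrightarrow{+1}
\]
in the derived category of graded $k[h]$-modules. This yields a long exact sequence
\[
\cdots \to H^{i-1}(M \otimes^L k_0) \to H^i(M)(d) \xrightarrow{\,h\,} H^i(M) \to H^i(M \otimes^L k_0) \to H^{i+1}(M)(d) \to \cdots
\]
of graded $k[h]$-modules.

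Next I would isolate the purely module-theoretic input I need: if $N$ is a graded $k[h]$-module with grading bounded below and $h \colon N(d) \to N$ is surjective, then $N = 0$. This is the standard graded Nakayama statement: letting $n_0$ be the smallest degree with $N_{n_0} \neq 0$, surjectivity at degree $n_0$ forces $N_{n_0}$ to be the image of $N_{n_0 - d} = 0$, a contradiction.

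For part (1), the vanishing $M \otimes^L k_0 = 0$ makes every middle arrow $h \colon H^i(M)(d) \to H^i(M)$ both injective and surjective. Surjectivity plus the bounded-below hypothesis gives $H^i(M) = 0$ for every $i$, so $M = 0$ in the derived category. For part (2), suppose $H^j(M \otimes^L k_0) = 0$ for $j \neq n_0$. For $i \notin \{n_0, n_0 + 1\}$, both adjacent terms vanish and $h$ acts as an isomorphism, in particular surjectively, on $H^i(M)$, so $H^i(M) = 0$. The remaining case $i = n_0 + 1$ is handled by the exact piece
\[
H^{n_0}(M \otimes^L k_0) \to H^{n_0+1}(M)(d) \xrightarrow{\,h\,} H^{n_0+1}(M) \to 0,
\]
which again yields surjectivity of $h$ and hence $H^{n_0+1}(M) = 0$; the only surviving cohomology is in degree $n_0$.

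The serious content is isolated in the module-level Nakayama statement, which is where the hypotheses $\deg(h) > 0$ and "grading bounded below" are essential. Everything else is bookkeeping with the long exact sequence, and there is no real obstacle once the distinguished triangle is in place. The only place where care is needed is keeping the two gradings (the internal $k[h]$-grading and the cohomological grading) separate and using the bounded-below hypothesis on each individual $H^i(M)$ rather than on $M$ itself.
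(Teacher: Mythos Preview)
Your proof is correct and takes a genuinely different route from the paper's. The paper exploits the fact that $k[h]$ has global dimension one to split $M$ as $\bigoplus H^i(M)[-i]$ in the derived category (formality over a hereditary ring), then computes $M\otimes^L k_0$ term by term as $\bigoplus\big((H^i(M)/h)[-i]\oplus \mathrm{Tor}_1(k_0,H^i(M))[-i+1]\big)$ and reads off the vanishing of $H^i(M)/h$ directly. You instead work with the distinguished triangle $M(d)\xrightarrow{h} M\to M\otimes^L k_0$ and its long exact sequence, reducing everything to surjectivity of multiplication by $h$ on individual cohomology modules. Your argument is more elementary in that it avoids the splitting lemma and would adapt more readily to situations where formality fails; the paper's approach has the virtue of making the structure of $M\otimes^L k_0$ completely explicit, which is sometimes useful elsewhere. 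Both rest on the same module-level graded Nakayama input.
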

\begin{proof}
Since $k[h]$ has global dimension one, any bounded complex $M$ is
quasi-isomorphic as a complex of $k[h]$-modules to the direct sum
of its appropriately shifted cohomology sheaves. Since our complex
consists of graded modules and graded morphisms, the cohomology sheaves
are graded as well. We write 
\[
M\tilde{=}\oplus H^{i}(M)[-i]
\]

Then the complex $M\otimes_{k[h]}^{L}k_{0}$ is quasi-isomorphic to
\[
\oplus(H^{i}(M)/h)[-i]\oplus Tor_{k[h]}^{1}(k_{0},H^{i}(M))[-i+1]
\]
 simply because, for any object $N\in k[h]-mod$ 
\[
N\otimes_{k[h]}^{L}k_{0}\tilde{=}(N/h)\oplus Tor_{k[h]}^{1}(k_{0},N)[1]
\]
 (a quasi-isomorphism of complexes). Now, if we are in the situation
of $1$, the assumption implies $H^{i}(M)/h=0$ for all $i$, and
so the graded Nakayama lemma for modules yields $H^{i}(M)=0$ for
all $i$, hence $M$ is equivalent to the trivial complex. 

Next, suppose we are in the situation of $2$, and shift so that $M\otimes_{k[h]}^{L}k_{0}$
is concentrated in degree zero. Then $H^{i}(M)/h$ must be trivial
for all $i\neq0$, so the same is true for $H^{i}(M)$ as required. 
\end{proof}
We shall also have occasion to consider the functor $R\Gamma^{G}$
of $G$-invariant cohomology. The results above go through unchanged
in this setting, as is easy to see by the fact that the functor $V\to V^{G}$
is exact on the category of algebraic $G$-modules. 

We should like to end the section with some general remarks about
the significance of these results for us. The three main {}``coherent''
categories that appear in this work are $D^{b,G\times\mathbb{G}_{m}}(\tilde{\mathcal{N}})$,
$D^{b,G\times\mathbb{G}_{m}}(\tilde{\mathfrak{g}})$, and $D^{b}(Mod^{G\times\mathbb{G}_{m}}(\tilde{D}_{h}))$.
The above results will be used to show that, for objects $M,N$ in
a certain tilting subcategory $\mathcal{T}$ of $D^{b}(Mod^{G\times\mathbb{G}_{m}}(\tilde{D}_{h}))$,
we have 
\[
Hom_{D^{b}(Mod^{G\times\mathbb{G}_{m}}(\tilde{D}_{h}))}(M,N)\otimes_{k[h]}^{L}k_{0}\tilde{=}Hom_{D^{b,G\times\mathbb{G}_{m}}(\tilde{\mathfrak{g}})}(M\otimes_{k[h]}^{L}k_{0},N\otimes_{k[h]}^{L}k_{0})
\]
 making precise the notion that $D^{b}(Mod^{G\times\mathbb{G}_{m}}(\tilde{D}_{h}))$
can be considered a one-parameter deformation of $D^{b,G\times\mathbb{G}_{m}}(\tilde{\mathfrak{g}})$. 

On the other hand, we have the identification $\tilde{\mathcal{N}}\tilde{=}\tilde{\mathfrak{g}}\times_{\mathfrak{h}}\{0\}$;
where $\tilde{\mathfrak{g}}\to\mathfrak{h}$ is Grothendieck's morphism
described above, coming from the maps 
\[
O(\mathfrak{h}^{*})\tilde{\to}O(\tilde{\mathfrak{g}})^{G}\to O(\tilde{\mathfrak{g}})
\]
which follows from the well known fact that $\mathcal{N}\tilde{=}\mathfrak{g}^{*}\times_{\mathfrak{h}^{*}/W}\{0\}$
(c.f. \cite{key-13} chapter 3). As is also well known, the morphism
$\tilde{\mathfrak{g}}\to\mathfrak{h}^{*}$ is flat (c.f. \cite{key-27}).
Therefore the flat base change theorem implies that for $M,N\in D^{b}(Coh(\tilde{\mathfrak{g}}))$
there is an isomorphism 
\[
Hom_{D^{b}(Coh(\tilde{\mathfrak{g}}))}(M,N)\otimes_{O(\mathfrak{h}^{*})}^{L}k_{0}\tilde{=}Hom_{D^{b}(Coh(\tilde{\mathcal{N}}))}(Li^{*}M,Li^{*}N)
\]
 where $i:\tilde{\mathcal{N}}\to\tilde{\mathfrak{g}}$ denotes the
inclusion. Thus $D^{b}(Coh(\tilde{\mathfrak{g}}))$ (and its equivariant
and graded versions) can be considered a $dim(\mathfrak{h})$-parameter
deformation of $D^{b}(Coh(\tilde{\mathcal{N}}))$. The exact same
set-up holds for the three main categories of perverse sheaves under
consideration, and this will turn out to be a key point in proving
the main equivalences.

\section{Structure of Coherent Categories}

In this chapter we discuss two interrelated and crucial pieces of
structure: the braid group action and tilting generation of the categories
defined in the previous chapter.

\subsection{Braid Group Action}

In this subsection we recall the main results of the papers \cite{key-28}
and \cite{key-11}. 

First of all, let us recall that for any Coxeter system $(W,S)$,
there is associated the braid group $\mathbb{B}(W,S)$, which is the
group on generators $S$ satisfying only the braid relations 
\[
s_{i}s_{j}s_{i}\cdot\cdot\cdot=s_{j}s_{i}s_{j}\cdot\cdot\cdot
\]
where the number of factors on each side is the $(i,j)$ entry in
the associated Coxeter matrix. 

The case of interest to us, as usual, is the case of the affine Weyl
group $W_{aff}$. Of course, there is also the isomorphism $W_{aff}\tilde{=}W\rtimes\mathbb{Z}\Phi$,
and, as in the case of the affine Hecke algebra, there is a presentation
of the affine braid group based upon this isomorphism, which is actually
slightly more general (c.f. the appendix to \cite{key-28}). So we
make the
\begin{defn}
Let $(W,S_{fin})$ be a finite Weyl group, with its root lattice $\mathbb{Z}\Phi$
and its weight lattice $\mathbb{X}$. The extended affine braid group
$\mathbb{B}_{aff}^{'}$ is the group with generators $\{T_{s}\}_{s\in S_{fin}}$,
and $\{\theta_{x}\}_{x\in\mathbb{X}}$, and relations: 

$\cdot$$T_{s_{i}}T_{s_{j}}T_{s_{i}}\cdot\cdot\cdot=T_{s_{j}}T_{s_{i}}T_{s_{j}}\cdot\cdot\cdot$
(the finite braid relations) 

$\cdot$ $\theta_{x}\theta_{y}=\theta_{x+y}$ for all $x,y\in\mathbb{X}$. 

$\cdot$ $T_{s}\theta_{x}=\theta_{x}T_{s}$ whenever $<x,\check{\alpha}_{s}>=0$. 

$\cdot$ $\theta_{x}=T_{s}\theta_{s(x)}T_{s}$ whenever $<x,\check{\alpha}_{s}>=1$. 
\end{defn}
If we replace the lattice $\mathbb{X}$ with the root lattice $\mathbb{\mathbb{Z}\Phi}$
in this presentation, then the resulting group is just isomorphic
to the usual affine braid group $\mathbb{B}_{aff}$; in particular
$\mathbb{B}_{aff}$ is a subgroup of finite index in $\mathbb{B}_{aff}^{'}$.
We also note that the extended affine Weyl group $W_{aff}^{'}$ is
then the quotient of $\mathbb{B}_{aff}^{'}$ by the relations
\[
s_{i}^{2}=1
\]
 for all $s_{i}$. The group $W_{aff}$ is the analogous quotient
for $\mathbb{B}_{aff}$. 

This presentation is useful for explaining how $\mathbb{B}_{aff}^{'}$
will act on categories of coherent sheaves. To set this up, we shall
briefly recall the notion of a Fourier-Mukai functor. 

Let $X$ and $Y$ be algebraic varieties, and let $\mathcal{F}\in D^{b}Coh(Y\times X)$
be a complex of coherent sheaves whose support is proper over both
$X$ and $Y$. Then we have a well defined functor on $D^{b}Coh(X)\to D^{b}Coh(Y)$
\[
F_{\mathcal{F}}(M)=Rp_{2*}(\mathcal{F}\otimes_{O_{Y\times X}}^{L}Lp_{1}^{*}(M))
\]
The sheaf $\mathcal{F}$ is called the kernel of this functor. For
example, the diagonal sheaf $O_{\Delta X}$ corresponds to the identity
functor, while for a proper morphism $f$ the standard (derived) functors
$f^{*}$ and $f_{*}$ can be realized via the sheaf of functions on
the graph of $f$ (c.f. \cite{key-20}, page 114). 

Further, we shall need the fact that the composition of functors corresponding
to two kernels $\mathcal{F}$ and $\mathcal{G}$ can be realized as
the {}``composition'' of the kernels, as follows: suppose $\mathcal{F}\in D^{b}(Coh(X\times Y))$
and $\mathcal{G}\in D^{b}(Coh(Y\times Z))$. We define 
\[
\mathcal{F}\star\mathcal{G}:=p_{XZ_{*}}(p_{XY}^{*}(\mathcal{F})\otimes p_{YZ}^{*}(\mathcal{G}))\in D^{b}(Coh(X\times Z))
\]
 (all supports assumed proper, all functors derived). Then we have
the 
\begin{prop}
\label{pro:composition=00003Dconvolution}There is an isomorphism
$F_{\mathcal{G}}\circ F_{\mathcal{F}}\tilde{=}F_{\mathcal{F}\star\mathcal{G}}$
of functors $D^{b}(Coh(X))\to D^{b}(Coh(Z))$. 
\end{prop}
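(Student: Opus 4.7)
The plan is to prove this by direct unwinding of the definitions on the triple product $X \times Y \times Z$, using the derived projection formula and flat (in fact, derived) base change. First I would fix notation: let $\pi_{XY}, \pi_{YZ}, \pi_{XZ}$ denote the three projections from $X\times Y \times Z$ onto the pairs, and let $q_{X}, q_{Y}, q_{Y}', q_{Z}$ denote the projections from $X\times Y$ and $Y \times Z$ onto the respective factors. Given $M \in D^b(Coh(X))$, by definition
\[
F_{\mathcal{G}}(F_{\mathcal{F}}(M)) \;=\; Rq_{Z*}\bigl(\mathcal{G} \otimes^L Lq_{Y}^{\prime*}\, Rq_{Y*}(\mathcal{F} \otimes^L Lq_{X}^{*}M)\bigr).
\]

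The key computational step is to push the inner expression up to the triple product. The square
\[
\begin{CD}
X\times Y \times Z @>\pi_{YZ}>> Y \times Z \\
@V \pi_{XY} VV @VV q_{Y}' V \\
X \times Y @>> q_{Y} > Y
\end{CD}
\]
is cartesian, and since in the derived setting base change for cartesian squares holds without flatness hypotheses (the functors $R\pi_{YZ*}$ and $Lq_Y^{\prime*}$ commute appropriately), I get $Lq_{Y}^{\prime*}Rq_{Y*} \cong R\pi_{YZ*}L\pi_{XY}^{*}$. Substituting, and noting that $Lq_{X}^{*} \circ q_Y$-style compositions commute with the obvious identifications $q_{X}\circ \pi_{XY} = p_{X}\circ \pi_{XZ}$ etc., yields
\[
F_{\mathcal{G}}(F_{\mathcal{F}}(M)) \;\cong\; Rq_{Z*}\bigl(\mathcal{G} \otimes^L R\pi_{YZ*}\bigl(L\pi_{XY}^{*}\mathcal{F} \otimes^L L\pi_{XZ}^{*}Lp_{X}^{*}M\bigr)\bigr),
\]
where $p_{X}:X\times Z \to X$ is the projection. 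Then the projection formula allows me to pull $\mathcal{G}$ inside the $R\pi_{YZ*}$:
\[
\mathcal{G} \otimes^L R\pi_{YZ*}(-) \;\cong\; R\pi_{YZ*}\bigl(L\pi_{YZ}^{*}\mathcal{G} \otimes^L (-)\bigr).
\]

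Finally, I collapse the two successive pushforwards using functoriality: $Rq_{Z*}\circ R\pi_{YZ*} \cong R(p_{Z}\circ \pi_{XZ})_{*} \cong Rp_{Z*} R\pi_{XZ*}$, where $p_Z:X\times Z \to Z$. Reorganizing the tensor product, the projection-formula-again identity $R\pi_{XZ*}(L\pi_{XY}^{*}\mathcal{F}\otimes^L L\pi_{YZ}^{*}\mathcal{G} \otimes^L L\pi_{XZ}^{*}Lp_{X}^{*}M) \cong R\pi_{XZ*}(L\pi_{XY}^{*}\mathcal{F}\otimes^L L\pi_{YZ}^{*}\mathcal{G}) \otimes^L Lp_{X}^{*}M$ assembles the result as
\[
Rp_{Z*}\bigl( (\mathcal{F}\star \mathcal{G}) \otimes^L Lp_{X}^{*}M\bigr) \;=\; F_{\mathcal{F}\star \mathcal{G}}(M),
\]
recognizing $\mathcal{F}\star\mathcal{G} = R\pi_{XZ*}(L\pi_{XY}^{*}\mathcal{F}\otimes^L L\pi_{YZ}^{*}\mathcal{G})$. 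Naturality in $M$ is automatic since all the isomorphisms used are themselves natural.

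The main obstacle is really a bookkeeping one rather than a conceptual one: verifying that each invocation of the projection formula and of derived base change is valid, which requires checking that the relevant supports are proper over the relevant bases (so that $R\pi_{\ast}$ preserves bounded coherent complexes) and that the cartesian squares used are honestly cartesian. The properness hypothesis on the supports of $\mathcal{F}$ and $\mathcal{G}$ over their respective factors (built into the hypotheses for Fourier--Mukai kernels) is exactly what ensures this runs through; no further hypothesis is needed in the derived setting.
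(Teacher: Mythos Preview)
Your argument is correct and is essentially the standard proof via base change and projection formula on the triple product. The paper does not actually supply its own proof of this proposition; it simply cites \cite{key-20}, page 114 (Huybrechts), where precisely the argument you have written is carried out.
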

This is proved in \cite{key-20}, page 114. 

The braid group action we shall present is given by Fourier-Mukai
kernels. To explain the sorts of varieties we shall need, let us recall
that $\tilde{\mathfrak{g}}$ has an open subvariety $\tilde{\mathfrak{g}}^{rs}$
(defined above!) which has a natural action of the finite Weyl group
$W$. For $s_{\alpha}\in S_{fin}$, we define the variety $S_{\alpha}\subset\tilde{\mathfrak{g}}\times\tilde{\mathfrak{g}}$
to be the closure of the graph of $s_{\alpha}$ in $\tilde{\mathfrak{g}}^{rs}\times\tilde{\mathfrak{g}}^{rs}$.
We further define $S_{\alpha}^{'}$ to be the variety $S_{\alpha}\cap(\tilde{\mathcal{N}}\times\tilde{\mathcal{N}})$
(c.f. \cite{key-28}, section 4).

Then, we have 
\begin{thm}
There is an action%
\footnote{By an action in this case we mean a weak action. We shall discuss
an extension of this to a stronger structure later in the paper.%
} of the group $\mathbb{B}_{aff}^{'}$ on the category $D^{b}Coh(\tilde{\mathfrak{g}})$
which is specified by 

$\cdot$ The action of $\theta_{\lambda}$ is given by the kernel
$\Delta_{*}(O_{\tilde{\mathfrak{g}}}(\lambda))$ (where $\Delta$
is the diagonal inclusion) 

$\cdot$ The action of $s_{\alpha}\in S_{fin}$ is given by the kernel
$O_{S_{\alpha}}$. 

This action restricts to an action on the category $D^{b}Coh(\tilde{\mathcal{N}})$,
in the sense that if we define kernels $\Delta_{*}(O_{\mathcal{\tilde{N}}}(\lambda))$,
and $O_{S_{\alpha}^{'}}$, we get an action of $\mathbb{B}_{aff}^{'}$
on $D^{b}Coh(\tilde{\mathcal{N}})$ which agrees with the previous
action under the inclusion functor $i_{*}$. 

Further, these same kernels also define braid group actions on the
equivariant categories $D^{b,G}(\tilde{\mathfrak{g}})$, $D^{b,G\times\mathbb{G}_{m}}(\tilde{\mathfrak{g}})$;
and the same for $\tilde{\mathcal{N}}$. 
\end{thm}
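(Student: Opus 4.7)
The plan is to reduce everything to checking that the specified kernels satisfy the defining relations of $\mathbb{B}_{aff}'$ up to isomorphism in $D^b Coh(\tilde{\mathfrak{g}}\times\tilde{\mathfrak{g}})$, using Proposition \ref{pro:composition=00003Dconvolution} to convert composition of functors into the $\star$-convolution of kernels. Since only a weak action is claimed, no coherence data beyond these isomorphisms needs to be specified. The lattice relations $\theta_\lambda\theta_\mu = \theta_{\lambda+\mu}$ follow immediately from $\Delta_*(O_{\tilde{\mathfrak{g}}}(\lambda))\star\Delta_*(O_{\tilde{\mathfrak{g}}}(\mu))\cong\Delta_*(O_{\tilde{\mathfrak{g}}}(\lambda+\mu))$, which is a direct application of the projection formula to the diagonal. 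For the commutation $T_s\theta_\lambda = \theta_\lambda T_s$ when $\langle\lambda,\check{\alpha}_s\rangle=0$, I would restrict to $\tilde{\mathfrak{g}}^{rs}$, where $S_\alpha$ is the graph of $s_\alpha$ and convolution with $O_{S_\alpha}$ is pullback along $s_\alpha$; the pairing hypothesis gives $s_\alpha^*O_{\tilde{\mathfrak{g}}}(\lambda)\cong O_{\tilde{\mathfrak{g}}}(\lambda)$, and since both convolved kernels are Cohen--Macaulay sheaves supported on $S_\alpha$ and flat over the base, the isomorphism on the regular locus extends uniquely to the full variety.

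The two nontrivial classes of relations are the quadratic mixed relation $\theta_\lambda = T_s\theta_{s(\lambda)}T_s$ when $\langle\lambda,\check{\alpha}_s\rangle=1$, and the finite braid relations $T_{s_i}T_{s_j}T_{s_i}\cdots = T_{s_j}T_{s_i}T_{s_j}\cdots$. I expect the finite braid relations to be the main technical obstacle. For both, the strategy is to first verify the relation over the regular locus $\tilde{\mathfrak{g}}^{rs}$, where it reduces to the corresponding relation in the Weyl group $W$ acting by deck transformations of $\tilde{\mathfrak{g}}^{rs}\to\mathfrak{g}^{*,reg}$ (see Lemma \ref{lem:Regiso}), and then to show that the resulting isomorphism of coherent sheaves extends across the complement. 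Following Riche \cite{key-28}, the extension is performed by a rank-two reduction: one pulls back the situation along the projection $\pi_{\mathcal{P}_{\{\alpha,\beta\}}}$ onto the partial flag variety for the pair of simple roots involved, uses the explicit local model for $\pi_{s*}O_{\tilde{\mathfrak{g}}}$ from equation (3), and reduces the global assertion to an explicit $SL_2$- or rank-two computation. The quadratic relation is handled in the same spirit using a single simple reflection; the condition $\langle\lambda,\check{\alpha}_s\rangle=1$ is exactly what makes $O_{S_\alpha}\star\Delta_*(O(\lambda))\star O_{S_\alpha}$ collapse, after resolving the self-intersection of $S_\alpha$, to $\Delta_*(O(\lambda))$ rather than $\Delta_*(O(s(\lambda)))$.

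The remaining statements are formal. The kernels $\Delta_*(O_{\tilde{\mathcal{N}}}(\lambda))$ and $O_{S_\alpha'}$ are the restrictions of the original kernels under the closed embedding $\tilde{\mathcal{N}}\times\tilde{\mathcal{N}}\hookrightarrow\tilde{\mathfrak{g}}\times\tilde{\mathfrak{g}}$, and since $\tilde{\mathcal{N}}\cong\tilde{\mathfrak{g}}\times_{\mathfrak{h}^*}\{0\}$ with $\tilde{\mathfrak{g}}\to\mathfrak{h}^*$ flat (as recalled at the end of section 2.2.4), base change shows that $\star$ of restricted kernels equals the restriction of the $\star$-convolutions; intertwining with $i_*$ then follows from the projection formula. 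Finally, every kernel in sight ($O_{\tilde{\mathfrak{g}}}(\lambda)$, $\Delta$, $S_\alpha$, $S_\alpha'$) is manifestly $G\times\mathbb{G}_m$-stable, so the braid group action lifts verbatim to the equivariant and graded-equivariant derived categories, and the same isomorphisms serve to verify the relations there.
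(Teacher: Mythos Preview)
The paper does not give its own proof of this theorem: it is stated in Section~3.1 as a result \emph{recalled} from Riche \cite{key-28} and Bezrukavnikov--Riche \cite{key-11}, and no proof environment follows the statement. So there is nothing in the paper to compare your proposal against.

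That said, your outline is roughly in the spirit of Riche's original argument (reduce to the regular semisimple locus where everything is a deck transformation, then extend), but two of your extension steps are too breezy to stand on their own. Saying that two convolved kernels are Cohen--Macaulay and agree on a dense open is not enough to conclude they are isomorphic on the whole of $\tilde{\mathfrak{g}}\times\tilde{\mathfrak{g}}$; an isomorphism on an open set need not extend across a codimension-one boundary without further input (normality of the support, control of the rank, a concrete resolution, etc.), and this is exactly where the real work in \cite{key-28} lies. Likewise, your ``rank-two reduction via $\pi_{\mathcal{P}_{\{\alpha,\beta\}}}$'' is a plausible heuristic, but Riche's actual verification of the finite braid relations is a considerably more hands-on computation with the specific varieties $S_\alpha$ and their convolutions, and the quadratic relation $\theta_\lambda = T_s\theta_{s(\lambda)}T_s$ requires an explicit identification of $O_{S_\alpha}\star\Delta_*O(\lambda)\star O_{S_\alpha}$ that does not simply ``collapse'' from the pairing condition. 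If you want a self-contained proof you should consult \cite{key-28} and \cite{key-11} directly; the paper under review treats the theorem as a black box.
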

We shall show later on that this action also extends to an action
on the category $D^{b}(Mod(\tilde{D}_{h}))$. 

We note, for later use, the following: 
\begin{claim}
\label{cla:W-fin-is-trivial}For each finite root $s_{\alpha}$, we
have $s_{\alpha}\cdot O_{\tilde{\mathfrak{g}}}\tilde{=}O_{\tilde{\mathfrak{g}}}$. 
\end{claim}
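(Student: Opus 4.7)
The plan is to reduce the claim to a pushforward computation via the Fourier--Mukai formalism, and then analyze the morphism $p_2|_{S_\alpha}$ directly. Since $Lp_1^* O_{\tilde{\mathfrak{g}}} = O_{\tilde{\mathfrak{g}} \times \tilde{\mathfrak{g}}}$ is just the structure sheaf, we have
\[
s_\alpha \cdot O_{\tilde{\mathfrak{g}}} \tilde{=} Rp_{2*}\!\left(O_{S_\alpha} \otimes^{L} Lp_1^* O_{\tilde{\mathfrak{g}}}\right) \tilde{=} Rp_{2*} O_{S_\alpha},
\]
so it suffices to prove that $Rp_{2*} O_{S_\alpha} \tilde{=} O_{\tilde{\mathfrak{g}}}$.

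To get at this, I would observe that $S_\alpha$ lies inside the fibre product $Z_\alpha := \tilde{\mathfrak{g}} \times_{\tilde{\mathfrak{g}}_{\mathcal{P}_\alpha}} \tilde{\mathfrak{g}}$: by \prettyref{lem:Regiso2} the map $\pi_\alpha$ is invariant under the $s_\alpha$-action on $\tilde{\mathfrak{g}}^{rs}$, so the graph of $s_\alpha$ lies in $Z_\alpha^{rs}$ and its closure $S_\alpha$ lies in $Z_\alpha$. Consequently $p_2|_{S_\alpha} : S_\alpha \to \tilde{\mathfrak{g}}$ is proper (as the restriction of the proper map $p_2 : Z_\alpha \to \tilde{\mathfrak{g}}$, which is the base change of the proper morphism $\pi_\alpha$) and birational, being an isomorphism over the dense open $\tilde{\mathfrak{g}}^{rs}$ whose complement has codimension at least two.

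The pushforward would then be computed by reducing to the rank-one case. Working locally on $\mathcal{P}_{s_\alpha}$ via the $J^- \cdot \mathfrak{p}$-covering of Section 2.2.2, everything reduces to the $\mathrm{SL}_2$ situation, where one checks that the exceptional fibres of $p_2|_{S_\alpha}$ are projective lines. The vanishing $H^{j}(\mathbb{P}^1, O) = 0$ for $j > 0$ together with cohomology-and-base-change then gives $R^{j} p_{2*} O_{S_\alpha} = 0$ for $j > 0$, while $p_{2*} O_{S_\alpha}$ has rank-one stalks everywhere and agrees with $O_{\tilde{\mathfrak{g}}^{rs}}$ on a dense open with codimension-two complement; by normality of $\tilde{\mathfrak{g}}$ it is then isomorphic to $O_{\tilde{\mathfrak{g}}}$.

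The main obstacle is the explicit geometric analysis of $S_\alpha$ over the non-regular stratum. An alternative route that avoids this is to use the short exact sequence of kernels $0 \to O_{S_\alpha}(-D) \to O_{Z_\alpha} \to O_{\Delta \tilde{\mathfrak{g}}} \to 0$ arising from the scheme-theoretic decomposition $Z_\alpha = \Delta \tilde{\mathfrak{g}} \cup S_\alpha$ along the intersection divisor $D$, combined with equation (3) to compute $Rp_{2*} O_{Z_\alpha} \tilde{=} O_{\tilde{\mathfrak{g}}} \otimes_{O(\mathfrak{h}^*/W(s_\alpha))} O(\mathfrak{h}^*)$; one then extracts $Rp_{2*} O_{S_\alpha}$ by combining with the auxiliary sequence $0 \to O_{S_\alpha}(-D) \to O_{S_\alpha} \to O_{D} \to 0$ after careful bookkeeping of the line-bundle twists.
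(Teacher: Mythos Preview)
The paper does not prove this claim; it simply cites \cite{key-11,key-9}. The argument in those references (and implicitly set up here in Section~3.3) goes through the reflection functor rather than through a direct geometric study of $S_\alpha$. One applies the exact triangle of functors $s_\alpha^{-1}(-2)\to\mathcal{R}_\alpha\to Id$ to $O_{\tilde{\mathfrak{g}}}$ and uses equation~(2.3) to get $\mathcal{R}_\alpha O_{\tilde{\mathfrak{g}}}=\pi_s^*\pi_{s*}O_{\tilde{\mathfrak{g}}}\cong O_{\tilde{\mathfrak{g}}}\otimes_{O(\mathfrak{h}^*)^{s_\alpha}}O(\mathfrak{h}^*)$, a free rank-two sheaf. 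The adjunction $\mathcal{R}_\alpha\to Id$ is the multiplication map, whose kernel is visibly free of rank one (generated by $1\otimes\alpha_s-\alpha_s\otimes 1$), so $s_\alpha^{-1}O_{\tilde{\mathfrak{g}}}\cong O_{\tilde{\mathfrak{g}}}$. Your ``alternative route'' is a more complicated packaging of this same idea; the direct functor-triangle version avoids the bookkeeping with the divisor $D$ entirely.

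Your primary approach is genuinely different and has a real gap. The appeal to cohomology-and-base-change to deduce $R^{j}p_{2*}O_{S_\alpha}=0$ from $H^{j}(\mathbb{P}^1,O)=0$ is not justified: that theorem requires flatness of $p_2|_{S_\alpha}$, which fails precisely because the fibre dimension jumps over the non-regular locus. What you actually need is either the theorem on formal functions together with control of the infinitesimal neighbourhoods of the exceptional fibres, or the fact that $S_\alpha$ is \emph{smooth} (so that $p_2|_{S_\alpha}$ is a resolution of the smooth variety $\tilde{\mathfrak{g}}$, whence $Rp_{2*}O_{S_\alpha}\cong O_{\tilde{\mathfrak{g}}}$ by rational singularities). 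The smoothness of $S_\alpha$ is true and is proved in \cite{key-28}, but it is not a triviality and you do not invoke it. Without one of these ingredients the argument does not close.
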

This is proved in \cite{key-11,key-9}.

\subsection{Highest Weight Structure}

In this section, we discuss a crucial piece of structure on the categories
$D^{b}Coh^{G}(\tilde{\mathcal{N}})$ and $D^{b}Coh^{G\times\mathbb{G}_{m}}(\tilde{\mathcal{N}})$,
which appears in the paper \cite{key-6}. In that paper the author
defines a t-structure, known as the perversely exotic t-structure,
which corresponds under the equivalence of \cite{key-1} to the perverse
t-structure on the category $D_{I^{u},\chi}^{b}(G((t))/I)$ (see \cite{key-9,key-6}
for a proof of this fact). 

A nice feature of this t-structure is that it can be defined in relatively
elementary terms, using the braid group action. To give the statement,
we shall have to recall some general facts, starting with the 
\begin{defn}
Let $\mathcal{D}$ be a triangulated category, linear over a field
$k$. Suppose that $\mathcal{D}$ is of finite type, so that $Hom^{\cdot}(N,M)$
is always finite dimensional over $k$. Let $\nabla=\{\nabla^{i}|i\in I\}$
be an ordered set of objects in $\mathcal{D}$, which generate $\mathcal{D}$
as a triangulated category. This set is called exceptional if $Hom^{\cdot}(\nabla^{i},\nabla^{j})=0$
whenever $i<j$, and if $Hom^{\cdot}(\nabla^{i})=k$ for all $i$. 
\end{defn}
The classic example of such a set is the collection of Verma modules
in the principal block of the BGG category $\mathcal{O}$. 

Of course, part of the advantage of Verma modules is that there are
dual Verma modules, and natural maps $M(\lambda)\to M^{*}(\lambda)$,
whose image is the irreducible module $L(\lambda)$. It turns out
that there is a general version of this fact as well. So let us make
the 
\begin{defn}
Let $\mathcal{D}$ be a triangulated category with a given exceptional
set $\nabla$. We let $\mathcal{D}_{<i}$ denote the triangulated
subcategory generated by $\{\nabla^{j}|j<i\}$. Then another set of
objects $\{\Delta_{i}|i\in I\}$ is called a dual exceptional set
if it satisfies $Hom^{\cdot}(\Delta_{n},\nabla^{i})=0$ for $n>i$,
and if we have isomorphisms 
\[
\Delta_{i}\tilde{=}\nabla^{i}\, mod\,\mathcal{D}_{<i}
\]
for all $i$. 
\end{defn}
Whenever the dual exceptional set exists, it is unique (c.f. \cite{key-6}).

\subsubsection{Existence of a $t$-structure}

Now we would like to recall, from the paper \cite{key-7}, the existence
of a $t$-structure which is compatible with the standard and costandard
objects. In particular, under the above assumptions, with the additional
assumption that the order set $I$ is either finite or (a finite union
of copies of) $\mathbb{Z}_{>0}$, we have the following
\begin{thm}
a) There exists a unique $t$-structure, $(\mathcal{D}^{\geq0},\mathcal{D}^{<0})$,
which satisfies $\nabla^{i}\in\mathcal{D}^{\geq0}$ and $\Delta_{i}\in\mathcal{D}^{\leq0}$
for all $i$. 

b) This $t$-structure is bounded.

c) For any $X\in Ob(\mathcal{D})$, we have that $X\in\mathcal{D}^{\geq0}$
iff $Hom^{<0}(\Delta_{i},X)=0$ for all $i$, and similarly, $X\in\mathcal{D}^{<0}$
iff $Hom^{\leq0}(X,\nabla^{i})=0$. 

d) We let $\mathcal{A}=\mathcal{D}^{\geq0}\cap\mathcal{D}^{\leq0}$
denote the heart of this $t$-structure. Every object of $\mathcal{A}$
has finite length. For each $i$ there is a canonical arrow 
\[
\tau_{\geq0}(\Delta_{i})\to\tau_{\geq0}(\nabla^{i})
\]
 whose image is an irreducible object of $\mathcal{A}$, called $L_{i}$.
The set $\{L_{i}\}_{i\in I}$ is a complete, pairwise non-isomorphic
set of irreducibles in $\mathcal{A}$. 
\end{thm}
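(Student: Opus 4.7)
The plan is to construct the $t$-structure by gluing Beilinson--Bernstein--Deligne recollements along the total order on $I$, and then to read off (a)--(d) from that construction.

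The first step is the orthogonality computation $\mathrm{Hom}^k(\Delta_i,\nabla^j) = k$ when $(i,j,k)=(i,i,0)$ and zero otherwise. This follows by combining the dual-exceptional vanishing $\mathrm{Hom}^\cdot(\Delta_n,\nabla^i)=0$ for $n>i$, the congruence $\Delta_i\equiv\nabla^i \bmod \mathcal{D}_{<i}$, and the exceptional relations on the $\nabla^i$. With this in hand, I would define $\mathcal{D}^{\leq 0}$ to be the extension-closure of $\{\Delta_i[n]: n\geq 0,\ i\in I\}$ and $\mathcal{D}^{\geq 0}$ the extension-closure of $\{\nabla^i[-n]: n\geq 0,\ i\in I\}$; the orthogonality $\mathrm{Hom}(\mathcal{D}^{\leq 0},\mathcal{D}^{\geq 1})=0$ then follows by an extension argument from the pointwise computation.

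The main obstacle, and the heart of the argument, is producing the truncation triangles. I would proceed by induction on $I$. Let $\mathcal{D}_{\leq i}$ denote the triangulated subcategory generated by $\{\nabla^j : j\leq i\}$, which coincides with the one generated by $\{\Delta_j : j\leq i\}$ thanks to the mod-$\mathcal{D}_{<i}$ identification. The inclusion $\mathcal{D}_{<i}\hookrightarrow \mathcal{D}_{\leq i}$ admits both adjoints, computed via Hom-pairings against $\Delta_i$ and $\nabla^i$ using Step 1, giving a recollement whose quotient is the triangulated envelope of $\nabla^i$ (equivalently $\Delta_i$) and is equivalent to $D^b(\mathrm{Vect}_k)$. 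Gluing the tautological $t$-structures on $\mathcal{D}_{<i}$ (supplied by the inductive hypothesis) and on the quotient via the standard recollement formula yields a bounded $t$-structure on $\mathcal{D}_{\leq i}$ whose connective part is the extension closure of $\{\Delta_j[n]: j\leq i,\ n\geq 0\}$. Passing to the union over $i$ produces the $t$-structure on $\mathcal{D}$; the hypothesis that $I$ is finite or a copy of $\mathbb{Z}_{>0}$ ensures that every object of $\mathcal{D}$ already lies in some $\mathcal{D}_{\leq i}$, so no transfinite issues arise.

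Given the $t$-structure, statements (a)--(d) fall out with little further effort. The characterization (c) is the standard description of the two halves of a $t$-structure via orthogonality to the generators, forced by Step 1, and uniqueness in (a) follows from (c). Boundedness (b) is a consequence of the fact that each object acquires from the gluing a finite filtration by shifts of $\nabla^j$'s, so only finitely many cohomological degrees are nonzero. For (d), the canonical arrow $\tau_{\geq 0}(\Delta_i)\to \tau_{\geq 0}(\nabla^i)$ arises from the natural morphism $\Delta_i\to\nabla^i$ present in the quotient $\mathcal{D}_{\leq i}/\mathcal{D}_{<i}$, where both objects restrict to the same generator. Irreducibility of its image $L_i$ follows from the highest-weight argument: any nontrivial sub- or quotient object in $\mathcal{A}$ would carry a nonzero map from some $\Delta_j$ with $j\leq i$, which by Step 1 forces $j=i$ and identifies the map with $L_i$ itself. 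Finally, the $L_i$ are pairwise non-isomorphic because $L_i$ is the unique simple quotient of $\tau_{\geq 0}(\Delta_i)$, and they exhaust the simples of $\mathcal{A}$ because the $\nabla^i$ generate $\mathcal{D}$ and hence their heart cohomologies, whose composition factors lie among the $L_i$, generate $\mathcal{A}$.
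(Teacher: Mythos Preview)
The paper does not supply its own proof of this theorem; it is quoted without argument from \cite{key-7} (Bezrukavnikov, \emph{Quasi-exceptional sets and equivariant coherent sheaves on the nilpotent cone}). Your proposal is a correct outline of the standard construction, and indeed of the argument in that reference: one builds the $t$-structure by iterated recollement along the filtration $\mathcal{D}_{<i}\subset\mathcal{D}_{\leq i}$, using the orthogonality $\mathrm{Hom}^{\bullet}(\Delta_i,\nabla^j)=k$ for $i=j$ (concentrated in degree zero) and $0$ otherwise to produce the adjoints and identify the Verdier quotient with $D^b(\mathrm{Vect}_k)$, then glues the tautological $t$-structures. Parts (a)--(d) then follow from the general properties of glued $t$-structures, as you indicate.

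Two small remarks. First, the paper's hypothesis allows $I$ to be a finite union of copies of $\mathbb{Z}_{>0}$, not just a single copy; this does not affect your argument, since one still has that every object lies in some $\mathcal{D}_{\leq i}$. Second, the assertion that the inclusion $\mathcal{D}_{<i}\hookrightarrow\mathcal{D}_{\leq i}$ admits both adjoints deserves a line of justification: it follows because the orthogonality from Step~1 lets you explicitly write down the adjoints as iterated cones against $\Delta_i$ (for the left adjoint to inclusion) and $\nabla^i$ (for the right), using finite-dimensionality of the Hom spaces.
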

Let us remark that by part c, if our exceptional collection satisfies
$Hom^{<0}(\nabla^{i},\nabla^{j})=0$ and $Hom^{<0}(\Delta_{i},\Delta_{j})=0$
for all $i$ and $j$, then in fact the collections $\nabla$ and
$\Delta$ are actually in the heart $\mathcal{A}$. This will be the
case in all of the examples we consider. 

We should also note that the theorem holds in the slightly modified
situation of a graded triangulated category. In particular, we suppose
that $\mathcal{D}$ is equipped with a triangulated autoequivalence
$M\to M(1)$, which is the {}``shift in grading.'' In this instance,
we can define the graded hom $Hom_{gr}^{i}(X,Y)=\oplus_{n}Hom^{i}(X,Y(n))$. 

In this case, we say that a collection of objects $\{X_{i}\}$ generates
$\mathcal{D}$ if $\{X_{i}(n)\}_{n\in\mathbb{Z}}$ generates $\mathcal{D}$.
Then a graded exceptional set is defined as above but using $Hom_{gr}$
instead of $Hom$, and using this looser sense of {}``generate''.
Then there is a graded analogue of theorem 9 where one replaces all
instances of $Hom$ with $Hom_{gr}$. See \cite{key-6} for details.

\subsubsection{Perversely Exotic $t$-structure. }

Now we are ready to describe the perversely exotic $t$-structures
on $D^{b}(Coh^{G}(\tilde{\mathcal{N}}))$ and on $D^{b}(Coh^{G\times\mathbb{G}_{m}}(\tilde{\mathcal{N}}))$.
In fact, we shall, following \cite{key-6,key-9}, write down the exceptional
and coexceptional sets explicitly. 

Our indexing set $I$ will be the character lattice $\mathbb{X}$.
We first consider $\mathbb{X}$ with the Bruhat partial ordering-
this is the partial ordering induced from considering $\mathbb{X}$
as a subset of the affine extended Weyl group $W_{aff}^{'}$. 

More explicitly, we can define the order as follows: let $\lambda$
and $\nu$ be two elements of $\mathbb{X}$. Choose $w(\lambda)$
and $w(\nu)$ in the group $W_{fin}$ so that $w(\lambda)\cdot\lambda$
sits in the dominant cone, and the same for $\nu$. Then $\lambda\leq\nu$
iff $w(\lambda)\cdot\lambda$ is below $w(\nu)\cdot\nu$ in the usual
dominance ordering. 

We note from this description that there are finitely many elements
which are absolute minima under this ordering; these are precisely
the set of minimal representatives in $\mathbb{X}$ of the finite
group $\mathbb{X}/\mathbb{Z}\Phi=\Omega$. So, we complete $\leq$
to a complete ordering on $\mathbb{X}$, which we choose to be isomorphic
to a finite union of copies of $\mathbb{Z}_{>0}$%
\footnote{In \cite{key-6}, he works with the adjoint group, and so assumes
that the ordering is isomorphic to a single copy of $\mathbb{Z}_{>0}$.
However, the results we need go over to our case without any difficulty.%
}. 

Now we can define our exceptional and coexceptional sets as follows:
we let $\mathbb{B}_{aff}^{+}$ denote the subsemigroup of the affine
braid group generated by $\{s_{\alpha}\}_{\alpha\in I_{aff}}$, and
$\mathbb{B}_{aff}^{-}$ the subsemigroup generated by the inverses.
Then our exceptional set is the collection of $\{b^{-}\cdot\omega O_{\tilde{\mathcal{N}}}\}_{\omega\in\Omega}$
and our coexceptional set is the collection of $\{b^{+}\cdot\omega O_{\tilde{\mathcal{N}}}\}_{\omega\in\Omega}$.
Then, one can in fact show c.f. \cite{key-6,key-9}, that these sets
are indexed by $\mathbb{X}$, by sending an element $b^{+}\omega$
its action on $0\in\mathbb{X}$; and thus we can also label them $\{\Delta_{\lambda}\}$
and $\{\nabla^{\lambda}\}$ for $\lambda\in\mathbb{X}$.

These indexing sets have several nice properties- at the bottom of
the ordering, the objects $\{\omega O_{\tilde{\mathcal{N}}}\}$ are
both standard and costandard. In addition, for any $\lambda$ which
is in the dominant cone of $\mathbb{X}$, we can choose the representative
$\theta_{\lambda}\in\mathbb{B}_{aff}^{'}$ as an element of the form
$b^{+}\omega$. We know from the explicit presentation of the braid
group action given above that the action of this element is given
by tensoring by the line bundle $O_{\tilde{N}}(\lambda)$. Thus we
have that the set of dominant coexceptional objects is $\{O_{\tilde{\mathcal{N}}}(\lambda)\}_{\lambda\in\mathbb{Y}^{+}}$,
and the exceptionals are $\{O_{\tilde{\mathcal{N}}}(-\lambda)\}_{\lambda\in\mathbb{Y}^{+}}$. 

So, we now can define the perversely exotic $t$-structure to be the
$t$-structure provided by the above theorem on $D^{b}(Coh^{G}(\tilde{\mathcal{N}}))$
and $D^{b}(Coh^{G\times\mathbb{G}_{m}}(\tilde{\mathcal{N}}))$ (using
the graded version for the latter). 

We would like to record one very important feature of this $t$-structure
right now. We recall from \cite{key-9} the
\begin{defn}
A $t$-structure on one of the categories $D^{b}(Coh(\tilde{\mathfrak{g}}))$,
$D^{b}(Coh(\tilde{\mathcal{N}}))$ (or one of the equivariant versions)
is said to be braid positive if for any affine root $\alpha$, the
functor $s_{\alpha}^{-1}\cdot$ is left exact with respect to this
$t$-structure. Of course, by adjointness, this implies immediately
that $s_{\alpha}\cdot$ is right exact. 
\end{defn}
Then we have the very easy 
\begin{lem}
The perversely exotic $t$-structure is braid positive. \end{lem}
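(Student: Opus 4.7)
The plan is to combine the cohomological characterization of $\mathcal{D}^{\geq 0}$ from Theorem 9(c) with the explicit presentation of the coexceptional objects $\Delta_\mu$ as $b^{+}\omega \cdot O_{\tilde{\mathcal{N}}}$ for positive braid elements. The key observation is that the family of $\Delta$-objects is preserved by the action of $s_\alpha$, from which braid positivity follows by a short $\mathrm{Hom}$ computation.

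Concretely, I would begin by recalling from Theorem 9(c) that $X \in \mathcal{D}^{\geq 0}$ if and only if $\mathrm{Hom}^{<0}(\Delta_\mu, X) = 0$ for every $\mu \in \mathbb{X}$. To show that $s_\alpha^{-1}\cdot$ preserves $\mathcal{D}^{\geq 0}$ it therefore suffices to verify that $\mathrm{Hom}^{<0}(\Delta_\mu, s_\alpha^{-1}\cdot X) = 0$ for every $\mu$ and every $X \in \mathcal{D}^{\geq 0}$. Since $s_\alpha$ is an autoequivalence with inverse $s_\alpha^{-1}$, this rewrites via adjunction as $\mathrm{Hom}^{<0}(s_\alpha \cdot \Delta_\mu, X) = 0$.

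The heart of the argument is then to identify $s_\alpha \cdot \Delta_\mu$ as another $\Delta$-object. Writing $\Delta_\mu = b^{+}\omega \cdot O_{\tilde{\mathcal{N}}}$ with $b^{+}\in\mathbb{B}_{aff}^{+}$ and $\omega\in\Omega$, we have $s_\alpha\cdot\Delta_\mu = (s_\alpha b^{+})\omega\cdot O_{\tilde{\mathcal{N}}}$. Because $\mathbb{B}_{aff}^{+}$ is by definition the subsemigroup generated by the $s_\alpha$'s, the product $s_\alpha b^{+}$ still lies in $\mathbb{B}_{aff}^{+}$, and so $s_\alpha\cdot\Delta_\mu$ remains in the coexceptional collection. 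By the parameterization of this collection by $\mathbb{X}$ recalled from \cite{key-6,key-9}, via the map $b^{+}\omega \mapsto b^{+}\omega\cdot 0$, and because $(s_\alpha b^{+})\omega\cdot 0 = s_\alpha\mu$, we conclude $s_\alpha\cdot\Delta_\mu \cong \Delta_{s_\alpha\mu}$, where $s_\alpha$ acts on $\mathbb{X}$ through $W_{aff}'$. Substituting back gives $\mathrm{Hom}^{<0}(s_\alpha\cdot\Delta_\mu, X) = \mathrm{Hom}^{<0}(\Delta_{s_\alpha\mu}, X) = 0$ by the characterization of $\mathcal{D}^{\geq 0}$, completing the verification.

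The one nontrivial input—the main potential obstacle—is the well-definedness of the parameterization: two different representatives in $\mathbb{B}_{aff}^{+}\cdot\Omega$ whose actions on $0\in\mathbb{X}$ agree must give isomorphic objects. This relies on Claim~\ref{cla:W-fin-is-trivial} (the triviality of the finite Weyl group action on $O_{\tilde{\mathcal{N}}}$) together with a Matsumoto-type argument, and is imported from the cited references. Once it is granted, the entire proof is formal, which justifies the description of this lemma as ``very easy.''
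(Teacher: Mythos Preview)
Your proof is correct and follows essentially the same route as the paper: use the characterization of $\mathcal{D}^{\geq 0}$ from Theorem~9(c), pass $s_\alpha$ across the $\mathrm{Hom}$ by adjunction, and observe that $s_\alpha b^{+}$ is again positive. The one unnecessary detour is your worry about the well-definedness of the $\mathbb{X}$-parameterization: the paper bypasses this entirely by phrasing the criterion as $\mathrm{Hom}^{<0}(b^{+}\omega\cdot O_{\tilde{\mathcal{N}}},X)=0$ for all $b^{+}\in\mathbb{B}_{aff}^{+}$ and $\omega\in\Omega$, so one only needs that $s_\alpha b^{+}\omega\cdot O_{\tilde{\mathcal{N}}}$ is again of this form, not which particular $\Delta_\nu$ it is.
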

\begin{proof}
By the definition of the $t$-structure and part c of the theorem,
we have that $X\in\mathcal{D}^{\geq0}$ iff $Hom^{<0}(b^{+}\omega\cdot O_{\tilde{\mathcal{N}}},X)=0$
for all $b^{+}\in\mathbb{B}_{aff}^{+}$. But then we have by adjointness
\[
Hom^{<0}(b^{+}\omega\cdot O_{\tilde{\mathcal{N}}},s_{\alpha}^{-1}\cdot X)=Hom^{<0}(s_{\alpha}b^{+}\omega\cdot O_{\tilde{\mathcal{N}}},X)
\]
 and the term on the right vanishes because $s_{\alpha}$ is positive,
so $s_{\alpha}b^{+}$ is a positive element of the braid group also. 
\end{proof}
In fact, a similar argument shows something a bit stronger: 
\begin{lem}
\label{lem:filteredobjects}Suppose that $X\in\mathcal{A}$ is filtered
(in $\mathcal{A}$) by standard objects $\nabla^{\lambda}$. Then
for all simple affine roots, $s_{\alpha}^{-1}\cdot X$ is in $\mathcal{A}$.
Similarly, if $X$ is filtered by costandard objects, then $s_{\alpha}\cdot X$
is in $\mathcal{A}$. \end{lem}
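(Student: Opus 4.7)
The plan is to reduce both statements to a direct calculation in the affine braid group, using the explicit presentation of the (co)exceptional collection in terms of the braid action on $O_{\tilde{\mathcal{N}}}$. I will prove part (a); part (b) is the dual statement.

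The crucial observation is that for any standard object $\nabla^\lambda$, the functor $s_\alpha^{-1}\cdot$ sends it to another standard object. Indeed, by construction $\nabla^\lambda = b^-\omega\cdot O_{\tilde{\mathcal{N}}}$ for some $b^-\in\mathbb{B}_{aff}^-$ and $\omega\in\Omega$, so
\[
s_\alpha^{-1}\cdot\nabla^\lambda \;=\; (s_\alpha^{-1}b^-)\,\omega\cdot O_{\tilde{\mathcal{N}}}.
\]
Since $s_\alpha^{-1}$ is among the generators of $\mathbb{B}_{aff}^-$, the product $s_\alpha^{-1}b^-$ still lies in $\mathbb{B}_{aff}^-$, and the resulting object is therefore again a member of the exceptional collection, namely $\nabla^{\lambda'}$ for $\lambda'=(s_\alpha^{-1}b^-\omega)\cdot 0\in\mathbb{X}$. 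In particular $s_\alpha^{-1}\cdot\nabla^\lambda\in\mathcal{A}$.

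For general $X\in\mathcal{A}$ admitting a standard filtration of length $n$, I induct on $n$. Given a short exact sequence $0\to Y\to X\to \nabla^\lambda\to 0$ in $\mathcal{A}$ with $Y$ standard-filtered of length $n-1$, applying the triangulated functor $s_\alpha^{-1}\cdot$ yields a distinguished triangle
\[
s_\alpha^{-1}\cdot Y \;\to\; s_\alpha^{-1}\cdot X \;\to\; s_\alpha^{-1}\cdot\nabla^\lambda \;\to\; (s_\alpha^{-1}\cdot Y)[1].
\]
By the inductive hypothesis and the base case above, the outer two terms lie in $\mathcal{A}$. Since $\mathcal{A}$ is the heart of a t-structure and hence closed under extensions in $\mathcal{D}$, the long exact cohomology sequence forces $s_\alpha^{-1}\cdot X\in\mathcal{A}$ as well.

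Part (b) is entirely symmetric: costandards have the form $\Delta_\lambda = b^+\omega\cdot O_{\tilde{\mathcal{N}}}$ with $b^+\in\mathbb{B}_{aff}^+$, and $s_\alpha b^+\in\mathbb{B}_{aff}^+$ since $s_\alpha$ is a positive generator. Hence $s_\alpha\cdot\Delta_\lambda$ is again a costandard object, and the same inductive extension-closure argument applies. There is no real obstacle in the argument; its entire content is the observation that left multiplication by $s_\alpha^{-1}$ (resp.\ $s_\alpha$) preserves the negative semigroup $\mathbb{B}_{aff}^-$ (resp.\ the positive semigroup $\mathbb{B}_{aff}^+$), so that the class of standard (resp.\ costandard) objects is preserved by the appropriate functor.
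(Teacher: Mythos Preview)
Your proposal is correct and follows essentially the same approach as the paper: both hinge on the observation that $s_\alpha^{-1}\mathbb{B}_{aff}^{-}\subseteq\mathbb{B}_{aff}^{-}$, so $s_\alpha^{-1}$ sends each standard object to a standard object, and then conclude by induction on the length of the filtration. Your use of extension-closure of the heart is slightly cleaner than the paper's explicit verification of the two $\mathrm{Hom}^{<0}$ vanishing conditions from part (c) of the $t$-structure theorem, but the content is identical.
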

\begin{proof}
By definition, $X\in\mathcal{A}$ iff $X\in\mathcal{D}^{\geq0}$ and
$X\in\mathcal{D}^{\leq0}$. So, to show the first claim, we must show
that, under the assumptions, we have 
\[
Hom^{<0}(b^{+}\omega\cdot O_{\tilde{\mathcal{N}}},s_{\alpha}^{-1}\cdot X)=0=Hom^{<0}(s_{\alpha}^{-1}X,b^{-}\omega\cdot O_{\tilde{\mathcal{N}}})=0
\]
for all $b^{+}\in\mathbb{B}_{aff}^{+}$. The first equality holds
simply because $X\in\mathcal{A}$. For the second, we shall walk up
a standard filtration of $X$: if $X$ is itself standard, then by
definition $s_{\alpha}^{-1}X$ is also standard, and hence in $\mathcal{A}$. 

So suppose that we have the exact sequence 
\[
0\to Y\to X\to b_{1}^{-}\omega\cdot O_{\tilde{\mathcal{N}}}\to0
\]
 in $\mathcal{A}$, where $Y$ has a filtration by standard objects
of length $n-1$. Hitting this sequence with $s_{\alpha}^{-1}$ gives
the triangle 
\[
s_{\alpha}^{-1}Y\to s_{\alpha}^{-1}X\to s_{\alpha}^{-1}b_{1}^{-}\omega\cdot O_{\tilde{\mathcal{N}}}
\]
 whose left and right terms are in $\mathcal{A}$, by induction. Then,
by the long exact sequence for $Hom$, we have for all $b^{-}\omega$
the sequence 
\[
Hom^{-i}(s_{\alpha}^{-1}b_{1}^{-}\omega O_{\tilde{\mathcal{N}}},b^{-}\omega\cdot O_{\tilde{\mathcal{N}}})\to Hom^{-i}(s_{\alpha}^{-1}X,b^{-}\omega\cdot O_{\tilde{\mathcal{N}}})\to Hom^{-i}(s_{\alpha}^{-1}Y,b^{-}\omega\cdot O_{\tilde{\mathcal{N}}})
\]
 and the left and right terms are zero for $i>0$; so the middle one
is as well, proving the first claim. The second claim follows in exactly
the same way.\end{proof}
\begin{rem}
\label{rem:filteredobjects}The proof actually shows that the object
$s_{\alpha}^{-1}X$ is filtered by standard objects in $\mathcal{A}$:
since the exact triangles 
\[
s_{\alpha}^{-1}Y\to s_{\alpha}^{-1}X\to s_{\alpha}^{-1}b_{1}^{-}\omega\cdot O_{\tilde{\mathcal{N}}}
\]
 are actually exact sequences in $\mathcal{A}$, this is shown by
the same inductive argument. Clearly the analogous fact is true for
$s_{\alpha}X$ if $X$ is filtered by costandard objects. 
\end{rem}

\subsection{Reflection Functors}

In this subsection we define the reflection functors- they will come
naturally out of the braid group action, and will allow us to construct
explicitly the tilting objects of our category. We shall also see
in the next section that they are the key to lifting the braid group
action from coherent sheaves to $\tilde{D}_{h}$-modules. 

To motivate the definition, we need to recall a bit of geometry from
the paper \cite{key-28}. Recall that we have defined above the kernel
$O_{S_{\alpha}}$ to be the structure sheaf of the variety $S_{\alpha}$,
which in turn is defined as the closure of the graph of the of action
of the Weyl group element $s_{\alpha}$ acting on $\tilde{\mathfrak{g}}^{rs}$.
Let us recall also that we have defined varieties $\tilde{\mathfrak{g}}_{\mathcal{P}}$
associated to any partial flag variety $\mathcal{P}$. In the case
$\mathcal{P}=G/P_{\alpha}$, we shall denote this variety $\tilde{\mathfrak{g}}_{\alpha}$,
and the natural map $\pi_{\alpha}:\tilde{\mathfrak{g}}\to\tilde{\mathfrak{g}}_{\alpha}$. 

Now, we let us consider the algebraic variety $\tilde{\mathfrak{g}}\times_{\tilde{\mathfrak{g}}_{\alpha}}\tilde{\mathfrak{g}}$.
This is not an irreducible variety, but instead has two components:
the first is the diagonal $\Delta\tilde{\mathfrak{g}}$, and the second
is $S_{\alpha}$. The natural restriction morphism leads to a short
exact sequence of kernels: 
\[
\mathcal{K}^{\cdot}\to O_{\tilde{\mathfrak{g}}\times_{\tilde{\mathfrak{g}}_{\alpha}}\tilde{\mathfrak{g}}}\to O_{\Delta\tilde{\mathfrak{g}}}
\]
 and it is checked in \cite{key-28} that $\mathcal{K}^{\cdot}$ is
the kernel of functor inverse to $s_{\alpha}$. Further, when we consider
the action on $D^{b,\mathbb{G}_{m}}(Coh(\tilde{\mathfrak{g}}))$,
we get that $\mathcal{K}^{\cdot}(2)$ is inverse to $s_{\alpha}$. 

So we should like to understand the kernel $O_{\tilde{\mathfrak{g}}\times_{\mathfrak{\tilde{g}}_{\alpha}}\tilde{\mathfrak{g}}}$.
Fortunately, it is easy to describe, following \cite{key-28}: 
\begin{lem}
There is an isomorphism of functors 
\[
F_{O_{\tilde{\mathfrak{g}}\times_{\mathfrak{\tilde{g}}_{\alpha}}\tilde{\mathfrak{g}}}}\tilde{=}\pi_{s}^{*}\pi_{s*}
\]
 where the functor on the right is taken in the derived sense. Further,
the natural adjunction $\pi_{s}^{*}\pi_{s*}\to Id$ comes from the
natural map of sheaves $O_{\tilde{\mathfrak{g}}\times_{\mathfrak{\tilde{g}}_{\alpha}}\tilde{\mathfrak{g}}}\to O_{\Delta\tilde{\mathfrak{g}}}$
(the restriction to a subvariety quotient map). 
\end{lem}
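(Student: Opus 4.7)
The plan is to combine the projection formula for the closed immersion of the fiber product with flat base change. The key preliminary is that $\pi_s : \tilde{\mathfrak{g}} \to \tilde{\mathfrak{g}}_\alpha$ is flat: locally on $\tilde{\mathfrak{g}}_\alpha$ it is a $\mathbb{P}^1$-bundle, and by equation (2.3) its direct image $\pi_{s*}O_{\tilde{\mathfrak{g}}}$ is locally free of rank $2$ over $O_{\tilde{\mathfrak{g}}_\alpha}$.

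Let $i : \tilde{\mathfrak{g}}\times_{\tilde{\mathfrak{g}}_\alpha}\tilde{\mathfrak{g}} \hookrightarrow \tilde{\mathfrak{g}}\times\tilde{\mathfrak{g}}$ denote the canonical closed embedding, and let $q_1, q_2$ be the two projections from the fiber product to $\tilde{\mathfrak{g}}$. Applying the projection formula for $i$ to the Fourier-Mukai functor gives
$$F_{O_{\tilde{\mathfrak{g}}\times_{\tilde{\mathfrak{g}}_\alpha}\tilde{\mathfrak{g}}}}(M) \;=\; Rp_{2*}\bigl(i_*O_{\mathrm{fp}}\otimes^{L}Lp_1^*M\bigr) \;\cong\; Rq_{2*}Lq_1^*M,$$
and flat base change for the Cartesian square defining the fiber product (via the flatness of $\pi_s$) identifies this further with $L\pi_s^* R\pi_{s*} M$. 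This yields the desired isomorphism of functors.

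For the statement about the adjunction, note that the diagonal $\Delta\tilde{\mathfrak{g}}$ is one of the two irreducible components of the fiber product, and the map $O_{\mathrm{fp}} \to O_{\Delta\tilde{\mathfrak{g}}}$ is the natural surjection restricting to that component. The diagonal embedding $\Delta: \tilde{\mathfrak{g}} \hookrightarrow \tilde{\mathfrak{g}}\times_{\tilde{\mathfrak{g}}_\alpha}\tilde{\mathfrak{g}}$ is a section of both $q_1$ and $q_2$, so the induced natural transformation $Rq_{2*}Lq_1^*M \to M$ is precisely the restriction along $\Delta$. Tracing this through the base change isomorphism above, one recovers the counit of the adjunction $L\pi_s^* \dashv R\pi_{s*}$, as expected from the naturality of adjunction data with respect to flat base change.

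The main technical subtlety is this second claim: the functor isomorphism drops straight out of base change, but matching the map of kernels $O_{\mathrm{fp}}\to O_{\Delta\tilde{\mathfrak{g}}}$ with the abstract adjunction counit requires carefully unwinding the base-change isomorphism and comparing it with the unit/counit of $(L\pi_s^*, R\pi_{s*})$. Conceptually this is just the compatibility of the standard six-functor formalism with flat base change, but writing it out is where the care is needed.
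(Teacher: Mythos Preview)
Your overall strategy---projection formula for the closed embedding $i$, followed by base change for the fiber square---is exactly the right one, and it is the approach in Riche's paper \cite{key-28}, to which the present paper defers for the proof. However, your justification of the base-change step contains a genuine gap: the morphism $\pi_s:\tilde{\mathfrak{g}}\to\tilde{\mathfrak{g}}_\alpha$ is \emph{not} flat.

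You have conflated $\pi_s:\tilde{\mathfrak{g}}\to\tilde{\mathfrak{g}}_\alpha$ with the projection $\mathcal{B}\to\mathcal{P}_s$ of flag varieties. The latter is indeed a $\mathbb{P}^1$-bundle, but the former is not: over a point $(x,\mathfrak{p})\in\tilde{\mathfrak{g}}_\alpha$ for which $x$ vanishes on the entire Levi of $\mathfrak{p}$, the fiber is the full $\mathbb{P}^1$ of Borels contained in $\mathfrak{p}$, whereas over a generic point the fiber is finite. (In the rank-one case this is simply the Grothendieck map $\tilde{\mathfrak{g}}\to\mathfrak{g}^*$, whose fiber over $0$ is $\mathbb{P}^1$.) Since source and target have the same dimension, these jumping fiber dimensions rule out flatness. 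Your second justification---that $\pi_{s*}O_{\tilde{\mathfrak{g}}}$ is locally free of rank $2$---is correct as a statement, but locally free pushforward of the structure sheaf does not imply flatness of the map: the blow-up of a smooth surface at a point already gives a counterexample.

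The repair is to replace flat base change by Tor-independent base change: one must check that the classical fiber product $\tilde{\mathfrak{g}}\times_{\tilde{\mathfrak{g}}_\alpha}\tilde{\mathfrak{g}}$ agrees with the derived one. This follows from a dimension argument. Both irreducible components $\Delta\tilde{\mathfrak{g}}$ and $S_\alpha$ have dimension $\dim\mathfrak{g}$, which is the expected value $2\dim\tilde{\mathfrak{g}}-\dim\tilde{\mathfrak{g}}_\alpha$. Concretely, factor $\pi_s$ as the Cartier divisor inclusion $\tilde{\mathfrak{g}}\hookrightarrow\tilde{\mathfrak{g}}(s)$ followed by the genuine $\mathbb{P}^1$-bundle $\tilde{\mathfrak{g}}(s)\to\tilde{\mathfrak{g}}_\alpha$ (see Section~3.5.2). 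Base change along the bundle map is flat, and what remains is an intersection of two Cartier divisors in a smooth ambient variety meeting in the correct codimension, hence Tor-independent. Once Tor-independence is established, your argument---including the identification of the restriction map $O_{\tilde{\mathfrak{g}}\times_{\tilde{\mathfrak{g}}_\alpha}\tilde{\mathfrak{g}}}\to O_{\Delta\tilde{\mathfrak{g}}}$ with the adjunction counit---goes through as written.
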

These facts lead us to the following 
\begin{defn}
For a finite root $\alpha$, we define the reflection functor $\mathcal{R}_{\alpha}$
to be the functor of the kernel $O_{\tilde{\mathfrak{g}}\times_{\mathfrak{\tilde{g}}_{\alpha}}\times\tilde{\mathfrak{g}}}$. 
\end{defn}
These functors have many nice properties. As already noted, there
is a natural complex of functors $s_{\alpha}^{-1}(-2)\to\mathcal{R}_{\alpha}\to Id$.
In fact, there is also a natural adjunction morphism $Id\to\mathcal{R}_{\alpha}(2)$
defined in \cite{key-28}, section 5, and an exact sequence $Id\to\mathcal{R}_{\alpha}(2)\to s_{\alpha}$.
Thus it is possible to describe completely the finite braid actions
via the reflection functors. 

We should note that the adjunction morphism $Id\to\pi_{s}^{*}\pi_{s*}(2)$
has a natural algebro-geometric explanation. We have an isomorphism
$\pi_{s}^{!}\tilde{=}\pi_{s}^{*}$ (noted in \cite{key-10}), and
in fact $\pi_{s}^{!}$ is the right adjoint to $\pi_{s*}$ (c.f. \cite{key-18}).
In this instance it has the advantage of having been constructed by
hand in terms of Fourier-Mukai kernels.

Next we would like to define the reflection functor corresponding
to the affine root. Of course, there is no {}``affine root'' partial
flag variety, so we have to use a trick to get around it. The trick
relies on the following
\begin{claim}
In the extended affine braid group $\mathbb{B}_{aff}^{'}$, there
exists a finite root element $s_{\alpha}$ which is conjugate to the
affine root $s_{\alpha_{0}}$. 
\end{claim}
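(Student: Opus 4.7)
The strategy is to prove the claim in two steps: first, show that $s_{\alpha_0}$ is conjugate to some finite simple reflection $s_{\alpha}$ inside the extended affine Weyl group $W_{aff}'$; then lift this conjugation to $\mathbb{B}_{aff}'$. Recall that $\alpha_0 = \delta - \theta$ with $\theta$ the highest (long) root of $\Phi$, so the affine real root $\alpha_0$ lies in the same $W_{aff}'$-orbit of real affine roots as any long finite simple root. Consequently $s_{\alpha_0}$ is $W_{aff}'$-conjugate to $s_{\alpha}$ for some long finite simple $\alpha$. One constructs a conjugating element explicitly by a type-by-type check: for $W$ of any irreducible type other than $G_2$, $F_4$, or $E_8$, the fundamental group $\Omega = \mathbb{X}/\mathbb{Z}\Phi$ contains an element whose action on the affine Dynkin diagram sends node $0$ to some finite simple node; for $W$ of type $G_2$, $F_4$, or $E_8$ (where $\Omega$ is trivial), node $0$ is joined to some finite simple node by a label-$3$ (hence odd) edge in the Coxeter diagram of $W_{aff}$.

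Both kinds of conjugation lift to $\mathbb{B}_{aff}'$. First, $\Omega$ acts on $\mathbb{B}_{aff}'$ by diagram automorphisms via the identification $\mathbb{B}_{aff}' \cong \mathbb{B}_{aff} \rtimes \Omega$, so any $\Omega$-conjugation passes directly to the braid group. Second, for any two Coxeter generators $s, s'$ of $W_{aff}$ with $m(s, s') = 2k+1$ odd, the identity $(T_s T_{s'})^k T_s (T_s T_{s'})^{-k} = T_{s'}$ holds in the braid group as a direct consequence of the braid relation $(T_s T_{s'})^{2k+1} = (T_{s'} T_s)^{2k+1}$, verified by a short induction on $k$. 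Composing conjugations of these two basic types along the chosen chain from node $0$ to the target finite node produces an element of $\mathbb{B}_{aff}'$ that conjugates $T_{s_{\alpha_0}}$ to $T_{s_\alpha}$.

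The main technical point is the case $W$ of type $C_n$: the even-labeled edges at both ends of the affine Dynkin diagram prevent any odd-edge path in the Coxeter graph of $W_{aff}$ from reaching node $0$, forcing one to use the $\Omega = \mathbb{Z}/2$-action swapping node $0$ with the finite node $n$. This action is available precisely because $\mathbb{X}$ is taken to be the full weight lattice, and this is the reason the statement is formulated for the extended braid group $\mathbb{B}_{aff}'$ rather than for $\mathbb{B}_{aff}$.
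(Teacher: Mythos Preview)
Your argument is correct and is essentially the standard one: the paper itself does not supply a proof but simply cites \cite{key-9}, Lemma~2.1.1, and then remarks that in every type except $C$ the conjugation can already be performed inside the non-extended $\mathbb{B}_{aff}$. Your analysis recovers exactly this remark---in type $C_n$ the affine node is attached only by a label-$4$ edge, so the odd-braid conjugation is unavailable and one is forced to use the $\Omega=\mathbb{Z}/2$ diagram automorphism---while in all remaining types either an element of $\Omega$ or a single label-$3$ braid conjugation suffices.
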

This claim is proved in \cite{key-9}, lemma 2.1.1. It is interesting
to note that in every type except $C$, the claim is true in the non-extended
affine braid group $\mathbb{B}_{aff}$. 

This is helpful for the following reason. If we take the exact sequence
of functors 
\[
Id\to\mathcal{R}_{\alpha}\to s_{\alpha}
\]
 and conjugate by an appropriate element in $\mathbb{B}_{aff}^{'}$,
$b$, we then arrive at a new sequence 
\[
Id\to b^{-1}\mathcal{R}_{\alpha}b\to s_{\alpha_{0}}
\]
and the same holds for the exact sequence for $s_{\alpha}^{-1}$.
Thus if we define the affine reflection functor 
\[
\mathcal{R}_{\alpha_{0}}:=b^{-1}\mathcal{R}_{\alpha}b
\]
then this is a functor which satisfies the same exact sequences for
$\alpha_{0}$ as the other reflection functors for their roots. This
functor will then do everything we need. Further, it will turn out
that the action of this functor is unique up to a unique isomorphism.

\subsection{Tilting Generators For Coherent Sheaves}

In this section, we shall describe collections of tilting modules
for the categories of our interest. We gave a general definition of
a tilting subcategory above, which we shall use for $D^{b}Coh^{G}(\tilde{\mathfrak{g}})$
and $D^{b}Mod^{G}(\tilde{D}_{h})$. However, in the case of $D^{b}Coh^{G}(\tilde{\mathcal{N}})$,
there is a way of constructing tilting modules just using the highest
weight structure. We shall take this to be our base case.

\subsubsection{Tilting in Highest Weight Categories}

In this section, we shall recall the general constructions and definitions
of \cite{key-33,key-4,key-6}. Let us suppose we are in the situation
of section 3.2, where we have a triangulated category with a $t-$structure,
whose heart $\mathcal{A}$ contains a given set of exceptional and
coexceptional objects. In this very special situation, we make the
\begin{defn}
A tilting object in $\mathcal{A}$ is one which possesses a filtration
(in $\mathcal{A}$) by standard objects, and a filtration (in $\mathcal{A}$)
by costandard objects. 
\end{defn}
This, as it turns out, is a very strong condition. We recall from
\cite{key-33,key-4,key-6,key-7} some properties that these objects
satisfy: 
\begin{lem}
In the above situation, we have that: 

1) To each $i\in I$, there is a unique indecomposable tilting module
$T_{i}$, which has a unique (up to scalar) surjection $T_{i}\to\nabla^{i}$
and a unique (up to scalar) injection $\Delta_{i}\to T_{i}$. 

2) Every tilting module is a direct sum of the $T_{i}$. 

3) For any two tilting modules $Hom^{\cdot}(T_{i},T_{j})=Hom^{0}(T_{i},T_{j})$. 

4) The tilting modules generate the standard and costandard objects;
thus they generate the entire triangulated category. Combining with
the above observation, we obtain an equivalence of categories 
\[
K^{b}(\mathcal{T})\to\mathcal{D}
\]
 where the left hand side is the homotopy category of complexes of
tilting modules (c.f. section 1.5). 

5) Let $X$ be any object of $\mathcal{D}$ such that $Hom^{>0}(\Delta_{i},X)=Hom^{>0}(X,\nabla^{i})$
for all $i$. Then $X$ lies in $\mathcal{A}$ and is a tilting object
therein.
\end{lem}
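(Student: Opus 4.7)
The plan is to follow the classical Ringel-type approach to tilting in a highest weight setting. The fundamental input is the Ext-orthogonality of standards and costandards: one first shows that $Hom^{k}(\Delta_{i},\nabla^{j})$ vanishes unless $k=0$ and $i=j$, in which case it is one-dimensional. This follows from part (c) of the preceding theorem applied to both arguments: $\Delta_{i}\in\mathcal{D}^{\leq0}$ and $\nabla^{j}\in\mathcal{D}^{\geq0}$ kill all negative Exts, while positive Exts are killed by an induction using the generating subcategories $\mathcal{D}_{<i}$ together with the defining properties of the exceptional and coexceptional sets.

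Given this, I would construct $T_{i}$ by transfinite induction on $i$. The base case is $T_{i}=\Delta_{i}=\nabla^{i}$ when $i$ is minimal. For the inductive step, one starts from $\Delta_{i}$ and successively forms extensions by costandards $\nabla^{j}$ with $j<i$ to absorb any obstruction in $Ext^{1}(\nabla^{j},\Delta_{i})$; one then verifies, using the dual construction starting from $\nabla^{i}$ and the Ext-orthogonality, that the resulting object simultaneously admits a standard and a costandard filtration. Indecomposability and uniqueness of $T_{i}$ follow from the fact that any endomorphism is determined by its action on the distinguished $\Delta_{i}$-sub and $\nabla^{i}$-quotient together with a Fitting-type argument, giving (1). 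Property (2) then follows because a decomposition of any tilting module into indecomposables forces each summand, by comparing induced filtrations and minimal/maximal indices, to be isomorphic to some $T_{i}$.

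Property (3) is immediate from a long exact sequence argument: applying $Hom^{\cdot}(T_{i},-)$ to a costandard filtration of $T_{j}$ and invoking the Ext-orthogonality collapses all higher Exts to zero. For (4), I would show inductively that every $\nabla^{i}$ lies in the thick subcategory generated by tiltings: the canonical surjection $T_{i}\twoheadrightarrow\nabla^{i}$ fits into a triangle whose third term is filtered by $\nabla^{j}$ with $j<i$, so the inductive hypothesis applies. Since the standards generate $\mathcal{D}$, so do the tiltings; combining this with the degree-zero concentration from (3), the standard argument in \cite{key-25} (or as used in \cite{key-6}) produces the equivalence $K^{b}(\mathcal{T})\to\mathcal{D}$, with fully faithfulness coming from (3) and essential surjectivity from the generation statement.

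For (5), the hypothesis together with part (c) of the preceding theorem immediately places $X$ in $\mathcal{D}^{\geq0}\cap\mathcal{D}^{\leq0}=\mathcal{A}$. To build a standard filtration, pick the maximal $i$ for which $Hom(\Delta_{i},X)\neq 0$: by maximality and the Ext-vanishing hypothesis, any nonzero such map must be an injection (its image cannot have composition factors above $L_{i}$), and the resulting cokernel still satisfies the same vanishing. Iterating downward along the bounded-below order produces a standard filtration on $X$, and the dual procedure yields a costandard one. The step I expect to be most delicate is the inductive construction of $T_{i}$ in (1), since one must control the Ext-obstructions carefully to ensure the two filtrations coexist; however, this is purely formal once the Ext-orthogonality of standards and costandards is in hand.
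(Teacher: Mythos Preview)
The paper does not give its own proof of this lemma: it is stated as a recollection of standard facts, with the proof delegated to the references \cite{key-33,key-4,key-6,key-7} (Soergel, Beilinson--Bezrukavnikov--Mirkovi\'c, and Bezrukavnikov). So there is nothing to compare against beyond the literature itself.

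Your sketch is in line with the arguments in those references. The Ext-orthogonality $Hom^{k}(\Delta_{i},\nabla^{j})=\delta_{i,j}\delta_{k,0}\cdot k$ is indeed the engine, and the Ringel-type inductive construction of $T_{i}$ is the standard route. One small point on part (5): the usual characterization runs the other way from what you wrote. The condition $Hom^{>0}(X,\nabla^{j})=0$ for all $j$ is what yields a $\Delta$-filtration, and dually $Hom^{>0}(\Delta_{i},X)=0$ yields a $\nabla$-filtration. In the $\Delta$-filtration argument one typically picks the \emph{maximal} $i$ with $[X:L_{i}]\neq 0$, obtains an injection $\Delta_{i}\hookrightarrow X$, and then checks that the cokernel $X/\Delta_{i}$ still satisfies $Hom^{>0}(X/\Delta_{i},\nabla^{j})=0$; the delicate step is that the boundary map $Hom(\Delta_{i},\nabla^{j})\to Ext^{1}(X/\Delta_{i},\nabla^{j})$ vanishes, which uses maximality of $i$ (for $j<i$ the source is zero, and for $j=i$ the map $Hom(X,\nabla^{i})\to Hom(\Delta_{i},\nabla^{i})$ is surjective). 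Your outline has the right shape but swaps which hypothesis produces which filtration; this is worth straightening out if you intend to write it up.
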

Thus, our goal is now to compare this abstract characterization of
tilting (which holds for the perversely exotic $t$-structure on $D^{b}Coh^{G}(\tilde{\mathcal{N}})$
and its graded version) with the concrete information listed above
about our categories. The main tool in this will be the reflection
functors.

\subsubsection{Tilting via Reflection Functors}

In this subsection, we will use the reflection functors to construct
tilting modules in our various categories. There is a minor problem:
the reflection functors only act on $Coh(\tilde{\mathfrak{g}})$,
and do not restrict to functors on $Coh(\tilde{\mathcal{N}})$- so
we have to construct our objects over $\tilde{\mathfrak{g}}$, and
then restrict. 

To start with, we should define natural lifts of our standard and
costandard objects to $\tilde{\mathfrak{g}}$. This is easy- since
the braid group action on the variety $\tilde{\mathfrak{g}}$ is consistent
via the restriction functor with the braid group action on $\tilde{\mathcal{N}}$,
we define 
\[
\Delta_{\lambda}(\tilde{\mathfrak{g}})=b^{+}\omega\cdot O_{\tilde{\mathfrak{g}}}
\]
 and 
\[
\nabla^{\lambda}(\tilde{\mathfrak{g}})=b^{-}\omega\cdot O_{\mathfrak{\tilde{g}}}
\]
where we have that $\lambda=b^{+}\omega\cdot0$. 
\begin{claim}
These are well defined objects which restrict to our given standard
and costandard objects on $\tilde{\mathcal{N}}$. \end{claim}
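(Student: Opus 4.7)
The claim has two content parts: (i) the objects $\Delta_\lambda(\tilde{\mathfrak{g}})$ and $\nabla^\lambda(\tilde{\mathfrak{g}})$ are independent (up to isomorphism) of the chosen lift $b^{\pm}\omega$ with $b^{\pm}\omega\cdot 0=\lambda$; and (ii) the derived restriction $Li^*$ along the closed embedding $i:\tilde{\mathcal{N}}\hookrightarrow\tilde{\mathfrak{g}}$ carries each such object to the corresponding standard/costandard on $\tilde{\mathcal{N}}$.

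The plan for (i) is to exploit the parabolic coset structure of $W_{fin}\subset W_{aff}'$ together with Claim~\ref{cla:W-fin-is-trivial}. The stabilizer of $0\in\mathbb{X}$ under $W_{aff}'$ is exactly $W_{fin}$, so the fiber of $w\mapsto w\cdot 0$ over $\lambda$ is a single right coset $w_\lambda W_{fin}$. This coset possesses a unique minimal-length representative $w_\lambda$, and every other element admits a unique factorization $w_\lambda u$ with $u\in W_{fin}$ and $\ell(w_\lambda u)=\ell(w_\lambda)+\ell(u)$. By length-additivity of positive lifts (Matsumoto's theorem), one obtains $T_w=T_{w_\lambda}T_u$ in $\mathbb{B}_{aff}'$. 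An induction on $\ell(u)$ using Claim~\ref{cla:W-fin-is-trivial} shows $T_u\cdot O_{\tilde{\mathfrak{g}}}\cong O_{\tilde{\mathfrak{g}}}$, and therefore
\[
T_w\cdot O_{\tilde{\mathfrak{g}}}\;\cong\; T_{w_\lambda}\cdot(T_u\cdot O_{\tilde{\mathfrak{g}}})\;\cong\; T_{w_\lambda}\cdot O_{\tilde{\mathfrak{g}}},
\]
which depends only on $\lambda$. The argument for $\nabla^\lambda(\tilde{\mathfrak{g}})$ using negative lifts is identical.

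For (ii), the plan is to upgrade the $i_*$-intertwining of braid group actions asserted in the preceding theorem to an $Li^*$-intertwining by adjunction. Since $i$ is a closed embedding, $i_*=Ri_*$ is right adjoint to $Li^*$; and each generator of $\mathbb{B}_{aff}'$ acts as an autoequivalence whose inverse also lies in the group. Taking left adjoints of the isomorphism $i_*\circ b\cong b\circ i_*$ and relabeling gives $Li^*(b\cdot N)\cong b\cdot Li^*N$ for every $b\in\mathbb{B}_{aff}'$ and $N\in D^b(Coh(\tilde{\mathfrak{g}}))$. Because $\tilde{\mathfrak{g}}\to\mathfrak{h}^*$ is flat and $\tilde{\mathcal{N}}=\tilde{\mathfrak{g}}\times_{\mathfrak{h}^*}\{0\}$, we have $Li^*O_{\tilde{\mathfrak{g}}}\cong O_{\tilde{\mathcal{N}}}$, and thus
\[
Li^*\Delta_\lambda(\tilde{\mathfrak{g}})\;\cong\; b^+\omega\cdot O_{\tilde{\mathcal{N}}}\;=\;\Delta_\lambda(\tilde{\mathcal{N}}),
\]
and analogously for $\nabla^\lambda$.

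The main delicate point is the bookkeeping in the extended setting $W_{aff}'=W_{aff}\rtimes\Omega$: elements of $\Omega$ have length zero but can permute the simple affine reflections, so one must verify that the minimal-coset representative decomposition and the multiplicativity of positive lifts genuinely extend to $\mathbb{B}_{aff}'$ and not merely $\mathbb{B}_{aff}$. This is routine once one uses that $\Omega$ normalizes the set of simple affine reflections; the length-zero $\Omega$-factor is absorbed harmlessly into $w_\lambda$.
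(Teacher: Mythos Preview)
Your argument is correct but follows a genuinely different route from the paper's.

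For part (i), the paper does \emph{not} use the coset combinatorics of $W_{fin}\subset W_{aff}'$ at all. Instead it bootstraps from the already-known well-definedness on $\tilde{\mathcal{N}}$ (cited to \cite{key-6,key-9}) via a deformation argument: given two lifts, the isomorphism $O_{\tilde{\mathcal{N}}}\cong \omega_1^{-1}b_1^{-}b_2^{+}\omega_2\cdot O_{\tilde{\mathcal{N}}}$ yields $R\Gamma^G(\omega_1^{-1}b_1^{-}b_2^{+}\omega_2\cdot O_{\tilde{\mathfrak{g}}})\otimes^L_{O(\mathfrak{h}^*)}k_0\cong k$, hence a $G\times\mathbb{G}_m$-equivariant section lifting the isomorphism; graded Nakayama (applied locally) makes this surjective, and running the same trick with the inverse braid word gives a map the other way whose composite lifts the identity and therefore \emph{is} the identity because $R\Gamma^{G\times\mathbb{G}_m}(O_{\tilde{\mathfrak{g}}})=k$.

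Your approach trades the external input ``well-definedness on $\tilde{\mathcal{N}}$'' for the external input Claim~\ref{cla:W-fin-is-trivial}, and replaces the Nakayama lifting by the standard parabolic-coset factorization $w=w_\lambda u$ with $u\in W_{fin}$ and additive lengths. This is cleaner and stays entirely on $\tilde{\mathfrak{g}}$; the paper's method, by contrast, is a first instance of the ``lift isomorphisms from the special fibre by graded Nakayama'' technique that recurs throughout (e.g.\ in \prettyref{lem:Braid-on-D(tilde)} and \prettyref{cor:Weak-braid-for-D}), so it has expository value even if it is less direct here. Your handling of the $\Omega$-factor is essentially right: since $W_{aff}'=W_{aff}\rtimes\Omega$ with $\Omega\cap W_{aff}=\{e\}$, two lifts of $\lambda$ share the same $\omega$, after which one works with the parabolic $\omega W_{fin}\omega^{-1}\subset W_{aff}$ and conjugates back so that Claim~\ref{cla:W-fin-is-trivial} applies.

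For part (ii) the paper simply declares it ``evident'' from the $i_*$-compatibility of the two braid actions; your adjunction argument $Li^*\circ b\cong b\circ Li^*$ together with $Li^*O_{\tilde{\mathfrak{g}}}\cong O_{\tilde{\mathcal{N}}}$ is a perfectly good way to make that precise.
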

\begin{proof}
Evidently these objects restrict to our given standard and costandard
objects on $\tilde{\mathcal{N}}$; we shall now argue that they are
well defined. Suppose $\lambda\in\mathbb{Y}$ has two decompositions
$b_{1}^{+}\omega_{1}\cdot0=\lambda=b_{2}^{+}\omega_{2}\cdot0$, we
wish to show $b_{1}^{+}\omega_{1}\cdot O_{\tilde{\mathfrak{g}}}\tilde{=}b_{2}^{+}\omega_{2}\cdot O_{\tilde{\mathfrak{g}}}$
(the argument for the standard objects will work the same way). Then,
by the well-definedness on $\tilde{\mathcal{N}}$, we have a $G\times\mathbb{G}_{m}$-isomorphism
\[
O_{\tilde{\mathcal{N}}}\mbox{ }\tilde{\to}\mbox{ }\omega_{1}^{-1}b_{1}^{-}b_{2}^{+}\omega_{2}\cdot O_{\tilde{\mathcal{N}}}
\]
Thus $R\Gamma^{G}(\omega_{1}^{-1}b_{1}^{-}b_{2}^{+}\omega_{2}\cdot O_{\tilde{\mathcal{N}}})\tilde{=}k$,
and so $R\Gamma^{G}(\omega_{1}^{-1}b_{1}^{-}b_{2}^{+}\omega_{2}\cdot O_{\tilde{\mathfrak{g}}})\otimes_{O(\mathfrak{h}^{*})}^{L}k_{0}\tilde{=}k$;
implying the existence of a $G\times\mathbb{G}_{m}$-global section
of $\omega_{1}^{-1}b_{1}^{-}b_{2}^{+}\omega_{2}\cdot O_{\tilde{\mathfrak{g}}}$
and thus a morphism 
\begin{equation}
O_{\tilde{\mathfrak{g}}}\to\omega_{1}^{-1}b_{1}^{-}b_{2}^{+}\omega_{2}\cdot O_{\tilde{\mathfrak{g}}}\label{eq:iso1}
\end{equation}

lifting the one above. By the graded Nakayama lemma (applied locally),
this is a surjective morphism of sheaves. 

Now, if we play the same game with the inverse $\omega_{2}^{-1}b_{2}^{-}b_{1}^{+}\omega_{1}\cdot O_{\tilde{\mathcal{N}}}$,
we get a map the other way, such that the composition with the map
\ref{eq:iso1} is a $G\times\mathbb{G}_{m}$-endomorphism of $O_{\tilde{\mathfrak{g}}}$
lifting the identity of $O_{\tilde{\mathcal{N}}}$, which therefore
is the identity of $O_{\mathfrak{\tilde{g}}}$ since $R\Gamma^{G\times\mathbb{G}_{m}}(O_{\tilde{\mathfrak{g}}})=k$.
Thus the map \ref{eq:iso1} is injective, and hence an isomorphism. 
\end{proof}
Before we proceed, let us make one notational convention. Given a
collection of objects $\{D_{i}\}$ in a triangulated category $\mathcal{C}$,
we shall say that an object $X$ is filtered by the $\{D_{i}\}$ if
there is a finite sequence of objects $\{X_{j}\}_{j=1}^{n}$ with
$X_{1}\in\{D_{i}\}$, $X_{n}=X$, and for all $j$ there are exact
triangles: 
\[
X_{j-1}\to X_{j}\to Q
\]
 where $Q$ is an object in the set $\{D_{i}\}$. 

Now, the very definition of reflection functors implies the following 
\begin{claim}
Suppose that $X$ is an object in $D^{b}Coh^{G}(\tilde{\mathfrak{g}})$
(or $D^{b}Coh^{G\times\mathbb{G}_{m}}(\tilde{\mathfrak{g}})$) which
is filtered by the $\Delta_{\lambda}$ (the $\Delta_{\lambda}(i)$,
respectively). Then $\mathcal{R}_{\alpha}X$ is also so filtered. 

The same holds if the $\Delta_{\lambda}$ are replaced by the $\nabla^{\lambda}$
. \end{claim}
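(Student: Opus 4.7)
The plan is to argue by induction on the length of the filtration, reducing everything to the case of a single standard (resp.\ costandard) object. The base case will be handled directly by the defining exact triangles of the reflection functor, together with the fact that the sets $\{b^+\omega\cdot O_{\tilde{\mathfrak{g}}}\}$ and $\{b^-\omega\cdot O_{\tilde{\mathfrak{g}}}\}$ are each stable under left multiplication by a simple generator of $\mathbb{B}_{aff}^{'}$.

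For the base case with $X=\Delta_{\mu}=b^{+}\omega\cdot O_{\tilde{\mathfrak{g}}}$, I will apply the exact sequence of functors $\mathrm{Id}\to\mathcal{R}_{\alpha}(2)\to s_{\alpha}$ recalled above to $\Delta_{\mu}$, producing the exact triangle
\[
\Delta_{\mu}\to \mathcal{R}_{\alpha}(2)\,\Delta_{\mu}\to s_{\alpha}\Delta_{\mu}.
\]
Now $s_{\alpha}\Delta_{\mu}=(s_{\alpha}b^{+})\omega\cdot O_{\tilde{\mathfrak{g}}}$; since $s_{\alpha}$ is a positive generator, $s_{\alpha}b^{+}\in\mathbb{B}_{aff}^{+}$, so $s_{\alpha}\Delta_{\mu}$ is again of the form $\Delta_{\mu'}$ (in the graded case, with a grading shift). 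Hence $\mathcal{R}_{\alpha}(2)\,\Delta_{\mu}$, and therefore $\mathcal{R}_{\alpha}\Delta_{\mu}$, is an extension of two standard objects and in particular is filtered by them.

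For the inductive step, given a filtration realized by exact triangles $X_{j-1}\to X_{j}\to \Delta_{\mu_{j}}$ with $X_{n}=X$, I apply the triangulated functor $\mathcal{R}_{\alpha}$ to the final triangle $X_{n-1}\to X\to \Delta_{\mu_{n}}$. By the inductive hypothesis $\mathcal{R}_{\alpha}X_{n-1}$ is $\Delta$-filtered, and by the base case so is $\mathcal{R}_{\alpha}\Delta_{\mu_{n}}$; concatenating these filtrations yields the desired one for $\mathcal{R}_{\alpha}X$. The argument for objects filtered by $\nabla^{\lambda}=b^{-}\omega\cdot O_{\tilde{\mathfrak{g}}}$ is entirely symmetric, using instead the triangle $s_{\alpha}^{-1}(-2)\to\mathcal{R}_{\alpha}\to\mathrm{Id}$ and the fact that $s_{\alpha}^{-1}b^{-}\in\mathbb{B}_{aff}^{-}$.

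I do not expect a serious obstacle here: the entire argument is an immediate consequence of the two exact triangles of functors defining $\mathcal{R}_{\alpha}$ and the elementary semigroup observation that $\mathbb{B}_{aff}^{\pm}$ is closed under left multiplication by a simple generator. The only point requiring a small remark is the case of the affine reflection functor $\mathcal{R}_{\alpha_{0}}$: since it was defined by conjugation so that the analogous triangles $\mathrm{Id}\to\mathcal{R}_{\alpha_{0}}(2)\to s_{\alpha_{0}}$ and $s_{\alpha_{0}}^{-1}(-2)\to\mathcal{R}_{\alpha_{0}}\to\mathrm{Id}$ hold, the same argument applies verbatim, noting that $s_{\alpha_{0}}$ is itself a simple generator of $\mathbb{B}_{aff}^{'}$ so the required positivity/negativity of $s_{\alpha_{0}}b^{\pm}$ still holds. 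In the graded equivariant case, the shift $(2)$ appearing in the triangles merely produces a grading twist of the filtration, which is absorbed into the collection $\{\Delta_{\lambda}(i)\}_{\lambda,i}$.
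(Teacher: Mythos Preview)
Your proof is correct and follows essentially the same approach as the paper: both rely on the exact triangles $Id\to\mathcal{R}_{\alpha}(2)\to s_{\alpha}$ and $s_{\alpha}^{-1}(-2)\to\mathcal{R}_{\alpha}\to Id$ together with the fact that $s_{\alpha}^{\pm1}$ sends a standard (resp.\ costandard) object to another one. The paper's version is slightly more compressed---it applies the triangle once to $X$ itself, noting that $s_{\alpha}X$ is $\Delta$-filtered whenever $X$ is---whereas you spell this out as an induction on filtration length, but the content is the same.
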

\begin{proof}
The proof of the first part comes from the exact sequence 
\[
X\to\mathcal{R}_{\alpha}X\to s_{\alpha}X
\]
 since by definition, if $X$ is filtered by the $\Delta_{\lambda}$,
so is $s_{\alpha}X$, and thus so is $\mathcal{R}_{\alpha}X$. The
second part follows by using the exact sequence for $s_{\alpha}^{-1}$ 
\end{proof}
Now, we already know a finite collection of objects of $D^{b}Coh^{G}(\tilde{\mathfrak{g}})$
(and also $D^{b}Coh^{G\times\mathbb{G}_{m}}(\tilde{\mathfrak{g}})$)
which are filtered by both the $\{\Delta_{\lambda}(\tilde{\mathfrak{g}})\}$
and the $\{\nabla^{\lambda}(\tilde{\mathfrak{g}})\}$: namely, the
set $\{\omega\cdot O_{\tilde{\mathfrak{g}}}\}_{\omega\in\Omega}$,
which are both standard and costandard. So it follows from the claim
that all objects of the form 
\[
\mathcal{R}_{\alpha_{1}}\mathcal{R}_{\alpha_{2}}\cdot\cdot\cdot\mathcal{R}_{\alpha_{n}}\omega\cdot O_{\tilde{\mathfrak{g}}}
\]
 for all collections of affine simple roots $\{\alpha_{i}\}_{i=1}^{n}$
are filtered by both the $\{\Delta_{\lambda}(\tilde{\mathfrak{g}})\}$
and the $\{\nabla^{\lambda}(\tilde{\mathfrak{g}})\}$. 

Now, we can consider the restriction of such an object to $\tilde{\mathcal{N}}$.
We have the 
\begin{claim}
All objects of the form 
\[
\mathcal{R}_{\alpha_{1}}\mathcal{R}_{\alpha_{2}}\cdot\cdot\cdot\mathcal{R}_{\alpha_{n}}\omega\cdot O_{\tilde{\mathfrak{g}}}|_{\tilde{\mathcal{N}}}
\]
 are tilting objects in the heart of the perversely exotic $t$-structure.
All indecomposable tilting objects for this $t$-structure are summands
of such objects. \end{claim}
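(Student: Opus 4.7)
I will prove the first assertion by induction on $n$, and deduce the second from the standard theory of tilting in highest weight categories. The base case $n=0$ is immediate: the object $\omega\cdot O_{\tilde{\mathfrak{g}}}|_{\tilde{\mathcal{N}}}=\omega\cdot O_{\tilde{\mathcal{N}}}$ is indexed by $\omega\cdot 0$, which lies at the bottom of the ordering on $\mathbb{X}$, so the object is simultaneously standard and costandard and hence trivially tilting. For the inductive step I invoke the preceding claim, which tells me that $T_n:=\mathcal{R}_{\alpha_{1}}\cdots\mathcal{R}_{\alpha_{n}}\omega\cdot O_{\tilde{\mathfrak{g}}}$ admits, in $D^{b}Coh^{G\times\mathbb{G}_m}(\tilde{\mathfrak{g}})$, filtrations both by $\{\Delta_\lambda(\tilde{\mathfrak{g}})\}$ and by $\{\nabla^\lambda(\tilde{\mathfrak{g}})\}$.

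I then transfer these filtrations to $\tilde{\mathcal{N}}$ via the derived restriction $Li^*$. Compatibility of the braid group action with $i_*$ (and hence with $Li^*$), combined with the flatness of $\tilde{\mathfrak{g}}\to\mathfrak{h}^*$ recalled at the end of Section 2, gives the identity $Li^*(b\cdot O_{\tilde{\mathfrak{g}}})=b\cdot O_{\tilde{\mathcal{N}}}$ concentrated in degree zero for every braid word $b$ realizing a standard or costandard label. Consequently $\bar{T}_n:=Li^*T_n$ inherits triangulated filtrations whose subquotients are the honest standards $\{\Delta_\lambda\}$ (respectively costandards $\{\nabla^\lambda\}$) on $\tilde{\mathcal{N}}$. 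I now process the standard filtration from the bottom. The initial step is a single standard, which lies in $\mathcal{A}$. For the inductive step, each triangle $\bar{T}^{(j-1)}\to\bar{T}^{(j)}\to\Delta_{\lambda_j}$ has both outer terms in $\mathcal{A}$, so the long exact sequence of t-structure cohomology forces $\bar{T}^{(j)}\in\mathcal{A}$ and simultaneously promotes the triangle to a short exact sequence in $\mathcal{A}$. Iterating produces a genuine standard filtration of $\bar{T}_n$ in $\mathcal{A}$; the identical argument for the costandard filtration completes the proof that $\bar{T}_n$ is tilting.

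For the second assertion, recall from the tilting lemma in Section 3.4.1 that the indecomposable tilting $T_\lambda$ is characterized (up to isomorphism) by admitting a surjection $T_\lambda\twoheadrightarrow\nabla^\lambda$, and moreover $\mathrm{Hom}(T_\mu,\nabla^\lambda)=0$ unless $\mu\geq\lambda$. Given $\lambda\in\mathbb{X}$, write $\lambda=s_{\alpha_1}\cdots s_{\alpha_n}\omega\cdot 0$ with each $s_{\alpha_i}$ a positive simple generator of $\mathbb{B}_{aff}^{\prime}$ and $\omega\in\Omega$. The iterated triangles $X\to\mathcal{R}_{\alpha}X\to s_\alpha X$ built from $\omega\cdot O_{\tilde{\mathcal{N}}}$ produce a standard filtration of $\bar{T}_n$ whose topmost quotient is precisely $s_{\alpha_1}\cdots s_{\alpha_n}\omega\cdot O_{\tilde{\mathcal{N}}}=\nabla^\lambda$. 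Decomposing $\bar{T}_n$ into indecomposable tilting summands and composing with the surjection to $\nabla^\lambda$ forces $T_\lambda$ to appear as a direct summand. Letting $\lambda$ range over $\mathbb{X}$ yields every indecomposable tilting object.

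The principal technical obstacle is the step asserting that $Li^*$ sends $\Delta_\lambda(\tilde{\mathfrak{g}})$ and $\nabla^\lambda(\tilde{\mathfrak{g}})$ to honest degree-zero sheaves on $\tilde{\mathcal{N}}$, which amounts to Tor-vanishing over $O(\mathfrak{h}^*)$ for the objects $b\cdot O_{\tilde{\mathfrak{g}}}$. The Fourier--Mukai kernels representing the braid action are supported on the closures $S_\alpha$ (and the line-bundle-twist diagonals for $\theta_\lambda$), each of which is flat over $\mathfrak{h}^*$; thus the flat base change recalled at the end of Section 2 applies and gives the required Tor-vanishing. The rest of the argument --- extracting the filtration in the heart from a triangulated filtration --- is then routine from the long exact sequence.
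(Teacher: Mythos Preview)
Your argument is correct, and for the first assertion it is actually more direct than the paper's. The paper proceeds by induction on $n$ using the triangle
\[
\mathcal{R}_{\alpha_{2}}\cdots\mathcal{R}_{\alpha_{n}}\omega\cdot O_{\tilde{\mathfrak{g}}}|_{\tilde{\mathcal{N}}}\to\mathcal{R}_{\alpha_{1}}\cdots\mathcal{R}_{\alpha_{n}}\omega\cdot O_{\tilde{\mathfrak{g}}}|_{\tilde{\mathcal{N}}}\to s_{\alpha_{1}}\mathcal{R}_{\alpha_{2}}\cdots\mathcal{R}_{\alpha_{n}}\omega\cdot O_{\tilde{\mathfrak{g}}}|_{\tilde{\mathcal{N}}}
\]
and invokes \prettyref{lem:filteredobjects} (braid positivity) to force the rightmost term into $\mathcal{A}$, then \prettyref{rem:filteredobjects} to upgrade the triangulated filtration to one in $\mathcal{A}$. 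You instead pull back the already-established triangulated filtration by $\Delta_\lambda(\tilde{\mathfrak{g}})$'s, observe that each subquotient restricts to a genuine $\Delta_\lambda\in\mathcal{A}$, and use nothing beyond the extension-closure of a heart. This bypasses \prettyref{lem:filteredobjects} entirely; what the paper's route buys is that it keeps the role of braid positivity visible, which matters elsewhere in the paper.

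Two small comments. First, your ``principal technical obstacle'' paragraph is heavier than needed: the compatibility of the braid action with $i_*$ stated in the theorem of Section~3.1, read through Fourier--Mukai base change, already gives $Li^*(b\cdot O_{\tilde{\mathfrak{g}}})\cong b\cdot O_{\tilde{\mathcal{N}}}$ as an object of $D^b$; you do not need to argue separately about flatness of the individual kernels over $\mathfrak{h}^*$, nor do you need concentration in degree zero for the \emph{standard} $t$-structure---only membership in $\mathcal{A}$, which is already known. Second, in your treatment of the second assertion there is a labeling slip: with positive simple generators the triangle $X\to\mathcal{R}_\alpha X\to s_\alpha X$ produces a filtration by $\Delta$'s (the $b^+\omega$ objects), not by $\nabla$'s, so the top subquotient you exhibit is $\Delta_\lambda$ rather than $\nabla^\lambda$. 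The conclusion is unaffected: the appearance of $\Delta_\lambda$ with $\lambda$ maximal among the filtration indices forces $T_\lambda$ to occur as a summand, which is exactly the mechanism the paper uses.
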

\begin{proof}
We shall show that these are tilting objects in $\mathcal{A}$ by
from the definition of tilting. By the construction, these objects
are filtered (in the triangulated sense) by both standard and costandard
objects. Now, consider the exact triangles of the form 
\[
\mathcal{R}_{\alpha_{2}}\cdot\cdot\cdot\mathcal{R}_{\alpha_{n}}\omega\cdot O_{\tilde{\mathfrak{g}}}|_{\tilde{\mathcal{N}}}\to\mathcal{R}_{\alpha_{1}}\mathcal{R}_{\alpha_{2}}\cdot\cdot\cdot\mathcal{R}_{\alpha_{n}}\omega\cdot O_{\tilde{\mathfrak{g}}}|_{\tilde{\mathcal{N}}}\to s_{\alpha_{1}}\mathcal{R}_{\alpha_{2}}\cdot\cdot\cdot\mathcal{R}_{\alpha_{n}}\omega\cdot O_{\tilde{\mathfrak{g}}}|_{\tilde{\mathcal{N}}}
\]
We assume by induction that the leftmost object is in $\mathcal{A}$
and is even filtered, in $\mathcal{A}$, by standard and costandard
objects. Then the right hand object is in $\mathcal{A}$ by \prettyref{lem:filteredobjects}.
Thus the middle object is in $\mathcal{A}$ and its filtration by
standard objects is actually a filtration in $\mathcal{A}$, by remark
\prettyref{rem:filteredobjects}. Using the other exact sequence,
we obtain that the same is true for its filtration by costandard objects,
and it is a tilting object by definition. 

To see that we obtain all indecomposable tilting objects as summands,
we use the same exact sequences as in the first part- by going through
all reduced words in $W_{aff}^{'}$, we eventually can obtain objects
which have any given $b^{-}\omega\cdot O_{\tilde{\mathcal{N}}}$ at
the top of the filtration. Thus we obtain all tilting modules.
\end{proof}
Having thus obtained tilting modules explicitly as a restriction of
certain objects on $\tilde{\mathfrak{g}}$, we have obvious candidates
for the deformation of these modules to $\tilde{\mathfrak{g}}$, which
we shall use. In particular, we can immediately deduce the following 
\begin{lem}
The objects $\mathcal{R}_{\alpha_{1}}\mathcal{R}_{\alpha_{2}}\cdot\cdot\cdot\mathcal{R}_{\alpha_{n}}\omega\cdot O_{\tilde{\mathfrak{g}}}$
satisfy 
\[
End_{D^{b}Coh^{G}(\tilde{\mathfrak{g}})}^{\cdot}(\mathcal{R}_{\alpha_{1}}\mathcal{R}_{\alpha_{2}}\cdot\cdot\cdot\mathcal{R}_{\alpha_{n}}\omega\cdot O_{\tilde{\mathfrak{g}}})=End_{D^{b}Coh^{G}(\tilde{\mathfrak{g}})}^{0}(\mathcal{R}_{\alpha_{1}}\mathcal{R}_{\alpha_{2}}\cdot\cdot\cdot\mathcal{R}_{\alpha_{n}}\omega\cdot O_{\tilde{\mathfrak{g}}})
\]
 The same is true in $D^{b}Coh^{G\times\mathbb{G}_{m}}(\tilde{\mathfrak{g}})$
for the objects $\mathcal{R}_{\alpha_{1}}\mathcal{R}_{\alpha_{2}}\cdot\cdot\cdot\mathcal{R}_{\alpha_{n}}O_{\tilde{\mathfrak{g}}}(i)$. \end{lem}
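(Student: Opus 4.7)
The plan is to deduce the vanishing of higher $\mathrm{Ext}$'s upstairs on $\tilde{\mathfrak{g}}$ from the corresponding vanishing on $\tilde{\mathcal{N}}$, using flat base change along $\tilde{\mathfrak{g}}\to\mathfrak{h}^{*}$ together with a graded Nakayama argument. Set $T=\mathcal{R}_{\alpha_{1}}\cdots\mathcal{R}_{\alpha_{n}}\omega\cdot O_{\tilde{\mathfrak{g}}}$ and $\bar{T}=Li^{*}T=T|_{\tilde{\mathcal{N}}}$, where $i:\tilde{\mathcal{N}}\hookrightarrow\tilde{\mathfrak{g}}$. By the claim immediately preceding the lemma, $\bar{T}$ is a tilting object in the heart $\mathcal{A}$ of the perversely exotic $t$-structure on $D^{b}Coh^{G}(\tilde{\mathcal{N}})$. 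Hence by part $(3)$ of the tilting lemma quoted in section 3.4.1, $\mathrm{End}^{\cdot}_{D^{b}Coh^{G}(\tilde{\mathcal{N}})}(\bar{T})$ is concentrated in cohomological degree zero.

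Next, I would invoke the flat base change statement recorded in the final paragraph of section 2.2.4: since $\tilde{\mathfrak{g}}\to\mathfrak{h}^{*}$ is flat and $\tilde{\mathcal{N}}\cong\tilde{\mathfrak{g}}\times_{\mathfrak{h}^{*}}\{0\}$, for all $M,N\in D^{b}Coh(\tilde{\mathfrak{g}})$ there is a canonical quasi-isomorphism
\[
\mathrm{RHom}_{D^{b}Coh(\tilde{\mathfrak{g}})}(M,N)\otimes^{L}_{O(\mathfrak{h}^{*})}k_{0}\;\tilde{\to}\;\mathrm{RHom}_{D^{b}Coh(\tilde{\mathcal{N}})}(Li^{*}M,Li^{*}N).
\]
Applying this with $M=N=T$ and then passing to $G$-invariants (which is exact on algebraic $G$-modules, as noted in section 2.2.4), one gets
\[
\mathrm{End}^{\cdot}_{D^{b}Coh^{G}(\tilde{\mathfrak{g}})}(T)\otimes^{L}_{O(\mathfrak{h}^{*})}k_{0}\;\tilde{=}\;\mathrm{End}^{\cdot}_{D^{b}Coh^{G}(\tilde{\mathcal{N}})}(\bar{T}),
\]
and by the previous paragraph the right-hand side is concentrated in degree zero.

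Finally, I would apply a graded Nakayama lemma to conclude. The complex $\mathrm{End}^{\cdot}_{D^{b}Coh^{G}(\tilde{\mathfrak{g}})}(T)$ is a bounded complex of graded, finitely generated $O(\mathfrak{h}^{*})$-modules (the grading placing $\mathfrak{h}^{*}$ in strictly positive degree). Since its derived reduction modulo the positively graded maximal ideal is concentrated in a single cohomological degree, the same must hold for the complex itself. For the $\mathbb{G}_{m}$-equivariant version, one repeats the argument verbatim with $T=\mathcal{R}_{\alpha_{1}}\cdots\mathcal{R}_{\alpha_{n}}O_{\tilde{\mathfrak{g}}}(i)$: the restriction is still tilting in the graded version by the preceding claim, and the endomorphism complex acquires only an extra grading which is preserved by base change and Nakayama.

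The main obstacle is the Nakayama step: Proposition \ref{pro:GrNak} is stated only for graded $k[h]$-modules, not for graded $O(\mathfrak{h}^{*})$-modules. I would handle this by extending the proposition to any positively graded polynomial ring with bounded-below graded modules. One cleanly does this by choosing a homogeneous regular sequence $h_{1},\dots,h_{r}$ in $O(\mathfrak{h}^{*})_{+}$ and reducing modulo one $h_{i}$ at a time: at each stage the hypothesis of being concentrated in a single degree after further derived reduction passes through the short exact sequence associated to multiplication by $h_{i}$, because all the intermediate complexes remain bounded, finitely generated and graded with bounded-below grading. After $r$ steps the rank-one case of Proposition \ref{pro:GrNak} applies, and unwinding gives concentration of the original complex.
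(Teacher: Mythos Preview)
Your proof is correct and follows essentially the same route as the paper: flat base change from $\tilde{\mathfrak{g}}$ to $\tilde{\mathcal{N}}$ to reduce to the tilting vanishing there, followed by graded Nakayama over $O(\mathfrak{h}^{*})$. Your explicit handling of the Nakayama step (reducing modulo one homogeneous generator at a time) is exactly what the paper gestures at with its parenthetical remark about filtering $\tilde{\mathfrak{g}}\to\tilde{\mathcal{N}}$ by line bundles indexed by a basis of $\mathfrak{h}^{*}$; the only cosmetic difference is that for the $\mathbb{G}_{m}$-equivariant statement the paper observes those complexes are summands of the ungraded ones rather than rerunning the argument.
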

\begin{proof}
By the deformation arguments of section 2, 
\begin{equation}
End_{D^{b}Coh^{G}(\tilde{\mathfrak{g}})}^{\cdot}(\mathcal{R}_{\alpha_{1}}\mathcal{R}_{\alpha_{2}}\cdot\cdot\cdot\mathcal{R}_{\alpha_{n}}\omega\cdot O_{\tilde{\mathfrak{g}}})\otimes_{O(\mathfrak{h}^{*})}^{L}k_{0}\tilde{=}End_{D^{b}Coh^{G}(\tilde{\mathcal{N}})}^{\cdot}(\mathcal{R}_{\alpha_{1}}\mathcal{R}_{\alpha_{2}}\cdot\cdot\cdot\mathcal{R}_{\alpha_{n}}\omega\cdot O_{\tilde{\mathfrak{g}}}|_{\tilde{\mathcal{N}}})
\end{equation}
as complexes of graded modules. 

Now, the complex on the right of equation 3.2 has no terms outside
of degree zero, because the objects there are tilting modules for
a $t$-structure. But then the same must be true for the complex 
\[
End_{D^{b}Coh^{G}(\tilde{\mathfrak{g}})}^{\cdot}(\mathcal{R}_{\alpha_{1}}\mathcal{R}_{\alpha_{2}}\cdot\cdot\cdot\mathcal{R}_{\alpha_{n}}\omega\cdot O_{\tilde{\mathfrak{g}}})
\]

by the graded Nakayama lemma (c.f. \prettyref{pro:GrNak}- to make
the arguments given there apply, one should filter the bundle $\tilde{\mathfrak{g}}\to\tilde{\mathcal{N}}$
by line bundles indexed by a basis of $\mathfrak{h}^{*}$). Hence
the first claim, and the second immediately follows since the complexes
in question are summands of the first ones. \end{proof}
\begin{rem}
We should note that a much stronger version of this lemma (without
any $G$-equivariance) is proved in \cite{key-9}. The proof there
uses reduction to characteristic $p$, where the relation with modular
representations of lie algebras is exploited.
\end{rem}
Now, we would like to show the key property of tilting modules, namely,
that $\mathcal{T}$, the full subcategory on the objects $\mathcal{R}_{\alpha_{1}}\mathcal{R}_{\alpha_{2}}\cdot\cdot\cdot\mathcal{R}_{\alpha_{n}}\omega\cdot O_{\tilde{\mathfrak{g}}}$,
generates (in the sense of section 1.5) the whole category. We recall
that all objects of the form $O_{\tilde{\mathfrak{g}}}(\lambda)$
generate (both the graded and ungraded versions of) our category.
It follows immediate that the collection $\{b\cdot O_{\tilde{\mathfrak{g}}}\}$
(where $b$ is an element of the extended affine braid group) also
generates our category. 

With this in hand, we can state and prove the 
\begin{cor}
The tilting objects generate (both the graded and ungraded versions
of) our category. \end{cor}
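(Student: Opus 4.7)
The plan is to exhibit every object of the form $b\cdot O_{\tilde{\mathfrak{g}}}$ (with $b\in\mathbb{B}_{aff}'$) as lying in the triangulated, summand-closed subcategory $\mathcal{C}'$ generated by the tilting objects $T_{\underline{\alpha},\omega}:=\mathcal{R}_{\alpha_1}\mathcal{R}_{\alpha_2}\cdots\mathcal{R}_{\alpha_n}\omega\cdot O_{\tilde{\mathfrak{g}}}$. Since the collection $\{b\cdot O_{\tilde{\mathfrak{g}}}\}$ already generates the whole category (finite and graded versions alike, by the discussion preceding the corollary), this will suffice.

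The first step is to record a closure property: for each affine simple root $\alpha$, the reflection functor $\mathcal{R}_{\alpha}$ sends our list of tilting generators into itself, since $\mathcal{R}_{\alpha}T_{\underline{\alpha},\omega}=T_{(\alpha,\alpha_1,\ldots,\alpha_n),\omega}$ is again of the tilting form. As $\mathcal{R}_{\alpha}$ is triangulated (being a Fourier--Mukai functor) and commutes with shifts and summands, it therefore preserves $\mathcal{C}'$. For the affine reflection $\alpha_{0}$, the same holds via the defining formula $\mathcal{R}_{\alpha_0}=b^{-1}\mathcal{R}_{\alpha}b$ together with the fact that the braid action preserves $\mathcal{C}'$ up to rewriting (alternatively, one applies the conjugated exact triangles produced at the end of Section~3.3).

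The second step is the induction. Write $b=s_{\alpha_1}^{\epsilon_1}s_{\alpha_2}^{\epsilon_2}\cdots s_{\alpha_n}^{\epsilon_n}\omega$ with $\epsilon_i\in\{\pm1\}$ and $\omega\in\Omega$; since $\mathbb{B}_{aff}'$ is generated by the affine simple reflections together with $\Omega$, every element admits such a factorization. We induct on $n$. The base case $n=0$ is immediate: $\omega\cdot O_{\tilde{\mathfrak{g}}}$ is itself a tilting object (the empty word of reflection functors). For the inductive step, set $Y=s_{\alpha_2}^{\epsilon_2}\cdots s_{\alpha_n}^{\epsilon_n}\omega\cdot O_{\tilde{\mathfrak{g}}}$, which lies in $\mathcal{C}'$ by hypothesis, and apply the two exact triangles of functors from Section~3.3:
\[
Y\longrightarrow \mathcal{R}_{\alpha_1}Y(2)\longrightarrow s_{\alpha_1}Y,\qquad s_{\alpha_1}^{-1}Y(-2)\longrightarrow \mathcal{R}_{\alpha_1}Y\longrightarrow Y.
\]
Depending on the sign $\epsilon_1$, one of these triangles exhibits $s_{\alpha_1}^{\epsilon_1}Y$ as the cone of a morphism between two objects of $\mathcal{C}'$ (the term $Y$ by induction, the term $\mathcal{R}_{\alpha_1}Y$ by the first step), possibly after a grading shift in the graded case. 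Thus $s_{\alpha_1}^{\epsilon_1}Y\in\mathcal{C}'$, completing the induction.

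I do not expect any essential obstacle; the argument is a direct assembly of results already in hand. The only point requiring a little care is verifying that the exact triangles and the ``$\mathcal{R}_\alpha$ preserves tilting'' property genuinely extend to the affine simple root $\alpha_{0}$ via the conjugation trick of Claim~3.22, but this is ensured by construction. In the graded case the Tate twists $(2)$ just have to be carried through; since $\mathcal{C}'$ is closed under the grading shift, this introduces no new issue.
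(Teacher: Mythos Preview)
Your proposal is correct and follows essentially the same approach as the paper: both arguments use the exact triangles relating $\mathcal{R}_{\alpha}$, $Id$, and $s_{\alpha}^{\pm1}$ to show that every object $s_{\alpha_1}^{\pm}\cdots s_{\alpha_n}^{\pm}\omega\cdot O_{\tilde{\mathfrak{g}}}$ lies in the subcategory generated by the tilting objects, and then invoke the fact that such braid-twists already generate. The only presentational difference is that you isolate the observation ``$\mathcal{R}_{\alpha}$ preserves $\mathcal{C}'$'' as a first step and then run a clean induction on word length, whereas the paper peels the reflection functors off one at a time from a given tilting object; the underlying mechanism is identical.
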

\begin{proof}
Let $\mathcal{C}$ denote the full triangulated subcategory containing
$\mathcal{T}$ and closed under extensions, shifts, and direct summands.
Then for any sequence of reflections and any $\omega\in\Omega$, we
have the exact sequences 
\[
\mathcal{R}_{\alpha_{2}}\cdot\cdot\cdot\mathcal{R}_{\alpha_{n}}\omega\cdot O_{\tilde{\mathfrak{g}}}\to\mathcal{R}_{\alpha_{1}}\mathcal{R}_{\alpha_{2}}\cdot\cdot\cdot\mathcal{R}_{\alpha_{n}}\omega\cdot O_{\tilde{\mathfrak{g}}}\to s_{\alpha_{1}}\mathcal{R}_{\alpha_{2}}\cdot\cdot\cdot\mathcal{R}_{\alpha_{n}}\omega\cdot O_{\tilde{\mathfrak{g}}}
\]
 and 
\[
s_{\alpha_{1}}^{-1}\mathcal{R}_{\alpha_{2}}\cdot\cdot\cdot\mathcal{R}_{\alpha_{n}}\omega\cdot O_{\tilde{\mathfrak{g}}}\to\mathcal{R}_{\alpha_{1}}\mathcal{R}_{\alpha_{2}}\cdot\cdot\cdot\mathcal{R}_{\alpha_{n}}\omega\cdot O_{\tilde{\mathfrak{g}}}\to\mathcal{R}_{\alpha_{2}}\cdot\cdot\cdot\mathcal{R}_{\alpha_{n}}\omega\cdot O_{\tilde{\mathfrak{g}}}
\]
 which imply that all objects of the form $s_{\alpha_{1}}^{\pm}\mathcal{R}_{\alpha_{2}}\cdot\cdot\cdot\mathcal{R}_{\alpha_{n}}\omega\cdot O_{\tilde{\mathfrak{g}}}$
are in $\mathcal{C}$. But now consider the sequence 
\[
s_{\alpha_{1}}^{\pm}\mathcal{R}_{\alpha_{3}}\cdot\cdot\cdot\mathcal{R}_{\alpha_{n}}\omega\cdot O_{\tilde{\mathfrak{g}}}\to s_{\alpha_{1}}^{\pm}\mathcal{R}_{\alpha_{2}}\cdot\cdot\cdot\mathcal{R}_{\alpha_{n}}\omega\cdot O_{\tilde{\mathfrak{g}}}\to s_{\alpha_{1}}^{\pm}s_{\alpha_{2}}\mathcal{R}_{\alpha_{3}}\cdot\cdot\cdot\mathcal{R}_{\alpha_{n}}\omega\cdot O_{\tilde{\mathfrak{g}}}
\]
 By the previous implication, we obtain that all objects of the form
$s_{\alpha_{1}}^{\pm}s_{\alpha_{2}}\mathcal{R}_{\alpha_{3}}\cdot\cdot\cdot\mathcal{R}_{\alpha_{n}}\omega\cdot O_{\tilde{\mathfrak{g}}}$
are in $\mathcal{C}$; and considering the other exact sequence for
$\mathcal{R}_{\alpha_{2}}\cdot\cdot\cdot\mathcal{R}_{\alpha_{n}}\omega\cdot O_{\tilde{\mathfrak{g}}}$
yields in fact that all objects of the form $s_{\alpha_{1}}^{\pm}s_{\alpha_{2}}^{\pm}\mathcal{R}_{\alpha_{3}}\cdot\cdot\cdot\mathcal{R}_{\alpha_{n}}\omega\cdot O_{\tilde{\mathfrak{g}}}$
are in $\mathcal{C}$. 

Continuing in this way, we eventually see that all objects of the
form $s_{\alpha_{1}}^{\pm}s_{\alpha_{2}}^{\pm}\cdot\cdot\cdot s_{\alpha_{n}}^{\pm}\omega\cdot O_{\tilde{\mathfrak{g}}}$
are in $\mathcal{C}$. But the previous comments then imply that $\mathcal{C}$
is the entire category.
\end{proof}

\subsection{$\tilde{D}_{h}$-Modules}

In this section we would like to show that the above considerations
extend to the category $D^{b}(Mod^{G}(\tilde{D}_{h}))$ and its graded
version. The point is to show that all of the definitions lift from
the coherent case.

\subsubsection{Line Bundles}

In this section, we shall show that the notion of twisting by a line
bundle lifts in a canonical way. This material follows easily from
standard knowledge of twisted differential operators. 

Let us start by recalling that for each $\lambda\in\mathfrak{h}^{*}$,
we define the sheaf of $\lambda$-twisted quantized differential operators,
$D_{h}^{\lambda}$, to be the quotient of $\tilde{D}_{h}$ by the
ideal sheaf generated by $\{v-h\lambda(v)|v\in\mathfrak{h}\}$. 

The sheaf $D_{h}^{\lambda}/(h-1)=D^{\lambda}$ is the well known sheaf
of twisted differential operators considered, e.g., in \cite{key-3}
(c.f. also \cite{key-27}, Chapter 1). When $\lambda$ is in the weight
lattice, we have an isomorphism 
\[
D^{\lambda}\tilde{=}D(O(\lambda))
\]
 where the object on the right is the sheaf of differential operators
of the equivariant line bundle $O(\lambda)$ on $\mathcal{B}$. This
isomorphism is even an isomorphism of filtered algebras, where the
algebra on the left has the filtration induced from $\tilde{D}_{h}$,
and the algebra on the right has the filtration by order of differential
operators. The associated graded of these algebras is clearly $O(\tilde{\mathcal{N}})$
(considered as a sheaf on $\mathcal{B}$). Further, the Rees algebra
of $D^{\lambda}$ is then isomorphic to $D_{h}^{\lambda}$ (more or
less by definition). 

Now, the algebra $D(O(\lambda))$ comes with a natural action on the
line bundle $O(\lambda)$- since by definition an element of $D(O(\lambda))$
is an operator on $O(\lambda)$. Let us equip $O(\lambda)$ with the
trivial filtration- all terms in degree zero. Then the natural action
of $D^{\lambda}$ on $O(\lambda)$ becomes a filtration respecting
action, and we can therefore deform it to an action of $D_{h}^{\lambda}$
on $Rees(O(\lambda))\tilde{=}O(\lambda)[h]$. 

Under the natural morphism $\tilde{D}_{h}\to D_{h}^{\lambda}$, we
obtain that $O(\lambda)[h]$ is a graded $\tilde{D}_{h}$-module.
The natural $G$-equivariant structure on $O(\lambda)$ lifts to $O(\lambda)[h]$
(by letting $G$ act trivially on $h$) to make it an element of $Mod^{G\times\mathbb{G}_{m}}(\tilde{D}_{h})$. 

Finally, this allows us to define the $\lambda$-twist of any $\tilde{D}_{h}$
module as follows: there is an isomorphism of sheaves 
\[
M\otimes_{O(\mathcal{B})}O(\lambda)\tilde{=}M\otimes_{O(\mathcal{B}\times\mathbb{A}^{1})}O(\lambda)[h]
\]
 and we can define the action of any $\xi\in\tilde{D}_{h}$ on the
right hand side by the usual formula: 
\[
\xi\cdot(m\otimes v)=\xi m\otimes v+m\otimes\xi v
\]
 We shall denote by $\tilde{D}_{h}(\lambda)$ the module $\tilde{D}_{h}\otimes_{O(\mathcal{B})}O(\lambda)$
with this action. By the projection formula, $\tilde{D}_{h}(\lambda)/h$
is the sheaf $O(\tilde{\mathfrak{g}})(\lambda)$ considered as a sheaf
on $\mathcal{B}$.

\subsubsection{Deforming Reflection Functors}

Now we wish to define the deformation of our reflection functors,
starting with the finite roots. Recall that for each finite reflection
$s$ we have a map 
\[
\pi_{s}:\mathcal{B}\to\mathcal{P}_{s}
\]
 and also an extension of this map (which we also called $\pi_{s}$)
$\tilde{\mathfrak{g}}\to\tilde{\mathfrak{g}}_{s}$. 

We can realize this latter map as follows: define the variety $\tilde{\mathfrak{g}}(s)$
as the incidence variety 
\[
\{(x,\mathfrak{b})\in\mathfrak{g}^{*}\times\mathcal{B}|x|_{\mathfrak{u}(\pi_{s}(\mathfrak{b}))}=0\}
\]
Equivalently, we have 
\[
\tilde{\mathfrak{g}}(s)=\tilde{\mathfrak{g}}_{s}\times_{\mathcal{P}_{s}}\mathcal{B}
\]
 where $\tilde{\mathfrak{g}}_{s}\to\mathcal{P}_{s}$ is the projection,
and $\mathcal{B}\to\mathcal{P}_{s}$ is $\pi_{s}$. Thus we see there
are natural maps 
\[
\tilde{\mathfrak{g}}\to\tilde{\mathfrak{g}}(s)\to\tilde{\mathfrak{g}}_{s}
\]
 (inclusion and projection, respectively), and the composition is
nothing but our standard map $\tilde{\mathfrak{g}}\to\mathfrak{\tilde{g}}_{s}$. 

The benefit of writing things this way is that it makes deformation
easy. In particular, we can define a sheaf (on $\mathcal{B}$), called
$\tilde{D}_{h}(s)$, quantizing $\tilde{\mathfrak{g}}_{s}$, as follows:
recall the sheaf of algebras $U^{0}$ on $\mathcal{B}$ from section
2, and define $\mathfrak{u}^{0}(s)$ as the ideal sheaf generated
by $\mathfrak{u}(\pi_{s}(\mathfrak{b}))$ at each point. Then the
quotient 
\[
\tilde{D}(s)=U^{0}/\mathfrak{u}^{0}(s)
\]
 is naturally a filtered sheaf of algebras which deforms $\tilde{\mathfrak{g}}(s)$;
and we set $\tilde{D}_{h}(s)=Rees(\tilde{D}(s))$. 

Then there is the obvious quotient map $\tilde{D}_{h}(s)\to\tilde{D}_{h}$
which deforms the inclusion $\tilde{\mathfrak{g}}\to\tilde{\mathfrak{g}}_{s}$.
The pushforward under this inclusion is then deformed by the functor
which regards a $\tilde{D}_{h}$-module as a $\tilde{D}_{h}(s)$ module. 

Further, if $M$ is a $\tilde{D}_{h}(s)$-module, then clearly $(\pi_{s})_{*}(M)$
is a $\tilde{D}_{h,\mathcal{P}_{s}}$-module. Thus we have defined
a natural functor (which on the level of sheaves is simply $(\pi_{s})_{*}$)
from $Mod(\tilde{D}_{h})$ to $Mod(\tilde{D}_{h,\mathcal{P}_{S}})$.
Thus functor clearly respects $G$-equivariance and grading. Further,
this is a deformation of the pushforward map on coherent sheaves,
in the sense of the following
\begin{prop}
We have an isomorphism 
\[
(\pi_{s})_{*}(M)\otimes_{k[h]}^{L}k_{0}\tilde{=}(\pi_{s})_{*}(M\otimes_{k[h]}^{L}k_{0})
\]
 where $M\in Mod^{\mathbb{G}_{m}}(\tilde{D}_{h})$ and the $(\pi_{s})_{*}$
on the right is that of coherent sheaves. 
\end{prop}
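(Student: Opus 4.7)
The proof will closely mirror \prettyref{lem:Cohomology1}, since both occurrences of $(\pi_s)_*$ in the statement are, at the underlying sheaf-theoretic level, simply the derived pushforward of quasi-coherent sheaves along $\pi_s : \mathcal{B} \to \mathcal{P}_s$; the two interpretations differ only in how the result is equipped with a module structure. In particular, the quotient map $M \to M/hM$ induces a natural base-change morphism
\[
(\pi_s)_*(M)\otimes_{k[h]}^L k_0 \to (\pi_s)_*(M\otimes_{k[h]}^L k_0),
\]
and the task is to show it is an isomorphism.

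The plan is then to reduce via the short exact sequence
\[
0 \to M_{tors} \to M \to M/M_{tors} \to 0,
\]
where $M_{tors}$ is the $h$-torsion subsheaf of $M$, which is a $\tilde{D}_h$-submodule because $h$ is central in $\tilde{D}_h$. Exactly as in \prettyref{lem:Cohomology1}, the positive gradedness of $h$ forces $M/M_{tors}$ to be $k[h]$-flat. Since $R(\pi_s)_*$ and the derived tensor product are triangulated, the problem reduces to the two cases where $M$ is $k[h]$-flat or $h$-torsion.

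In the flat case, I would choose an affine open cover $\{U_i\}$ of $\mathcal{P}_s$ over which the Zariski-locally trivial $\mathbb{P}^1$-bundle $\pi_s$ trivializes, and cover each $\pi_s^{-1}(U_i) \cong U_i \times \mathbb{P}^1$ by two affines; the two-term Cech complex built from this cover computes $R(\pi_s)_*(M)|_{U_i}$, and its terms are $k[h]$-flat because $M$ is. Hence tensoring with $k_0$ over $k[h]$ commutes with taking cohomology of the Cech complex, giving the isomorphism locally and, by gluing, globally. In the $h$-torsion case, I would filter $M$ finitely by submodules whose subquotients are $\mathcal{O}_{\tilde{\mathfrak{g}}}$-modules (i.e., on which $h$ acts as zero); on each subquotient $N$ the Koszul resolution of $k_0$ over $k[h]$ splits both sides into $(\pi_s)_*(N) \oplus (\pi_s)_*(N)[1]$, and induction up the filtration via the triangulated structure finishes the argument.

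The main technical obstacle is verifying the Cech argument in the flat case, i.e., confirming that the sections of $M$ on the affine opens in question inherit $k[h]$-flatness from $M$. This should follow from the local nature of flatness together with the local triviality of $\pi_s$, so the obstacle is not a serious one; the real content of the proposition is that the two descriptions of the pushforward functor genuinely commute with the extraction of the classical limit $h \to 0$.
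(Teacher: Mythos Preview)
Your proposal is correct and follows essentially the same approach as the paper: the paper's proof consists of the single sentence ``follows immediately from \prettyref{lem:Cohomology1} and \prettyref{cor:Cohomology2},'' meaning exactly that the argument of \prettyref{lem:Cohomology1} for $R\Gamma$ carries over verbatim with $R\Gamma$ replaced by $R(\pi_s)_*$. You have simply written out that transplanted argument in detail, including the torsion/flat decomposition and the \v{C}ech computation adapted to the $\mathbb{P}^1$-bundle $\pi_s$; your stated ``technical obstacle'' about flatness of sections on affine opens is not a real issue, since $k[h]$-flatness of $M$ is a local condition and passes to sections over any affine.
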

The proof follows immediately from \prettyref{lem:Cohomology1} and
\prettyref{cor:Cohomology2}.

Next, we would like to define a functor 
\[
\pi_{s}^{*}:\tilde{D}_{h,\mathcal{P}_{s}}\to\tilde{D}_{h,\mathcal{B}}
\]
This we can also do, following the definition for coherent sheaves.
So, we first deform the pullback along the map 
\[
\tilde{\mathfrak{g}}(s)\to\tilde{\mathfrak{g}}_{s}
\]
by defining a functor on $Mod(\tilde{D}_{h,\mathcal{P}_{s}})$ as
\[
M\to O(\mathcal{B})\otimes_{\pi_{s}^{-1}(O(\mathcal{P}))}\pi_{s}^{-1}(M)
\]
 which is clearly an object of $Mod(\tilde{D}_{h}(s))$ (by the definition
of $\tilde{D}_{h}(s)$). 

Next we deform the pullback along the map $\tilde{\mathfrak{g}}\to\tilde{\mathfrak{g}}(s)$
by defining a functor on $Mod(\tilde{D}_{h}(s))$ as 
\[
M\to\tilde{D}_{h}\otimes_{\tilde{D}_{h}(s)}M
\]
 where $\tilde{D}_{h}$ is a module over $\tilde{D}_{h}(s)$ by the
natural surjection of algebras; so in fact this functor is just 
\[
M\to M/\mathcal{I}
\]
 where $\mathcal{I}$ is the ideal sheaf kernel of $\tilde{D}_{h}(s)\to\tilde{D}_{h}$. 

Taking the composition of these two functors yields a functor $\pi_{s}^{*}:Mod(\tilde{D}_{h,\mathcal{P}_{s}})\to Mod(\tilde{D}_{h})$.
As above, one checks immediately that this functor preserves graded
and Equivariant versions of the category, and it is very easy to see
that this functor deforms the pullback of coherent sheaves (as in
the proposition above). 

Let us note that there is a natural adjunction $\pi_{s}^{*}\pi_{s*}\to Id$
for the usual reasons. Further, we note that these functors extend
naturally to derived functors, and this adjunction continues to hold
at that level.

Thus we can now make the 
\begin{defn}
To any finite simple root $\alpha$, we associate the functor $\mathcal{R}_{\alpha_{s}}=L\pi_{s}^{*}R(\pi_{s})_{*}$
on $D^{b}(Mod(\tilde{D}_{h}))$. Thus functor also preserves the equivariant
and graded versions of this category. 
\end{defn}
This is a crucial step in deforming tilting modules. We shall need
to gain some further insight into the behavior of these functors.
To do so, we shall reformulate them in terms of a {}``Fourier-Mukai''
type set up.

\subsection{Fourier-Mukai functors for Noncommutative rings. }

In this section, we would like to set up a version of Fourier-Mukai
theory which applies to certain sheaves of non-commutative rings on
nice spaces. 

We start with the {}``affine'' case of noncommutative rings themselves;
more precisely, let $A$ and $B$ be two noncommutative flat noetherian
$k$-algebras, and let $F^{\cdot}$ be a complex in $D^{b}(A^{opp}\otimes_{k}B-mod)$.
To $F^{\cdot}$ we shall associate a functor 
\[
\mathcal{F}_{F^{\cdot}}:D^{b}(A-mod)\to D^{b}(B-mod)
\]
 as follows: 

For any $M^{\cdot}\in D^{b}(A-mod)$, we can form the tensor product
$M^{\cdot}\otimes_{k}^{L}B$. Since $B$ is right $B$-module, we
can consider this tensor product as an element of \linebreak{}
$D^{b}(A\otimes_{k}B^{opp}-mod$); in addition, it carries a left
action of $B$ via $b\to1\otimes b$. So the complex 
\[
(M^{\cdot}\otimes_{k}^{L}B)\otimes_{A\otimes_{k}B^{opp}}^{L}F^{\cdot}
\]
 which is a priori just a complex of $k$-vector spaces, is in fact
a complex of left $B$-modules also; this defines the functor $\mathcal{F}_{F^{\cdot}}$. 

There are two main cases of interest. First up, we have the
\begin{example}
Let us suppose that $A=B$. 

Then, the identity functor on $D^{b}(A-mod)$ can be realized via
the kernel $A$ considered as an $A^{opp}\otimes_{k}A$-module. To
see this, we note that for any \linebreak{}
$M^{\cdot}\in D^{b}(A-mod)$, there is a morphism of complexes 
\[
M^{\cdot}\to(M^{\cdot}\otimes_{k}^{L}A)\otimes_{A\otimes_{k}A^{opp}}^{L}A
\]

given at the level of objects by $m\to m\otimes1\otimes1$. However,
the complex on the right is equivalent to another where $M^{\cdot}$
is replaced by a finite complex of projective $A$ modules. Hence,
to show that this morphism is an isomorphism, we can in fact assume
that $M^{\cdot}$ is a complex of free $A$-modules. But in that case
the complex on the right is evidently a complex of free $A$-modules
of the same rank, and the map just becomes the identity. 
\end{example}
We shall also need to consider the cases provided by the following
\begin{example}
Now let us suppose that there is an algebra map $f:A\to B$ between
two noetherian flat $k$-algebras. We shall express the Fourier-Mukai
kernels for the functors $M^{\cdot}\to M^{\cdot}\otimes_{A}^{L}B$
(here $B$ is considered as a right $A$-module via $f$, and the
resulting complex is a left $B$-module via the left action of $B$
on itself) and $N^{\cdot}\to Res_{A}^{B}(N^{\cdot})$ (where the restriction
is over the map $f$). 

For the first functor, which is an arrow $D^{b}(A-mod)\to D^{b}(B-mod)$,
we consider the object $B\in D^{b}(A^{opp}\otimes_{k}B)$- here letting
$A$ act on the right via $f$ and $B$ act on the left. Then we have
a morphism of complexes of $B$-modules 
\[
M^{\cdot}\otimes_{A}^{L}B\to(M^{\cdot}\otimes_{k}^{L}B)\otimes_{A\otimes_{k}B^{opp}}^{L}B
\]
 which comes from the map on objects sending $m\otimes b\to m\otimes1\otimes b$.
As above, we can actually replace $M^{\cdot}$ by a complex of free
$A$-modules to see that this is an isomorphism. 

For the second functor, we use the object $B\in D^{b}(B^{opp}\otimes_{k}A)$
considered as a left $A$-module via $f$ and a right $B$-module.
A similar argument says that this gives the functor $Res$. 
\end{example}

\subsubsection{Fourier Mukai Kernels for deformed reflection functors. }

Now we shall give the sheaf-theoretic versions of the above functors
in the cases which are relevant to us. The general set-up simply copies
the affine case: let $X$ and $Y$ be $k$- varieties, and let $\mathcal{A}$
and $\mathcal{B}$ be quasi-coherent sheaves of non-commutative rings
on $X$ and $Y$, respectively. We assume $\mathcal{A}$ and $\mathcal{B}$
are noetherian and flat over $k$. 

Then, the product variety $X\times Y$ carries the sheaf of rings
$\mathcal{A}\boxtimes\mathcal{B}$. We consider a complex of sheaves
$\mathcal{F}^{\cdot}$ belonging to $D^{b}((\mathcal{A}^{opp}\boxtimes\mathcal{B})-mod)$.
Then $\mathcal{F}^{\cdot}$ defines a functor 
\[
M^{\cdot}\to Rp_{*}((M^{\cdot}\boxtimes\mathcal{B})\otimes_{\mathcal{A}\boxtimes\mathcal{B}^{opp}}^{L}\mathcal{F}^{\cdot})
\]
 which goes from $D^{b}(\mathcal{A}-mod)$ to $D^{b}(\mathcal{B}-mod)$;
here $p$ denotes the projection $X\times Y\to Y$. As above, the
left action of $\mathcal{B}$ comes from the additional action of
$O_{X}\boxtimes\mathcal{B}$ on the sheaf $(M^{\cdot}\boxtimes\mathcal{B})\otimes_{\mathcal{A}\boxtimes\mathcal{B}^{opp}}^{L}\mathcal{F}^{\cdot}$. 

In the case of interest to us, the categories under consideration
are $Mod(\tilde{D}_{h,\mathcal{P}})$ for the various flag varieties
$\mathcal{P}$, which will mainly be $\mathcal{B}$ or $\mathcal{P}_{s}$.
We shall use the principles of the above section to construct the
functors $Id$, $(\pi_{s})_{*}$, and $\pi_{s}^{*}$ (from now on
we only consider the derived version of these functors; every functor
in sight is taken to be derived). 
\begin{example}
We start with the functor $Id$. Consider the sheaf of algebras \linebreak{}
$\tilde{D}_{h}\boxtimes\tilde{D}_{h}^{opp}$ on the scheme $\mathcal{B}\times\mathcal{B}$,
and let $M^{\cdot}$ be a complex in $D^{b}(Mod(\tilde{D}_{h})).$
We shall form the Fourier-Mukai functor associated to the diagonal
bimodule $\tilde{D}_{h}\in Mod(\tilde{D}_{h}^{opp}\boxtimes\tilde{D_{h}})$.
This is the functor 
\[
M^{\cdot}\to(p_{2})_{*}((M^{\cdot}\boxtimes\tilde{D}_{h})\otimes_{\tilde{D}_{h}\boxtimes\tilde{D}_{h}^{opp}}^{L}\tilde{D}_{h})
\]
 (where $p_{2}:\mathcal{B}\times\mathcal{B}\to\mathcal{B}$ is the
second projection), which will therefore be an endofunctor of the
category $D^{b}(Mod(\tilde{D}_{h}))$. 

To see that this is just the identity functor, we can reduce to the
affine case discussed above by noting that the $(\tilde{D}_{h}\boxtimes\tilde{D}_{h}^{opp})$-module
$\tilde{D}_{h}$ is concentrated on the diagonal of $\mathcal{B}\times\mathcal{B}$,
which easily implies that we can reduce to the affine case by working
with a covering of the form $\{U\times U:U\subseteq\mathcal{B}\mbox{ \ensuremath{}is affine}\}$. 
\end{example}
Next, we take care of the push and pull functors. 
\begin{example}
We have the sheaves of algebras $\tilde{D}_{h}^{opp}\boxtimes\tilde{D}_{h,\mathcal{P}}$
on the scheme $\mathcal{B}\times\mathcal{P}$, and $\tilde{D}_{h,\mathcal{P}}^{opp}\boxtimes\tilde{D}_{h}$
on $\mathcal{P}\times\mathcal{B}$. Via the map $\pi_{s}:\mathcal{B}\to\mathcal{P}_{s}$,
we get the graph subschemes $\Gamma_{\pi_{s}}\subseteq\mathcal{B}\times\mathcal{P}_{s}$
and $\Gamma_{\pi_{s}}^{'}\subseteq\mathcal{P}_{s}\times\mathcal{B}$
(the latter is the flip of the former). 

We shall now construct certain modules supported on these graph subschemes:
let $i:\mathcal{B}\to\Gamma_{\pi_{s}}$ be the natural isomorphism.
We define $(\tilde{D}_{h})_{\pi_{s}}$ as the $\tilde{D}_{h}^{opp}\boxtimes\tilde{D}_{h,\mathcal{P}}$-module
which is simply $i_{*}\tilde{D}_{h}$ as a sheaf, with the obvious
right action of $\tilde{D}_{h}$. The action by $\tilde{D}_{h,\mathcal{P}}$
then comes via the natural algebra morphism $\pi_{s}^{*}(\tilde{D}_{h,\mathcal{P}})\to\tilde{D}_{h}$. 

Similarly, we can define $_{\pi_{s}}(\tilde{D}_{h})$ over $\tilde{D}_{h,\mathcal{P}}^{opp}\boxtimes\tilde{D}_{h}$
as follows: we have the subscheme $\Gamma_{\pi_{s}}^{'}\subseteq\mathcal{P}_{s}\times\mathcal{B}$,
and a corresponding isomorphism $i^{'}:\mathcal{B}\to\Gamma_{\pi_{s}}^{'}$.
Then we can consider the module $i_{*}^{'}(\tilde{D}_{h})$ as a left
$\tilde{D}_{h}$-module, which also inherits a right action of $\tilde{D}_{h,\mathcal{P}}$
via the morphism $\pi_{s}$; this will be our $_{\pi_{s}}(\tilde{D}_{h})$. 

Now, let $M^{\cdot}\in D^{b}(Mod(\tilde{D}_{h,\mathcal{P}}))$. Then
we have the complex \linebreak{}
$M^{\cdot}\boxtimes\tilde{D}_{h}\in D^{b}(Mod(\tilde{D}_{h,\mathcal{P}}\boxtimes\tilde{D}_{h}^{opp}))$,
and so we can form the tensor product 
\[
(M^{\cdot}\boxtimes\tilde{D}_{h})\otimes_{\tilde{D}_{h,\mathcal{P}}\boxtimes\tilde{D}_{h}^{opp}}^{L}{}_{\pi_{s}}(\tilde{D}_{h})
\]
 which carries an additional action of the sheaf $O_{\mathcal{P}}\boxtimes\tilde{D}_{h}$
(via the left action of $\tilde{D}_{h}$ on itself in the first factor).
Thus the complex 
\[
(p_{2})_{*}((M^{\cdot}\boxtimes\tilde{D}_{h})\otimes_{\tilde{D}_{h,\mathcal{P}}\boxtimes\tilde{D}_{h}^{opp}}^{L}{}_{\pi_{s}}(\tilde{D}_{h}))
\]
 is naturally in $D^{b}(Mod(\tilde{D}_{h}))$. We claim that this
complex is functorially isomorphic to our complex $L\pi_{s}^{*}(M^{\cdot})$
defined above. The morphism of complexes is the same one as was used
in the affine case, and in fact if we locally replace $M^{\cdot}$
by a complex of free $\tilde{D}_{h,\mathcal{P}}$-modules, we can
use the same argument to prove that this map is an isomorphism (noting
that $_{\pi_{s}}(\tilde{D}_{h})$ is acyclic for $(p_{2})_{*}$ by
the results of section 2). 

In a very similar way, one shows that for $M^{\cdot}\in D^{b}(Mod(\tilde{D}_{h}))$,
the functor 
\[
M^{\cdot}\to(p_{2})_{*}((M^{\cdot}\boxtimes\tilde{D}_{h,\mathcal{P}})\otimes_{\tilde{D}_{h}\boxtimes\tilde{D}_{h,\mathcal{P}}^{opp}}(\tilde{D}_{h})_{\pi_{s}})
\]
is isomorphic to $(R\pi_{s})_{*}$. 
\end{example}

\subsubsection{Convolution}

With these preliminaries out of the way, we shall develop several
important properties of our reflection functors. To reach our aims,
we shall first explain how convolution works in a general setting
setting. 

We suppose varieties $X$, $Y$, and $Z$, with flat noetherian sheaves
of algebras $\mathcal{A}$, $\mathcal{B}$, $\mathcal{C}$, respectively.
We let $M^{\cdot}\in D^{b}(\mathcal{A}\boxtimes\mathcal{B}-mod)$
and $N^{\cdot}\in D^{b}(\mathcal{B}^{opp}\boxtimes\mathcal{C}-mod)$.
We shall construct an object $M^{\cdot}\star N^{\cdot}\in D^{b}(\mathcal{A}\boxtimes\mathcal{C}-mod)$,
as follows: 

We have the object $M^{\cdot}\boxtimes\mathcal{C}\in D^{b}(\mathcal{A}\boxtimes\mathcal{B}\boxtimes\mathcal{C}^{opp}-mod)$
(by looking at $\mathcal{C}$ as a right module over itself); this
object admits an additional action of $O_{X}\boxtimes O_{Y}\boxtimes\mathcal{C}$
via the left action of $\mathcal{C}$ on itself. In the same vein,
we can consider the object $\mathcal{A}^{opp}\boxtimes N^{\cdot}\in D^{b}(\mathcal{A}^{opp}\boxtimes\mathcal{B}^{opp}\boxtimes\mathcal{C}-mod)$,
with the additional action of $\mathcal{A}\boxtimes O_{Y}\boxtimes O_{Z}$.
Since $M^{\cdot}\boxtimes\mathcal{C}$ and $\mathcal{A}^{opp}\boxtimes N^{\cdot}$
are modules over opposed algebras, we can consider the tensor product
\[
(M^{\cdot}\boxtimes\mathcal{C})\otimes_{\mathcal{A}\boxtimes\mathcal{B}\boxtimes\mathcal{C}^{opp}}^{L}(\mathcal{A}^{opp}\boxtimes N^{\cdot})
\]
 which is then naturally a module over $\mathcal{A}\boxtimes O_{Y}\boxtimes\mathcal{C}$
via the additional actions discussed above. Thus the complex 
\[
(Rp_{13})_{*}((M^{\cdot}\boxtimes\mathcal{C})\otimes_{\mathcal{A}\boxtimes\mathcal{B}\boxtimes\mathcal{C}^{opp}}^{L}(\mathcal{A}^{opp}\boxtimes N^{\cdot})):=M^{\cdot}\star N^{\cdot}
\]
is thus an element of $D^{b}(\mathcal{A}\boxtimes\mathcal{C}-mod)$. 

On the other hand, by the general discussion of the previous sections,
the object $M^{\cdot}$ defines a functor $\mathcal{F}_{M^{\cdot}}:D^{b}(\mathcal{A}^{opp}-mod)\to D^{b}(\mathcal{B}-mod)$,
while $N^{\cdot}$ defines a functor $\mathcal{F}_{N^{\cdot}}:D^{b}(\mathcal{B}-mod)\to D^{b}(\mathcal{C}-mod)$,
and $M^{\cdot}\star N^{\cdot}$ defines a functor $\mathcal{F}_{M^{\cdot}\star N^{\cdot}}:D^{b}(\mathcal{A}^{opp}-mod)\to D^{b}(\mathcal{C}-mod)$.
Then we have the 
\begin{lem}
There is an isomorphism of functors 
\[
\mathcal{F}_{N^{\cdot}}\circ\mathcal{F}_{M^{\cdot}}\tilde{=}\mathcal{F}_{M^{\cdot}\star N^{\cdot}}
\]

\end{lem}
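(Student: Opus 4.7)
The plan is to verify the isomorphism by expanding both sides using the definitions of the Fourier--Mukai functors and of convolution, then identifying the result with a common expression supported on the triple product $X \times Y \times Z$. This is the noncommutative analogue of the classical composition formula proved earlier in the commutative setting, and the mechanism mirrors that proof: the projection formula combined with associativity of the derived tensor product, with all manipulations justified by flatness.

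Fix $P^{\cdot} \in D^{b}(\mathcal{A}^{opp}\text{-mod})$. To compute $\mathcal{F}_{M^{\cdot} \star N^{\cdot}}(P^{\cdot})$, I would substitute the definition of $M^{\cdot} \star N^{\cdot}$ as the pushforward $(Rp_{13})_{*}$ from $X \times Y \times Z$ to $X \times Z$ into the definition of $\mathcal{F}_{M^{\cdot} \star N^{\cdot}}$, and then apply the projection formula along $p_{13}$. Since $\mathcal{C}$ is flat over $k$, the external tensor product with $(P^{\cdot} \boxtimes \mathcal{C})$ pulls through the pushforward cleanly, so the right-hand side becomes a single total pushforward from $X \times Y \times Z$ down to $Z$ of a triple tensor product of $P^{\cdot}$, $M^{\cdot}$, and $N^{\cdot}$, suitably boxed with structure sheaves and algebra factors so that the result is a module over $\mathcal{A} \boxtimes \mathcal{B} \boxtimes \mathcal{C}^{opp}$ taken against its opposite.

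For the other side, I would unpack $\mathcal{F}_{N^{\cdot}} \circ \mathcal{F}_{M^{\cdot}}(P^{\cdot})$ in two stages: first $\mathcal{F}_{M^{\cdot}}(P^{\cdot})$ is a pushforward from $X \times Y$ to $Y$; then applying $\mathcal{F}_{N^{\cdot}}$ gives a pushforward from $Y \times Z$ to $Z$. Using flat base change and the projection formula along the two projections $X \times Y \times Z \to X \times Y$ and $X \times Y \times Z \to Y \times Z$, I would lift both pushforwards up to $X \times Y \times Z$, so that the composition becomes a single pushforward to $Z$. Using associativity of $\otimes^{L}$ and commutativity of the various external tensor products in disjoint variables, the resulting integrand matches the one obtained on the right-hand side. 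Naturality in $P^{\cdot}$ is automatic since every step is a functorial construction.

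The main obstacle is purely bookkeeping: one must track which factor of $\mathcal{A} \boxtimes \mathcal{B} \boxtimes \mathcal{C}^{opp}$ (and its opposite) acts on which side of each tensor factor, and verify that the necessary reorderings of $\otimes^{L}$'s and pushforwards are legitimate. All such exchanges are underwritten by the flatness hypothesis on $\mathcal{A}$, $\mathcal{B}$, and $\mathcal{C}$ over $k$, which guarantees that external tensor products are derived-exact in each variable and that the Tor-independence conditions needed for base change are satisfied. Once the bookkeeping is in place, the two expressions coincide as objects of $D^{b}(\mathcal{A} \boxtimes \mathcal{C}\text{-mod})$, and hence induce the claimed natural isomorphism of functors on $D^{b}(\mathcal{A}^{opp}\text{-mod})$.
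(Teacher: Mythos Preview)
Your proposal is correct and follows essentially the same approach the paper invokes: the paper does not spell out a proof but simply asserts that the classical argument for Fourier--Mukai kernels (projection formula plus base change on the triple product, as in \cite{key-20}) carries over verbatim to the noncommutative setting. Your sketch is precisely that argument, with the flatness hypotheses doing exactly the work you describe.
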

This lemma is simply the statement that {}``convolution becomes composition''
for Fourier-Mukai kernels (see section 3 above). The proof in the
classical case (spelled out in great detail in \cite{key-20}) works
perfectly well in our situation.

Let us spell out exactly how this works in the case of interest to
us: consider $(\tilde{D}_{h})_{\pi_{s}}\boxtimes\tilde{D}_{h}$ as
a $\tilde{D}_{h}^{opp}\boxtimes\tilde{D}_{h,\mathcal{P}}\boxtimes\tilde{D}_{h}^{opp}$-module.
This module admits an additional action by $O_{\mathcal{B}}\boxtimes O_{\mathcal{B}}\boxtimes\tilde{D}_{h}$.
Similarly, consider $\tilde{D}_{h}\boxtimes{}_{\pi_{s}}(\tilde{D}_{h})$
as a $\tilde{D}_{h}\boxtimes\tilde{D}_{h,\mathcal{P}}^{opp}\boxtimes\tilde{D}_{h}$-module;
this module admits an additional action by $\tilde{D}_{h}^{opp}\boxtimes O_{\mathcal{B}}\boxtimes O_{\mathcal{B}}$.
Then we have: 
\begin{lem}
For a finite root $\alpha$, the sheaf 
\[
\mathcal{G}_{\alpha}:=(p_{13})_{*}(((\tilde{D}_{h})_{\pi_{s}}\boxtimes\tilde{D}_{h})\otimes_{\tilde{D}_{h}^{opp}\boxtimes\tilde{D}_{h,\mathcal{P}}\boxtimes\tilde{D}_{h}^{opp}}^{L}(\tilde{D}_{h}\boxtimes{}_{\pi_{s}}(\tilde{D}_{h})))
\]
 on $\mathcal{B}\times\mathcal{B}$, which is naturally a $\tilde{D}_{h}^{opp}\boxtimes\tilde{D}_{h}$-module,
is a kernel which gives the functor $\mathcal{R}_{\alpha}$. 
\end{lem}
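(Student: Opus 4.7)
The plan is to derive the lemma as a direct corollary of the preceding convolution-equals-composition lemma, applied to the two Fourier-Mukai kernels already constructed in the examples just above this statement.

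First, recall that by definition $\mathcal{R}_\alpha = L\pi_s^* \circ R(\pi_s)_*$. The excerpt has exhibited a kernel for each factor: the bimodule $(\tilde{D}_h)_{\pi_s}$ on $\mathcal{B}\times\mathcal{P}_s$, viewed as an object of $D^b(\tilde{D}_h^{opp}\boxtimes\tilde{D}_{h,\mathcal{P}}\text{-mod})$, represents $R(\pi_s)_*$, while ${}_{\pi_s}(\tilde{D}_h)$ on $\mathcal{P}_s\times\mathcal{B}$, viewed as an object of $D^b(\tilde{D}_{h,\mathcal{P}}^{opp}\boxtimes\tilde{D}_h\text{-mod})$, represents $L\pi_s^*$.

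Second, I would apply the general convolution lemma with $X=Z=\mathcal{B}$, $Y=\mathcal{P}_s$, and with the three sheaves of algebras taken to be $\tilde{D}_h$, $\tilde{D}_{h,\mathcal{P}}$, and $\tilde{D}_h$, plugging in $M^{\cdot} = (\tilde{D}_h)_{\pi_s}$ and $N^{\cdot} = {}_{\pi_s}(\tilde{D}_h)$. Unwinding the convolution formula $M^{\cdot}\star N^{\cdot} = (Rp_{13})_*((M^{\cdot}\boxtimes\mathcal{C})\otimes^L_{\mathcal{A}\boxtimes\mathcal{B}\boxtimes\mathcal{C}^{opp}}(\mathcal{A}^{opp}\boxtimes N^{\cdot}))$ in this setting gives literally
\[
M^{\cdot}\star N^{\cdot} \;=\; (Rp_{13})_*\bigl(((\tilde{D}_h)_{\pi_s}\boxtimes\tilde{D}_h)\otimes^L_{\tilde{D}_h^{opp}\boxtimes\tilde{D}_{h,\mathcal{P}}\boxtimes\tilde{D}_h^{opp}}(\tilde{D}_h\boxtimes{}_{\pi_s}(\tilde{D}_h))\bigr),
\]
which is precisely the sheaf $\mathcal{G}_\alpha$ defined in the statement. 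The convolution lemma then immediately yields $\mathcal{F}_{\mathcal{G}_\alpha} \cong \mathcal{F}_{N^{\cdot}}\circ\mathcal{F}_{M^{\cdot}} = L\pi_s^*\circ R(\pi_s)_* = \mathcal{R}_\alpha$, as required.

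The only real bookkeeping --- and hence the main potential obstacle, though it is dispatched by earlier results --- is to verify that the various ``extra'' left and right actions of $\tilde{D}_h$ on the factors $(\tilde{D}_h)_{\pi_s}\boxtimes\tilde{D}_h$ and $\tilde{D}_h\boxtimes{}_{\pi_s}(\tilde{D}_h)$ survive the derived tensor product and pushforward to produce a genuine $\tilde{D}_h^{opp}\boxtimes\tilde{D}_h$-bimodule structure on $\mathcal{G}_\alpha$, and that these derived operations are well-behaved (for instance, that ${}_{\pi_s}(\tilde{D}_h)$ is $(p_{13})_*$-acyclic, which follows from the cohomology results of Section 2 via a covering by affine products of the form $U\times\mathcal{P}_s\times V$ and was already used to identify the kernel of $L\pi_s^*$). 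With that formal verification in hand, the lemma is immediate from the convolution principle.
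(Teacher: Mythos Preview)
Your proposal is correct and is exactly the paper's approach: the lemma is stated immediately after the general ``convolution becomes composition'' lemma as a direct specialization, with the paper simply writing ``Let us spell out exactly how this works in the case of interest to us'' and then plugging in $M^{\cdot}=(\tilde{D}_h)_{\pi_s}$ and $N^{\cdot}={}_{\pi_s}(\tilde{D}_h)$. No separate proof is given in the paper beyond this unwinding, and your bookkeeping remarks about the surviving bimodule structure and acyclicity match the paper's own earlier observations.
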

Now we can compare formally this functor with its classical version.
Firstly, let us note that, a priori, $\mathcal{G}_{\alpha}$ has three
actions of $h$ coming from the fact that is is the pushforward of
a sheaf defined on a three-fold product. However, these actions of
$h$ must all agree, as can be seen from the fact that the left and
right actions of $h$ on $(\tilde{D}_{h})_{\pi_{s}}$ and $_{\pi_{s}}(\tilde{D}_{h})$
agree. Then we have the
\begin{prop}
We have an isomorphism (of sheaves on $\mathcal{B}\times\mathcal{B}$)
\[
(\mathcal{G}_{\alpha})\otimes_{k[h]}^{L}k_{0}\tilde{=}O_{\tilde{\mathfrak{g}}\times_{\mathfrak{\tilde{g}}_{\alpha}}\tilde{\mathfrak{g}}}
\]
\end{prop}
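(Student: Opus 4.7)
The plan is to reduce the problem to a classical convolution computation by pushing $(-)\otimes^L_{k[h]}k_0$ through the definition of $\mathcal{G}_\alpha$. Since, as remarked just before the statement, the three $h$-actions on the tensor product agree, the operation of reducing mod $h$ is unambiguous, and the question is how it interacts with the outer pushforward and the inner derived tensor product.

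First I would apply the base-change result of Lemma~\ref{lem:Cohomology1} (which extends to the sheaves of algebras at hand, since $\tilde{D}_h$ and $\tilde{D}_{h,\mathcal{P}}$ are flat over $k[h]$ and everything is $\mathbb{G}_m$-equivariant) to push $\otimes^L_{k[h]}k_0$ past the pushforward $(p_{13})_*$. This yields
\[
\mathcal{G}_\alpha \otimes^L_{k[h]} k_0 \;\cong\; (p_{13})_*\Bigl( \bigl(((\tilde{D}_h)_{\pi_s}\boxtimes\tilde{D}_h)\otimes^L_{\tilde{D}_h^{opp}\boxtimes\tilde{D}_{h,\mathcal{P}}\boxtimes\tilde{D}_h^{opp}}(\tilde{D}_h\boxtimes{}_{\pi_s}(\tilde{D}_h))\bigr)\otimes^L_{k[h]}k_0\Bigr).
\]
Second, I would move $\otimes^L_{k[h]} k_0$ inside the derived tensor product. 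Because $h$ is central in all the algebras in sight and because $\tilde{D}_h$ is $k[h]$-flat, the reductions mod $h$ of the building blocks are the classical kernels: $\tilde{D}_h\otimes^L_{k[h]}k_0\cong p_*O_{\tilde{\mathfrak{g}}}$ (the identity kernel), while $(\tilde{D}_h)_{\pi_s}\otimes^L_{k[h]}k_0$ and ${}_{\pi_s}(\tilde{D}_h)\otimes^L_{k[h]}k_0$ are the structure sheaves of the graphs of $\pi_s$ and its flip, equipped with their natural bimodule structure over $O_{\tilde{\mathfrak{g}}}$ and $O_{\tilde{\mathfrak{g}}_s}$ (these being precisely the classical kernels representing $\pi_{s*}$ and $\pi_s^*$, as discussed in the Fourier-Mukai examples above).

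Third, after these identifications, the right-hand side of the displayed equation is exactly the convolution (in the sense of Proposition~\ref{pro:composition=00003Dconvolution}) of the classical kernels for $\pi_{s*}$ and $\pi_s^*$, hence a Fourier-Mukai kernel representing the composition $\pi_s^*\circ\pi_{s*}$. By the lemma identifying this composition with $F_{O_{\tilde{\mathfrak{g}}\times_{\tilde{\mathfrak{g}}_\alpha}\tilde{\mathfrak{g}}}}$, the result is (represented by) $O_{\tilde{\mathfrak{g}}\times_{\tilde{\mathfrak{g}}_\alpha}\tilde{\mathfrak{g}}}$. To upgrade from an isomorphism of functors to an isomorphism of sheaves, I would compute the classical convolution directly: on the three-fold product $\mathcal{B}\times\mathcal{P}_s\times\mathcal{B}$, the tensor product of the two graph structure sheaves is supported on the fiber product, and pushing down by $p_{13}$ yields exactly $O_{\tilde{\mathfrak{g}}\times_{\tilde{\mathfrak{g}}_\alpha}\tilde{\mathfrak{g}}}$ because the projection $\tilde{\mathfrak{g}}\to\tilde{\mathfrak{g}}_s$ is flat (so the fiber product is the underived one and no higher Tor or higher direct images appear).

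The main obstacle will be Step~1: verifying that the non-commutative base-change argument really goes through for the complex inside $(p_{13})_*$. The cleanest route is to establish it locally by choosing a covering of $\mathcal{B}\times\mathcal{P}_s\times\mathcal{B}$ by affine opens on which we can replace one of the bimodule factors by a bounded resolution by sheaves that are direct sums of $\tilde{D}_h$-modules of the form considered in Section~2.2.4, so that the splitting of Lemma~\ref{lem:Cohomology1} into the flat and torsion cases applies term-by-term in the Čech complex. Once this local analysis is settled, the rest is a formal identification of classical Fourier-Mukai kernels.
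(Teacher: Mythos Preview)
Your approach is essentially the same as the paper's: push $\otimes^L_{k[h]}k_0$ through the pushforward and the derived tensor product using the cohomological lemmas of Section~2.2.4, then identify the result with the classical convolution of the graph kernels for $\pi_{s*}$ and $\pi_s^*$. The only difference is cosmetic: where you propose a direct computation of that classical convolution using flatness of $\tilde{\mathfrak{g}}\to\tilde{\mathfrak{g}}_s$, the paper simply cites \cite{key-28}, Section~5, for the identity $(Rp_{13})_*(O_{\tilde{\mathfrak{g}}\times_{\tilde{\mathfrak{g}}_\alpha}\tilde{\mathfrak{g}}_\alpha\times\tilde{\mathfrak{g}}}\otimes^L O_{\tilde{\mathfrak{g}}\times\tilde{\mathfrak{g}}_\alpha\times_{\tilde{\mathfrak{g}}_\alpha}\tilde{\mathfrak{g}}})\cong O_{\tilde{\mathfrak{g}}\times_{\tilde{\mathfrak{g}}_\alpha}\tilde{\mathfrak{g}}}$, and is content to say that $\otimes^L_{k[h]}k_0$ ``commutes with all tensor products'' rather than unpacking Step~2 as you do.
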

\begin{proof}
The results of \cite{key-28}, section 5, show that we have an isomorphism
\[
(Rp_{13})_{*}(O_{\tilde{\mathfrak{g}}\times_{\tilde{\mathfrak{g}}_{\alpha}}\tilde{\mathfrak{g}}_{\alpha}\times\tilde{\mathfrak{g}}}\otimes_{\tilde{\mathfrak{g}}\times\tilde{\mathfrak{g}}_{\alpha}\times\tilde{\mathfrak{g}}}^{L}O_{\tilde{\mathfrak{g}}\times\tilde{\mathfrak{g}}_{\alpha}\times_{\tilde{\mathfrak{g}}_{\alpha}}\tilde{\mathfrak{g}}})\tilde{=}O_{\tilde{\mathfrak{g}}\times_{\mathfrak{\tilde{g}}_{\alpha}}\tilde{\mathfrak{g}}}
\]
 where all the products are taken over the natural map $\pi_{s}$.
But the sheaf on the left, considered as a quasicoherent sheaf on
$\mathcal{B}\times\mathcal{B}$, is isomorphic to $(\mathcal{G}_{\alpha})\otimes_{k[h]}^{L}k_{0}$
by \prettyref{lem:Cohomology1},\prettyref{cor:Cohomology2} and the
fact that the functor $\otimes_{k[h]}^{L}k_{0}$ commutes with all
tensor products. 
\end{proof}
From this proposition and the graded Nakayama lemma, it follows that
$\mathcal{G}_{\alpha}$ is a sheaf (i.e., concentrated in a single
cohomological degree). 

Now we can deduce from these facts the following crucial: 
\begin{cor}
The adjunction morphism of kernels $O_{\Delta\tilde{\mathfrak{g}}}\to O_{\tilde{\mathfrak{g}}\times_{\tilde{\mathfrak{g}}_{\alpha}}\tilde{\mathfrak{g}}}(2)$
lifts to a morphism of $\tilde{D}_{h}^{opp}\boxtimes\tilde{D}_{h}$-modules
$\tilde{D}_{h}\to\mathcal{G}_{\alpha}(2)$. \end{cor}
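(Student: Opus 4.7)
The plan is to lift the classical adjunction to the quantized setting using the deformation/base-change machinery of Section 2.2.4 combined with the graded Nakayama lemma.

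First I would consider the complex
\[
V^{\cdot} := R Hom^{G\times\mathbb{G}_{m}}_{\tilde{D}_{h}^{opp}\boxtimes\tilde{D}_{h}}(\tilde{D}_{h},\mathcal{G}_{\alpha}(2)),
\]
a bounded complex of graded $k[h]$-modules. Since $\tilde{D}_{h}$ is flat over $k[h]$ (being the Rees algebra of a filtered sheaf), and since $\mathcal{G}_{\alpha}$ is concentrated in a single cohomological degree as noted in the paragraph immediately preceding the statement, the base change isomorphism \prettyref{lem:Cohomology1} applied to the $R Hom$-complex gives
\[
V^{\cdot}\otimes_{k[h]}^{L}k_{0}\;\tilde{=}\;R Hom^{G\times\mathbb{G}_{m}}(O_{\Delta\tilde{\mathfrak{g}}},\,O_{\tilde{\mathfrak{g}}\times_{\tilde{\mathfrak{g}}_{\alpha}}\tilde{\mathfrak{g}}}(2)),
\]
using $\tilde{D}_{h}/h\tilde{=}O_{\Delta\tilde{\mathfrak{g}}}$ (viewed as a kernel on $\mathcal{B}\times\mathcal{B}$) and the identification of $\mathcal{G}_{\alpha}\otimes^{L}k_{0}$ from the previous proposition.

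Next, the classical adjunction morphism is a distinguished degree-zero class on the right-hand side (it is the unit $Id\to\pi_{s}^{!}\pi_{s*}\tilde{=}\pi_{s}^{*}(2)\pi_{s*}$ along the smooth $\mathbb{P}^{1}$-fibration $\pi_{s}$, as in the discussion following the definition of the reflection functor). To lift this class to $H^{0}(V^{\cdot})$ I would invoke the distinguished triangle
\[
V^{\cdot}(-2)\to V^{\cdot}\to V^{\cdot}\otimes_{k[h]}^{L}k_{0}
\]
(the left arrow being multiplication by $h$); the associated long exact cohomology sequence places the obstruction to lifting in $H^{1}(V^{\cdot})(-2)$. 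By \prettyref{pro:GrNak} applied to $H^{1}(V^{\cdot})$, it suffices to check that this obstruction vanishes after reducing mod $h$, i.e.\ that the relevant $G\times\mathbb{G}_{m}$-isotypic component of $Ext^{1}(O_{\Delta\tilde{\mathfrak{g}}},O_{\tilde{\mathfrak{g}}\times_{\tilde{\mathfrak{g}}_{\alpha}}\tilde{\mathfrak{g}}}(2))$ vanishes.

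The main obstacle is this $Ext^{1}$-vanishing on the coherent side. I would establish it using that $O_{\tilde{\mathfrak{g}}\times_{\tilde{\mathfrak{g}}_{\alpha}}\tilde{\mathfrak{g}}}\tilde{=}\pi_{s}^{*}\pi_{s*}O_{\tilde{\mathfrak{g}}}$ (from the lemma preceding the definition of $\mathcal{R}_{\alpha}$) together with the fact that $\pi_{s*}O_{\tilde{\mathfrak{g}}}$ is a rank-$2$ locally free sheaf on $\tilde{\mathfrak{g}}_{\alpha}$ (equation~(2.3)): combining this with Grothendieck--Serre duality $\pi_{s}^{!}\tilde{=}\pi_{s}^{*}(2)$ along the $\mathbb{P}^{1}$-fibration, the higher Ext computation on $\tilde{\mathfrak{g}}\times\tilde{\mathfrak{g}}$ reduces via the $(\pi_{s}^{*},\pi_{s*})$-adjunction to higher self-Ext of a rank-$2$ locally free sheaf on $\tilde{\mathfrak{g}}_{\alpha}$, which vanishes. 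An alternative, more conceptual route would be to construct the lift directly as the unit of the abstract $(L\pi_{s}^{*},R\pi_{s*})$-adjunction on $\tilde{D}_{h}$-module categories, twisted by $(2)$ to pass from $\pi_{s}^{!}$ to $\pi_{s}^{*}$, and to translate via the convolution lemma just proved into a morphism of Fourier--Mukai kernels; the obstacle in that approach is establishing the quantized duality $\pi_{s}^{!}\tilde{=}\pi_{s}^{*}(2)$, which itself reduces by the same Nakayama argument to the classical fact.
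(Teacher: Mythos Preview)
Your approach is genuinely different from the paper's, and while the overall strategy (lift mod $h$ via obstruction theory and graded Nakayama) is reasonable in spirit, there is a real gap at the $Ext^{1}$-vanishing step.

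The paper avoids any $RHom$ computation between the kernels. Instead it works directly with $R\Gamma^{G}(\mathcal{G}_{\alpha})$: this carries a natural action of $O(\mathfrak{h}^{*}\times\mathbb{A}^{1})\otimes_{k}O(\mathfrak{h}^{*}\times\mathbb{A}^{1})$, and acting on the obvious section $1$ gives a map from this polynomial ring into $R\Gamma^{G}(\mathcal{G}_{\alpha})$. Reducing mod $h$ identifies $R\Gamma^{G}(O_{\tilde{\mathfrak{g}}\times_{\tilde{\mathfrak{g}}_{\alpha}}\tilde{\mathfrak{g}}})\cong O(\mathfrak{h}^{*})\otimes_{O(\mathfrak{h}^{*}/s)}O(\mathfrak{h}^{*})$, and the classical adjunction becomes the explicit degree-$2$ element $v=1\otimes\alpha_{s}+\alpha_{s}\otimes1$. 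One then simply lifts $v$ to the polynomial ring in the obvious way and pushes it into $R\Gamma^{G}(\mathcal{G}_{\alpha})$; this produces a map $\tilde{D}_{h}^{opp}\boxtimes\tilde{D}_{h}\to\mathcal{G}_{\alpha}(2)$ from the \emph{free} bimodule, and the remaining work is to check it kills the ideal $\mathcal{J}$ defining the diagonal quotient $\tilde{D}_{h}$, which is done by a direct degree argument. No $Ext$-vanishing is needed, and as a bonus one gets the explicit formula for $v$ used later (Remark \ref{rem:A-natural-question}).

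Your route instead requires $Ext^{1,G\times\mathbb{G}_{m}}_{\tilde{\mathfrak{g}}\times\tilde{\mathfrak{g}}}(O_{\Delta\tilde{\mathfrak{g}}},O_{\tilde{\mathfrak{g}}\times_{\tilde{\mathfrak{g}}_{\alpha}}\tilde{\mathfrak{g}}}(2))=0$, and the argument you give for this is not correct as stated. The map $\pi_{s}:\tilde{\mathfrak{g}}\to\tilde{\mathfrak{g}}_{\alpha}$ is \emph{not} a $\mathbb{P}^{1}$-fibration: both source and target have dimension $\dim\mathfrak{g}$, and $\pi_{s}$ is a finite degree-$2$ cover on the regular locus (this is exactly what equation~(2.3) says). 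So the claimed reduction ``to higher self-Ext of a rank-$2$ locally free sheaf on $\tilde{\mathfrak{g}}_{\alpha}$'' does not follow from the adjunction you describe; after applying $(\pi_{s}\times id)_{*}$ one lands on $RHom_{\tilde{\mathfrak{g}}_{\alpha}\times\tilde{\mathfrak{g}}}(O_{\Gamma_{\pi_{s}}},O_{\Gamma_{\pi_{s}}})$, a self-Ext of the structure sheaf of a closed subvariety, which has no reason to vanish in positive degrees. One can probably salvage the approach by only tracking the single graded piece needed (rather than full $Ext^{1}$-vanishing), but that analysis is not in your sketch. Separately, invoking Lemma~\ref{lem:Cohomology1} for an $RHom$-complex rather than $R\Gamma$ requires justification (e.g.\ $h$-flatness of $\mathcal{G}_{\alpha}$ and a suitable bimodule resolution of $\tilde{D}_{h}$).
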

\begin{proof}
By the basic results of section 2 for the flag variety $\mathcal{B}\times\mathcal{B}$,
we have that the space $R\Gamma^{G}(\mathcal{G}_{\alpha})$ admits
an action of the algebra $O(\mathfrak{h}^{*}\times\mathbb{A}^{1})\otimes_{k}O(\mathfrak{h}^{*}\times\mathbb{A}^{1})$.
Further, there is a global section $1\in R\Gamma^{G}(\mathcal{G}_{\alpha})$
(obtained by looking at the image of $(1\boxtimes1)\otimes(1\boxtimes1)\in\mathcal{G}_{\alpha}$);
and so the action on $1$ produces a map 
\begin{equation}
O(\mathfrak{h}^{*}\times\mathbb{A}^{1})\otimes_{k}O(\mathfrak{h}^{*}\times\mathbb{A}^{1})\to R\Gamma^{G}(\mathcal{G}_{\alpha})\label{eq:relflectionmap}
\end{equation}

and applying $\otimes_{k[h]}^{L}k_{0}$ yields a map 
\[
O(\mathfrak{h}^{*})\otimes_{k}O(\mathfrak{h}^{*})\to R\Gamma^{G}(O_{\tilde{\mathfrak{g}}\times_{\tilde{\mathfrak{g}}_{\alpha}}\tilde{\mathfrak{g}}})
\]

It is not difficult to check that this map is a surjection, and produces
an isomorphism 
\[
O(\mathfrak{h}^{*})\otimes_{O(\mathfrak{h}^{*}/s)}O(\mathfrak{h}^{*})\to R\Gamma^{G}(O_{\tilde{\mathfrak{g}}\times_{\tilde{\mathfrak{g}}_{\alpha}}\tilde{\mathfrak{g}}})
\]
 using the Leray spectral sequence for either of the two pushforwards
$p:\tilde{\mathfrak{g}}\times\tilde{\mathfrak{g}}\to\tilde{\mathfrak{g}}$
(and the fact that $p_{*}(O_{\tilde{\mathfrak{g}}\times_{\tilde{\mathfrak{g}}_{\alpha}}\tilde{\mathfrak{g}}})\tilde{=}\mathcal{R}_{\alpha}O_{\tilde{\mathfrak{g}}}\tilde{=}O_{\tilde{\mathfrak{g}}}\oplus O_{\tilde{\mathfrak{g}}}$). 

Now, the morphism $O_{\tilde{\mathfrak{g}}\times\tilde{\mathfrak{g}}}\to O_{\Delta\tilde{\mathfrak{g}}}\to O_{\tilde{\mathfrak{g}}\times_{\tilde{\mathfrak{g}}_{\alpha}}\tilde{\mathfrak{g}}}(2)$
yields a non-trivial element $v$ in 
\[
R\Gamma^{G\times\mathbb{G}_{m}}(O_{\tilde{\mathfrak{g}}\times_{\tilde{\mathfrak{g}}_{\alpha}}\tilde{\mathfrak{g}}}(2))\tilde{=}\mathfrak{h}^{*}\oplus\mathfrak{h}^{*}
\]

Now, one can regard this element as a degree one element of $O(\mathfrak{h}^{*}\times\mathbb{A}^{1})\otimes_{k}O(\mathfrak{h}^{*}\times\mathbb{A}^{1})$.
Then its image in $R\Gamma^{G}(\mathcal{G}_{\alpha})$ under the map
\ref{eq:relflectionmap} produces a nontrivial map $\phi:\tilde{D}_{h}^{opp}\boxtimes\tilde{D}_{h}\to\mathcal{G}_{\alpha}$. 

Let $\tilde{D}_{h}\tilde{=}\tilde{D}_{h}^{opp}\boxtimes\tilde{D}_{h}/\mathcal{J}$
as $\tilde{D}_{h}^{opp}\boxtimes\tilde{D}_{h}$-modules. We wish to
show that the map $\phi$ dies on the submodule $\mathcal{J}$. We
know this is true upon setting both copies of $h=0$. However, $\mathcal{J}\subseteq ker(\phi)$
can then be seen from the fact that $\mathcal{J}$ is generated locally
by elements in grade degree $2$ which survive after killing $h$;
as well as the fact that the two actions of $h$ on $\mathcal{G}_{\alpha}$
(coming from $\tilde{D}_{h}^{opp}\boxtimes\tilde{D}_{h}$) agree by
definition of $\mathcal{G}_{\alpha}$. Thus $\phi$ is a lift of the
original map. \end{proof}
\begin{rem}
a) The choice of this lift depended only on the choice of an isomorphism
$O(\mathfrak{h}^{*}\times\mathbb{A}^{1})\tilde{=}R\Gamma^{G}(\tilde{D}_{h})$,
which in turn depends only on the general data fixed at the beginning
of the paper (c.f. section 2). 

b) We can compute explicitly the element $v$. By construction, it
is a degree $2$ element of $O(\mathfrak{h}^{*})\otimes_{O(\mathfrak{h}^{*}/s)}O(\mathfrak{h}^{*})$
which satisfies $(\alpha_{s}\otimes1)v=(1\otimes\alpha_{s})v$. It
is easy to see that, up to scalar, the only such is $1\otimes\alpha_{s}+\alpha_{s}\otimes1$.
We shall make this choice of $v$ from now on. \end{rem}
\begin{prop}
\label{pro:adjointness}The morphism \textup{$Id\to\mathcal{R}_{\alpha}(2)$}
is an adjointness of functors on $D^{b,\mathbb{G}_{m}}(Mod(\tilde{D}_{h}))$. \end{prop}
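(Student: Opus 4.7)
My plan is to lift the classical adjunction $(\mathcal{R}_\alpha, \mathcal{R}_\alpha(2))$ from the coherent side to the $\tilde{D}_h$-module side by a deformation argument via the Fourier-Mukai kernel formalism. Recall that classically, $\pi_s: \mathcal{B} \to \mathcal{P}_s$ is a smooth proper $\mathbb{P}^1$-bundle, and because the relative canonical bundle is trivial up to a $(2)$-shift in the $\mathbb{G}_m$-grading, the functor $R\pi_{s*}$ admits $L\pi_s^*(2)$ as a right adjoint in $D^{b,\mathbb{G}_m}(Coh(\tilde{\mathfrak{g}}))$. The unit of this adjunction is precisely the morphism $Id \to \mathcal{R}_\alpha(2)$ constructed in \cite{key-28}, and it coincides with the explicit morphism built in the preceding corollary after $h \mapsto 0$.

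First, I would reformulate both the candidate unit $Id \to \mathcal{R}_\alpha(2)$ and the corresponding counit $\mathcal{R}_\alpha \to Id(2)$ as morphisms of Fourier-Mukai kernels on $\mathcal{B} \times \mathcal{B}$. The unit is encoded by the morphism $\phi: \tilde{D}_h \to \mathcal{G}_\alpha(2)$ just produced. The counit is obtained by an entirely parallel construction: start from the classical restriction map $O_{\tilde{\mathfrak{g}} \times_{\tilde{\mathfrak{g}}_\alpha} \tilde{\mathfrak{g}}} \to O_{\Delta \tilde{\mathfrak{g}}}(2)$ and lift it to a morphism $\psi: \mathcal{G}_\alpha \to \tilde{D}_h(2)$ of $\tilde{D}_h^{opp} \boxtimes \tilde{D}_h$-modules using the same $G$-equivariant global sections calculation that produced $\phi$.

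Second, to verify that the pair $(\phi, \psi)$ satisfies the zig-zag identities of an adjunction, I would apply the convolution-equals-composition principle for noncommutative Fourier-Mukai kernels (the lemma stated just before the construction of $\mathcal{G}_\alpha$) to rewrite each zig-zag identity as an equality between two morphisms of convolution kernels on $\mathcal{B} \times \mathcal{B}$, living in a Hom-space of $\tilde{D}_h^{opp} \boxtimes \tilde{D}_h$-modules. The relevant graded piece of this Hom-space can be computed, via the Leray spectral sequence for $\mathcal{B} \times \mathcal{B} \to \mathrm{pt}$ together with the identification $p_*(\mathcal{G}_\alpha) \cong \tilde{D}_h \oplus \tilde{D}_h(-2)$ (the $\tilde{D}_h$-analogue of the classical calculation in the preceding corollary's proof), to be free of rank one over $k[h]$. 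Applying \prettyref{lem:Cohomology1}, \prettyref{cor:Cohomology2}, and the graded Nakayama lemma \prettyref{pro:GrNak} then reduces each zig-zag identity to its specialization at $h=0$, which is exactly the classical adjunction established in \cite{key-28}.

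The main obstacle is executing this Nakayama reduction rigorously at the level of morphisms. Specifically, one needs to show that the relevant bimodule Hom-spaces are concentrated in a single cohomological degree and free over $k[h]$ in the relevant $\mathbb{G}_m$-weight, so that the graded Nakayama conclusion applies to morphism-level equalities and not merely to existence statements. This reduces to a cohomology vanishing for $R\Gamma$ of convolutions of $\mathcal{G}_\alpha$ with itself, which in turn follows, by the same base-change machinery of section 2, from the classical vanishing for $O_{\tilde{\mathfrak{g}} \times_{\tilde{\mathfrak{g}}_\alpha} \tilde{\mathfrak{g}}}$; this classical vanishing is already implicit in the formulas of \cite{key-28}.
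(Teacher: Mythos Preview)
Your approach is correct in spirit and follows the same deformation-from-$h=0$ philosophy as the paper, but the paper takes a shorter route by exploiting the factorization $\mathcal{R}_\alpha=\pi_s^{*}\pi_{s*}$ rather than working directly with the kernel $\mathcal{G}_\alpha$ on $\mathcal{B}\times\mathcal{B}$. Instead of checking the zig-zag identities as equalities in $\operatorname{Hom}$-spaces between convolutions of $\mathcal{G}_\alpha$ (which forces you to control the cohomology of $\mathcal{G}_\alpha\star\mathcal{G}_\alpha$), the paper observes that the triangle identities for the pair $(\pi_{s*},\pi_s^{*}(2))$ reduce to showing that the composite $(\pi_s)_*\to(\pi_s)_*\mathcal{R}_\alpha\to(\pi_s)_*$ is the identity. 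At the kernel level this is an endomorphism of the much simpler object $(\tilde{D}_h)_{\pi_s}$, supported on the graph of $\pi_s$ in $\mathcal{B}\times\mathcal{P}_s$. Its $G\times\mathbb{G}_m$-equivariant endomorphisms in degree zero are read off directly from $R\Gamma^{G}(\tilde{D}_h)\cong O(\mathfrak{h}^{*}\times\mathbb{A}^{1})$, so a single application of graded Nakayama (the reduction mod $h$ is the classical identity) finishes the argument with no further cohomological input.

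What your route buys is that it stays entirely on $\mathcal{B}\times\mathcal{B}$ and never unpacks the $\pi_s^{*}\pi_{s*}$ factorization, which is conceptually clean; what it costs is the extra vanishing you flag in your last paragraph, which the paper avoids entirely. One small slip: your counit should be the lift of $O_{\tilde{\mathfrak{g}}\times_{\tilde{\mathfrak{g}}_\alpha}\tilde{\mathfrak{g}}}\to O_{\Delta\tilde{\mathfrak{g}}}$ with no $(2)$-twist; the twist sits only on the unit side.
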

\begin{proof}
To see this, recall that we have morphisms of the sort 
\[
(\pi_{s})_{*}\to(\pi_{s})_{*}\mathcal{R}_{\alpha}\to(\pi_{s})_{*}
\]
(on $D^{b}(Mod(\tilde{D}_{h,\mathcal{P}}))$) which we want to show
are the identity. At the level of Fourier-Mukai kernels, we have a
morphism 
\[
(\tilde{D}_{h})_{\pi_{s}}\to(\tilde{D}_{h})_{\pi_{s}}
\]
 whose reduction mod $h$ is the identity. Since this morphism is
$G\times\mathbb{G}_{m}$-equivariant, we conclude from $R\Gamma(\tilde{D}_{h})=O(\mathfrak{h}^{*}\times\mathbb{A}^{1})$
that it is the identity as well; the same is true for the adjunction
morphism in the other order. 
\end{proof}
From this proposition, we conclude that $\mathcal{R}_{\alpha}$ is
a self-adjoint functor on $D^{b,G}(Mod(\tilde{D}_{h}))$ and its graded
version. 

As a next step, we shall also need to deform the functor associated
to the affine root. At the level of coherent sheaves, this functor
can be obtained from a finite-root functor by a certain conjugation.
Thus we need to lift the functors by which our finite root was conjugated.
That is the goal of the next subsection.

\subsection{Deforming braid generators. }

Our goal here is to deform the functors attached to braid generators.
The previous two sections have told us exactly how to do this: the
functor of action by a line bundle is given by 
\[
M\to M\otimes_{O(\mathcal{B})}O(\lambda)
\]
 made into a $\tilde{D}_{h}$-module as explained above. 

The functor associated to a finite root $\alpha$ can then be defined
using the reflection functors. By the natural adjointness property
and \prettyref{pro:adjointness}, we have natural morphisms 
\[
Id\to\mathcal{R}_{\alpha}(2)
\]
 and 
\[
\mathcal{R}_{\alpha}\to Id
\]
 and therefore we can simply define functors (on $D^{b}(Mod^{\mathbb{G}_{m}}(\tilde{D}_{h}))$)
to be triangles 
\[
Id\to\mathcal{R}_{\alpha}(2)\to s_{\alpha}
\]
 and 
\[
s_{\alpha}^{-1}(-2)\to\mathcal{R}_{\alpha}\to Id
\]
 (to get the ungraded version we simply ignore the shifts).

Our first order of business is to check that these functors are actually
inverse. For this, we shall use a very general lemma, which appears
in \cite{key-29}. The set-up is as follows: 

Suppose that $\mathcal{C}$ and $\mathcal{D}$ are algebraic triangulated
categories and let $F:\mathcal{C}\to\mathcal{D}$ and $G:\mathcal{D}\to\mathcal{C}$
be triangulated functors. Let $\Phi$ be a triangulated self-equivalence
of $\mathcal{C}$. Suppose we are given two adjoint pairs $(F,G)$
and $(G,F\Phi)$. 

Then we have the data of four morphisms of the adjunctions 
\[
\eta:1_{\mathcal{D}}\to F\Phi G\phantom{hellomy}\epsilon:G\Phi F\to1_{\mathcal{C}}
\]
 
\[
\eta':1_{\mathcal{C}}\to GF\phantom{hellomy}\epsilon':FG\to1_{\mathcal{D}}
\]
 Let $\Psi$ be the cocone of $\epsilon'$ and $\Psi^{'}$ be the
cone of $\eta$. Assume that 
\[
1_{\mathcal{C}}\to GF\to\Phi^{-1}
\]
 is a split exact triangle. Then one concludes
\begin{prop}
The functors $\Psi$ and $\Psi^{-1}$ are inverse self equivalences
of $\mathcal{D}$. 
\end{prop}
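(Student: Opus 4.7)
The plan is to establish $\Psi \circ \Psi' \cong 1_{\mathcal{D}}$; the companion identity $\Psi' \circ \Psi \cong 1_{\mathcal{D}}$ would follow by the symmetric argument obtained by composing with $F$ on the right of $\Psi$'s defining triangle and applying $F$ on the left of the assumed split triangle.

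First, I would analyze $G\Psi'$. Applying $G$ to the defining triangle $1_{\mathcal{D}} \xrightarrow{\eta} F\Phi G \to \Psi'$ gives a triangle
\[
G \xrightarrow{G\eta} GF\Phi G \to G\Psi' \to G[1].
\]
The zig-zag identity $(\epsilon G)\circ (G\eta) = 1_G$ for the adjunction $(G, F\Phi)$ provides a retraction of $G\eta$, so this triangle splits: $GF\Phi G \cong G \oplus G\Psi'$. Independently, composing the hypothesized split triangle on the right with $\Phi G$ yields a split triangle $\Phi G \to GF\Phi G \to G$, hence a second decomposition $GF\Phi G \cong \Phi G \oplus G$. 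A diagram chase comparing these two splittings, using the naturality of $\eta$ and the explicit section/retraction data of the assumed split triangle, should identify $G\Psi' \cong \Phi G$.

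Applying $F$ then yields $FG\Psi' \cong F\Phi G$. The key compatibility to verify is that under this isomorphism the counit $\epsilon'_{\Psi'} : FG\Psi' \to \Psi'$ is identified with the natural map $F\Phi G \to \Psi'$ from the defining triangle of $\Psi'$. Granted this, the triangle defining $\Psi \Psi'$ will read
\[
\Psi \Psi' \to F\Phi G \to \Psi' \to \Psi \Psi'[1],
\]
and rotating the defining triangle $1_{\mathcal{D}} \xrightarrow{\eta} F\Phi G \to \Psi'$ will identify the fiber of $F\Phi G \to \Psi'$ with $1_{\mathcal{D}}$, giving $\Psi \Psi' \cong 1_{\mathcal{D}}$.

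The main obstacle is this last compatibility check: verifying that $\epsilon'_{\Psi'}$ corresponds to the natural map from the defining triangle of $\Psi'$ after the identification $G\Psi' \cong \Phi G$. This amounts to a diagram chase using the naturality of both units $\eta, \eta'$ and both counits $\epsilon, \epsilon'$, the zig-zag identities for each adjunction, and the explicit structure of the split triangle. The hypothesis that $\mathcal{C}$ and $\mathcal{D}$ are algebraic triangulated is essential here, since it ensures that cones can be treated functorially (via a DG or stable $\infty$-categorical enhancement), so that the identifications above are well-defined up to canonical natural isomorphism rather than merely up to some non-unique isomorphism object by object.
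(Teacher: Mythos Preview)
The paper does not actually prove this proposition: it is stated with attribution to Rouquier \cite{key-29} (``a very general lemma, which appears in \cite{key-29}'') and then immediately applied to the specific situation $F=\pi_s^*$, $G=(\pi_s)_*$, $\Phi=(2)$. So there is no in-paper argument against which to compare yours directly.

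That said, your outline is essentially the strategy Rouquier uses. The route through $G\Psi' \cong \Phi G$ via the two splittings of $GF\Phi G$ is the right one, and you correctly flag the two places where care is needed: matching the $G$ summands in the two decompositions, and checking that the identification $FG\Psi' \cong F\Phi G$ carries $\epsilon'_{\Psi'}$ to the structure map of the defining triangle of $\Psi'$. Your remark that the algebraicity hypothesis is what makes these identifications canonical (via a DG enhancement) is also on point; without it one only gets isomorphisms object-by-object, which is not enough for a statement about functors. One small note: for the first splitting you want the zig-zag identity for the adjunction $(G,F\Phi)$, which reads $(\epsilon\,\Phi G)\circ(G\eta)=1_{\Phi G}$ after a $\Phi$-twist rather than literally $(\epsilon G)\circ(G\eta)=1_G$; be careful to track the $\Phi$ consistently when you write out the diagram chase.
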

We shall apply this proposition in the case where $\mathcal{D}=D^{b}(Mod(\tilde{D}_{h}))$,
$\mathcal{C}=D^{b}(Mod(\tilde{D}_{h,\mathcal{P}}))$ $F=\pi_{s}^{*}$,
$G=(\pi_{s})_{*}$, and $\Phi=(2)$. The remaining issue is to show
that the triangle 
\[
1_{D^{b}(Mod(\tilde{D}_{h,\mathcal{P}}))}\to(\pi_{s})_{*}\pi_{s}^{*}\to(2)
\]
 is split exact- but in fact the adjunction formula implies immediately
that for any sheaf $M\in D^{b}(Mod(\tilde{D}_{h,\mathcal{P}}))$,
\[
(\pi_{s})_{*}\pi_{s}^{*}M\tilde{=}M\oplus M(2)
\]
 and so we can conclude that this the case. Thus our functors are
indeed inverse as required.

Next, we would like to show that these functors satisfy the (weak)
braid relations. We begin with the 
\begin{lem}
\label{lem:Braid-on-D(tilde)}Consider any braid relation satisfied
by the elements $\{T_{s_{\alpha}}^{\pm1}\}_{\alpha\in S_{fin}}$,
$\{\theta_{\lambda}\}$. For notational convenience, we consider $T_{s}\theta_{\lambda}=\theta_{\lambda}T_{s}$
(for $<\lambda,\alpha>=0$). Then there is an isomorphism in $D^{b}(Mod^{G\times\mathbb{G}_{m}}(\tilde{D}_{h}))$
\[
s_{\alpha}\cdot\theta_{\lambda}\cdot\tilde{D}_{h}\tilde{=}\theta_{\lambda}\cdot s_{\alpha}\cdot\tilde{D}_{h}
\]
 \end{lem}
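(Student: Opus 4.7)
The plan is to reduce the desired braid relation to the commutation of the reflection functor $\mathcal{R}_\alpha = \pi_s^*(\pi_s)_*$ with the line-bundle twist $\theta_\lambda = -\otimes O(\lambda)$, by comparing the defining exact triangles for $s_\alpha$ given in Section 3.7. Since $\theta_\lambda$ is exact (tensor with an invertible sheaf), applying it to the triangle $\tilde{D}_h \to \mathcal{R}_\alpha(2)\tilde{D}_h \to s_\alpha \tilde{D}_h$ produces a distinguished triangle
\[
\tilde{D}_h(\lambda) \to \theta_\lambda \mathcal{R}_\alpha(2)\tilde{D}_h \to \theta_\lambda s_\alpha \tilde{D}_h,
\]
while the defining triangle for $s_\alpha\theta_\lambda\tilde{D}_h$ reads
\[
\tilde{D}_h(\lambda) \to \mathcal{R}_\alpha(2)\tilde{D}_h(\lambda) \to s_\alpha \theta_\lambda \tilde{D}_h.
\]
So it suffices to produce a natural isomorphism $\theta_\lambda\mathcal{R}_\alpha \cong \mathcal{R}_\alpha\theta_\lambda$ intertwining the two instances of the unit $\mathrm{Id}\to\mathcal{R}_\alpha(2)$: one then obtains an isomorphism between the third vertices.

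The commutation follows from a projection-formula argument that uses the hypothesis $\langle\lambda,\check\alpha\rangle=0$. Under this hypothesis, $\lambda$ extends from $T$ to a character of the Levi of $P_s$, so there is a line bundle $L_\lambda$ on $\mathcal{P}_s$ with $\pi_s^*L_\lambda \cong O_\mathcal{B}(\lambda)$. Since the quantized pushforward $(\pi_s)_*$ of Section 3.6 is realized on the underlying quasi-coherent level by the ordinary pushforward along $\pi_s:\mathcal{B}\to\mathcal{P}_s$, the standard projection formula yields
\[
(\pi_s)_*(M\otimes O(\lambda)) \cong (\pi_s)_*(M)\otimes L_\lambda,
\]
and dually $\pi_s^*(N\otimes L_\lambda) \cong \pi_s^*(N)\otimes O(\lambda)$. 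Composing these identifies $\mathcal{R}_\alpha\theta_\lambda$ with $\theta_\lambda\mathcal{R}_\alpha$ as endofunctors of $D^b(\mathrm{Mod}(\tilde{D}_h))$, and the identification passes to the $G$-equivariant and graded versions without change.

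The main subtle step is verifying that this commutation isomorphism is compatible with the adjunction unit of \prettyref{pro:adjointness}. Conceptually this is naturality: tensoring with the invertible $O(\lambda)$ is an autoequivalence, and the projection-formula isomorphism is precisely the one transporting the adjunction structure. Concretely, the unit was constructed from the global section $v = 1\otimes\alpha_s + \alpha_s\otimes 1 \in R\Gamma^G(\mathcal{G}_\alpha(2))$, and one checks that this section is preserved under twist by $O(\lambda)$ because $\lambda$ and $\alpha_s$ sit in orthogonal directions of $\mathfrak{h}^*$. As a safety net, should this direct verification prove cumbersome, one can instead argue by deformation: the classical braid relation in $D^{b,G\times\mathbb{G}_m}(\tilde{\mathfrak{g}})$ from \cite{key-28,key-11} gives the desired isomorphism after $\otimes^L_{k[h]} k_0$, and the base-change results \prettyref{lem:Cohomology1} and \prettyref{cor:Cohomology2}, combined with the graded Nakayama lemma \prettyref{pro:GrNak}, lift it to $D^b(\mathrm{Mod}^{G\times\mathbb{G}_m}(\tilde{D}_h))$ along the template of the tilting-endomorphism lemma at the end of Section 3.4.
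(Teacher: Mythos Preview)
Your projection-formula argument is clean for the particular relation $T_s\theta_\lambda=\theta_\lambda T_s$ with $\langle\lambda,\check\alpha_s\rangle=0$, but that is only one of the four relation types in the Bernstein presentation. The lemma is stated for an \emph{arbitrary} braid relation (the commutation relation is chosen purely ``for notational convenience''), and the subsequent corollary on weak braid relations needs all of them. For the finite braid relations $T_{s_i}T_{s_j}T_{s_i}\cdots=T_{s_j}T_{s_i}T_{s_j}\cdots$ and for the relation $\theta_x=T_s\theta_{s(x)}T_s$ when $\langle x,\check\alpha_s\rangle=1$, there is no projection-formula shortcut: neither side factors through a single $\pi_s^*(\pi_s)_*$, so your comparison of cones does not apply. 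Thus your main argument proves only a special case.

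The paper's proof is uniform across all relation types and is essentially what you sketch as a ``safety net'', but promoted to the main argument and with the lifting step spelled out. One rewrites the relation as $\tilde D_h\cong b\cdot\tilde D_h$ for the appropriate word $b$ in the braid generators, observes that $b\cdot O_{\tilde{\mathfrak g}}\cong O_{\tilde{\mathfrak g}}$ by the known coherent result, and then uses \prettyref{lem:Cohomology1}, \prettyref{cor:Cohomology2} and graded Nakayama to conclude that $b\cdot\tilde D_h$ is a sheaf with $R\Gamma^G$ deforming $O(\mathfrak h^*)$. The point you do not address is how to \emph{produce} the isomorphism: one lifts a generator of $R\Gamma^{G\times\mathbb G_m}$ to get a map $\tilde D_h\to b\cdot\tilde D_h$, checks surjectivity by local Nakayama, and then runs the same argument for $b^{-1}$ to get a map back whose composite lifts the identity of $O_{\tilde{\mathfrak g}}$ and hence \emph{is} the identity (since $R\Gamma^{G\times\mathbb G_m}(\tilde D_h)=k$). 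Your safety-net paragraph gestures at Nakayama but omits this two-sided construction, which is where the actual content lies.
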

\begin{rem}
In other words, the lemma says that the functors satisfy braid relations
upon application to $\tilde{D}_{h}$. \end{rem}
\begin{proof}
As all functors considered are invertible (c.f. the remarks right
above the lemma), we see that this comes down to showing 
\[
\tilde{D}_{h}\tilde{=}s_{\alpha}^{-1}\cdot\theta_{-\lambda}\cdot s_{\alpha}\cdot\theta_{\lambda}\cdot\tilde{D}_{h}
\]
 We know that, upon restriction to $h=0$, there is an isomorphism
\[
O_{\tilde{\mathfrak{g}}}\tilde{=}s_{\alpha}^{-1}\cdot\theta_{-\lambda}\cdot s_{\alpha}\cdot\theta_{\lambda}\cdot O_{\tilde{\mathfrak{g}}}
\]
Thus the complex $s_{\alpha}^{-1}\cdot\theta_{-\lambda}\cdot s_{\alpha}\cdot\theta_{\lambda}\cdot\tilde{D}_{h}$
is concentrated in degree zero by the graded Nakayama lemma. Now,
since $R\Gamma^{G}(O_{\tilde{\mathfrak{g}}})\tilde{=}O(\mathfrak{h}^{*})$
as graded modules, we deduce that 
\[
R\Gamma^{G}(s_{\alpha}^{-1}\cdot\theta_{-\lambda}\cdot s_{\alpha}\cdot\theta_{\lambda}\cdot\tilde{D}_{h})|_{h=0}\tilde{=}O(\mathfrak{h}^{*})
\]
 as graded modules, and that 
\[
R^{i}\Gamma^{G}(s_{\alpha}^{-1}\cdot\theta_{-\lambda}\cdot s_{\alpha}\cdot\theta_{\lambda}\cdot\tilde{D}_{h})=0
\]
for $i>0$ (by \prettyref{lem:Cohomology1} and \prettyref{cor:Cohomology2}).
So we see that there is a nontrivial element of 
\[
Hom_{(Mod^{G\times\mathbb{G}_{m}}(\tilde{D}_{h}))}(\tilde{D}_{h},s_{\alpha}^{-1}\cdot\theta_{-\lambda}\cdot s_{\alpha}\cdot\theta_{\lambda}\cdot\tilde{D}_{h})
\]
 but the restriction of any such map to $h=0$ is a nontrivial element
of 
\[
Hom_{Coh^{G\times\mathbb{G}_{m}}(\tilde{\mathfrak{g}})}(O_{\tilde{\mathfrak{g}}},s_{\alpha}^{-1}\cdot\theta_{-\lambda}\cdot s_{\alpha}\cdot\theta_{\lambda}\cdot O_{\tilde{\mathfrak{g}}})=k
\]
 and hence is an isomorphism. Thus we see that our map is surjective
by the graded Nakayama lemma (applied locally). 

To produce a morphism in the other direction, we simply run the same
argument for the {}``inverse'' complex $\theta_{-\lambda}\cdot s_{\alpha}^{-1}\cdot\theta_{\lambda}\cdot s_{\alpha}\cdot\tilde{D}_{h}$;
thus we can get a map 
\[
s_{\alpha}^{-1}\cdot\theta_{-\lambda}\cdot s_{\alpha}\cdot\theta_{\lambda}\cdot\tilde{D}_{h}\to\tilde{D}_{h}
\]
 such that the composition of the two is an endomorphism of $\tilde{D}_{h}$
lifting the identity on $O_{\tilde{\mathfrak{g}}}$; since any such
is the identity, we conclude that our original map is injective. 
\end{proof}
Now we proceed to the full statement: 
\begin{cor}
\label{cor:Weak-braid-for-D}The collection of functors $\{s_{\alpha}^{\pm1}\}_{\alpha\in W_{fin}},\{\theta_{\lambda}\}$
defined above satisfy the (weak) braid relations. \end{cor}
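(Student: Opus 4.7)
The plan is to upgrade the object-level isomorphism of the previous lemma into a natural isomorphism of functors for each braid relation, by lifting it to an isomorphism of Fourier--Mukai kernels on $\mathcal{B}\times\mathcal{B}$. Recall that each generator $s_{\alpha}^{\pm1},\theta_{\lambda}$ of our action is given as the Fourier--Mukai functor associated to an explicit bimodule kernel (the diagonal bimodule twisted by $O(\lambda)$, the kernel $\mathcal{G}_\alpha$ from section 3.6.2, or its inverse). By the convolution lemma of that section, any word $w$ in the generators gives a functor $F_w=F_{K_w}$ where $K_w\in D^{b,G\times\mathbb{G}_m}(Mod(\tilde{D}_h^{opp}\boxtimes\tilde{D}_h))$ is the convolution of the constituent kernels. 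Producing an isomorphism $K_{w_1}\simeq K_{w_2}$ for each braid relation $w_1=w_2$ would immediately yield the desired weak braid relation $F_{w_1}\simeq F_{w_2}$.

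First I would observe that each $K_w$ carries a canonical $G\times\mathbb{G}_m$-equivariant global section $1_w$, built as a product of the natural units of the constituent kernels (as in the construction of $\mathcal{G}_\alpha$). Consequently $R\Gamma^G(K_w)$ is cyclic as a bimodule over $O(\mathfrak{h}^*\times\mathbb{A}^1)\otimes O(\mathfrak{h}^*\times\mathbb{A}^1)$, generated by $1_w$. Next, I would reduce modulo $h$ using \prettyref{lem:Cohomology1} and \prettyref{cor:Cohomology2}: the resulting statement $K_{w_1}\otimes_{k[h]}^{L}k_0\simeq K_{w_2}\otimes_{k[h]}^{L}k_0$ is precisely a braid relation between coherent Fourier--Mukai kernels on $\tilde{\mathfrak{g}}\times\tilde{\mathfrak{g}}$, which is the content of the coherent braid-action theorem from section 3.1 (where it is already established).

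The third step is to lift this coherent isomorphism to the full $\tilde{D}_h$-kernel level. Here the previous lemma enters essentially: it guarantees that the induced identifications at $h=0$ extend to a nontrivial morphism of the two sections $1_{w_1}$ and $1_{w_2}$ inside $R\Gamma^G(K_{w_1})$ and $R\Gamma^G(K_{w_2})$. One argues exactly as in the proof of the adjunction lift (the corollary immediately before Proposition \prettyref{pro:adjointness}): the canonical section of $K_w$ together with the action of $O(\mathfrak{h}^*\times\mathbb{A}^1)\otimes O(\mathfrak{h}^*\times\mathbb{A}^1)$ produces a surjection onto $K_w$, and surjectivity plus \prettyref{pro:GrNak} applied locally produces a map of kernels whose composition with its inverse (obtained by running the argument for $w_1\leftrightarrow w_2$) is the identity by the rigidity coming from $R\Gamma^{G\times\mathbb{G}_m}(\tilde{D}_h)=O(\mathfrak{h}^*\times\mathbb{A}^1)$.

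Once $K_{w_1}\simeq K_{w_2}$ is established, the convolution lemma of section 3.6.2 immediately produces a natural isomorphism $F_{w_1}\simeq F_{w_2}$ of functors on $D^b(Mod^{G\times\mathbb{G}_m}(\tilde{D}_h))$, which is the required weak braid relation. The main obstacle is step three: the explicit matching of canonical sections, and the verification that only the identity lifts the identity endomorphism of each kernel; this is not deep but requires a careful bookkeeping of the actions of $h$ coming from the two factors of $\tilde{D}_h\boxtimes\tilde{D}_h^{opp}$, entirely analogous to the computation for the adjunction lift earlier in the section.
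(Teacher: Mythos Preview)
Your overall strategy---work with Fourier--Mukai kernels, reduce modulo $h$ to the known coherent braid relations, and lift via graded Nakayama---is exactly the paper's strategy. But the execution diverges at your ``third step,'' and there is a gap there.

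Your plan relies on each kernel $K_w$ carrying a canonical global section $1_w$, with $R\Gamma^G(K_w)$ cyclic over the bimodule ring. This is not available: the kernel $M_{s_\alpha}$ is by definition the two-term complex $\tilde{D}_h\to\mathcal{G}_\alpha(2)$ (a cone), not a sheaf with a distinguished unit, and there is no evident ``product of units'' in a convolution of such complexes. The analogy with the adjunction-lift corollary breaks down precisely because $\mathcal{G}_\alpha$ is a genuine sheaf with an explicit section, while $M_{s_\alpha}$ is not.

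The paper avoids this by passing immediately to the \emph{relation kernel} $K:=M_{w_2}^{-1}\star M_{w_1}$ (e.g.\ $M_{-\lambda}\star M_{s^{-1}}\star M_\lambda\star M_s$), so that one only needs a map from the diagonal bimodule $\tilde{D}_h$ into $K$. The coherent braid relations give $K/h\cong O(\Delta\tilde{\mathfrak{g}})$, hence $K$ is a sheaf; the extra step you are missing is then to show that $K$ is \emph{scheme-theoretically} supported on $\Delta\mathcal{B}$ (set-theoretic support is immediate, but one needs $(K/\mathcal{I}_\Delta)/h=0$ and graded Nakayama). This support statement is what allows the identification
\[
Hom_{\tilde{D}_h^{opp}\boxtimes\tilde{D}_h}(\tilde{D}_h,K)\ \cong\ Hom_{\tilde{D}_h}(\tilde{D}_h,\,Rp_*K)\ \cong\ Hom_{\tilde{D}_h}(\tilde{D}_h,\,w_2^{-1}w_1\cdot\tilde{D}_h),
\]
after which the previous lemma applies verbatim. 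Your outline would be repaired by inserting exactly this diagonal-support argument in place of the canonical-section bookkeeping.
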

\begin{proof}
By definition, $s_{\alpha}$ is the Fourier-Mukai kernal for the complex
of bimodules $Id\to\mathcal{G}_{\alpha}(2)$, and $s_{\alpha}^{-1}$
is the Fourier-Mukai kernel for $\mathcal{G}_{\alpha}(2)\to Id(2)$.
Further, it is easy to verify that the functor $M\to M\otimes_{O(\mathcal{B})}O(\lambda)$
is represented by the bimodule $\tilde{D}_{h}\otimes_{O(\Delta\mathcal{B})}O(\lambda)$;
here $\tilde{D}_{h}$ is taken to be the diagonal bimodule, and the
action of $\tilde{D}_{h}\boxtimes\tilde{D}_{h}^{opp}$ is inherited
from the action on $\tilde{D}_{h}$. 

So, for $?=s_{\alpha}^{\pm}$ or $\lambda$, we let $M_{?}$ denote
the associated bimodule. We consider any braid relation; again we
shall take $T_{s}\theta_{\lambda}=\theta_{\lambda}T_{s}$ for notational
convenience. We wish to show the existence of an isomorphism inside
\[
Hom_{D^{b}(Mod^{G\times\mathbb{G}_{m}}(\tilde{D}_{h}\boxtimes\tilde{D}_{h}^{opp}))}(M_{s}\star M_{\lambda},M_{\lambda}\star M_{s})\tilde{=}
\]
\[
Hom_{D^{b}(Mod^{G\times\mathbb{G}_{m}}(\tilde{D}_{h}\boxtimes\tilde{D}_{h}^{opp}))}(\tilde{D}_{h},M_{-\lambda}\star M_{s^{-1}}\star M_{\lambda}\star M_{s})
\]
 However, the object $M_{-\lambda}\star M_{s^{-1}}\star M_{\lambda}\star M_{s}$
is known to satisfy $(M_{-\lambda}\star M_{s^{-1}}\star M_{\lambda}\star M_{s})/h\tilde{=}O(\Delta\tilde{\mathfrak{g}})$
by \cite{key-11}; this is precisely the fact that the braid relations
are known for $D^{b,G\times\mathbb{G}_{m}}(\tilde{\mathfrak{g}})$.
This implies that $M_{-\lambda}\star M_{s^{-1}}\star M_{\lambda}\star M_{s}$
is a sheaf (i.e., concentrated in a single degree). 

We shall use this to argue that $ $$M_{-\lambda}\star M_{s^{-1}}\star M_{\lambda}\star M_{s}$,
as a quasi-coherent sheaf on $\mathcal{B}\times\mathcal{B}$, is scheme-theoretically
supported on the diagonal $\Delta\mathcal{B}$. The fact that is is
set-theoretically supported there is immediate from the above remark.
To see the scheme=theoretic support, let $\mathcal{I}$ denote the
ideal sheaf of $\Delta\mathcal{B}$ in $\mathcal{B}\times\mathcal{B}$.
Then $(M_{-\lambda}\star M_{s^{-1}}\star M_{\lambda}\star M_{s})/\mathcal{I}$
is a graded sheaf whose grading is bounded below (because $M_{-\lambda}\star M_{s^{-1}}\star M_{\lambda}\star M_{s}$
is a finitely generated, graded $\tilde{D}_{h}\boxtimes\tilde{D}_{h}^{opp}$-
module and $\mathcal{I}$ is an ideal of degree zero elements). Further,
we have that 
\[
(M_{-\lambda}\star M_{s^{-1}}\star M_{\lambda}\star M_{s}/\mathcal{I})/h\tilde{=}(M_{-\lambda}\star M_{s^{-1}}\star M_{\lambda}\star M_{s}/h)/\mathcal{I}=0
\]
 since $(M_{-\lambda}\star M_{s^{-1}}\star M_{\lambda}\star M_{s})/h\tilde{=}O(\Delta\tilde{\mathfrak{g}})$
is scheme-theoretically supported on $\Delta\mathcal{B}$. Thus the
graded Nakayama lemma for sheaves implies that \linebreak{}
 $(M_{-\lambda}\star M_{s^{-1}}\star M_{\lambda}\star M_{s})/\mathcal{I}=0$,
which is what we wanted. 

Given this, we have an isomorphism 
\[
Hom_{D^{b}(Mod^{G\times\mathbb{G}_{m}}(\tilde{D}_{h}\boxtimes\tilde{D}_{h}^{opp}))}(\tilde{D}_{h},M_{-\lambda}\star M_{s^{-1}}\star M_{\lambda}\star M_{s})\tilde{=}
\]
\[
Hom_{D^{b}(Mod^{G\times\mathbb{G}_{m}}(\tilde{D}_{h}))}(\tilde{D}_{h},Rp_{*}(M_{-\lambda}\star M_{s^{-1}}\star M_{\lambda}\star M_{s}))
\]
 since the projection $p$ induces an equivalence of categories between
quasi-coherent sheaves on $\Delta\mathcal{B}$ and those on $\mathcal{B}$
(we need scheme-theoretic support for this, not just set theoretic;
hence the above discussion). 

But now, $Rp_{*}(M_{-\lambda}\star M_{s^{-1}}\star M_{\lambda}\star M_{s})\tilde{=}\theta_{-\lambda}\cdot s^{-1}\cdot\theta_{\lambda}\cdot s\cdot\tilde{D}_{h}$
(by {}``convolution becomes composition'' above), and so we finally
conclude 
\[
Hom_{D^{b}(Mod^{G\times\mathbb{G}_{m}}(\tilde{D}_{h}\boxtimes\tilde{D}_{h}^{opp}))}(M_{s}\star M_{\lambda},M_{\lambda}\star M_{s})\tilde{=}
\]
\[
Hom_{D^{b}(Mod^{G\times\mathbb{G}_{m}}(\tilde{D}_{h}))}(\tilde{D}_{h},\theta_{-\lambda}\cdot s^{-1}\cdot\theta_{\lambda}\cdot s\cdot\tilde{D}_{h})
\]
 and so the result follows from the lemma above. 
\end{proof}
Now let us note that it is possible to define functors associated
to $b$ and $b^{-1}$, where these were elements of $\mathbb{B}_{aff}^{'}$
such that 
\[
\mathcal{R}_{\alpha_{0}}=b^{-1}\mathcal{R}_{\alpha}b
\]
 for a finite root $\alpha$; this then defines an affine root functor
for $D^{b}(Mod(\tilde{D}_{h}))$. From the braid relations it follows
that any two such choices are isomorphic; we shall see an even stronger
uniqueness statement later.

\subsubsection{Tilting Objects}

Now it is straightforward to define the deformation of our tilting
objects. Indeed, for any sequence of finite roots $(\alpha_{1},...,\alpha_{n})$,
and any element $\omega\in\Omega$ we define an object 
\[
\mathcal{R}_{\alpha_{1}}\mathcal{R}_{\alpha_{2}}\cdot\cdot\cdot\mathcal{R}_{\alpha_{n}}\omega\cdot\tilde{D}_{h}
\]
which lives in $D^{b}(Mod^{G\times\mathbb{G}_{m}}(\tilde{D}_{h}))$.
From the definitions and the cohomological lemmas \ref{lem:Cohomology1},\ref{cor:Cohomology2}
it is clear that 
\[
(\mathcal{R}_{\alpha_{1}}\mathcal{R}_{\alpha_{2}}\cdot\cdot\cdot\mathcal{R}_{\alpha_{n}}\omega\cdot\tilde{D}_{h})|_{h=0}=\mathcal{R}_{\alpha_{1}}\mathcal{R}_{\alpha_{2}}\cdot\cdot\cdot\mathcal{R}_{\alpha_{n}}\omega\cdot O_{\tilde{\mathfrak{g}}}
\]

Further, the $G$-equivariant version of these cohomological lemmas
gives:
\begin{lem}
Let $T_{h}$ denote any tilting object in $D^{b}(Mod^{G\times\mathbb{G}_{m}}(\tilde{D}_{h}))$,
and $T$ its reduction mod $h$ as above. Then:

a) $H^{i}(R\Gamma^{G}(T_{h}))=0$ for all $i\neq0$. 

b) $H^{0}(R\Gamma^{G}(T_{h}))|_{h=0}=H^{0}(R\Gamma^{G}(T))$ 
\end{lem}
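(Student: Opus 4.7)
The plan for (a) is to reduce $R\Gamma^G(T_h)$ by two successive base-change arguments — first to the coherent sheaf $T = T_h \otimes^L_{k[h]} k_0 \in D^{b,G\times\mathbb{G}_m}(\tilde{\mathfrak{g}})$, then to its further restriction $T|_{\tilde{\mathcal{N}}}$ — at which point the conclusion becomes a standard Ext-vanishing statement in the heart of the perversely exotic $t$-structure. Part (b) then falls out almost for free. For the first reduction, I would apply the $G$-equivariant form of \prettyref{lem:Cohomology1} and \prettyref{cor:Cohomology2} (whose validity the paper asserts at the end of section 2.2.4, since $G$-invariants is exact on algebraic $G$-modules) to obtain
\[
R\Gamma^G(T_h) \otimes^L_{k[h]} k_0 \cong R\Gamma^G(T),
\]
and conclude that if the right-hand side is concentrated in degree $0$ then so is the left. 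This reduces (a) to showing $R\Gamma^G(T)$ is concentrated in degree $0$.

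For the second reduction, I would use the flat base change along $\tilde{\mathfrak{g}} \to \mathfrak{h}^*$ recorded at the end of section 2.2.4, which in its $G$-equivariant form gives $R\Gamma^G(T) \otimes^L_{O(\mathfrak{h}^*)} k_0 \cong R\Gamma^G(T|_{\tilde{\mathcal{N}}})$. Combined with \prettyref{pro:GrNak} applied over the positively graded ring $O(\mathfrak{h}^*)$, this reduces the problem further to showing $R\Gamma^G(T|_{\tilde{\mathcal{N}}})$ is concentrated in degree $0$. Here I would identify $R\Gamma^G(T|_{\tilde{\mathcal{N}}}) = R\mathrm{Hom}^G_{D^b(Coh^G(\tilde{\mathcal{N}}))}(O_{\tilde{\mathcal{N}}}, T|_{\tilde{\mathcal{N}}})$ and invoke the highest weight structure: $O_{\tilde{\mathcal{N}}}$ is the standard-equals-costandard object $\Delta_0 = \nabla^0$ at the bottom of the Bruhat order (the object corresponding to $e \in \Omega$), while $T|_{\tilde{\mathcal{N}}}$ is tilting in the heart of the perversely exotic $t$-structure by the claim in section 3.4.2. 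Walking up a costandard filtration of $T|_{\tilde{\mathcal{N}}}$ and using $\mathrm{Ext}^{>0}(\Delta_0, \nabla^\mu) = 0$ — together with the automatic vanishing of $\mathrm{Ext}^{<0}$ between objects of the heart of a $t$-structure — yields the required concentration.

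For (b), once (a) is in hand both $R\Gamma^G(T_h)$ and $R\Gamma^G(T)$ are concentrated in cohomological degree $0$. Computing $R\Gamma^G(T_h) \otimes^L_{k[h]} k_0$ by tensoring with the resolution $k[h] \xrightarrow{h} k[h] \to k_0$ of $k_0$ gives $H^0(R\Gamma^G(T_h))/h$ in degree $0$ and the $h$-torsion submodule $H^0(R\Gamma^G(T_h))[h]$ in degree $-1$; since the answer is $R\Gamma^G(T)$, concentrated in degree $0$, the torsion part vanishes and one reads off $H^0(R\Gamma^G(T_h))|_{h=0} \cong H^0(R\Gamma^G(T))$. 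The main technical point of the argument is really the Ext vanishing on $\tilde{\mathcal{N}}$; the remaining steps are formal consequences of the cohomology base change results of section 2.2.4 and the graded Nakayama lemma, and I do not anticipate serious difficulty because the required ingredients (tilting of $T|_{\tilde{\mathcal{N}}}$, identification of $O_{\tilde{\mathcal{N}}}$ as a standard object, flatness of $\tilde{\mathfrak{g}} \to \mathfrak{h}^*$) are already in place.
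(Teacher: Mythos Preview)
Your argument is correct and is essentially the one the paper has in mind: the paper states the lemma as an immediate consequence of the $G$-equivariant versions of \prettyref{lem:Cohomology1} and \prettyref{cor:Cohomology2}, with the required input that $R\Gamma^G(T)$ is concentrated in degree zero already established (by the same two-step reduction to $\tilde{\mathcal{N}}$ and the highest-weight Ext vanishing) in the proof of the preceding lemma about tilting objects on $\tilde{\mathfrak{g}}$. Your write-up simply spells out these steps more explicitly than the paper does.
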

Then, by applying the self adjointness of the $\mathcal{R}_{\alpha}$
and the fact that \linebreak{}
$Hom_{D^{b}Mod^{G}(\tilde{D}_{h})}(\tilde{D}_{h},\cdot)=R\Gamma^{G}(\cdot)$,
we deduce immediately (from the graded Nakayama lemma) the 
\begin{cor}
The objects $\mathcal{R}_{\alpha_{1}}\mathcal{R}_{\alpha_{2}}\cdot\cdot\cdot\mathcal{R}_{\alpha_{n}}\omega\cdot\tilde{D}_{h}$
satisfy 
\[
End_{D^{b}Coh^{G}(\tilde{D}_{h})}^{\cdot}(\mathcal{R}_{\alpha_{1}}\mathcal{R}_{\alpha_{2}}\cdot\cdot\cdot\mathcal{R}_{\alpha_{n}}\omega\cdot\tilde{D}_{h})=End_{D^{b}Coh^{G}(\tilde{D}_{h})}^{0}(\mathcal{R}_{\alpha_{1}}\mathcal{R}_{\alpha_{2}}\cdot\cdot\cdot\mathcal{R}_{\alpha_{n}}\omega\cdot\tilde{D}_{h})
\]
 The same is true in $D^{b}Coh^{G\times\mathbb{G}_{m}}(\tilde{D}_{h})$
for the objects $\mathcal{R}_{\alpha_{1}}\mathcal{R}_{\alpha_{2}}\cdot\cdot\cdot\mathcal{R}_{\alpha_{n}}\omega\cdot\tilde{D}_{h}(i)$. 
\end{cor}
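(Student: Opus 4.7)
The plan is to follow the strategy sketched in the sentence immediately preceding the corollary, combining self-adjointness of the reflection functors with the cohomological lemma about $R\Gamma^{G}$ of tilting objects. Concretely, the first step is to use Proposition \ref{pro:adjointness} (self-adjointness of each $\mathcal{R}_{\alpha}$ on $D^{b}(\mathrm{Mod}^{G}(\tilde{D}_{h}))$ and its graded version), together with the invertibility of the action of $\omega\in\Omega$ (with inverse $\omega^{-1}$), to move all the reflections from the left argument of $\mathrm{Hom}$ to the right. This yields
\[
\mathrm{End}^{\cdot}(\mathcal{R}_{\alpha_{1}}\cdots\mathcal{R}_{\alpha_{n}}\omega\cdot\tilde{D}_{h})\;\cong\;\mathrm{Hom}^{\cdot}(\tilde{D}_{h},S_{h}),
\]
where $S_{h}:=\omega^{-1}\mathcal{R}_{\alpha_{n}}\cdots\mathcal{R}_{\alpha_{1}}\mathcal{R}_{\alpha_{1}}\cdots\mathcal{R}_{\alpha_{n}}\omega\cdot\tilde{D}_{h}$. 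Then, invoking $\mathrm{Hom}_{D^{b}\mathrm{Mod}^{G}(\tilde{D}_{h})}(\tilde{D}_{h},\cdot)=R\Gamma^{G}(\cdot)$, we can rewrite this as $R\Gamma^{G}(S_{h})$.

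The second step is to recognize that $S_{h}$ is again a tilting object in the sense of the preceding lemma. Indeed, conjugation of a reflection functor $\mathcal{R}_{\alpha}$ by the invertible functor associated to $\omega$ is itself the reflection functor $\mathcal{R}_{\omega^{-1}(\alpha)}$ (this follows from the definitions, since $\omega$ permutes the simple roots of the affine root system), so $S_{h}$ has the form $\mathcal{R}_{\beta_{1}}\cdots\mathcal{R}_{\beta_{2n}}\cdot\tilde{D}_{h}$ after cancelling $\omega^{-1}\omega$ at the end. The preceding lemma then directly asserts that $H^{i}(R\Gamma^{G}(S_{h}))=0$ for $i\neq0$, which gives the required vanishing of $\mathrm{End}^{<0}$ and $\mathrm{End}^{>0}$.

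If one prefers to avoid the identification of $S_{h}$ as a tilting object on the nose, the same conclusion can be obtained via graded Nakayama. Namely, writing $M=\mathrm{End}^{\cdot}(T_{h})\cong R\Gamma^{G}(S_{h})$, one uses the $G$-invariant variant of Lemma \ref{lem:Cohomology1} (whose proof passes through by exactness of $V\mapsto V^{G}$ on algebraic $G$-modules, as noted in section 2.2.4) to obtain the base-change isomorphism
\[
M\otimes^{L}_{k[h]}k_{0}\;\cong\;R\Gamma^{G}(S_{h}|_{h=0})\;\cong\;\mathrm{End}^{\cdot}_{D^{b}\mathrm{Coh}^{G}(\tilde{\mathfrak{g}})}(T_{h}|_{h=0}),
\]
and the coherent result already proved in the previous lemma shows the right-hand side is concentrated in degree $0$. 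Since $M$ is a bounded complex of finitely generated graded $k[h]$-modules with grading bounded below, Proposition \ref{pro:GrNak} concludes that $M$ itself is in degree $0$. The graded $(G\times\mathbb{G}_{m})$-version of the statement is obtained by tracking gradings throughout, which causes no additional trouble since all the functors involved preserve the $\mathbb{G}_{m}$-structure.

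The main obstacle is essentially bookkeeping: verifying that conjugation by $\omega$ really does send reflection functors to reflection functors (so that $S_{h}$ lies in the class of objects for which the preceding lemma applies), or equivalently verifying that the base change formula for $R\Gamma^{G}$ commutes with the chain of adjunctions used in step one. Both are routine given the constructions of section 3.5--3.7, but some care is needed to make sure the $\mathbb{G}_{m}$-gradings and the shifts $(2)$ appearing in the definition of $\mathcal{R}_{\alpha}$ line up correctly when one passes from $\mathrm{End}^{\cdot}$ to $R\Gamma^{G}$.
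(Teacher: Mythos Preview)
Your proposal is correct and follows essentially the same approach as the paper: use self-adjointness of the $\mathcal{R}_{\alpha}$ together with $\mathrm{Hom}(\tilde{D}_{h},\cdot)=R\Gamma^{G}(\cdot)$ to reduce to the preceding lemma, with the graded Nakayama lemma providing the passage from $h=0$ to general $h$. The paper's one-line justification is exactly your second route (base change plus Nakayama); your first route via the identification $\omega^{-1}\mathcal{R}_{\alpha}\omega\cong\mathcal{R}_{\omega^{-1}(\alpha)}$ is a legitimate alternative but, as you note, requires a small extra verification that the paper does not explicitly carry out.
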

Therefore, to see that these are tilting objects in the sense that
we need, we should show that they generate the category. As in the
coherent case, one first shows that the objects $\tilde{D}_{h}(\lambda)$
generate our category, and then show that the tilting objects generate
these objects. 

To prove the generation by $\tilde{D}_{h}(\lambda)$'s we only have
to prove the equivalent for $Mod^{B}(U_{h}(\mathfrak{b}))$. But in
this case the proof is identical to the coherent version. 

Next, we can copy the coherent argument to show that the full triangulated
subcategory generated by the tilting objects contains all objects
of the form \linebreak{}
$s_{\alpha_{1}}^{\pm}\cdot\cdot\cdot s_{\alpha_{n}}^{\pm}\cdot\omega\cdot\tilde{D}_{h}$.
Since the weak braid relations are satisfied for objects acting on
$\tilde{D}_{h}$, we deduce right away that this collection contains
all objects $\tilde{D}_{h}(\lambda)$, which is what we needed.

\section{Kostant-Whittaker Reduction and Soergel Bimodules}

The aim of this section is prove our {}``combinatorial'' description
of the coherent categories via the Kostant-Whittaker reduction functor.
By the results of the above sections, all we have to do to completely
encode these categories is to give a description of the Hom's between
tilting generators. We shall show that this can be done entirely in
terms of the action of the (affine) Weyl group on its geometric representation-
hence the use of the adjective {}``combinatorial.''

\subsection{Kostant Reduction for $\mathfrak{g}$.}

In this section, we shall define our {}``functor into combinatorics.''
This definition is a generalization of the main idea of \cite{key-8}.
We shall start by making a few general remarks about the Kostant reduction-
first found in the classic paper \cite{key-26} (c.f. also \cite{key-16}).
Let $\mathfrak{g}$ be our reductive lie algebra (over $k$) with
its fixed pinning, such that $\mathfrak{n}^{-}$ is the {}``opposite''
maximal nilpotent subalgebra. We choose $\chi$ a generic character
for $\mathfrak{n}^{-}$; in other words, we choose a linear functional
on the space $\mathfrak{n}^{-}/[\mathfrak{n}^{-},\mathfrak{n}^{-}]$
which takes a nonzero value on each simple root element $F_{\alpha}$.
Our $\chi$ is the pullback of this functional to $\mathfrak{n}^{-}$,
which is then a character by definition. 

Next, we define a left ideal of the enveloping algebra $U(\mathfrak{g})$,
called $I_{\chi}$, to be the left ideal generated by 
\[
\{n-\chi(n)|n\in\mathfrak{n}^{-}\}
\]
 and we can form the quotient $U(\mathfrak{g})/I_{\chi}$ - naturally
a $U(\mathfrak{g})$-module. It is easy to check that this module
retains the adjoint action of the lie algebra $\mathfrak{n}^{-}$,
and hence we can further define the subspace 
\[
(U(\mathfrak{g})/I_{\chi})^{ad(\mathfrak{n}^{-})}
\]
 of $\mathfrak{n}^{-}$-invariant vectors. (This is equal, in characteristic
zero, to the $N^{-}$-invariant vectors). In (large enough) positive
characteristic, everything still works, but we should work with the
group instead of the algebra. 

As it turns out, this space has the structure of an algebra under
the residue of the multiplication in $U(\mathfrak{g})$ (c.f. \cite{key-16}
section 2 for a more general result). Further, we see that since the
center of $U(\mathfrak{g})$, $Z(\mathfrak{g})$, consists of all
$G$-invariant vectors in $U(\mathfrak{g})$, the natural quotient
map yields a morphism 
\[
Z(\mathfrak{g})\to(U(\mathfrak{g})/I_{\chi})^{ad(\mathfrak{n}^{-})}
\]
 Kostant's theorem assets that in fact this is an algebra isomorphism. 

We can perform the same procedure with $U(\mathfrak{g})$ replaced
by its associated graded version, $S(\mathfrak{g})=O(\mathfrak{g}^{*})$.
Then the quotient by the ideal $gr(I_{\chi})$ corresponds to the
restriction to affine subspace $\mathfrak{n}^{\perp}+\chi\subseteq\mathfrak{g}^{*}$.
Taking invariant vectors then corresponds to taking the quotient of
this affine space by the action of the group $N^{-}$. This quotient
exists (in the sense of GIT), and is isomorphic to an explicitly constructed
affine space, as follows.

We choose a principal nilpotent element in $\mathfrak{n}^{-}$, called
$F$, which can be taken to be the sum of all the $F_{\alpha}$ associated
to simple roots. Then by the well known Jacobson-Morozov theorem,
we can complete $F$ to an $\mathfrak{sl}_{2}$-triple- called $\{E,F,H\}$.
Then we can define the subspace $ker(ad(F))\subset\mathfrak{g}$,
and we can then transfer this space to $\mathfrak{g}^{*}$ via the
isomorphism $\mathfrak{g}\tilde{=}\mathfrak{g}^{*}$, and we shall
denote the resulting space $ker(ad(F))^{*}$. Finally, we define the
Kostant-Slodowy slice to be the affine space 
\[
S_{\chi}:=\chi+ker(ad(F))^{*}
\]
 This space lives naturally inside $\mathfrak{n}^{\perp}+\chi$. What's
more, we have: 
\begin{lem}
The action map 
\[
a:N^{-}\times S_{\chi}\to\mathfrak{n}^{-}+\chi
\]
 is an isomorphism of varieties. 
\end{lem}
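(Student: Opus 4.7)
The strategy is to exploit the $N^-$-equivariance of $a$ (with $N^-$ acting on the source by left multiplication and on the target via the coadjoint action), together with a dimension count. Both sides are smooth irreducible affine varieties of the same dimension: since the principal nilpotent $F$ is regular, $\dim\ker(ad(F))=rank(\mathfrak{g})$, and hence $\dim(N^-\times S_\chi)=\dim\mathfrak{n}^-+rank(\mathfrak{g})=\dim\mathfrak{b}^-=\dim\mathfrak{n}^\perp$, where I read the target of $a$ as $\mathfrak{n}^\perp+\chi$ (matching the preceding discussion of $gr(I_\chi)$ and the statement that $S_\chi$ lies inside $\mathfrak{n}^\perp+\chi$). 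It will thus suffice to check that $a$ is étale and bijective on closed points.

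The heart of the argument is the differential at the base point $(e,\chi)$. Using the Killing form to identify $\mathfrak{g}^*\cong\mathfrak{g}$ (the same identification that was used to define $\ker(ad(F))^*$), the functional $\chi$ corresponds to a principal nilpotent $E\in\mathfrak{n}^+$, the highest-weight partner of $F$; $\mathfrak{n}^\perp$ corresponds to $\mathfrak{b}^-=\mathfrak{h}\oplus\mathfrak{n}^-$; and $\ker(ad(F))^*$ corresponds to $\ker(ad(F))$. By invariance of the Killing form, $da|_{(e,\chi)}$ then takes the form
\[
\mathfrak{n}^-\oplus\ker(ad(F))\;\longrightarrow\;\mathfrak{b}^-,\qquad (n,v)\longmapsto [n,E]+v.
\]
The two image summands intersect trivially, by the standard $\mathfrak{sl}_2$-decomposition $\mathfrak{g}=\mathrm{im}(ad(E))\oplus\ker(ad(F))$. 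Restricted to $\mathfrak{n}^-$, the operator $ad(E)$ is injective: $\mathfrak{n}^-$ lies in strictly negative $ad(H)$-weights whereas $\ker(ad(E))$ lies in non-negative weights. A weight computation places both $[\mathfrak{n}^-,E]$ and $\ker(ad(F))$ inside $\mathfrak{b}^-$, and a dimension count then forces $da|_{(e,\chi)}$ to be an isomorphism.

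To globalize, I would invoke the contracting $\mathbb{G}_m$-action on $\mathfrak{g}$ built from the $\mathfrak{sl}_2$-triple, which fixes $E$ and acts with strictly positive weights on $\ker(ad(F))$; combined with the matching conjugation action on $N^-$, the map $a$ becomes $\mathbb{G}_m$-equivariant. Contraction to the fixed point propagates étaleness from $(e,\chi)$ to all of $N^-\times S_\chi$, and bijectivity on closed points similarly reduces to the fiber over $\chi$, which consists only of $(e,\chi)$. I expect the globalization step to be the main obstacle: one must verify carefully that the two $\mathbb{G}_m$-actions interlock to make $a$ equivariant, so that étaleness and triviality of the $N^-$-stabilizer on $S_\chi$ both transport from the fixed point to the entire space.
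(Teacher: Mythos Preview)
The paper does not supply its own proof of this lemma; it is quoted as a classical fact from Kostant's work (see the references to \cite{key-26} and \cite{key-16} in the surrounding text). So there is nothing in the paper to compare against directly.

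Your argument is essentially the standard proof, as in Gan--Ginzburg \cite{key-16}, Lemma~2.1. A couple of remarks. First, your reading of the target as $\mathfrak{n}^{\perp}+\chi$ is correct; the displayed $\mathfrak{n}^{-}+\chi$ is a typo, as the preceding paragraph makes clear. Second, the globalization step you flag as the ``main obstacle'' is in fact routine once the actions are written down: the Kazhdan $\mathbb{G}_m$-action on $\mathfrak{g}$ is $t\cdot x = t^{2}\,\mathrm{Ad}(\gamma(t))^{-1}x$ where $\gamma$ is the cocharacter corresponding to $H$, and on $N^{-}$ one takes $t\cdot n=\gamma(t)^{-1}n\gamma(t)$. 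With these definitions, $a$ is visibly $\mathbb{G}_m$-equivariant because the coadjoint action of $N^-$ is itself $\mathrm{Ad}(T)$-equivariant. Since the weights on the tangent space at the unique fixed point $(e,\chi)$ are all strictly positive (for $\ker(ad(F))$ they are $2m_i+2>0$ with $m_i$ the exponents, and for $\mathfrak{n}^-$ they are the root heights), the action genuinely contracts both source and target to their fixed points, and \'etaleness at $(e,\chi)$ propagates everywhere. Bijectivity follows because an \'etale $\mathbb{G}_m$-equivariant map between affine spaces with contracting actions and a single fixed point on each side is automatically an isomorphism. So your outline is complete as it stands; there is no hidden difficulty in the step you were worried about.
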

Therefore, Kostant's theorem states that the space $S_{\chi}$ is
naturally isomorphic to $\mathfrak{h}^{*}/W$, and that in fact this
isomorphism is realized as the restriction of the natural adjoint
quotient map $\mathfrak{g}^{*}\to\mathfrak{h}^{*}/W$. This is a deep
result, and along the way he proves many interesting facts about $S_{\chi}$.
One which we shall record for later use is:
\begin{prop}
\label{pro:S in reglocus}Every point of $S_{\chi}$ is contained
in the regular locus of $\mathfrak{g}$.
\end{prop}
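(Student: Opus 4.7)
My plan is to prove this by exploiting the principal $\mathfrak{sl}_2$-triple structure and upper semicontinuity of centralizer dimension.

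First, I would translate the problem from $\mathfrak{g}^*$ to $\mathfrak{g}$ using the Killing form isomorphism. Since $\chi$ is a generic character on $\mathfrak{n}^-$ that we extend by zero to $\mathfrak{b}^- \to k$ (i.e., to a functional on $\mathfrak{g}$ vanishing on $\mathfrak{b}$), the Killing form identification sends $\chi$ to some element $E \in \mathfrak{g}$ whose component in each positive simple root space $\mathfrak{g}_{\alpha}$ is nonzero. Because $\chi$ is generic, $E$ is in fact a principal nilpotent element of $\mathfrak{n}^+$, and by Jacobson-Morozov it can be completed (compatibly with the given $F$) to the $\mathfrak{sl}_2$-triple $\{E, F, H\}$ fixed in the definition. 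Under this identification, the slice $S_\chi = \chi + \ker(\mathrm{ad}\, F)^*$ corresponds to the affine subspace $E + \mathfrak{g}^F \subset \mathfrak{g}$, where $\mathfrak{g}^F = \ker(\mathrm{ad}\, F)$. Since $F$ is principal, $\dim \mathfrak{g}^F = \mathrm{rank}(\mathfrak{g}) = r$.

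Next, I would introduce the $\mathbb{G}_m$-action on $\mathfrak{g}$ given by
\[
t \cdot v := t^{-2}\, \mathrm{Ad}(\gamma(t))\, v,
\]
where $\gamma : \mathbb{G}_m \to G$ is the cocharacter with derivative $H$. Since $\mathrm{ad}(H)E = 2E$, the element $E$ is fixed by this action. On the other hand, $\mathfrak{sl}_2$-representation theory applied to the adjoint action decomposes $\mathfrak{g}$ into irreducible $\mathfrak{sl}_2$-modules, and $\mathfrak{g}^F$ is spanned by the lowest weight vectors, all of which have non-positive $H$-weight. Thus every $v \in \mathfrak{g}^F$ has $H$-weights $\leq 0$, so the weights of $v$ under $t^{-2}\mathrm{Ad}(\gamma(t))$ are all $\leq -2 < 0$, and hence $t \cdot (E+v) \to E$ as $t \to \infty$. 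Consequently the entire slice $E + \mathfrak{g}^F$ lies in the $\mathbb{G}_m$-orbit-closure of $E$.

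Finally, I would invoke two standard facts: (i) the principal nilpotent $E$ is regular, i.e., $\dim \mathfrak{c}_\mathfrak{g}(E) = r$; and (ii) the function $x \mapsto \dim \mathfrak{c}_\mathfrak{g}(x)$ is upper semicontinuous on $\mathfrak{g}$. The $\mathbb{G}_m$-action above is a composition of conjugation and scaling, both of which preserve $\dim \mathfrak{c}_\mathfrak{g}(-)$, so $\dim \mathfrak{c}_\mathfrak{g}(t\cdot (E+v))$ is independent of $t \in \mathbb{G}_m$. Upper semicontinuity at the limit point $E$ then forces
\[
\dim \mathfrak{c}_\mathfrak{g}(E+v) \;\leq\; \dim \mathfrak{c}_\mathfrak{g}(E) \;=\; r,
\]
and combined with the universal lower bound $\dim \mathfrak{c}_\mathfrak{g}(x) \geq r$ we conclude $E+v$ is regular. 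Pulling back across the Killing isomorphism shows that every point of $S_\chi$ lies in $\mathfrak{g}^{*,reg}$.

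The main obstacle is mostly bookkeeping: verifying that the generic $\chi$ indeed corresponds to a principal nilpotent compatible with the chosen $F$ (so that one may use the same $\mathfrak{sl}_2$-triple throughout), and checking the weight inequality on $\mathfrak{g}^F$ carefully. Neither is difficult, but they are the points where a sloppy setup would break the contraction argument. The deep input, regularity of the principal nilpotent, is classical.
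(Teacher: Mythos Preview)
Your argument is correct; it is essentially the standard contraction argument for Slodowy slices (see e.g.\ Gan--Ginzburg, cited in the paper as \cite{key-16}). One phrasing slip: after showing $t\cdot(E+v)\to E$ you write that ``the entire slice lies in the $\mathbb{G}_m$-orbit-closure of $E$,'' which is backwards---$E$ is a fixed point, so its orbit closure is $\{E\}$. What you mean, and what you actually use in the next paragraph, is that $E$ lies in the orbit closure of each point $E+v$ of the slice. With that corrected, the upper semicontinuity step goes through exactly as you wrote it.

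As for comparison with the paper: there is nothing to compare. The paper does not prove this proposition at all; it merely records it as one of the facts established in Kostant's original work \cite{key-26}, introduced with the sentence ``One which we shall record for later use is.'' So you have supplied a self-contained proof where the paper simply cites the literature. The bookkeeping concern you flag---that the element of $\mathfrak{g}$ corresponding to $\chi$ under the Killing form should be the $E$ of the chosen $\mathfrak{sl}_2$-triple---is real but harmless: any two principal nilpotents in $\mathfrak{n}^+$ are $T$-conjugate, so after a harmless adjustment of $\chi$ or of the triple one may assume they agree, and the argument is insensitive to this normalization anyway.
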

Let us note one more nice property of the Kostant map. The action
of the principal semisimple element $H$ equips the space $(U(\mathfrak{g})/I_{\chi})^{ad(\mathfrak{n}^{-})}$
with a grading; which for convenience, we shift up by $2$ (c.f. {[}GG{]},
this is called the Kazhdan grading). In addition, $Z(\mathfrak{g})$
is graded by considering the algebra $S(\mathfrak{h})^{W}$ as a subalgebra
of $S(\mathfrak{h})$- which, of course, is graded by putting $\mathfrak{h}$
in degree 2. Then, with these conventions the Kostant map is actually
an isomorphism of graded algebras.

\subsection{Kostant Reduction for $\tilde{\mathfrak{g}}_{\mathcal{P}}$. }

Now we would like to extend the definition of the Kostant reduction
to the varieties $\tilde{\mathfrak{g}}_{\mathcal{P}}$. In fact, we
shall work with the sheaves of algebras $\tilde{D}_{h,\mathcal{P}}$.
The Kostant reduction of these sheaves is easy to define: by using
the natural map $\mathfrak{n}^{-}\to\Gamma(\tilde{D}_{h,\mathcal{P}})$,
we define the sheaf of left ideals $\mathcal{I}_{\chi}$ to be the
left ideal sheaf generated by the image of $\{n-\chi(n)|n\in\mathfrak{n}^{-}\}$. 

Then we form the sheaf of $\tilde{D}_{h,\mathcal{P}}$-modules $\tilde{D}_{h,\mathcal{P}}/\mathcal{I}_{\chi}$,
and, using the residual adjoint action of the group $N^{-}$, we take
$\Gamma(\tilde{D}_{h,\mathcal{P}}/\mathcal{I}_{\chi})^{N^{-}}$. It
is easy to check that this object inherits a multiplication from the
algebra structure of $\tilde{D}_{h,\mathcal{P}}$ (c.f. \cite{key-16}
section 2 for a more general result). 

Further, we can consider the action of the principal semisimple element
$H$ (chosen via the Jacobson-Morozov theorem above), which makes
this into a graded algebra (the element $h$ is in degree $2$). As
above, we shift this grading by $2$. Then, the natural map from the
center
\[
O(\mathfrak{h}^{*}/W_{\mathcal{P}}\times\mathbb{A}^{1})=\Gamma(\tilde{D}_{h,\mathcal{P}})^{G}\to\Gamma(\tilde{D}_{h,\mathcal{P}}/\mathcal{I}_{\chi})^{N^{-}}
\]
(where $\mathfrak{h}^{*}$ is in degree $2$ as well) becomes a morphism
of graded algebras. 
\begin{claim}
This map is a graded algebra isomorphism. \end{claim}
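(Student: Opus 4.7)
The plan is to reduce the claim to the classical ($h=0$) case via the graded Nakayama lemma (Proposition \ref{pro:GrNak}), and then to identify the classical Kostant reduction of $\tilde{\mathfrak{g}}_{\mathcal{P}}$ using the fibered-product description of Lemma \ref{lem:Regiso2} together with Kostant's original theorem for $\mathfrak{g}$.

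First I would verify that both sides of the map are graded $k[h]$-modules with grading bounded below in the Kazhdan grading, and that both are $k[h]$-flat. The source $O(\mathfrak{h}^{*}/W_{\mathcal{P}}\times\mathbb{A}^{1})$ is manifestly flat as a polynomial algebra in $h$. For the target, I would exhibit a PBW-type basis for $\tilde{D}_{h,\mathcal{P}}/\mathcal{I}_\chi$ (or equivalently, argue that the defining generators of $\mathcal{I}_\chi$ cut out a flat deformation, as in classical Kostant); the $N^-$-invariants then remain $k[h]$-flat since $N^-$ is unipotent and the Kazhdan grading is bounded below. Having established flatness and boundedness of grading, Proposition \ref{pro:GrNak} reduces the problem to showing that the map induced at $h=0$,
\[
O(\mathfrak{h}^{*}/W_{\mathcal{P}}) \longrightarrow \Gamma(O(\tilde{\mathfrak{g}}_{\mathcal{P}})/\mathcal{I}_{\chi})^{N^{-}},
\]
is an isomorphism.

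For this classical statement, I would use the description $\Gamma(O(\tilde{\mathfrak{g}}_{\mathcal{P}})) = O(\mathfrak{g}^{*})\otimes_{O(\mathfrak{h}^{*}/W)}O(\mathfrak{h}^{*}/W_{\mathcal{P}})$ from equation (2.2). Since Proposition \ref{pro:S in reglocus} places the Kostant slice $S_{\chi}$ entirely inside the regular locus $\mathfrak{g}^{*,reg}$, the Kostant reduction is insensitive to what happens off the regular locus; so one can first restrict to $\mathfrak{g}^{*,reg}$. There Lemma \ref{lem:Regiso2} gives $\tilde{\mathfrak{g}}_{\mathcal{P}}^{reg}\cong\mathfrak{h}^{*}/W_{\mathcal{P}}\times_{\mathfrak{h}^{*}/W}\mathfrak{g}^{*,reg}$, and classical Kostant identifies the reduction of $O(\mathfrak{g}^{*,reg})$ with $O(S_{\chi})\cong O(\mathfrak{h}^{*}/W)$. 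Base-changing this identification along $\mathfrak{h}^{*}/W_{\mathcal{P}}\to\mathfrak{h}^{*}/W$ collapses the fibered product to $\mathfrak{h}^{*}/W_{\mathcal{P}}$, producing the sought isomorphism; the grading comparison is automatic because the Kazhdan grading restricts to the standard grading on $S_{\chi}\cong\mathfrak{h}^{*}/W$.

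The main obstacle is the sheaf-theoretic bookkeeping: ensuring that global sections commute with the quotient by $\mathcal{I}_{\chi}$ and that $N^{-}$-invariants commute with the relevant tensor products. The former I would handle by working over an open cover of $\mathcal{P}$ by $J^-$-translates of a base point (as in Section 2.2.2), on which $\tilde{D}_{h,\mathcal{P}}$ is a genuinely affine sheaf of algebras, so that taking sections and quotients commute. The latter is clean because $O(\mathfrak{h}^{*}/W_{\mathcal{P}})$ sits inside the $G$-invariants of $\Gamma(\tilde{D}_{h,\mathcal{P}})$ and hence consists of $N^{-}$-invariant elements, so invariants pass through the base change; this uses the higher-cohomology vanishing of the unipotent group $N^{-}$ on modules whose Kazhdan grading is bounded below. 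With these compatibilities in hand, the argument reduces cleanly to Kostant's classical theorem.
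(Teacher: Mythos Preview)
Your proposal is correct and follows essentially the same route as the paper: reduce to $h=0$ by flatness and graded Nakayama, then use Proposition~\ref{pro:S in reglocus} to land in the regular locus and Lemma~\ref{lem:Regiso2} to identify $\pi^{-1}(S_\chi)$ with $\mathfrak{h}^*/W_{\mathcal{P}}$. The paper is terser---it phrases the $h=0$ step geometrically as computing $O(\pi^{-1}(S_\chi))$ rather than via the global-sections formula (2.2) and base change---and it defers your ``sheaf-theoretic bookkeeping'' to a remark observing that $\pi^{-1}(S_\chi)$ is affine; but the substance is the same.
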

\begin{proof}
Since this is clearly a morphism of flat graded algebras, one reduces
immediately to the coherent case where $h=0$. By the construction
of the spaces involved, we have the isomorphisms 
\[
O(\tilde{\mathfrak{g}}_{\mathcal{P}}/\mathcal{I}_{\chi})\tilde{=}O(\pi^{-1}(\mathfrak{n}^{\perp}+\chi))
\]
 and 
\[
O(\tilde{\mathfrak{g}}_{\mathcal{P}}/\mathcal{I}_{\chi})^{N^{-}}\tilde{=}O(\pi^{-1}(S_{\chi}))
\]

So, we really only have to show that the natural map 
\[
O(\mathfrak{h}^{*}/W_{\mathcal{P}})\to O(\pi^{-1}(S_{\chi}))
\]
 is an isomorphism. 

But we also have the isomorphism 
\[
\tilde{\mathfrak{g}}_{\mathcal{P}}^{reg}\tilde{=}\mathfrak{h}^{*}/W_{\mathcal{P}}\times_{\mathfrak{h}^{*}/W}\mathfrak{g^{*,}}^{reg}
\]
(c.f. \prettyref{lem:Regiso}). Since we already know that $S_{\chi}$
is a closed subscheme of $\mathfrak{g}^{reg}$ which is a section
of the map $\mathfrak{g}^{*}\to\mathfrak{h}^{*}/W$ (by \prettyref{pro:S in reglocus}),
the result follows.\end{proof}
\begin{rem}
From the proof it follows that $\pi^{-1}(S_{\chi})$ is an affine
variety (indeed, it is a copy of affine space). Therefore the space
$\pi^{-1}(\mathfrak{n}^{-}+\chi)$ is a copy of an affine space as
well. So the use of the global sections functor in the definition
is superfluous (see \prettyref{lem:skryabin} below for a more detailed
result in this direction). 
\end{rem}

\subsection{The Functor $\kappa^{'}$ }

We shall now proceed to define the first, naive version of our functor.
This shall be a functor 
\[
\kappa_{\mathcal{P}}^{'}:Mod^{G\times\mathbb{G}_{m}}(\tilde{D}_{h,\mathcal{P}})\to Mod^{gr}(O(\mathfrak{h}^{*}\times\mathbb{A}^{1}))
\]
defined as 
\[
\kappa_{\mathcal{P}}^{'}(M)=\Gamma(M/\mathcal{I}_{\chi})^{N^{-}}
\]
 where the taking of $N^{-}$-invariants is via the adjoint action
of the group $N^{-}\subset G$. The fact that this functor lands in
the category $Mod^{gr}(O(\mathfrak{h}^{*}/W_{\mathcal{P}}\times\mathbb{A}^{1}))$
follows immediately from the discussion in the previous section. 

We should like to consider some general properties of this functor.
First of all, let us note that the sheaf $M/\mathcal{I}_{\chi}$ retains
an $N^{-}$-Equivariance and an action of $H$, the principal semisimple
element (we shall consider the $H$-grading shifted by $2$, as above).
This sheaf has the property that $(M/\mathcal{I}_{\chi})|_{h=0}$
is supported on the variety $\pi^{-1}(\mathfrak{n}^{-}+\chi)$. It
has the further property that if we define a left action of $\mathfrak{n}^{-}$
on it by 
\[
n\cdot x=(n-\chi(n))x
\]
 then this is a nilpotent lie algebra action. This follows, essentially,
from the commutation relations in the enveloping algebra $U_{h}(\mathfrak{g})$. 

To study $\kappa^{'}$, we shall need to develop, briefly, some of
the properties of the functor $\Gamma$, as applied to objects of
the form $M/\mathcal{I}_{\chi}$. To that end, we make the
\begin{defn}
We let $\mathcal{C}_{\chi}$ be the category of modules $M\in Mod(\tilde{D}_{h})$
such that $\mathcal{I}_{\chi}\cdot M\subseteq hM$, such that the
$\chi$-twisted left action of $\mathfrak{n}$ is nilpotent , and
such that $M$ is graded by a semisimple action of the principal semisimple
element $H$. We further demand that $h$ act by degree $2$ with
respect to this grading. 

The morphisms in this category are those which respect all structures.
\end{defn}
We note that for any equivariant module $M$, the object $M/\mathcal{I}_{\chi}$
is in $\mathcal{C}_{\chi}$ (the grading is the Kazhdan grading).
Then, we have the 
\begin{prop}
The functor $\Gamma$ is exact on the subcategory $\mathcal{C}_{\chi}$. \end{prop}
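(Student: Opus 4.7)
The plan is to establish $H^i(\mathcal{P}, M) = 0$ for all $M \in \mathcal{C}_\chi$ and $i > 0$, since $\Gamma$ is automatically left exact and the desired exactness on $\mathcal{C}_\chi$ amounts to this vanishing. The crucial geometric input is the remark preceding the proposition: $\pi^{-1}(\mathfrak{n}^{\perp}+\chi) \subseteq \tilde{\mathfrak{g}}_\mathcal{P}$ is a closed affine subvariety (indeed a copy of affine space). Combined with the fact that $\pi : \tilde{\mathfrak{g}}_\mathcal{P} \to \mathcal{P}$ is affine, being a vector bundle, this yields: any quasi-coherent sheaf on $\tilde{\mathfrak{g}}_\mathcal{P}$ that is scheme-theoretically supported on $\pi^{-1}(\mathfrak{n}^{\perp}+\chi)$ has vanishing higher cohomology computed on $\mathcal{P}$.

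First I would introduce the $h$-adic filtration $\{h^n M\}_{n \geq 0}$. The defining condition $\mathcal{I}_\chi \cdot M \subseteq hM$ of $\mathcal{C}_\chi$ immediately gives $\mathcal{I}_\chi \cdot h^n M \subseteq h^{n+1} M$, so each associated graded piece $h^n M / h^{n+1} M$ is annihilated by both $h$ and $\mathcal{I}_\chi$. Consequently it is a module over $\tilde{D}_{h,\mathcal{P}}/(h, \mathcal{I}_\chi)$, which is exactly the pushforward to $\mathcal{P}$ of the structure sheaf of $\pi^{-1}(\mathfrak{n}^{\perp}+\chi)$. Applying the geometric input, $H^i(\mathcal{P}, h^n M / h^{n+1} M) = 0$ for every $n$ and every $i > 0$. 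By the long exact sequences attached to
\[
0 \to h^n M / h^{n+1} M \to M / h^{n+1} M \to M / h^n M \to 0
\]
and induction on $n$, I obtain $H^i(\mathcal{P}, M / h^n M) = 0$ for all $n \geq 0$ and all $i > 0$.

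To pass from this to $M$ itself, I invoke the semisimple $H$-grading with $h$ in degree $2$. For finitely generated $M$ the grading is bounded below, so in any fixed graded degree $k$ the canonical projection $M \to M / h^n M$ is an isomorphism in that degree once $n > (k - k_0)/2$, where $k_0$ is a lower bound for the grading. Computing $H^i(\mathcal{P}, -)$ grade-by-grade via the \v{C}ech complex for a finite affine open cover of $\mathcal{P}$, the degree-$k$ component of the \v{C}ech complex of $M$ coincides with that of $M / h^n M$ once $n$ is large, hence $H^i(\mathcal{P}, M)_k = H^i(\mathcal{P}, M / h^n M)_k = 0$. Summing over $k$ yields the required vanishing, and thus $\Gamma$ sends short exact sequences in $\mathcal{C}_\chi$ to short exact sequences.

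The main obstacle is the final passage to the limit: one must verify that the $H$-grading refines to a grading of $M$ by quasi-coherent $\mathcal{O}_\mathcal{P}$-submodules $M_k$, that the $h$-adic filtration is separated on each $M_k$, and that sheaf cohomology on $\mathcal{P}$ commutes with the direct-sum decomposition by graded degree. Each of these is a straightforward consequence of $M$ being a finitely generated graded $\tilde{D}_{h,\mathcal{P}}$-module with grading bounded below together with the fact that $h$ has strictly positive Kazhdan degree, but checking these carefully is the technical heart of what turns the naive d\'evissage into a complete proof.
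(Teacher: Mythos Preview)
Your proposal is correct and follows essentially the same strategy as the paper: reduce to the case $h=0$, where $M/hM$ is supported on the affine variety $\pi^{-1}(\mathfrak{n}^{\perp}+\chi)$ and hence has no higher cohomology, and then lift to general $M$ using the Kazhdan grading and the positivity of $h$.

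The one difference is packaging. The paper invokes its earlier cohomological machinery (\prettyref{lem:Cohomology1}, \prettyref{cor:Cohomology2}, and the graded Nakayama lemma \prettyref{pro:GrNak}) directly: once one knows $R\Gamma(M/hM)$ is concentrated in degree zero and $R\Gamma(M)$ has grading bounded below, those results immediately give the vanishing. You instead unroll that machinery by hand via the $h$-adic filtration and an explicit d\'evissage on $M/h^{n}M$, then pass to the limit degree-by-degree. Your route is more self-contained and makes the separation argument visible; the paper's is shorter because the Nakayama-type step has already been abstracted. The only point to watch---which you correctly flag---is that the boundedness-below of the grading is not automatic from finite generation alone (the Kazhdan grading on $\tilde{D}_{h,\mathcal{P}}$ is not obviously bounded below); the paper handles this with a PBW-basis argument using that the elements $n-\chi(n)$ act nilpotently and that $S_{\chi}\times N^{-}$ is positively graded, and you would need the same input.
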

\begin{proof}
This shall follow from the cohomological lemmas \ref{lem:Cohomology1},\ref{cor:Cohomology2}.
In particular, we need that for $M\in\mathcal{C}_{\chi}$, $R\Gamma(M)$
has grading bounded below, and that $M/hM$ has no higher cohomology. 

To see that the grading is bounded below, note that $R\Gamma(M)$
satisfies $\mathcal{I}_{\chi}R\Gamma(M)\subseteq hR\Gamma(M)$, and
that the space $S_{\chi}\times N^{-}$ is positively graded (c.f.
\cite{key-16}, section 2). So, we choose a PBW basis for $U_{h}(\mathfrak{g})$
from a basis of $\mathfrak{g}$ as a graded module, but with elements
$n-\chi(n)$ instead of $n$ for all $n$ with negative grading. Let
elements of the form $n-\chi(n)$ be on the right. Then since they
act nilpotently, and $R\Gamma(M)$ has cohomology consisting of finitely
generated modules, we see that $R\Gamma(M)$ is indeed bounded below. 

Finally, note that $M/hM$ is now supported on the affine variety
$\tilde{S}_{\chi}\times N^{-}$, and hence has no higher cohomology.
The lemma follows. 
\end{proof}
From this proposition, one can go a bit further. Let $A_{\chi}$ be
the subcategory of $U_{h}(\mathfrak{g})\otimes_{O(\mathfrak{h}^{*}/W)}O(\mathfrak{h}^{*})$-modules
$M$ such that $\mathcal{I}_{\chi}\cdot M\subseteq hM$, such that
the $\chi$-twisted left action of $\mathfrak{n}$ is nilpotent, and
such that $M$ admits a semisimple action of the principal semisimple
element $H$, with $h$ acting by degree $2$ elements. Then we have
the
\begin{cor}
The functor $\Gamma$ is an equivalence of categories between $C_{\chi}$
and $A_{\chi}$. \end{cor}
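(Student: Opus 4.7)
The strategy is a standard adjunction argument: construct an explicit quasi-inverse $L : A_{\chi} \to \mathcal{C}_{\chi}$ by localization, then show the unit and counit are isomorphisms. The exactness of $\Gamma$ just proven, together with the graded Nakayama lemma \prettyref{pro:GrNak} and the fact (noted in the remark after the claim on Kostant reduction for $\tilde{\mathfrak{g}}_{\mathcal{P}}$) that $\pi^{-1}(\mathfrak{n}^{-} + \chi)$ is an affine variety, supply all the ingredients.

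For the quasi-inverse, given $N \in A_{\chi}$, I set $L(N) := \tilde{D}_{h} \otimes_{\Gamma(\tilde{D}_{h})} N$, viewed as a sheaf of left $\tilde{D}_{h}$-modules on $\mathcal{B}$, where the tensor product uses the isomorphism $\Gamma(\tilde{D}_{h}) \cong U_{h}(\mathfrak{g}) \otimes_{O(\mathfrak{h}^{*}/W)} O(\mathfrak{h}^{*})$ recorded in section 2.2.1. Each condition defining $\mathcal{C}_{\chi}$ transfers from $N$: $\mathcal{I}_{\chi} L(N) \subseteq h L(N)$ because $L$ is $\Gamma(\tilde{D}_{h})$-linear, the $\chi$-twisted nilpotency of the $\mathfrak{n}^{-}$-action is inherited, and the $H$-grading (with $h$ in degree $2$) transfers as well. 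The tensor-Hom adjunction then yields natural transformations $\eta_{N} : N \to \Gamma(L(N))$ and $\varepsilon_{M} : L(\Gamma(M)) \to M$, and the task reduces to showing both are isomorphisms.

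To verify this I reduce modulo $h$. At $h = 0$, both $L(N)/h L(N)$ and $M/h M$ are quasi-coherent sheaves on $\tilde{\mathfrak{g}}$ set-theoretically supported on $\pi^{-1}(\mathfrak{n}^{-} + \chi)$; since this subscheme is an affine variety, Serre's equivalence between quasi-coherent sheaves and modules over the global sections is an immediate equivalence at $h = 0$, so $\eta_{N} \otimes^{L}_{k[h]} k_{0}$ and $\varepsilon_{M} \otimes^{L}_{k[h]} k_{0}$ are isomorphisms. Compatibility of $\Gamma$ with base change along $\otimes^{L}_{k[h]} k_{0}$ is furnished by \prettyref{lem:Cohomology1}, \prettyref{cor:Cohomology2}, and the exactness of $\Gamma$ on $\mathcal{C}_{\chi}$ established in the preceding proposition, so the cones of $\eta_{N}$ and $\varepsilon_{M}$ vanish after reduction mod $h$. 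The bounded-below grading hypothesis needed for \prettyref{pro:GrNak} was verified in the proof of the previous proposition from the positivity of the Kazhdan grading on functions on $S_{\chi} \times N^{-}$ combined with the containment $\mathcal{I}_{\chi} M \subseteq h M$; the same argument applies verbatim to $L(N)$ and to the cones. Applying \prettyref{pro:GrNak} then forces the cones themselves to vanish.

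The main obstacle I anticipate is confirming the bounded-below grading hypothesis for the \emph{cones} of $\eta_{N}$ and $\varepsilon_{M}$, rather than merely for the objects $M, N, L(N), \Gamma(M)$ in isolation. This is largely a bookkeeping matter controlled by the explicit Kazhdan grading conventions, but it must be checked carefully before the Nakayama-based comparison can be invoked; apart from this, every other step reduces mechanically either to the affine-variety case at $h = 0$ or to the cohomological machinery of section 2.
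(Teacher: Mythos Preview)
Your approach is sound but differs from the paper's. The paper does not build an explicit quasi-inverse at all; instead it argues that, given the exactness of $\Gamma$ on $\mathcal{C}_{\chi}$ already established, it suffices to show $\Gamma$ is \emph{conservative}, after which the equivalence follows from the standard localization argument of \cite{key-19}, Chapter~1.4. Conservativity is checked by a line-bundle trick: for $V\neq 0$ choose $\lambda$ with $\Gamma(V\otimes O(\lambda))\neq 0$; exactness of $\Gamma$ on the short exact sequence for multiplication by $h$ plus graded Nakayama forces $\Gamma((V/hV)\otimes O(\lambda))\neq 0$; but $V/hV$ is supported on the affine space $\pi^{-1}(\mathfrak{n}^{\perp}+\chi)$, where the twist by $O(\lambda)$ is trivial as a sheaf, so $\Gamma(V/hV)\neq 0$ and hence $\Gamma(V)\neq 0$.

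Your route---explicit localization $L(N)=\tilde{D}_{h}\otimes_{\Gamma(\tilde{D}_{h})}N$, reduction mod $h$ to the affine situation over $\pi^{-1}(\mathfrak{n}^{\perp}+\chi)$, and graded Nakayama on the cones---also works, and has the virtue of exhibiting the inverse functor concretely. The paper's argument is more economical: by reducing to conservativity it avoids having to verify that $L$ lands in $\mathcal{C}_{\chi}$, that $L$ (or its derived version) commutes with $\otimes^{L}_{k[h]}k_{0}$, and that the cones of $\eta$ and $\varepsilon$ inherit the bounded-below Kazhdan grading---all true, but exactly the bookkeeping you flag as the main obstacle. One small correction to your write-up: at $h=0$ the support on $\pi^{-1}(\mathfrak{n}^{\perp}+\chi)$ is in fact \emph{scheme}-theoretic (the condition $\mathcal{I}_{\chi}M\subseteq hM$ becomes $\mathcal{I}_{\chi}(M/hM)=0$), which is what you need for Serre's theorem to apply without passing to thickenings.
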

\begin{proof}
To see this, we only need show that $\Gamma$ is conservative; then
the result will follow from the previous proposition and standard
arguments (e.g. \cite{key-19} chapter 1.4). So, let $V\in\mathcal{C}_{\chi}$
be nonzero. Choose $W$ a nonzero coherent subsheaf of $V$ (on $\mathcal{B}$).
Then there exists a line bundle $O(\lambda)$ such that 
\[
\Gamma(W\otimes O(\lambda))\neq0
\]
 and so the same is true of $V$. Next, the exact sequence 
\[
0\to V\otimes O(\lambda)\to V\otimes O(\lambda)\to(V/hV)\otimes O(\lambda)\to0
\]
 (where the first map is multiplication by $h$) gives a surjection
$\Gamma(V\otimes O(\lambda))\to\Gamma((V/hV)\otimes O(\lambda))$
with kernel equal to $h\Gamma(V\otimes O(\lambda))$, by the exactness
of $\Gamma$. Thus the graded Nakayama lemma implies that $\Gamma((V/hV)\otimes O(\lambda))\neq0$.
But now the sheaf $(V/hV)$ lives on a copy of affine space. Thus
any tensor by a line bundle is an isomorphism of sheaves. So we deduce
$\Gamma(V/hV)\neq0$, which by the exact sequence 
\[
0\to V\to V\to(V/hV)\to0
\]
 implies $\Gamma(V)\neq0$, as required. 
\end{proof}
We wish to see what happens after taking $N^{-}$-invariants. To that
end, we state the 
\begin{lem}
\label{lem:skryabin}The functor $M\to M^{N^{-}}$ is exact on the
category $A_{\chi}$. In fact, this functor gives an equivalence from
$A_{\chi}$ to the category of graded $O(\mathfrak{h}^{*}\times\mathbb{A}^{1})$-modules.
\end{lem}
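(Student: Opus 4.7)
My plan is to adapt Skryabin's classical equivalence between Whittaker modules and modules over the finite $W$-algebra (appendix to Premet's paper on finite $W$-algebras) to the present $h$-deformed, Kazhdan-graded setting. Write $A := U_h(\mathfrak{g}) \otimes_{O(\mathfrak{h}^*/W)} O(\mathfrak{h}^*)$ and $Q := A / A \mathcal{I}_{\chi}$. By the claim established just above, $Q^{N^-}$ is identified with $\mathcal{W} := O(\mathfrak{h}^* \times \mathbb{A}^1)$, so $Q$ acquires the structure of an $(A,\mathcal{W})$-bimodule. I would produce the inverse functor to $M \mapsto M^{N^-}$ as $V \mapsto Q \otimes_{\mathcal{W}} V$, and deduce both parts of the lemma from a \emph{quantum PBW decomposition} of $Q$: namely, that $Q$ is free as a right $\mathcal{W}$-module, with basis given by ordered PBW monomials in a chosen basis of $\mathfrak{n}^-$.

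The PBW statement I need is a deformation of Kostant's identification $\mathfrak{n}^\perp + \chi \tilde{=} N^- \times S_{\chi}$ via the $N^-$-action, which classically yields $O(\mathfrak{n}^\perp+\chi) \tilde{=} O(N^-) \otimes O(S_{\chi})$. I would prove the deformed version by working throughout with the Kazhdan grading, in which $\mathcal{I}_{\chi}$ is homogeneous (each generator $n-\chi(n)$ becomes homogeneous after the shift of $\mathfrak{h}^*$ into degree $2$), so that \prettyref{pro:GrNak} applies: spanning and $\mathcal{W}$-linear independence of the ordered monomials in $Q$ can be lifted from their classical analogues in $Q/hQ$ by the graded Nakayama lemma. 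Granting this, for any $M \in A_{\chi}$ the multiplication map $Q \otimes_{\mathcal{W}} M^{N^-} \to M$ is an isomorphism by the same Nakayama argument (the $h=0$ case being Kostant's and Skryabin's theorems applied to the coherent sheaf $M/hM$ supported on $\pi^{-1}(\mathfrak{n}^\perp+\chi)$, which is $N^-$-equivariantly free over $S_{\chi}$).

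Exactness of $M \mapsto M^{N^-}$ now follows formally: under the isomorphism $M \tilde{=} O(N^-) \otimes M^{N^-}$, the invariants functor is simply projection to the constant term in $O(N^-)$, hence exact. That $V \mapsto Q \otimes_{\mathcal{W}} V$ is quasi-inverse to $M \mapsto M^{N^-}$ is then a direct check with the PBW basis: the unit map is an isomorphism because the identity monomial $1 \in Q$ generates $\mathcal{W} \cdot 1 \subset Q$ as the full space of $N^-$-invariants, and the counit is an isomorphism by freeness of $Q$ over $\mathcal{W}$. The main obstacle, as I anticipate it, is the quantum PBW decomposition itself — lifting Kostant's transversality statement from $h=0$ requires that the Kazhdan grading interact cleanly with $\mathcal{I}_{\chi}$, which is exactly the reason for the degree conventions fixed at the start of the section. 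Once this PBW freeness is in hand, the rest of the argument is bookkeeping combined with repeated applications of \prettyref{pro:GrNak}.
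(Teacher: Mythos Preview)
Your proposal is correct and is essentially the same approach the paper has in mind: the paper does not spell out a proof but simply refers to \cite{key-16}, section six, saying that Skryabin's argument (via the PBW freeness of $Q = A/A\mathcal{I}_{\chi}$ over the reduced algebra and the resulting quasi-inverse $V \mapsto Q \otimes_{\mathcal{W}} V$) carries over verbatim to this Kazhdan-graded, $h$-deformed setting. You have supplied exactly those details, including the correct use of the Kazhdan grading and \prettyref{pro:GrNak} to lift the classical PBW decomposition from $h=0$.
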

This lemma is really just a restatement of Skryabin's equivalence
in our context. See \cite{key-16}, section six, for a proof (the
same one applies here). 

Thus, we see that by taking flat resolutions, we can consider the
derived functor 
\[
L\kappa_{\mathcal{P}}^{'}:D^{b}Mod^{G\times\mathbb{G}_{m}}(\tilde{D}_{h,\mathcal{P}})\to D^{b}(Mod^{gr}(O(\mathfrak{h}^{*}/W_{\mathcal{P}}\times\mathbb{A}^{1})))
\]
which is obtained by taking the derived functor of the restriction
$M\to M/\mathcal{I}_{\chi}$ and then composing with the invariants
functor (we shall omit the functor $\Gamma$ from now on, which we
can do by the above propositions). 

To see that this is a functor is the appropriate one, we should first
show the 
\begin{prop}
We have $L\kappa_{\mathcal{P}}^{'}(\tilde{D}_{h,\mathcal{P}})=O(\mathfrak{h}^{*}/W_{\mathcal{P}}\times\mathbb{A}^{1})$ \end{prop}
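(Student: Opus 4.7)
The plan is to reduce the computation to the Claim of the previous subsection, which already identifies $\Gamma(\tilde{D}_{h,\mathcal{P}}/\mathcal{I}_{\chi})^{N^{-}}$ with $O(\mathfrak{h}^{*}/W_{\mathcal{P}}\times\mathbb{A}^{1})$. The only real content is to show that on $\tilde{D}_{h,\mathcal{P}}$ itself, the derived functor $L\kappa_{\mathcal{P}}^{'}$ agrees with the underived one; once this is done, the Claim delivers the answer verbatim.

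First I would peel off the two factors that are already known to be exact. The invariants functor $(\cdot)^{N^{-}}$ is exact on the category $A_{\chi}$ by Skryabin's equivalence (\prettyref{lem:skryabin}), and $\Gamma$ is exact on $\mathcal{C}_{\chi}$ by the proposition preceding that lemma. Since $M/\mathcal{I}_{\chi}$ lies in $\mathcal{C}_{\chi}$ whenever $M\in Mod^{G\times\mathbb{G}_{m}}(\tilde{D}_{h,\mathcal{P}})$, the derivation of $\kappa_{\mathcal{P}}^{'}$ is entirely concentrated in the restriction step $M\mapsto M/\mathcal{I}_{\chi}$. So the proposition reduces to showing that $L(\tilde{D}_{h,\mathcal{P}}/\mathcal{I}_{\chi})$ is concentrated in degree zero and equals $\tilde{D}_{h,\mathcal{P}}/\mathcal{I}_{\chi}$.

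To do this I would use the Koszul complex for the elements $\{n-\chi(n):n\in\mathfrak{n}^{-}\}$ acting on $\tilde{D}_{h,\mathcal{P}}$, namely $\tilde{D}_{h,\mathcal{P}}\otimes_{k}\Lambda^{\bullet}\mathfrak{n}^{-}$ with the standard Koszul differential twisted by $\chi$. Reducing modulo $h$, this becomes the Koszul complex for the functions cutting out $\pi^{-1}(\mathfrak{n}^{\perp}+\chi)$ inside $\tilde{\mathfrak{g}}_{\mathcal{P}}$ (where $\pi:\tilde{\mathfrak{g}}_{\mathcal{P}}\to\mathfrak{g}^{*}$). By \prettyref{pro:S in reglocus} the slice $S_{\chi}$ lies in $\mathfrak{g}^{*,reg}$, so the entire $N^{-}$-orbit $\mathfrak{n}^{\perp}+\chi=N^{-}\cdot S_{\chi}$ also lies in $\mathfrak{g}^{*,reg}$; combining with \prettyref{lem:Regiso2} one gets $\pi^{-1}(\mathfrak{n}^{\perp}+\chi)\cong N^{-}\times\mathfrak{h}^{*}/W_{\mathcal{P}}$, which is a complete intersection of the expected codimension $\dim\mathfrak{n}^{-}$. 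Hence the Koszul complex is acyclic in positive degrees modulo $h$. Applying the graded Nakayama lemma for complexes (\prettyref{pro:GrNak}), appropriately localized on $\mathcal{B}$, lifts this vanishing to the full Koszul complex over $k[h]$.

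With this acyclicity in hand, $L(\tilde{D}_{h,\mathcal{P}}/\mathcal{I}_{\chi})=\tilde{D}_{h,\mathcal{P}}/\mathcal{I}_{\chi}$ in $D^{b}(\mathcal{C}_{\chi})$. Applying the exact functors $\Gamma$ and $(\cdot)^{N^{-}}$ and invoking the Claim proved in the previous subsection produces the desired isomorphism $L\kappa_{\mathcal{P}}^{'}(\tilde{D}_{h,\mathcal{P}})\cong O(\mathfrak{h}^{*}/W_{\mathcal{P}}\times\mathbb{A}^{1})$, with the grading matching because the map built in the Claim is an isomorphism of graded algebras. The main obstacle in this outline is the acyclicity of the Koszul complex; everything else is either bookkeeping or a direct appeal to the preceding Claim and the exactness results of section 4.2.
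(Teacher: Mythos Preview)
Your approach is essentially the same as the paper's: both reduce to showing the higher derived terms vanish, pass to $h=0$, establish vanishing there, and lift via the graded Nakayama lemma (\prettyref{pro:GrNak}). The only difference is that where the paper simply identifies $L^{i}\kappa_{\mathcal{P}}^{'}(\tilde{D}_{h,\mathcal{P}})|_{h=0}$ with $Tor^{i}_{O(\mathfrak{g}^{*})}(S_{\chi},O_{\tilde{\mathfrak{g}}_{\mathcal{P}}})$ and cites \cite{key-9,key-11} for its vanishing, you supply an explicit argument: using \prettyref{pro:S in reglocus} and \prettyref{lem:Regiso2} to see that $\pi^{-1}(\mathfrak{n}^{\perp}+\chi)\cong N^{-}\times\mathfrak{h}^{*}/W_{\mathcal{P}}$ has the expected codimension $\dim\mathfrak{n}^{-}$ in the smooth variety $\tilde{\mathfrak{g}}_{\mathcal{P}}$, hence the defining equations form a regular sequence and the Koszul complex is a resolution. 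This is exactly the content of the cited Tor vanishing, so your version is a self-contained unpacking of the same proof rather than a genuinely different route. (Minor slip: you wrote ``localized on $\mathcal{B}$'' where you meant $\mathcal{P}$.)
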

\begin{proof}
The claim is simply that there are no higher derived terms. We note
that by the construction 
\[
L^{i}\kappa_{\mathcal{P}}^{'}(\tilde{D}_{h,\mathcal{P}})|_{h=0}=Tor_{O(\mathfrak{g}^{*})}^{i}(S_{\chi},O_{\tilde{\mathfrak{g}_{\mathcal{P}}}})
\]
 and the term on the right vanishes for nonzero $i$ (c.f. \cite{key-9},
chapter 1, and \cite{key-11}). So the result follows from the graded
Nakayama lemma.
\end{proof}
Below, we shall denote $\kappa_{\mathcal{B}}^{'}$ simply by $\kappa^{'}$,
and for $\mathcal{P}=\mathcal{P}_{s}$, we denote $\kappa_{\mathcal{P}}^{'}$
by $\kappa_{s}^{'}$. 

Now we can state the two main results of this section: 
\begin{lem}
\label{lem:Kappa&Reflection1}If $\alpha_{s}$ is a finite simple
root, then we have a functorial isomorphism 
\[
L\kappa^{'}(\mathcal{R}_{\alpha}M)\tilde{=}O(\mathfrak{h}^{*})\otimes_{O(\mathfrak{h}^{*}/s_{\alpha})}L\kappa_{s}^{'}(M)
\]
for any complex $M\in D^{b}Mod^{G\times\mathbb{G}_{m}}(\tilde{D}_{h})$. 
\end{lem}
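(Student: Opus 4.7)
The plan is to exploit the factorization $\mathcal{R}_\alpha = L\pi_s^* R(\pi_s)_*$ and establish two compatibilities of the Kostant-Whittaker reduction with the two halves separately. Since the right-hand side $L\kappa_s^{'}(M)$ requires a $\tilde{D}_{h,\mathcal{P}_s}$-module as input, I read this as $L\kappa_s^{'}(R(\pi_s)_* M)$, so the target becomes
\[
L\kappa^{'}(L\pi_s^* R(\pi_s)_* M) \tilde{=} O(\mathfrak{h}^*) \otimes_{O(\mathfrak{h}^*/s_\alpha)}^L L\kappa_s^{'}(R(\pi_s)_* M).
\]

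The first compatibility I would prove is $L\kappa^{'}(M) \tilde{=} L\kappa_s^{'}(R(\pi_s)_* M)$ as complexes of $O(\mathfrak{h}^*/s_\alpha \times \mathbb{A}^1)$-modules (the former carrying additionally the finer $O(\mathfrak{h}^*)$-structure). This holds because the ideal $\mathcal{I}_\chi$ is generated in each of $\tilde{D}_h$ and $\tilde{D}_{h,\mathcal{P}_s}$ by the image of $\{n-\chi(n)\mid n\in\mathfrak{n}^-\}$, and these sets correspond under the natural quotient $\pi_s^{-1}\tilde{D}_{h,\mathcal{P}_s} \to \tilde{D}_h$. Hence $R(\pi_s)_*(M/\mathcal{I}_\chi) \tilde{=} (R(\pi_s)_* M)/\mathcal{I}_\chi$, and taking $\Gamma^{N^-}$ of both identifies them. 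Everything is well-defined at the derived level thanks to the exactness statements of section 4.3 and \prettyref{lem:skryabin}.

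The main step is the base change $L\kappa^{'}(L\pi_s^* N) \tilde{=} O(\mathfrak{h}^*) \otimes_{O(\mathfrak{h}^*/s_\alpha)}^L L\kappa_s^{'}(N)$ for an arbitrary $N \in D^b Mod^{G\times\mathbb{G}_m}(\tilde{D}_{h,\mathcal{P}_s})$. Geometrically, by \prettyref{lem:Regiso2} the map $\pi_s: \tilde{\mathfrak{g}}^{reg} \to \tilde{\mathfrak{g}}_s^{reg}$ is the base change along $\mathfrak{g}^{*,reg} \to \mathfrak{h}^*/W$ of the quotient $\mathfrak{h}^* \to \mathfrak{h}^*/s_\alpha$; and since $S_\chi \subset \mathfrak{g}^{*,reg}$ by \prettyref{pro:S in reglocus}, restricting to the preimage of $S_\chi$ turns $\pi_s$ into the morphism $\mathfrak{h}^* \to \mathfrak{h}^*/s_\alpha$ itself. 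To transport this to the quantized setting, I would unwind the definition of $L\pi_s^* N$ from section 3.5.2, apply $(-/\mathcal{I}_\chi)^{N^-}$ termwise, and use the cohomology base change of \prettyref{lem:Cohomology1} and \prettyref{cor:Cohomology2} to commute this past the tensor products.

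Combining the two yields the lemma. The main obstacle will be ensuring no extra Tor's contribute in the derived base change; this reduces to the flatness of $\tilde{D}_h$ as a module over $\pi_s^{-1}\tilde{D}_{h,\mathcal{P}_s}$ on the regular locus, a quantized form of the fact that $\pi_s$ is a flat double cover there. Concretely, this flatness is expressed by equation (4) of section 2.2.2, which identifies $(\pi_s)_* \tilde{D}_h$ with $\tilde{D}_{h,\mathcal{P}_s} \otimes_{O(\mathfrak{h}^*/s_\alpha)} O(\mathfrak{h}^*)$ as a free module of rank two.
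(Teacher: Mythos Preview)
Your proposal is correct and follows essentially the same route as the paper: factor $\mathcal{R}_\alpha = L\pi_s^* R(\pi_s)_*$ and prove two separate compatibilities, one for pushforward and one for pullback, which the paper states as a single Claim with parts (a) and (b). Your reading of the right-hand side as $L\kappa_s'(R(\pi_s)_*M)$ is exactly what is intended, and your second compatibility is verbatim the paper's part (b), which the paper dispatches in one line as ``composition of pullbacks.'' For the pushforward compatibility, the paper's argument is slightly more explicit than yours: rather than invoking ``$\mathcal{I}_\chi$ is generated by global elements'' directly, it passes through the intermediate sheaf $\tilde{D}_h(s)=\pi_s^*\tilde{D}_{h,\mathcal{P}_s}$, identifies $M/\mathcal{I}_\chi\cong M/\mathcal{I}_\chi(s)$, and then uses the flatness of $\tilde{D}_h(s)$ over $\tilde{D}_{h,\mathcal{P}_s}$ (coming from the $\mathbb{P}^1$-bundle, equivalently equation (2.4)) together with a \v{C}ech-complex computation to commute $R(\pi_s)_*$ past the quotient by $\mathcal{I}_\chi$. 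This is exactly the flatness issue you flag at the end, so your outline matches the paper's proof both in structure and in the identification of the one nontrivial point.
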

and also 
\begin{lem}
\label{lem:Kappa&Reflection2}For any integral weight $\lambda$,
we have a functorial isomorphism 
\[
L\kappa^{'}(M\otimes O(\lambda))\tilde{=}O(\mathfrak{h}^{*}\times\mathbb{A}^{1})_{\lambda}\otimes_{O(\mathfrak{h}^{*}\times\mathbb{A}^{1})}L\kappa^{'}(M)
\]
 where $O(\mathfrak{h}^{*}\times\mathbb{A}^{1})$ is the $O(\mathfrak{h}^{*}\times\mathbb{A}^{1})=Sym(\mathfrak{h}\oplus k\cdot e)$-module
with the action defined as 
\[
(h,a)\cdot m=(h+a\lambda,a)m
\]
 abstractly, this is a one dimensional free $O(\mathfrak{h}^{*}\times\mathbb{A}^{1})$-module,
and so ultimately we get an isomorphism 
\[
L\kappa^{'}(M\otimes O(\lambda))\tilde{=}L\kappa^{'}(M)
\]
 
\end{lem}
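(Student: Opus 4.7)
The plan is to reduce the statement to a direct computation of how the action of the central subalgebra $O(\mathfrak{h}^{*}\times\mathbb{A}^{1})\subset\Gamma(\tilde{D}_{h})$ changes under the twist by $O(\lambda)$, and separately to check that, as an underlying module, nothing changes. First I will analyze the action of $v\in\mathfrak{h}\subset\tilde{D}_{h}$ on $M\otimes O(\lambda)$. By the construction of the $\tilde{D}_{h}$-module structure on $O(\lambda)[h]$ via the line bundle description, this action factors through the quotient $\tilde{D}_{h}\to D_{h}^{\lambda}$, which sends $v$ to the scalar $h\lambda(v)$. Hence by the Leibniz rule $\xi\cdot(m\otimes s)=\xi m\otimes s+m\otimes\xi s$, the element $v$ acts on $M\otimes O(\lambda)$ as the operator $v+h\lambda(v)$ applied to $M$, under local trivializations of $O(\lambda)$. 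Passing to $\kappa'$, this produces a shift of the $\mathfrak{h}$-component of the $O(\mathfrak{h}^{*}\times\mathbb{A}^{1})$-action by $h\lambda$, which is precisely the twist encoded in $O(\mathfrak{h}^{*}\times\mathbb{A}^{1})_{\lambda}$.

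Next I will show that, forgetting the $\mathfrak{h}$-action, $\kappa'(M\otimes O(\lambda))\tilde{=}\kappa'(M)$ as $k[h]$-modules. The geometric input is \prettyref{pro:S in reglocus}, which places $S_{\chi}$ inside the regular locus, together with \prettyref{lem:Regiso}, which identifies $\pi^{-1}(S_{\chi})\tilde{=}\mathfrak{h}^{*}$. Since $\mathfrak{h}^{*}$ is affine with trivial Picard group, the restriction $O(\lambda)|_{\pi^{-1}(S_{\chi})}$ is a trivial line bundle, so tensoring with it does not alter the underlying coherent sheaf after Kostant reduction. At $h=0$ this yields the classical isomorphism; it lifts to the deformed setting via \prettyref{lem:Cohomology1}, \prettyref{cor:Cohomology2}, and the graded Nakayama lemma (\prettyref{pro:GrNak}).

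Combining these observations yields the stated isomorphism: the underlying module coincides with $\kappa'(M)$, but the $\mathfrak{h}$-action is shifted by $h\lambda$, which is exactly the formula $O(\mathfrak{h}^{*}\times\mathbb{A}^{1})_{\lambda}\otimes_{O(\mathfrak{h}^{*}\times\mathbb{A}^{1})}\kappa'(M)$. The derived version follows because $O(\lambda)$ is flat (being a line bundle), so $(-)\otimes O(\lambda)$ commutes with flat resolutions used to compute $L\kappa'$; the abstract isomorphism $L\kappa'(M\otimes O(\lambda))\tilde{=}L\kappa'(M)$ is then immediate since the twist module is free of rank one. The main obstacle is to fix a canonical trivialization of $O(\lambda)|_{\pi^{-1}(S_{\chi})}$ compatible with the shifted $\mathfrak{h}$-action, which is accomplished using the Kostant section together with the identification of \prettyref{lem:Regiso}.
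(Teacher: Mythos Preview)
Your proposal is correct and shares the essential computation with the paper: the Leibniz-rule analysis showing that $v\in\mathfrak{h}$ acts on $M\otimes O(\lambda)$ as $v+h\lambda(v)$, hence the $O(\mathfrak{h}^{*}\times\mathbb{A}^{1})$-action shifts by $h\lambda$. This is exactly what the paper means by ``by the definition of the $\tilde{D}_{h}$-action on the tensor product.''

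The difference lies in how the ``underlying module unchanged'' part is handled. The paper does not argue this separately at all: having already established the equivalence $\mathcal{C}_{\chi}\tilde{\to}\mathcal{A}_{\chi}$ and the Skryabin equivalence (\prettyref{lem:skryabin}), it knows that an object of $\mathcal{C}_{\chi}$ is completely determined by its $O(\mathfrak{h}^{*}\times\mathbb{A}^{1})$-module structure after taking $N^{-}$-invariants. So once the shifted action is computed, the identification $(M/\mathcal{I}_{\chi})\otimes O(\lambda)\tilde{=}O(\mathfrak{h}^{*}\times\mathbb{A}^{1})_{\lambda}\otimes_{O(\mathfrak{h}^{*}\times\mathbb{A}^{1})}(M/\mathcal{I}_{\chi})$ is immediate, and one simply takes $N^{-}$-invariants. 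No deformation argument, no Nakayama, no explicit trivialization of $O(\lambda)$ on the slice is needed.

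Your route---trivializing $O(\lambda)$ on $\pi^{-1}(S_{\chi})\tilde{=}\mathfrak{h}^{*}$ and then lifting from $h=0$ via \prettyref{lem:Cohomology1}, \prettyref{cor:Cohomology2}, and \prettyref{pro:GrNak}---works, but it reproves a special case of what the Skryabin equivalence already gives you. In effect you are re-deriving that objects in $\mathcal{C}_{\chi}$ behave as if they live on an affine space, whereas the paper has packaged this once and for all in the categorical equivalence and simply invokes it.
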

We shall prove these lemmas momentarily. Let us note right away, however,
the crucial consequence that $L\kappa^{'}$ takes a tilting module
to a complex concentrated in degree zero- simply because the above
lemmas show that the action of the image of the reflection functors
is exact on $Mod(O(\mathfrak{h}^{*}\times\mathbb{A}^{1}))$, and the
tilting modules are built by applying the reflection functors to the
basic objects $\omega\cdot\tilde{D}_{h}$. 

In order to prove \prettyref{lem:Kappa&Reflection1}, we shall break
the reflection functor into its two pieces. The proof of the lemma
is immediately reducible to the following claim: 
\begin{claim}
a) We have a functorial isomorphism 
\[
L\kappa_{s}^{'}R\pi_{s}{}_{*}(M)\tilde{=}(pr_{s})_{*}(L\kappa^{'}(M))
\]
 (where $pr_{s}:\mathfrak{h}^{*}\to\mathfrak{h}^{*}/s$ is the natural
quotient map) for all $M\in D^{b}Mod^{G\times\mathbb{G}_{m}}(\tilde{D}_{h})$. 

b) We have a functorial isomorphism 
\[
L\kappa^{'}R\pi_{s}^{*}(N)\tilde{=}O(\mathfrak{h}^{*})\otimes_{O(\mathfrak{h}^{*}/s)}L\kappa_{\alpha}^{'}(N)
\]
 for all $N\in D^{b}Mod^{G\times\mathbb{G}_{m}}(\tilde{D}_{h,\mathcal{P}})$. \end{claim}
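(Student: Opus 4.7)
The plan is to prove both parts in parallel, using that both $\mathcal{I}_\chi$ and the residual adjoint $N^-$-action are induced globally from the natural algebra map $U_h(\mathfrak{g}) \to \Gamma(\tilde{D}_{h,?})$, which is compatible across the projection $\pi_s$. Consequently the operations $M \mapsto M/\mathcal{I}_\chi$ and $(-)^{N^-}$ interact with $R\pi_{s*}$ and $L\pi_s^*$ by base change, and at the level of centers this base change is precisely the inclusion $O(\mathfrak{h}^*/s \times \mathbb{A}^1) \hookrightarrow O(\mathfrak{h}^* \times \mathbb{A}^1)$.

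For part (a), I first observe that the left ideal sheaf $\mathcal{I}_\chi^{\mathcal{P}_s} \subset \tilde{D}_{h,\mathcal{P}_s}$ generated by $\{n-\chi(n) : n \in \mathfrak{n}^-\}$ pulls back along the algebra map $\pi_s^{-1}\tilde{D}_{h,\mathcal{P}_s} \to \tilde{D}_h$ to $\mathcal{I}_\chi \subset \tilde{D}_h$. Since $R\pi_{s*}$ is derived sheaf pushforward and quotient by a left ideal generated by globally-defined elements commutes with such a pushforward, one obtains $R\pi_{s*}(M)/\mathcal{I}_\chi^{\mathcal{P}_s} \simeq R\pi_{s*}(M/\mathcal{I}_\chi)$. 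Taking global sections (which are identified via $R\pi_{s*}$) and $N^-$-invariants (exact on the relevant category by \prettyref{lem:skryabin}) gives the desired equality of underlying modules. The $O(\mathfrak{h}^*/s \times \mathbb{A}^1)$-structure on the LHS is exactly the restriction of the $O(\mathfrak{h}^* \times \mathbb{A}^1)$-structure on the RHS along $O(\mathfrak{h}^*/s) \hookrightarrow O(\mathfrak{h}^*)$, which is precisely the effect of $(pr_s)_*$.

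For part (b), I would use the bimodule/Fourier--Mukai description of $L\pi_s^*$ developed earlier in the paper to write $L\pi_s^*(N) \simeq \tilde{D}_h \otimes^L_{\tilde{D}_h(s)} (O_\mathcal{B} \otimes^L_{\pi_s^{-1}O_{\mathcal{P}_s}} \pi_s^{-1} N)$. Again, because $\mathcal{I}_\chi$ is generated by global elements compatible along $\pi_s$, modding out commutes with these tensor products, so $(L\pi_s^*(N))/\mathcal{I}_\chi$ becomes the tensor of $\tilde{D}_h/\mathcal{I}_\chi$ with the analogous pullback of $N/\mathcal{I}_\chi^{\mathcal{P}_s}$. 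Passing to global sections and $N^-$-invariants, and using the central algebra identifications $\kappa'(\tilde{D}_h) \simeq O(\mathfrak{h}^* \times \mathbb{A}^1)$ and $\kappa_s'(\tilde{D}_{h,\mathcal{P}_s}) \simeq O(\mathfrak{h}^*/s \times \mathbb{A}^1)$ already established, the tensor product converts into the base change $O(\mathfrak{h}^*) \otimes^L_{O(\mathfrak{h}^*/s)} L\kappa_s'(N)$, as claimed.

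The main obstacle will be to rigorously propagate these identifications through the derived functors and to check compatibility with gradings, $G$-equivariance, and the center action. To manage this, I would reduce modulo $h$ via \prettyref{lem:Cohomology1} and \prettyref{cor:Cohomology2}, whereupon both identities become instances of flat base change along the Cartesian square $\tilde{\mathfrak{g}}^{reg} \simeq \mathfrak{h}^* \times_{\mathfrak{h}^*/s} \tilde{\mathfrak{g}}_s^{reg}$ provided by \prettyref{lem:Regiso2}, combined with the fact that the Kostant slice lies entirely in $\mathfrak{g}^{*,reg}$ (\prettyref{pro:S in reglocus}). The graded Nakayama lemma (\prettyref{pro:GrNak}) then lifts the classical identifications back to the deformed setting, functorially in $M$ and $N$.
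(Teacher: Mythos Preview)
Your approach is essentially the one the paper takes: for (a) you construct the natural map $R\pi_{s*}(M)/\mathcal{I}_\chi \to R\pi_{s*}(M/\mathcal{I}_\chi)$ using that $\mathcal{I}_\chi$ is generated by global elements compatible along $\pi_s$, and for (b) you reduce to compatibility of pullbacks, exactly as the paper does. The only real difference is in how the derived commutation in (a) is justified. The paper does not pass through reduction mod $h$ and graded Nakayama as you propose in your final paragraph; instead it argues directly by introducing the intermediate sheaf of algebras $\tilde{D}_h(s)=\pi_s^*(\tilde{D}_{h,\mathcal{P}})$, observing that $M/\mathcal{I}_\chi\cong M/\mathcal{I}_\chi(s)$ as $\tilde{D}_h(s)$-modules, and then using that $\tilde{D}_h(s)$ is flat over $\tilde{D}_{h,\mathcal{P}}$ (from the $\mathbb{P}^1$-bundle $\mathcal{B}\to\mathcal{P}_s$) so that a $\tilde{D}_h(s)$-flat resolution of $M$ pushes forward to a $\tilde{D}_{h,\mathcal{P}}$-flat complex. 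The two Cech computations then visibly coincide. Your reduction-mod-$h$ route via \prettyref{lem:Regiso2} and \prettyref{pro:S in reglocus} would also work, but it is a detour: once you have the natural map, the flatness argument settles the derived comparison in one stroke without invoking Nakayama or the regular-locus geometry.
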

\begin{proof}
We start with a). There is the obvious natural map of sheaves 
\begin{equation}
R\pi_{s*}(M)/\mathcal{I}_{\chi}\to R\pi_{s*}(M/\mathcal{I}_{\chi})
\end{equation}

which upon taking $N^{-}$ invariants becomes a map 
\[
L\kappa_{s}^{'}(R\pi_{s*}(M))\to pr_{s*}(L\kappa^{'}(M))
\]
 We shall show that the map 4.1 is an isomorphism. First of all, let
us recall that we have the sheaf of algebras $\tilde{D}_{h}(s)$,
which by definition is the coherent pullback $\pi_{s}^{*}(\tilde{D}_{h,\mathcal{P}})$;
we recall the natural surjection $\tilde{D}_{h}(s)\to\tilde{D}_{h}$,
which allows us to regard any $\tilde{D}_{h}$-module as a $\tilde{D}_{h}(s)$-module. 

Now, we can define the ideal sheaf $\mathcal{I}_{\chi}(s)$ to be
the ideal sheaf of $\tilde{D}_{h}(s)$ generated by $\{n-\chi(n)|n\in\mathfrak{n}^{-}\}$,
and then we have an isomorphism 
\[
M/\mathcal{I}_{\chi}\tilde{=}M/\mathcal{I}_{\chi}(s)
\]

(as $\tilde{D}_{h}(s)$-modules) following immediately from the fact
that $\mathcal{I}_{\chi}$ is defined by global generators. So, we
have to compute 
\[
R\pi_{s*}(M/\mathcal{I}_{\chi}(s))
\]
over a given affine subset of $\mathcal{P}$, denoted $U$. To do
that, we should first replace $M$ by a complex of flat $\tilde{D}_{h}(s)$-modules,
$\mathcal{F}^{\cdot}$, and then quotient each term of this complex
by $\mathcal{I}_{\chi}(s)$. We compute cohomology by taking the Cech
complex of this complex. 

But now $\tilde{D}_{h}(s)$ is flat over $\tilde{D}_{h,\mathcal{P}}$
since it is obtained by (the quantization of) base change from the
$\mathbb{P}^{1}$-bundle $\mathcal{B}\to\mathcal{P}$, which implies
that $(\pi_{s})_{*}(N^{\cdot})$ is a complex of $\tilde{D}_{h,\mathcal{P}}$-flat
modules if $N^{\cdot}$ is a complex of $\tilde{D}_{h}(s)$-flat modules
. So we can compute $R\pi_{s*}(M)/\mathcal{I}_{\chi}$ by taking the
Cech complex of $\mathcal{F}^{\cdot}$ and then moding out by $\mathcal{I}_{\chi}$
(no further replacement necessary). These two procedures evidently
yield isomorphic complexes.

So this shows that 4.1 is an isomorphism, and the result we want follows
upon taking $N^{-}$ invariants.

Part b) is simpler- in fact it follows easily from the statement that
composition of pullback functors is the pullback of the composed map. 
\end{proof}
Now we proceed to \prettyref{lem:Kappa&Reflection2}. 
\begin{proof}
(of \prettyref{lem:Kappa&Reflection2}). As a first step we note that
since tensoring by a line bundle is exact, we have 
\[
(M\otimes O(\lambda))/\mathcal{I}_{\chi}\tilde{=}(M/\mathcal{I}_{\chi})\otimes O(\lambda)
\]
 (where the quotient is taken in the derived sense). Then, as a module
over 
\[
\Gamma(\tilde{D}_{h})^{G}=O(\mathfrak{h}^{*}/W\times\mathbb{A}^{1})
\]
 we have that $(M/\mathcal{I}_{\chi})\otimes O(\lambda)$ is simply
\[
O(\mathfrak{h}^{*}\times\mathbb{A}^{1})_{\lambda}\otimes_{O(\mathfrak{h}^{*}\times\mathbb{A}^{1})}M/\mathcal{I}_{\chi}
\]
 by the definition of the $\tilde{D}_{h}$-action on the tensor product
(we use the equivalence of the category of sheaves $\mathcal{C}_{\chi}$
with the {}``affine'' category $\mathcal{A}_{\chi}$ above). Now
the result follows from taking $N^{-}$ invariants. \end{proof}
\begin{rem}
\label{rem:A-natural-question}A natural question is to ask where
the morphisms of the adjunctions $Id\to\mathcal{R}_{\alpha}(2)$ and
$\mathcal{R}_{\alpha}\to Id$ go under $\kappa^{'}$. 

We claim that the former goes to the natural transformation 
\[
M\to M\oplus(\alpha_{s}\otimes M)
\]
 which sends $m\to\alpha_{s}m+(\alpha_{s}\otimes m)$, while the latter
goes to the multiplication map 
\[
M\oplus(\alpha_{s}\otimes M)\to M
\]
 which sends $m_{1}+(\alpha_{s}\otimes m_{2})\to m_{1}+\alpha_{s}m_{2}$. 

These claims shall follow from the explicit description of the adjunction
morphisms on Fourier-Mukai kernels given above. We saw that the morphism
\[
\tilde{D}_{h}\to\mathcal{G}_{\alpha}(2)
\]
 was defined by sending $1$ to the global section $\alpha_{s}\otimes1+1\otimes\alpha_{s}$.
Thus the morphism 
\[
M\tilde{=}p_{2*}((M^{\cdot}\boxtimes\tilde{D}_{h})\otimes_{\tilde{D}_{h}\boxtimes\tilde{D}_{h}^{opp}}^{L}\tilde{D}_{h})\to p_{2*}((M^{\cdot}\boxtimes\tilde{D}_{h})\otimes_{\tilde{D}_{h}\boxtimes\tilde{D}_{h}^{opp}}^{L}\mathcal{G}_{\alpha})\tilde{=}\mathcal{R}_{\alpha}(M)
\]
must send a local section $m\otimes1\otimes1$ to the section $m\otimes1\otimes(1\otimes\alpha_{s}+\alpha_{s}\otimes1)$.
After restriction to the regular elements (i.e., moding out by $\mathcal{I}_{\chi}$)
and taking $N^{-}$ invariants this is evidently the same map as written
above. The argument for the other adjunction is the same. 
\end{rem}

\subsection{The functor $\kappa$. }

Now we shall extend our functor $\kappa^{'}$ to a functor into categories
of bimodules. Let us first recall that there is an equivalence of
categories 
\[
K^{b}(\mathcal{T})\to D^{b}Mod^{G\times\mathbb{G}_{m}}(\tilde{D}_{h})
\]
 Where as above $K^{b}(\mathcal{T})$ denotes the homotopy category
of graded tilting complexes (c.f. section 1.5). So our task shall
be to extend the functors $\kappa^{'}$ to functors $\kappa$ on the
category $\mathcal{T}$. To that end, we note that any $T\in\mathcal{T}$
carries an action of $Z(\mathfrak{g})\tilde{=}O(\mathfrak{h}^{*}/W)$
which is inherited from the right $U(\mathfrak{g})$-module structure
(c.f. the definition of an equivariant $D$-module above). This action
is functorial, and hence the functor 
\[
\kappa^{'}:\mathcal{T}\to Mod^{gr}(O(\mathfrak{h}^{*}\times\mathbb{A}^{1}))
\]
 naturally carries a $Z(\mathfrak{g})$-action. Even better, since
the right $U(\mathfrak{g})$-module structure respects the grading,
we can in fact upgrade to a functor 
\[
\kappa:\mathcal{T}\to Mod^{gr}(O(\mathfrak{h}^{*}\times\mathbb{A}^{1}\times\mathfrak{h}^{*}/W))
\]
 by identifying the category on the right with the category of (graded)
$Z(\mathfrak{g})$-module objects in the category $Mod^{gr}(O(\mathfrak{h}^{*}\times\mathbb{A}^{1}))$. 

We then extend this to a functor 
\[
\kappa:K^{b}(\mathcal{T})\to K^{b}(Mod^{gr}(O(\mathfrak{h}^{*}\times\mathbb{A}^{1}\times\mathfrak{h}^{*}/W)))
\]
 in the canonical way.
\begin{rem}
\label{rem:Dervivedkappa}a)The key identification does not hold on
the level of derived categories- i.e., we cannot identify $D^{b}Mod^{gr}(O(\mathfrak{h}^{*}\times\mathbb{A}^{1}\times\mathfrak{h}^{*}/W))$
with the category of $Z(\mathfrak{g})$-objects in $D^{b}Mod^{gr}(O(\mathfrak{h}^{*}\times\mathbb{A}^{1}))$.
Thus, although we do end up with a functor on the whole derived category
$D^{b}Mod^{G\times\mathbb{G}_{m}}(\tilde{D}_{h})$, the definition
seems to require taking this circuitous route (the same problem occurs
in \cite{key-12}, chapter 3). 

b) There is, however, a certain consistency condition in the definition,
as follows: suppose $M\in D^{b}Mod^{G\times\mathbb{G}_{m}}(\tilde{D}_{h})$
is such that $L\kappa^{'}(M)$ is concentrated in degree zero. Then
$\kappa$ as we have defined it is that same as the object of $Mod^{gr}(O(\mathfrak{h}^{*}\times\mathbb{A}^{1}\times\mathfrak{h}^{*}/W))$
obtained by upgrading $L\kappa^{'}(M)$ with the natural $Z(\mathfrak{g})$-action
. This can be seen easily by taking any homotopy equivalence between
$M$ and a complex of tilting modules, and noting that such an equivalence
respects the natural $Z(\mathfrak{g})$-actions (since it is a $G$-equivariant
equivalence). \end{rem}
\begin{example}
We note right away that $\kappa(\tilde{D}_{h})=O(\mathfrak{h}^{*}\times\mathbb{A}^{1})$
considered with the right action given by the natural inclusions of
algebras 
\[
O(\mathfrak{h}^{*}/W)\to O(\mathfrak{h}^{*})\to O(\mathfrak{h}^{*}\times\mathbb{A}^{1})
\]
 simply because this morphism comes from the residual right action
of $Z(\mathfrak{g})$ on $\tilde{D}_{h}$. 
\end{example}

\subsection{The Categorical Affine Hecke Algebra}

The goal of the next few sections is to extend our key lemmas \ref{lem:Kappa&Reflection1}
and \ref{lem:Kappa&Reflection2} to the functor $\kappa$ itself.
First, we wish to give some context for the final result, extending
the remarks in section 1.2.

So, in the rest of this section, we adopt the language and notation
of section 1.2, in the specific case $W=W_{aff}$. We have $V=\mathfrak{h}^{*}\times\mathbb{A}^{1}$
and $A=O(\mathfrak{h}^{*}\times\mathbb{A}^{1})$. Let us re-state
the constructions of that section explicitly in this language. 

First we define, for a finite root $\alpha_{s}$ 
\[
R_{\alpha_{s}}:=O(\mathfrak{h}^{*}\times\mathbb{A}^{1})\otimes_{O(\mathfrak{h}^{*}\times\mathbb{A}^{1})^{s}}O(\mathfrak{h}^{*}\times\mathbb{A}^{1})
\]

and then we define, for a weight $\lambda$, the bimodule $J_{\lambda}$
to be the module of functions on the graph 
\[
\{(h_{1},h_{2},a)|h_{2}=h_{1}+a\lambda\}
\]
 We note right away that these bimodules yield exact functors on \linebreak{}
$Mod^{gr}(O(\mathfrak{h}^{*}\times\mathbb{A}^{1}\times\mathfrak{h}^{*}/W))$. 

We wish to relate these functors to the categorical constructions
of Rouquier and Soergel. The first observation here is that the geometric
action of the extended affine Weyl group $W_{aff}^{'}$ on the space
$\mathfrak{h}^{*}\times\mathbb{A}^{1}$ is given by the formulas 
\[
w\cdot(v,a)=(w\cdot v,a)
\]
 for $w\in W$, and 
\[
\lambda\cdot(v,a)=(v+a\lambda,a)
\]
 for $\lambda$ in the weight lattice. This action induces an action
of $W_{aff}^{'}$ on the category $Mod^{gr}(O(\mathfrak{h}^{*}\times\mathbb{A}^{1}\times\mathfrak{h}^{*}/W))$,
and this action can be expressed via tensoring by bimodules. In particular,
the action of $\lambda$ is given by tensoring by the $J_{\lambda}$,
while the action of $w\in W_{fin}$ is given by the module $J_{w}$
defined as the module of functions on the graph of $w$, i.e., $\{((v,a),(w\cdot v,a))|v\in\mathfrak{h}^{*},a\in\mathbb{A}^{1}\}$. 

Then it follows directly from the definitions that for finite Coxeter
generators $s_{\alpha}$ we have the exact sequence of bimodules 
\[
J_{s_{\alpha}}(-2)\to R_{\alpha}\to J_{Id}
\]
where $J_{Id}$ is the diagonal bimodule (c.f. \cite{key-29} section
3); in particular, the bimodule $\mathcal{R}_{\alpha}$ is just the
module of functions on the union of the diagonal $\Delta$ and the
graph of $s_{\alpha}$. 

Further, there is a naturally defined affine reflection functor $R_{\alpha_{0}}$
which is given by the bimodule 
\[
O(\mathfrak{h}^{*}\times\mathbb{A}^{1})\otimes_{O(\mathfrak{h}^{*}\times\mathbb{A}^{1})^{s_{\alpha_{0}}}}O(\mathfrak{h}^{*}\times\mathbb{A}^{1})
\]
 and which fits into the analogous exact sequence 
\[
J_{s_{\alpha_{0}}}(-2)\to R_{\alpha_{0}}\to J_{Id}
\]

Then we have $\mathcal{H}_{aff}=\mathcal{H}(W)$, the smallest category
of $A\otimes A$ bimodules containing $J_{Id}$ and $\{R_{\alpha}\}_{\alpha\in S_{aff}}$
and closed under direct sums, summands, and tensor product. 

Soergel's results show that the objects of this category (the Soergel
bimodules) are precisely the summands of the objects of the form $R_{\alpha_{1}}R_{\alpha_{2}}\cdot\cdot\cdot R_{\alpha_{n}}(A)$
for all sequences of simple affine roots.

It follows from this that the same isomorphism is true for the homotopy
category of complexes $K^{b}(\mathcal{H}_{aff})$; and we can even
express the element $q^{-2}T_{s}$ as the image in $K$-theory of
the complex 
\[
R_{\alpha}\to J_{Id}
\]

Now, if we consider $b\in W_{aff}^{'}$ and $\alpha\in S_{fin}$ as
in the definition of the affine reflection functor as given above
(i.e., chosen so that \textbf{$b^{-1}s_{\alpha}b=s_{\alpha_{0}}$}),
then we have the
\begin{claim}
There is an isomorphism 
\[
J_{b^{-1}}R_{\alpha}J_{b}\tilde{=}R_{\alpha_{0}}
\]

where the term on the left indicates convolution of bimodules.\end{claim}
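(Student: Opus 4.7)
The plan is to regard the bimodules $J_w$ for $w\in W_{aff}'$ as implementing the geometric $W_{aff}'$-action on $\mathfrak{h}^{*}\times\mathbb{A}^{1}$ on the bimodule category by conjugation. Since each $J_w$ is (by definition) the structure sheaf of the graph of an automorphism of $\mathfrak{h}^{*}\times\mathbb{A}^{1}$, it is invertible with respect to convolution, and a direct computation with graph correspondences shows that convolution satisfies $J_w * J_{w'}\cong J_{ww'}$. In particular $J_{b^{-1}}*(-)*J_b$ is an autoequivalence of the category of $A$-bimodules, inverse to $J_b*(-)*J_{b^{-1}}$.

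Applied to $R_\alpha=A\otimes_{A^{s_\alpha}}A$, the first step I would carry out is the absorption
\[
J_{b^{-1}}*R_\alpha*J_b \;=\; J_{b^{-1}}\otimes_{A}(A\otimes_{A^{s_\alpha}}A)\otimes_{A}J_b \;\cong\; J_{b^{-1}}\otimes_{A^{s_\alpha}}J_b.
\]
Unwinding the bimodule structure on $J_{b^{-1}}$ and $J_b$ (each is a copy of $A$ with one side twisted by the automorphism $b^{*}$), and using that $b^{*}(A^{s_\alpha})=A^{b^{-1}s_\alpha b}=A^{s_{\alpha_0}}$ by the hypothesis on $b$, this tensor product identifies with $A\otimes_{A^{s_{\alpha_0}}}A=R_{\alpha_0}$ as $(A,A)$-bimodules.

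As a sanity check (and as a more conceptual route if the explicit bimodule bookkeeping becomes painful), I would apply the exact autoequivalence $J_{b^{-1}}*(-)*J_b$ to the defining short exact sequence $J_{s_\alpha}(-2)\to R_\alpha\to J_{Id}$. Using $J_{b^{-1}}*J_w*J_b\cong J_{b^{-1}wb}$ on the outer terms yields a short exact sequence
\[
J_{s_{\alpha_0}}(-2)\;\longrightarrow\; J_{b^{-1}}*R_\alpha*J_b \;\longrightarrow\; J_{Id},
\]
with identical shape to the one presenting $R_{\alpha_0}$. Since the middle term is supported scheme-theoretically on $\Delta\cup\Gamma_{s_{\alpha_0}}$ (because conjugation by $b$ sends $\Delta\cup\Gamma_{s_\alpha}$ to $\Delta\cup\Gamma_{s_{\alpha_0}}$ on $(\mathfrak{h}^{*}\times\mathbb{A}^{1})^{2}$), and the extension class is non-split for degree reasons, one concludes $J_{b^{-1}}*R_\alpha*J_b\cong R_{\alpha_0}$.

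The main obstacle I anticipate is purely bookkeeping: keeping track on which side each twist by $b^{*}$ lives, and matching conventions between the composition of graph correspondences and the tensor product of $A$-bimodules (these differ by a transposition, so one has to be careful whether it is $b$ or $b^{-1}$ that appears on each side). All the geometry is trivial once the graph description of the $J_w$ is in hand; the content is really just that the $W_{aff}'$-action on $\mathfrak{h}^{*}\times\mathbb{A}^{1}$ intertwines $s_\alpha$ and $s_{\alpha_0}$ under conjugation by $b$, which is the hypothesis.
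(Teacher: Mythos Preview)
Your proposal is correct and follows essentially the same idea as the paper: both arguments reduce to the fact that conjugation by $J_{b^{\pm1}}$ sends $J_e$ to $J_e$ and $J_{s_\alpha}$ to $J_{s_{\alpha_0}}$, and then identify $R_\alpha$ from these two pieces. The only presentational difference is that you use the description $R_\alpha=A\otimes_{A^{s_\alpha}}A$ (and the observation $b^{*}(A^{s_\alpha})=A^{s_{\alpha_0}}$), whereas the paper uses the description $R_\alpha=(A\otimes_k A)/(I_e\cap I_{s_\alpha})$ together with an explicit multiplication isomorphism $\phi:J_{b^{-1}}(A\otimes A)J_b\to A\otimes A$ and checks $\phi(I_e)=I_e$, $\phi(I_{s_\alpha})=I_{s_{\alpha_0}}$. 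The paper's formulation has the minor advantage that the bookkeeping you flag (which side carries the twist by $b^{*}$ versus $(b^{-1})^{*}$) is absorbed into the single map $\phi$, so one never has to chase the inner versus outer actions; conversely, your formulation makes the role of the invariant subring transparent. Your second route via the short exact sequence is also fine and is exactly what the paper records immediately after its proof (that the adjunction map $R_{\alpha_0}\to Id$ is the conjugate of $R_\alpha\to Id$); note however that as a standalone argument it would still require knowing that the relevant extension is unique, which is why both you and the paper ultimately rely on the direct identification rather than the exact sequence alone.
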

\begin{proof}
We note first that there is a multiplication isomorphism \linebreak{}
 $\phi:J_{b^{-1}}(A\otimes A)J_{b}\tilde{\to}A\otimes A$ given by
\[
\phi(a_{1}\otimes(a_{2}\otimes a_{3})\otimes a_{4})=a_{1}a_{2}\otimes a_{3}a_{4}
\]

Further, the group relations give us isomorphisms 
\[
J_{b^{-1}}AJ_{b}\tilde{\to}A
\]
 
\[
J_{b^{-}}A_{s_{\alpha}}J_{b}\tilde{\to}A_{s_{\alpha_{0}}}
\]
(also given by multiplication) from which we conclude the following:
if we let $I_{w}$ is the kernel of the defining map $A\otimes A\to A_{w}$;
then $\phi(I_{e})=I_{e}$ and $\phi(I_{s_{\alpha}})=I_{s_{\alpha_{0}}}$.
Since $R_{\alpha}=A\otimes A/(I_{e}\cap I_{s_{\alpha}})$ (and the
same for $R_{\alpha_{0}}$), the result follows.
\end{proof}
From here, one checks right away that the natural map $R_{\alpha_{0}}\to Id$
corresponds under conjugation to the map $R_{\alpha}\to Id$. Thus
we deduce that the category of bimodules containing $\{R_{\alpha}\}_{\alpha\in S_{fin}}$
and $\{J_{b^{-1}}R_{\alpha}J_{b}\}$ and closed under direct sums,
summands, and tensor product is equivalent to the category $\mathcal{H}_{aff}$. 

With this in mind, we shall extend $\mathcal{H}_{aff}$ to a categorification
of the extended affine Hecke algebra $H_{aff}^{'}$ corresponding
to any given finite root system, with the finite group $\Omega=\mathbb{Y}/\mathbb{Z}\Phi$.
In particular, we make the 
\begin{defn}
We define $\mathcal{H}_{aff}^{'}$ to be the smallest additive category
of bimodules containing $\{J_{Id}\}$ and closed under the action
of $\mathcal{H}$ and the modules $\{J_{\omega}\}_{\omega\in\Omega}$.
So in particular this category consists of summands of objects of
the form 
\[
R_{\alpha_{1}}\cdot\cdot\cdot R_{\alpha_{n}}(J_{\omega})
\]
 where $\{\alpha_{1},...,\alpha_{n}\}$ is any sequence of affine
roots. 
\end{defn}
We shall now argue that in fact $K(\mathcal{H}_{aff}^{'})\tilde{=}H_{aff}^{'}\tilde{=}\Omega\rtimes H_{aff}$. 

First, let us recall \cite{key-17} that the action of $\Omega$ induces
an action on the set of affine simple roots, which we denote $\alpha_{i}\to\omega(\alpha_{i})$. 

Therefore, for any $\omega\in\Omega$ and any affine simple root $\alpha_{i}$,
we have the isomorphism of complexes 
\[
J_{\omega}(R_{\alpha_{i}}\to J_{Id})J_{\omega^{-1}}\tilde{=}R_{\omega(\alpha_{i})}\to J_{Id}
\]
 by the same reasoning as the claim above. Therefore the map $H_{aff}^{'}\to K(\mathcal{H}_{aff}^{'})$
which takes $\omega T_{s_{i}}$ to $[J_{\omega}]([A\otimes_{A^{s}}A]-1)$
satisfies the correct relations. The surjectivity of this map is evident
from the definition of the category $\mathcal{H}_{aff}^{'}$. In addition,
this relation also demonstrates the above claim about how to index
the objects in $\mathcal{H}_{aff}^{'}$. 

The injectivity of our map will be clear if we know that for any $\omega_{1}\neq\omega_{2}$
and any two strings of affine roots $\{\alpha_{1},...,\alpha_{n}\}$
and $\{\beta_{1},...,\beta_{m}\}$, we have 
\[
Hom(R_{\alpha_{1}}\cdot\cdot\cdot R_{\alpha_{n}}(J_{\omega_{1}}),R_{\beta_{1}}\cdot\cdot\cdot R_{\beta_{m}}(J_{\omega_{2}}))=0
\]
 this will be proved in section \ref{rem:Extended-Hecke-claim} below. 

Finally, we should consider the category of interest to us: 
\begin{defn}
Let $\mathcal{M}_{asp}$ be the smallest additive category of modules
in\linebreak{}
 $Mod^{gr}(O(\mathfrak{h}^{*}\times\mathbb{A}^{1}\times\mathfrak{h}^{*}/W))$
which contains the {}``identity'' module $O(\mathfrak{h}^{*}\times\mathbb{A}^{1})$
and which is closed under the action of $\mathcal{H}_{aff}^{'}$. 
\end{defn}
This is then a category of {}``singular Soergel bimodules'' as defined
in \cite{key-34}. From the results of that paper (and an argument
just like the one above for $\mathcal{H}_{aff}^{'}$) it follows that
$K(\mathcal{M}_{asp})\tilde{=}M_{asp}$, the polynomial representation
of $H_{aff}^{'}$ (c.f. section 1 above); and that one can describe
the objects as summands of actions of the reflection functors on $A$.
The same sorts of descriptions then hold for the homotopy category
of complexes $K^{b}(\mathcal{M}_{asp})$. 
\begin{rem}
The computation of $K(\mathcal{M}_{aff})$ which follows the arguments
of \cite{key-34} is purely algebraic- indeed, that paper follows
the lines of argument of Sorgel's paper \cite{key-32}. In fact, this
computation is not needed to prove the main results of this paper
in section 4.6 below; the equivalences there follow directly from
the definition of the category $\mathcal{M}_{asp}$. Since the computation
of this $K$-group on the geometric side is a well known result (c.f,
e.g., \cite{key-1}), we can obtain from this result on the algebraic
side. 
\end{rem}

\subsection{The Key Properties of $\kappa$. }

We have the following corollary of the previous section: 
\begin{cor}
Let $\tilde{\mathcal{T}}$ be the full subcategory of $D^{b}Mod^{G\times\mathbb{G}_{m}}(\tilde{D}_{h})$
on all objects obtained from $\tilde{D}_{h}$ via repeated application
of the reflection functors or tensoring by a line bundle. Then for
any $M\in\tilde{\mathcal{T}}$, we have functorial isomorphisms: 
\[
\kappa(\mathcal{R}_{\alpha}M)\tilde{=}R_{\alpha}(M)
\]
 and 
\[
\kappa(M\otimes O(\lambda))\tilde{=}J_{\lambda}(M)
\]
\end{cor}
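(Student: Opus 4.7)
The plan is to upgrade the $\kappa'$-level isomorphisms of Lemmas \ref{lem:Kappa&Reflection1} and \ref{lem:Kappa&Reflection2} to bimodule statements, by tracking the right $O(\mathfrak{h}^{*}/W)$-action through them. Note first that for $M \in \tilde{\mathcal{T}}$ the complex $L\kappa'(M)$ is concentrated in degree zero: this follows by induction on the defining presentation of $\tilde{\mathcal{T}}$, since $\kappa'(\tilde{D}_{h}) = O(\mathfrak{h}^{*}\times\mathbb{A}^{1})$ is concentrated in degree zero and each of the two generating operations corresponds, at the $\kappa'$-level, to tensoring with a flat graded $O(\mathfrak{h}^{*}\times\mathbb{A}^{1})$-bimodule (namely, $O(\mathfrak{h}^{*})\otimes_{O(\mathfrak{h}^{*})^{s_{\alpha}}}O(\mathfrak{h}^{*})$ for the reflection and the rank-one module $O(\mathfrak{h}^{*}\times\mathbb{A}^{1})_{\lambda}$ for the twist). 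Hence by Remark \ref{rem:Dervivedkappa}(b), $\kappa(M)$ is canonically $\kappa'(M)$ equipped with its intrinsic right $Z(\mathfrak{g})$-action.

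Setting $A = O(\mathfrak{h}^{*}\times\mathbb{A}^{1})$, a direct unwinding of the formulas in Section 4.5 shows that for any bimodule $N$ over $A\otimes O(\mathfrak{h}^{*}/W)$,
\[
R_{\alpha}(N) \;=\; A\otimes_{A^{s_{\alpha}}} N, \qquad J_{\lambda}(N) \;=\; A_{\lambda}\otimes_{A} N,
\]
with the left $A$-structure carried by the outer factor and the right $O(\mathfrak{h}^{*}/W)$-structure inherited from $N$. Under this description, Lemmas \ref{lem:Kappa&Reflection1} and \ref{lem:Kappa&Reflection2} give exactly the desired isomorphisms $\kappa'(\mathcal{R}_{\alpha}M)\tilde{=}R_{\alpha}(\kappa'(M))$ and $\kappa'(M\otimes O(\lambda))\tilde{=}J_{\lambda}(\kappa'(M))$ as left $A$-modules. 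It remains to verify that they are compatible with the right $Z(\mathfrak{g})$-action.

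This compatibility is the main subtlety of the proof. The key point is that each of the functors $\pi_{s*}$, $\pi_{s}^{*}$, and $(-)\otimes O(\lambda)$ is $G$-equivariant and preserves the right $U(\mathfrak{g})$-action $\gamma = hL - \alpha$ from Section 2.2.3 (the adjoint action $L$ being functorial in $G$-equivariant morphisms and the left action $\alpha$ being transported by the functor itself). Consequently, the right $Z(\mathfrak{g})$-action on $\mathcal{R}_{\alpha}M$ and on $M\otimes O(\lambda)$ is precisely the one inherited from $M$, whereas on the bimodule side the right action on $R_{\alpha}(\kappa(M))$ and $J_{\lambda}(\kappa(M))$ sits, by construction, in the inner tensor factor. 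To finish, one inspects the proofs of the two lemmas and confirms that each intermediate identification — the base-change step $R\pi_{s*}(M)/\mathcal{I}_{\chi}\tilde{=}R\pi_{s*}(M/\mathcal{I}_{\chi})$, the compatibility $(M\otimes O(\lambda))/\mathcal{I}_{\chi}\tilde{=}(M/\mathcal{I}_{\chi})\otimes O(\lambda)$, and the Skryabin equivalence of Lemma \ref{lem:skryabin} — is a morphism of $Z(\mathfrak{g})$-modules for the right action, which is immediate from $G$-equivariance. Functoriality in $M\in\tilde{\mathcal{T}}$ is then automatic, and the two displayed bimodule isomorphisms follow.
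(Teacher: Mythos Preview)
Your proof is correct and follows essentially the same approach as the paper's: both arguments upgrade Lemmas \ref{lem:Kappa&Reflection1} and \ref{lem:Kappa&Reflection2} to bimodule statements by verifying that the right $Z(\mathfrak{g})$-action is carried along, with the line-bundle case handled by the observation that $\gamma = hL - \alpha$ is unchanged under $(-)\otimes O(\lambda)$ and the reflection case handled by noting that $\pi_{s*}$ and $\pi_{s}^{*}$ are $G$-equivariant. You are somewhat more explicit than the paper in invoking Remark \ref{rem:Dervivedkappa}(b) to justify why $\kappa$ agrees with $\kappa'$ on $\tilde{\mathcal{T}}$, which is a helpful clarification.
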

\begin{proof}
The second isomorphism follows from the observation that the right
$U(\mathfrak{g})$-module structure is unaffected by tensoring by
a line bundle. This is because the left $\mathfrak{g}$ structure
is defined by the tensor product rule, as is the adjoint action; so
the action of their difference simply comes from the right $\mathfrak{g}$-action
on $M$. Combining this with the calculation of the left action in
the previous section, we see immediately the isomorphism. 

The first isomorphism is proved by showing that the action of $R\pi_{s*}$
and $\pi_{s}^{*}$ correspond to the pushforward and pullback of bimodules.
But this is an easy generalization of the proofs of \ref{lem:Kappa&Reflection1}and\ref{lem:Kappa&Reflection2}. \end{proof}
\begin{rem}
\label{rem:A-natural-question-2}It also follows right away that the
adjunctions $Id\to\mathcal{R}_{\alpha}(2)$ and $\mathcal{R}_{\alpha}\to Id$
are sent under $\kappa$ to the morphisms described in \prettyref{rem:A-natural-question},
now considered as morphisms of bimodules. These maps make the $R_{\alpha}$
into self adjoint functors on $Mod(A)$. 
\end{rem}
Therefore, we arrive at the 
\begin{cor}
For any tilting object $\mathcal{R}_{\alpha_{1}}\cdot\cdot\cdot\mathcal{R}_{\alpha_{n}}(\tilde{D}_{h})$,
we have that 
\[
\kappa(\mathcal{R}_{\alpha_{1}}\cdot\cdot\cdot\mathcal{R}_{\alpha_{n}}(\tilde{D}_{h}))\tilde{=}R_{\alpha_{1}}\cdot\cdot\cdot R_{\alpha_{n}}(O(\mathfrak{h}^{*}\times\mathbb{A}^{1}))
\]
 as $O(\mathfrak{h}^{*}\times\mathbb{A}^{1}\times\mathfrak{h}^{*}/W)$-modules. 
\end{cor}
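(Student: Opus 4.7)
The plan is to proceed by a straightforward induction on the length $n$ of the sequence of reflection functors, using the previous corollary as the single-step inductive input and the explicit calculation of $\kappa(\tilde{D}_{h})$ as the base case.

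For the base case $n=0$, I would invoke the example at the end of section 4.4, which gives $\kappa(\tilde{D}_{h}) \cong O(\mathfrak{h}^{*}\times\mathbb{A}^{1})$ as an object of $Mod^{gr}(O(\mathfrak{h}^{*}\times\mathbb{A}^{1}\times\mathfrak{h}^{*}/W))$, with the right $O(\mathfrak{h}^{*}/W)$-action given by the natural chain $O(\mathfrak{h}^{*}/W)\to O(\mathfrak{h}^{*})\to O(\mathfrak{h}^{*}\times\mathbb{A}^{1})$ inherited from the right $U(\mathfrak{g})$-action on $\tilde{D}_{h}$. This is precisely the diagonal bimodule that $R_{\alpha_{1}}\cdots R_{\alpha_{n}}$ acts on in the statement.

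For the inductive step, set $M_{k} := \mathcal{R}_{\alpha_{k+1}}\cdots\mathcal{R}_{\alpha_{n}}(\tilde{D}_{h})$, and note that each $M_{k}$ lies in the subcategory $\tilde{\mathcal{T}}$ by definition (this category is closed under reflection functors). Assuming inductively that $\kappa(M_{k}) \cong R_{\alpha_{k+1}}\cdots R_{\alpha_{n}}(O(\mathfrak{h}^{*}\times\mathbb{A}^{1}))$ as bimodules, the first isomorphism of the preceding corollary applied to $M_{k}\in\tilde{\mathcal{T}}$ yields
\[
\kappa(\mathcal{R}_{\alpha_{k}}M_{k}) \cong R_{\alpha_{k}}(\kappa(M_{k})) \cong R_{\alpha_{k}}R_{\alpha_{k+1}}\cdots R_{\alpha_{n}}(O(\mathfrak{h}^{*}\times\mathbb{A}^{1})).
\]
Iterating from $k=n-1$ down to $k=0$ completes the induction. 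The consistency remark from section 4.4 ensures there is no ambiguity in what $\kappa$ means here: each tilting object $\mathcal{R}_{\alpha_{k}}M_{k}$ has $L\kappa'$ concentrated in degree zero (noted immediately after the statement of lemmas \ref{lem:Kappa&Reflection1} and \ref{lem:Kappa&Reflection2}), so $\kappa$ on it is just $L\kappa'$ upgraded with its natural $Z(\mathfrak{g})$-action.

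There is really no main obstacle once the previous corollary is in hand: the genuine work sits in proving that corollary, namely in tracking the $Z(\mathfrak{g})$-action through $R\pi_{s*}$ and $\pi_{s}^{*}$. The only minor point worth verifying explicitly is that the identification $\kappa(\mathcal{R}_{\alpha}M)\cong R_{\alpha}(\kappa(M))$ from the previous corollary is an identification of bimodules (not merely of left modules), so that the iterated right-tensoring by the sequence $R_{\alpha_{k}}$'s corresponds correctly to the iterated application of reflection functors on the coherent/quantum side; this is built in because the right $Z(\mathfrak{g})$-action on $\mathcal{R}_{\alpha}M$ is induced from the right action on $M$ (the reflection functor kernel is acted on only on its left by the outer $\tilde{D}_{h}$), matching the fact that $R_{\alpha}$ acts by convolution from the left on $A$-bimodules and leaves the right $A$-module structure untouched.
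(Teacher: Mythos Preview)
Your proposal is correct and matches the paper's approach: the paper presents this corollary as an immediate consequence of the preceding one (introduced simply by ``Therefore, we arrive at the''), and the explicit induction you write out, with the example computing $\kappa(\tilde{D}_{h})$ as the base case, is exactly how that implicit deduction unpacks.
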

This result will now allow us to give a complete description of the
category $K^{b}(\mathcal{T})$, in particular the following
\begin{thm}
The functor $\kappa$ is fully faithful on tilting modules, and thus
induces an equivalence of categories 
\[
D^{b}(Mod^{G\times\mathbb{G}_{m}}(\tilde{D}_{h}))\tilde{\to}K^{b}(\mathcal{T})\tilde{\to}K^{b}(\mathcal{M}_{asp})
\]

\end{thm}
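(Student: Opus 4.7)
The plan is to deduce both equivalences from a combination of the tilting formalism of Section 1.5 together with a direct full-faithfulness computation on $\mathcal{T}$. For the first equivalence $D^{b}(Mod^{G\times\mathbb{G}_{m}}(\tilde{D}_{h}))\tilde{\to}K^{b}(\mathcal{T})$, I appeal to the claim following the definition of a tilting subcategory, once we observe that $\mathcal{T}$ generates the whole category (the last corollary of Section 3.5, transported to $\tilde{D}_{h}$-modules via Section 3.7) and satisfies the Hom-vanishing condition (the corollary just before the theorem). For the second equivalence $K^{b}(\mathcal{T})\tilde{\to}K^{b}(\mathcal{M}_{asp})$, essential surjectivity up to summands is immediate from the preceding corollary, which exhibits each generator $R_{\alpha_{1}}\cdots R_{\alpha_{n}}(A)$ of $\mathcal{M}_{asp}$ as the image under $\kappa$ of a tilting object, so the entire task reduces to showing that $\kappa$ is fully faithful on $\mathcal{T}$.

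For the full-faithfulness I use an adjunction reduction. By \prettyref{pro:adjointness}, each $\mathcal{R}_{\alpha}$ is self-adjoint on $D^{b}(Mod^{G\times\mathbb{G}_{m}}(\tilde{D}_{h}))$ up to a grading shift, and by \prettyref{rem:A-natural-question-2} each bimodule $R_{\alpha}$ is self-adjoint on $Mod(A)$ with unit and counit matching those on the coherent side under $\kappa$. Applying these adjunctions iteratively to migrate reflection functors from the source to the target, both $\mathrm{Hom}_{\mathcal{T}}(T_{1},T_{2})$ and $\mathrm{Hom}_{\mathcal{M}_{asp}}(\kappa(T_{1}),\kappa(T_{2}))$ reduce to expressions of the form $\mathrm{Hom}_{\mathcal{T}}(\tilde{D}_{h},T')$ and $\mathrm{Hom}_{\mathcal{M}_{asp}}(A,\kappa(T'))$ for a new tilting object $T'$ assembled from the same reflection and line-bundle functors. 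Because $\kappa$ is functorial and compatible with the adjunction morphisms, the problem is reduced to matching these base-case Hom spaces.

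The crux is identifying $\mathrm{Hom}_{\mathcal{T}}(\tilde{D}_{h},T')\cong\kappa(T')$ for tilting $T'$ as graded $O(\mathfrak{h}^{*}\times\mathbb{A}^{1}\times\mathfrak{h}^{*}/W)$-modules, while the bimodule side reads $\mathrm{Hom}_{\mathcal{M}_{asp}}(A,\kappa(T'))=\kappa(T')$ tautologically. The coherent-side Hom equals $\Gamma^{G}(T')$, concentrated in degree zero by the cohomology lemma for tilting modules at the end of Section 3.7.1, with left $\mathrm{End}(\tilde{D}_{h})=O(\mathfrak{h}^{*}\times\mathbb{A}^{1})$-action and the right $Z(\mathfrak{g})=O(\mathfrak{h}^{*}/W)$-action coming from the residual right $U(\mathfrak{g})$-structure. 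For $T'=\omega\cdot\tilde{D}_{h}$ both sides equal $O(\mathfrak{h}^{*}\times\mathbb{A}^{1})$ by the example opening Section 4.4. The inductive step will combine \prettyref{lem:Kappa&Reflection1} and \prettyref{lem:Kappa&Reflection2}, which furnish compatibility of $\kappa$ with reflection functors and line-bundle twists, with the explicit description of the adjunction morphisms under $\kappa$ from \prettyref{rem:A-natural-question}; the graded Nakayama lemma (\prettyref{pro:GrNak}) then lifts the identification from the classical $h=0$ case to the full deformed setting.

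The main obstacle is precisely this base-case identification $\Gamma^{G}(T')\cong\kappa(T')$ as bimodules: the $G$-invariant global sections and the Kostant--Whittaker reduction $\Gamma(-/\mathcal{I}_{\chi})^{N^{-}}$ are \emph{a priori} distinct constructions, one operating on the full sheaf and the other restricting first to the Kostant slice, and matching their left and right center actions simultaneously is delicate. Once this identification is established the full-faithfulness follows from the adjunction reduction, and the two stated equivalences of categories follow by combining full-faithfulness with the tilting formalism of Section 1.5 and the essential surjectivity noted at the outset.
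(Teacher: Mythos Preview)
Your adjunction reduction to the base case $\mathrm{Hom}(\tilde{D}_{h},T')\cong\mathrm{Hom}_{O(\mathfrak{h}^{*}\times\mathbb{A}^{1}\times\mathfrak{h}^{*}/W)}(A,\kappa(T'))$ is exactly the paper's first step, and your use of the compatibility of $\kappa$ with the adjunction maps is correct. The gap is in your computation of the two sides of this base case. Neither of your identifications holds: it is \emph{not} true that $\mathrm{Hom}_{\mathcal{M}_{asp}}(A,\kappa(T'))=\kappa(T')$ tautologically, and it is \emph{not} true that $\Gamma^{G}(T')\cong\kappa(T')$. Take $T'=\mathcal{R}_{\alpha_{0}}\tilde{D}_{h}$ for the affine root $\alpha_{0}$. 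Then $\kappa(T')=R_{\alpha_{0}}(A)$ is free of rank two over $A$, but the exact triangle $\tilde{D}_{h}\to\mathcal{R}_{\alpha_{0}}\tilde{D}_{h}\to s_{\alpha_{0}}\tilde{D}_{h}$ together with $R\Gamma^{G}(s_{\alpha_{0}}\tilde{D}_{h})=0$ (since $s_{\alpha_{0}}\notin W_{\mathrm{fin}}$) shows $\Gamma^{G}(T')$ is rank one. Likewise $\mathrm{Hom}(A,R_{\alpha_{0}}(A))$ picks out only those elements on which the left and right $O(\mathfrak{h}^{*}/W)$-actions agree, which is a rank-one submodule, not all of $R_{\alpha_{0}}(A)$. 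Your proposed inductive step, built on the false claim that both sides equal $\kappa(T')$, therefore cannot close.

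What the paper does instead is compute both $\mathrm{Hom}(\tilde{D}_{h},T')$ and $\mathrm{Hom}(A,\kappa(T'))$ by walking up a filtration of $T'$ by objects $b^{+}\omega\cdot\tilde{D}_{h}$ (respectively of $\kappa(T')$ by $J_{\overline{b^{+}\omega}}(A)$), and matching the graded pieces term by term: both sides vanish when $b^{+}\omega\notin W_{\mathrm{fin}}$ and both equal $O(\mathfrak{h}^{*}\times\mathbb{A}^{1})$ when $b^{+}\omega\in W_{\mathrm{fin}}$. The key input on the coherent side is the orthogonality of standard and costandard objects for the perversely exotic $t$-structure on $\tilde{\mathcal{N}}$, transported via $h$-deformation; on the bimodule side it is the elementary observation that $J_{\bar{b}}(A)$ is supported on an affine subspace meeting the diagonal properly when $\bar{b}\notin W_{\mathrm{fin}}$. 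Your proposal correctly identifies the adjunction machinery and the role of \prettyref{lem:Kappa&Reflection1} and \prettyref{rem:A-natural-question-2}, but you need to replace the attempted direct identification $\Gamma^{G}\cong\kappa$ with this filtration argument.
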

Let us note right away the following 
\begin{cor}
Let 
\[
\mathcal{B}_{h}=\oplus_{i}End_{D^{b}(Mod(\tilde{D}_{h}))}(T_{i})
\]
where the sum runs over all tilting modules. Then we have equivalences
of categories 
\[
D^{b,G\times\mathbb{G}_{m}}(Mod(\tilde{D}_{h}))\tilde{\to}Perf^{gr}(\mathcal{B}_{h})
\]

\[
D^{b,G\times\mathbb{G}_{m}}(Coh(\tilde{\mathfrak{g}}))\tilde{\to}Perf^{gr}(\mathcal{B}_{h}\otimes_{k[h]}k_{0})
\]
 and 
\[
D^{b,G\times\mathbb{G}_{m}}(Coh(\tilde{\mathcal{N}}))\tilde{\to}Perf^{gr}(\mathcal{B}_{h}\otimes_{O(\mathfrak{h}^{*}\times\mathbb{A}^{1})}k_{0})
\]
Where the categories on the right stand for (direct limits of) graded
perfect complexes (c.f. section 1.5). The same statement also holds
true if we remove the $\mathbb{G}_{m}$ from the left and the $gr$
from the right of all these equivalences. 
\end{cor}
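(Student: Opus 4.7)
The plan is to derive all three equivalences from the formalism of Section~1.5 (in particular Claim~1.3, which asserts $\mathcal{C}\simeq Perf(\mathcal{B})$ whenever $\mathcal{C}$ admits a tilting subcategory $\mathcal{T}$ with endomorphism algebra $\mathcal{B}$), applied in turn to the three categories on the left. The first equivalence is essentially immediate: the theorem just proved identifies $D^{b,G\times\mathbb{G}_m}(Mod(\tilde{D}_h))$ with $K^b(\mathcal{T})$, and the collection $\mathcal{T}$ of tilting modules $\mathcal{R}_{\alpha_1}\cdots\mathcal{R}_{\alpha_n}\omega\cdot\tilde{D}_h(i)$ was shown (a) to generate the category (end of Section~3.7) and (b) to satisfy $\mathrm{Hom}^{<0}=0$ between its members (Corollary preceding Remark 3.25). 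Hence Claim~1.3 applies directly with $\mathcal{B}=\mathcal{B}_h$, giving the first equivalence.

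For the second equivalence, I would exhibit a tilting subcategory $\mathcal{T}_{\tilde{\mathfrak{g}}}\subset D^{b,G\times\mathbb{G}_m}(Coh(\tilde{\mathfrak{g}}))$ consisting of the classical objects $\mathcal{R}_{\alpha_1}\cdots\mathcal{R}_{\alpha_n}\omega\cdot O_{\tilde{\mathfrak{g}}}(i)$, then apply Claim~1.3 again. Generation was established in the corollary at the end of Section~3.4.2; the required $\mathrm{Hom}$-vanishing is the lemma preceding Remark~3.19. The key step is the identification of the endomorphism algebra: by the deformation formalism of Section~2 (Lemma~\ref{lem:Cohomology1} and Corollary~\ref{cor:Cohomology2} in the $G$-equivariant setting), one has
\[
\mathrm{End}^{\cdot}_{D^{b,G\times\mathbb{G}_m}(Coh(\tilde{\mathfrak{g}}))}(T|_{h=0})
\;\simeq\;
\mathrm{End}^{\cdot}_{D^{b,G\times\mathbb{G}_m}(Mod(\tilde{D}_h))}(T)\otimes^L_{k[h]}k_0
\]
for each tilting object $T\in\mathcal{T}$. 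Since both sides are concentrated in cohomological degree zero (being tilting on each side), the derived tensor product coincides with the ordinary one, so summing over all tilting generators yields exactly $\mathcal{B}_h\otimes_{k[h]}k_0$. Plugging into Claim~1.3 gives the second equivalence.

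The third equivalence proceeds in the same manner, using the further specialization $\tilde{\mathcal{N}}\simeq\tilde{\mathfrak{g}}\times_{\mathfrak{h}^*}\{0\}$. The relevant tilting collection is $\mathcal{R}_{\alpha_1}\cdots\mathcal{R}_{\alpha_n}\omega\cdot O_{\tilde{\mathfrak{g}}}|_{\tilde{\mathcal{N}}}(i)$, which was shown in Section~3.4.2 to give all indecomposable tiltings for the perversely exotic $t$-structure and to generate $D^{b,G\times\mathbb{G}_m}(Coh(\tilde{\mathcal{N}}))$. The $\mathrm{Hom}$-vanishing between these objects is built into the tilting property in a highest-weight category. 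For the endomorphism algebra, I would invoke the flat base change isomorphism
\[
\mathrm{Hom}_{D^b(Coh(\tilde{\mathfrak{g}}))}(M,N)\otimes^L_{O(\mathfrak{h}^*)}k_0
\;\simeq\;
\mathrm{Hom}_{D^b(Coh(\tilde{\mathcal{N}}))}(Li^*M,Li^*N)
\]
recorded at the end of Section~2 (which relies on flatness of $\tilde{\mathfrak{g}}\to\mathfrak{h}^*$). Composing this with the $h\to 0$ specialization from the previous paragraph identifies the endomorphism algebra with $\mathcal{B}_h\otimes_{O(\mathfrak{h}^*\times\mathbb{A}^1)}k_0$.

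The main obstacle, and essentially the only nontrivial point beyond quoting Claim~1.3, is ensuring that the two successive specializations (first $h\to 0$, then $\mathfrak{h}^*\to 0$) preserve the tilting property and compute the endomorphism algebra without producing higher Tors. This is exactly what the graded Nakayama proposition \ref{pro:GrNak} together with the cohomological lemmas of Section~2.2.4 guarantee, because the $\mathcal{B}_h$-modules $\mathrm{Hom}^{\cdot}(T,T')$ are concentrated in degree zero and graded with grading bounded below, which forces flatness in $h$ and in the $\mathfrak{h}^*$-direction for these particular objects. Finally, the ungraded version is obtained by repeating the entire argument verbatim, replacing $K^b(\mathcal{T})$ and $Perf^{gr}$ by their non-graded counterparts throughout, since every step above is carried out $G\times\mathbb{G}_m$-equivariantly with the grading playing only a bookkeeping role.
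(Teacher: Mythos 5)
Your proposal is correct and follows exactly the route the paper intends: the paper's proof is the single sentence that the corollary is "an immediate consequence of the theorem and the descriptions of all three categories via tilting modules," and your argument simply makes explicit what that means — applying the tilting-subcategory formalism of Section 1.5 to each category, with the generation statements, the degree-zero concentration of Homs between tilting objects, and the $h\to 0$ and $\mathfrak{h}^*\to 0$ base-change isomorphisms (guaranteed Tor-free by the graded Nakayama lemma) identifying the three endomorphism algebras.
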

This is an immediate consequence of the theorem and the descriptions
of all three categories via tilting modules.

In order to approach the proof of the theorem, we start with an immediate
reduction: 
\begin{lem}
The theorem follows from the statement that 
\[
Hom_{D^{b}Mod^{G}(\tilde{D}_{h})}(\tilde{D}_{h},T)\tilde{=}Hom_{O(\mathfrak{h}^{*}\times\mathbb{A}^{1}\times\mathfrak{h}^{*}/W)}(O(\mathfrak{h}^{*}\times\mathbb{A}^{1}),\kappa T)
\]
(as graded modules) for any tilting module $T$. \end{lem}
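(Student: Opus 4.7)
The plan is to reduce full faithfulness of $\kappa$ on the tilting subcategory $\mathcal{T}$ to the single Hom-computation in the hypothesis, by exploiting the self-adjointness of the reflection functors on both sides of $\kappa$, together with the invertibility of the line-bundle and $\omega$-twists. The equivalence $D^{b}(Mod^{G\times\mathbb{G}_{m}}(\tilde{D}_{h}))\tilde{\to}K^{b}(\mathcal{T})$ is already in hand from the tilting generation established in Sections 1.5 and 3, so only the second equivalence $K^{b}(\mathcal{T})\tilde{\to}K^{b}(\mathcal{M}_{asp})$ remains.

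Given two tilting objects $T_{1},T_{2}\in\mathcal{T}$, write $T_{1}=\mathcal{R}_{\alpha_{1}}\cdots\mathcal{R}_{\alpha_{n}}\omega\cdot\tilde{D}_{h}$. The self-adjointness of each $\mathcal{R}_{\alpha_{i}}$ (Proposition \ref{pro:adjointness}), together with invertibility of $\omega\cdot$ and of $O(\lambda)\otimes(-)$, allow us to migrate all the operations from the left argument to the right:
\begin{equation*}
Hom_{D^{b}Mod^{G\times\mathbb{G}_{m}}(\tilde{D}_{h})}(T_{1},T_{2})\;\tilde{=}\;Hom_{D^{b}Mod^{G\times\mathbb{G}_{m}}(\tilde{D}_{h})}(\tilde{D}_{h},T_{3})
\end{equation*}
(up to grading shifts), where $T_{3}:=\omega^{-1}\mathcal{R}_{\alpha_{n}}\cdots\mathcal{R}_{\alpha_{1}}T_{2}$ is again an object of $\mathcal{T}$ because $\mathcal{T}$ is visibly closed under reflection functors, $\omega$-action, and grading shift.

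Carrying out the parallel manipulation on the bimodule side, using self-adjointness of the singular Soergel bimodules $R_{\alpha}$ (as recorded in Remark \ref{rem:A-natural-question-2}) and invertibility of $J_{\omega}$ and $J_{\lambda}$, one obtains
\begin{equation*}
Hom_{\mathcal{M}_{asp}}(\kappa T_{1},\kappa T_{2})\;\tilde{=}\;Hom_{\mathcal{M}_{asp}}(O(\mathfrak{h}^{*}\times\mathbb{A}^{1}),\,J_{\omega^{-1}}R_{\alpha_{n}}\cdots R_{\alpha_{1}}\kappa T_{2}).
\end{equation*}
By the intertwining property of $\kappa$ established in the previous corollary, the right-hand argument is precisely $\kappa T_{3}$, while $\kappa\tilde{D}_{h}=O(\mathfrak{h}^{*}\times\mathbb{A}^{1})$. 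Invoking the hypothesis identifies this with $Hom(\tilde{D}_{h},T_{3})$, yielding $Hom(T_{1},T_{2})\tilde{=}Hom(\kappa T_{1},\kappa T_{2})$. Essential surjectivity of the induced functor $K^{b}(\mathcal{T})\to K^{b}(\mathcal{M}_{asp})$ is then immediate, because $\mathcal{M}_{asp}$ is by construction generated additively by summands of objects $R_{\alpha_{1}}\cdots R_{\alpha_{n}}J_{\omega}\cdot O(\mathfrak{h}^{*}\times\mathbb{A}^{1})$, each of which is the image under $\kappa$ of a corresponding object in $\mathcal{T}$.

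The main obstacle in this argument is not the overall strategy but the verification that $\kappa$ carries the adjunction units and counits for $\mathcal{R}_{\alpha}$ (and for the $\omega$, $\lambda$ twists) to the corresponding adjunction units and counits for $R_{\alpha}$ (and for $J_{\omega}$, $J_{\lambda}$); without this compatibility the two adjunction computations run in parallel but need not end up at the same object. This compatibility is, however, exactly the content of Remarks \ref{rem:A-natural-question} and \ref{rem:A-natural-question-2}, combined with the explicit description of the reflection-functor kernels earlier in Section 3; in carrying out the reduction one must additionally keep careful track of the grading shifts of $(2)$ that appear in each self-adjunction, so that the comparison on both sides produces the same shifted object $T_{3}$.
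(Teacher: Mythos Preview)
Your proposal is correct and follows essentially the same approach as the paper: reduce full faithfulness on $\mathcal{T}$ to the case where the first argument is $\tilde{D}_{h}$ by using self-adjointness of the reflection functors (and invertibility of the $\omega$- and line-bundle twists) on both sides, with the crucial input being that $\kappa$ intertwines the adjunction units and counits, which is exactly what Remarks~\ref{rem:A-natural-question} and~\ref{rem:A-natural-question-2} provide. The paper's proof is terser and phrases the compatibility as the commutativity of a single square, but the content is the same; your explicit treatment of essential surjectivity and the careful tracking of grading shifts are welcome additions the paper leaves implicit.
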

\begin{proof}
The proof follows from the self-adjointness property of the reflection
functors, and the obvious adjointness for the action of the line bundles.
On the left hand side, this is discussed above, while on the right
hand side the fact that the $R_{\alpha}$ are self adjoint is explained
\cite{key-29}. Further, we wish to see that the diagram $$ \begin{CD} Hom_{D^{b}Mod^{G}(\tilde{D}_{h})}(\mathcal{R}_{\alpha}T_{1},T_{2}) @> =>> Hom_{D^{b}Mod^{G}(\tilde{D}_{h})}(T_{1},\mathcal{R}_{\alpha}T_{2})  \\ @V\kappa VV            @V\kappa VV  \\ Hom_{O(\mathfrak{h}^{*}\times\mathbb{A}^{1}\times\mathfrak{h}^{*}/W)}(R_{\alpha}\kappa T_1,\kappa T_2)  @> = >> Hom_{O(\mathfrak{h}^{*}\times\mathbb{A}^{1}\times\mathfrak{h}^{*}/W)}(\kappa T_1,R_{\alpha} \kappa T_2)  \\ \end{CD} $$

coming from functoriality commutes. This follows from the various
remarks \ref{rem:A-natural-question} and \ref{rem:A-natural-question-2}
above. 
\end{proof}
So the proof comes down to computing equivariant global sections.
To approach this, let us note that the exact sequences 
\[
s_{\alpha}(-2)\to\mathcal{R}_{\alpha}\to Id
\]
 imply that the object $T$ admits a filtration in $D^{b}(Mod^{G\times\mathbb{G}_{m}}(\tilde{D}_{h}))$
by objects of the form $b\cdot\tilde{D}_{h}(i)$ for $b\in\mathbb{B}_{aff}^{'+}$
(i.e., $b$ is a product of positive elements). Further, let us note
that there are isomorphisms of graded modules 
\[
RHom_{D^{b}(Mod^{G}(\tilde{D}_{h}))}(\tilde{D}_{h},b\cdot\tilde{D}_{h})|_{h=0}\tilde{\to}RHom_{D^{b}(Coh^{G}(\tilde{\mathfrak{g}}))}(O_{\tilde{\mathfrak{g}}},b\cdot O_{\tilde{\mathfrak{g}}})
\]
as follows from the cohomological lemmas \ref{lem:Cohomology1}and
\ref{cor:Cohomology2}. Now, the term on the right is zero for any
$b$ such that $b\cdot0\neq0$ in the representation $M_{asp}$ (this
follows from the description of standard and costandard objects in
section 3). Let us note that this is equivalent to $b\notin W$. Thus
the term on the left is zero for $b\notin W$ also. Further, when
$b\in W_{fin}$, we have $b\cdot\tilde{D}_{h}\tilde{=}\tilde{D}_{h}$
(c.f. proof of \prettyref{lem:Braid-on-D(tilde)}, and \prettyref{cla:W-fin-is-trivial}),
and so 
\[
Hom_{D^{b}(Mod^{G}(\tilde{D}_{h}))}(\tilde{D}_{h},b\cdot\tilde{D}_{h})\tilde{=}O(\mathfrak{h}^{*}\times\mathbb{A}^{1})
\]
 as graded modules.

Now, on the bimodule side, one makes exactly the same type of argument:
the exact sequences for reflection functors imply that 
\[
Hom_{O(\mathfrak{h}^{*}\times\mathbb{A}^{1}\times\mathfrak{h}^{*}/W)}(O(\mathfrak{h}^{*}\times\mathbb{A}^{1}),R_{\alpha_{1}}\cdot\cdot\cdot R_{\alpha_{n}}(O(\mathfrak{h}^{*}\times\mathbb{A}^{1})))
\]
is filtered by terms of the form 
\[
Hom_{O(\mathfrak{h}^{*}\times\mathbb{A}^{1}\times\mathfrak{h}^{*}/W)}(O(\mathfrak{h}^{*}\times\mathbb{A}^{1}),J_{\bar{b}}\cdot O(\mathfrak{h}^{*}\times\mathbb{A}^{1}))
\]
 where $\bar{b}\in W_{aff}$ is a positive element. Now, when $\bar{b}\notin W$,
the module $J_{\bar{b}}O(\mathfrak{h}^{*}\times\mathbb{A}^{1})$ is
the module of functions on an affine subspace of $\mathfrak{h}^{*}\times\mathbb{A}^{1}\times\mathfrak{h}^{*}/W$
which intersects $O(\mathfrak{h}^{*}\times\mathbb{A}^{1})$ in a proper
subspace. Thus the $Hom$'s between them are zero. When $\bar{b}\in W$,
then of course we have $J_{\bar{b}}O(\mathfrak{h}^{*}\times\mathbb{A}^{1})=O(\mathfrak{h}^{*}\times\mathbb{A}^{1})$
(since we are in $\mathfrak{h}^{*}\times\mathbb{A}^{1}\times\mathfrak{h}^{*}/W$)
and so 
\[
Hom_{O(\mathfrak{h}^{*}\times\mathbb{A}^{1}\times\mathfrak{h}^{*}/W)}(O(\mathfrak{h}^{*}\times\mathbb{A}^{1}),J_{\bar{b}}O(\mathfrak{h}^{*}\times\mathbb{A}^{1}))\tilde{=}O(\mathfrak{h}^{*}\times\mathbb{A}^{1})
\]
 as graded modules. Now the proof of the lemma follows by walking
up a standard filtration, and the following easy
\begin{claim}
For any object $b^{+}\omega\cdot\tilde{D}_{h}$, we have $\kappa(b^{+}\omega\cdot\tilde{D}_{h})=J_{\bar{b^{+}\omega}}(O(\mathfrak{h}^{*}\times\mathbb{A}^{1}))$.
The analogous result holds for objects $b^{-}\omega\cdot\tilde{D}_{h}$. \end{claim}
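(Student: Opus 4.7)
The plan is to prove the first assertion by induction on the length of $b^{+}$ in the positive submonoid $\mathbb{B}_{aff}^{+}$; the assertion for $b^{-}\omega$ follows by the entirely symmetric argument using the dual exact triangle $s_{\alpha}^{-1}(-2)\to\mathcal{R}_{\alpha}\to\mathrm{Id}$ in place of $\mathrm{Id}\to\mathcal{R}_{\alpha}(2)\to s_{\alpha}$.

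For the base case $b^{+}=e$, I would write $\omega\in\mathbb{B}_{aff}^{'}$ uniquely in the form $\theta_{\lambda(\omega)}T_{w(\omega)}$ with $\lambda(\omega)\in\mathbb{X}$ a minuscule representative and $w(\omega)\in W_{fin}$. The element $T_{w(\omega)}$ acts trivially on $\tilde{D}_{h}$: the coherent analogue is \prettyref{cla:W-fin-is-trivial}, and this lifts to $\tilde{D}_{h}$ by the deformation/graded-Nakayama argument used in the proof of \prettyref{lem:Braid-on-D(tilde)}. Thus $\omega\cdot\tilde{D}_{h}\simeq\theta_{\lambda(\omega)}\cdot\tilde{D}_{h}=\tilde{D}_{h}(\lambda(\omega))$, and by \prettyref{lem:Kappa&Reflection2} together with the upgrade from $\kappa^{'}$ to $\kappa$ we get $\kappa(\tilde{D}_{h}(\lambda(\omega)))=J_{\lambda(\omega)}(A)$ where $A=O(\mathfrak{h}^{*}\times\mathbb{A}^{1})$. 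Finally, since the right action of $O(\mathfrak{h}^{*}/W)$ is $W_{fin}$-invariant, $J_{\bar\omega}(A)$ coincides with $J_{\lambda(\omega)}(A)$, which is exactly what we want.

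For the inductive step, write $b^{+}=s_{\alpha}b^{+\prime}$ with $\ell(b^{+})=\ell(b^{+\prime})+1$, so that $s_{\alpha}b^{+\prime}\omega$ admits a reduced expression beginning with $s_{\alpha}$. Apply the exact triangle of functors $\mathrm{Id}\to\mathcal{R}_{\alpha}(2)\to s_{\alpha}$ to $b^{+\prime}\omega\cdot\tilde{D}_{h}$:
\[
b^{+\prime}\omega\cdot\tilde{D}_{h}\;\longrightarrow\;\mathcal{R}_{\alpha}(2)\,b^{+\prime}\omega\cdot\tilde{D}_{h}\;\longrightarrow\;s_{\alpha}b^{+\prime}\omega\cdot\tilde{D}_{h}.
\]
By the deformation arguments of section~2 and the fact that the corresponding standard coherent objects $b^{+}\omega\cdot O_{\tilde{\mathfrak{g}}}$ are genuine sheaves, every term is concentrated in a single cohomological degree, so applying $\kappa$ is legitimate (via the consistency of \prettyref{rem:Dervivedkappa}(b)). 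By the inductive hypothesis $\kappa(b^{+\prime}\omega\cdot\tilde{D}_{h})\simeq J_{\overline{b^{+\prime}\omega}}(A)$, and by the corollary of \prettyref{lem:Kappa&Reflection1} combined with \prettyref{rem:A-natural-question-2}, the middle term becomes $R_{\alpha}(2)\cdot J_{\overline{b^{+\prime}\omega}}(A)$ with the first arrow identified with the bimodule adjunction $\mathrm{Id}\to R_{\alpha}(2)$. The defining bimodule triangle $J_{\mathrm{Id}}\to R_{\alpha}(2)\to J_{s_{\alpha}}$ then identifies the cone as $J_{s_{\alpha}}\cdot J_{\overline{b^{+\prime}\omega}}(A)=J_{s_{\alpha}\overline{b^{+\prime}\omega}}(A)=J_{\overline{b^{+}\omega}}(A)$, completing the induction.

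The main obstacle will be confirming that, at each stage, $b^{+}\omega\cdot\tilde{D}_{h}$ really is concentrated in degree zero (so that its image under $\kappa$ lands in the abelian category $\mathcal{M}_{asp}$ rather than only in $K^{b}(\mathcal{M}_{asp})$), and that the connecting morphism in the triangle above is identified with the \emph{specified} bimodule adjunction — not merely with a morphism in the same one-dimensional $\mathrm{Hom}$-space up to scalar. Both points come down to carefully tracking the canonical choices of adjunction morphisms made in the corollary following the remark on explicit Fourier--Mukai lifts; this canonicality, together with the compatibilities recorded in \prettyref{rem:A-natural-question} and \prettyref{rem:A-natural-question-2}, is precisely what ensures the induction propagates the \emph{identity} isomorphism rather than an isomorphism modified by a scalar at each step.
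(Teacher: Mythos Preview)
Your approach is essentially the same as the paper's: the paper's one-sentence proof invokes exactly the ingredients you use---compatibility of $\kappa$ with reflection functors, the defining exact triangles $Id\to\mathcal{R}_{\alpha}(2)\to s_{\alpha}$ and $s_{\alpha}^{-1}(-2)\to\mathcal{R}_{\alpha}\to Id$, and the fact (Remarks~\ref{rem:A-natural-question} and~\ref{rem:A-natural-question-2}) that the adjunction maps are sent by $\kappa$ to the corresponding bimodule adjunctions---and your write-up simply makes the implicit induction explicit and isolates the base case and the concentration-in-degree-zero issues that the paper leaves to the reader.
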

\begin{proof}
We already know the compatibility with the reflection functors, so
the proof follows by applying the usual exact sequences defining the
action of the braid group and the fact that the adjunction maps $\mathcal{R}_{\alpha}\to Id$
and $Id\to\mathcal{R}_{\alpha}(2)$ go to the corresponding maps for
$R_{\alpha}$ on $D^{b,gr}(A-mod)$. \end{proof}
\begin{rem}
\label{rem:Extended-Hecke-claim}Let us note that the argument given
above also proves the unproved claim (of the previous section) that
in $\mathcal{H}_{aff}^{'}$ we have 
\[
Hom(R_{\alpha_{1}}\cdot\cdot\cdot R_{\alpha_{n}}(J_{\omega_{1}}),R_{\beta_{1}}\cdot\cdot\cdot R_{\beta_{m}}(J_{\omega_{2}}))=0
\]
 for $\omega_{1}\neq\omega_{2}$ and any strings of affine roots.
By adjointness and the definition of the action of $\Omega$ on $W_{aff}$,
this comes down to showing that 
\[
Hom(O(\mathfrak{h}^{*}\times\mathbb{A}^{1}),R_{\alpha_{1}}\cdot\cdot\cdot R_{\alpha_{k}}(J_{\omega}))=0
\]
 for any $\omega\neq0$ and any sequence $\{\alpha_{1},...,\alpha_{k}\}$
of affine roots. But now the object $R_{\alpha_{1}}\cdot\cdot\cdot R_{\alpha_{k}}(J_{\omega})$
will have a filtration by objects of the form $J_{s}(i)$ where no
$s$ is $Id$. The claim follows. A similar argument works for the
category $\mathcal{M}_{asp}$. 
\end{rem}

\section{Applications}

In this section, we shall give our two main applications of the above
description. The first is the connection with perverse sheaves on
affine flag manifolds, and the second is the strictification of the
braid group action.

\subsection{Connection with perverse sheaves. }

Let us fulfill our promise from the introduction of the paper. Given
the results of the paper \cite{key-12} this is an easy consequence
of the results of the previous section. Let us recall some generalities
from \cite{key-1,key-12} (c.f. also the references therein). 

We consider the dual reductive group $\check{G}$ over a field $F=\bar{\mathbb{F}}_{p}$.
As is well known, we can associate to $\check{G}$ an ind-scheme (called
the affine flag variety) as follows: we let $F((t))$ be the field
of Laurent series in $F$, and $F[[t]]$ its ring of integers. Then
$\check{G}(F[[t]])$ is a maximal compact subgroup of the topological
group $\check{G}(F((t)))$, and there is a subgroup $I\subseteq\check{G}(F[[t]])$
called the standard Iwahori (it is the inverse image of our standard
Borel in $\check{G}$ under the evaluation map taking $\check{G}(F[[t]])\to\check{G}$).
Then we define $\mathcal{F}l:=\check{G}(F((t)))/I$ with its natural
ind-scheme structure. 

Next, we recall that the action of $I$ on the left of $\mathcal{F}l$
induces a decomposition of $\mathcal{F}l$ into orbits, each of which
is isomorphic to a copy of the affine space $\mathbb{A}^{n}$. Further,
the orbits are indexed by the standard basis of the algebra $H_{\mathbb{X}}$
which we associated to $G$ above (this is a combinatorial manifestation
of Langlands duality). In fact, we can even say that the basis element
$T_{w}$ gives an orbit of length $\mathbb{A}^{l(w)}$. Thus we have
a stratification of the variety $\mathcal{F}l$ which is given by
closures of $I$ orbits, and $I$ acts on each orbit through a finite
quotient; as in section 1 we let $j_{w}$ denote the inclusion of
an orbit into its closure. 

Thus we are in a perfect setting to consider categories of equivariant
constructible sheaves. We let $I^{-}$ be the opposite Iwahori subgroup,
and $\mathbf{I}^{-}$ its associated group scheme. We also consider
their {}``unipotent radicals'' $I_{u}^{-}$ and $\mathbf{I}_{u}^{-}$.
Let $\psi:\mathbf{I}^{-}\to\mathbb{G}_{a}$ be a generic character
(this is the affine lie algebra analogue of the situation of section
4.1, in which Kostant-Whittaker reduction was defined). Then we shall
let $D_{IW}$ denote the triangulated category of bounded complexes
of $(\mathbf{I}^{-},\psi)$-equivariant $\bar{\mathbb{Q}_{l}}$-constructable
sheaves on $\mathcal{F}l$. This category is a main player of \cite{key-1},
along with its mixed version obtained by taking into account the action
of the Frobenius by weights, and denoted $D_{IW,m}$. We note that
these categories are linear over $k=\bar{\mathbb{Q}}_{l}$.

As explained in \cite{key-12}, this category admits several natural
deformations. We first explain how to deform the spaces involved.

We can define the extended affine flag manifold to be $\tilde{\mathcal{F}l}=\check{G}(F((t)))/I_{u}$
which is a natural $\check{T}$-torsor over $\mathcal{F}l$ (here
$\check{T}$ is the maximal torus of $\check{G}$), where the morphism
is just the quotient morphism. 

In addition, we recall that the group $\check{G}(F((t)))$ admits
a one dimensional central extension $\check{\mathcal{G}}$ (this is
an example of a Kac-Moody group), and therefore we can define the
quotient $\check{\mathcal{F}l}=\check{\mathcal{G}}/I_{u}$. This is
naturally a $\check{T}\times\mathbb{G}_{m}$-torsor over $\mathcal{F}l$. 

By the general yoga of sheaves on torsors, we see that the category
$D_{IW}$ is equivalent to a certain category of $\check{T}$- equivariant
sheaves on $\tilde{\mathcal{F}l}$, and a certain category of $\check{T}\times\mathbb{G}_{m}$-equivariant
sheaves on $\check{\mathcal{F}l}$. Because these are tori, Equivariance
for a constructible sheaf is equivalent to demanding that the associated
monodromy action be trivial. We can therefore loosen this condition
by demanding that a torus act with unipotent monodromy. In this way
we obtain categories $D^{b}((I_{u}^{-},\psi)\backslash\tilde{\mathcal{F}l}\rightdash\check{T})$
and $D^{b}((I_{u}^{-},\psi)\backslash\check{\mathcal{F}l}\rightdash\check{T}\times\mathbb{G}_{m})$,
and their mixed versions. We note that taking the logarithm of monodromy
then gives us actions by the polynomial rings $Sym(\check{\mathfrak{t}})$
and $Sym(\check{\mathfrak{t}}\times\mathbb{A}^{1})$, such that the
augmentation ideal acts nilpotently. 

As explained in the appendices to \cite{key-12}, these categories
alone do not have enough objects to be suitable for the Koszul duality
formalism. This is remedied there by defining certain completions
with respect to the action of the rings $Sym(\check{\mathfrak{t}})$
and $Sym(\check{\mathfrak{t}}\times\mathbb{A}^{1})$, roughly analogous
to replacing modules such that the augmentation ideal acts nilpotently
with modules over the completed algebra. 

Therefore we obtain categories denoted $\hat{D}^{b}((I_{u}^{-},\psi)\backslash\check{\mathcal{F}l}\rightdash\check{T}\times\mathbb{G}_{m})$
(and similarly for $\tilde{\mathcal{F}l}$), and mixed versions $\hat{D}_{m}^{b}((I_{u}^{-},\psi)\backslash\check{\mathcal{F}l}\rightdash\check{T}\times\mathbb{G}_{m})$.
Let us say a few words about their structure. First, the category
$D_{IW}$ inherits the $t$-structure of the middle perversity from
$D^{b}(\mathcal{F}l,\mathbb{\bar{Q}}_{l})$, the bounded derived category
of constructible sheaves on $\mathcal{F}l$. The heart of this $t$-structure
is a highest weight category in the sense of section three. In particular,
the standard and costandard objects are given by the collections $\{j_{w!}(\bar{\mathbb{Q}}_{l,Z_{w}}[dimZ_{w}])\}_{W_{aff}/W}$
and $\{j_{w*}(\bar{\mathbb{Q}}_{l,Z_{w}}[dimZ_{w}])\}_{W_{aff}/W}$,
henceforth simply denoted $\{j_{w!}\}$ and $\{j_{w*}\}$. 

We can rephrase this in terms of the convolution action (as defined
in section 1.4); in particular, if we consider $j_{w!}$ and $j_{w*}$
as objects in $D^{b}(\mathcal{F}l,\mathbb{\bar{Q}}_{l})$, then we
see that our standard and costandard collection is given by $\{j_{w!}\star j_{e}\}$
and $\{j_{w*}\star j_{e}\}$. This is exactly the same type of formula
used to define the perversely exotic $t$-structure on coherent sheaves
above; it also works for finite dimensional flag varieties and category
$\mathcal{O}$. By the general theory of highest weight categories,
this category has a tilting collection which generates it.

In \cite{key-12} it is shown that the perverse $t$-structure and
the standard and costandard objects admit deformations to both of
the lifted categories. The deformations of $j_{w*}$ are denoted $\tilde{\nabla}_{w}$,
and $j_{w!}$ deforms to $\tilde{\Delta}_{w}$. In addition, it is
proved there that the deformed categories are generated by tilting
collections, which deform the tilting objects in $D_{IW}$. 

Then the main result is the following 
\begin{thm}
We have equivalences of triangulated categories $$ D^{b,G\times\mathbb{G}_{m}}(Mod(\tilde{D}_{h}))\tilde{\to}\hat{D}_{m}^{b}((I_{u}^{-},\psi)\backslash\check{\mathcal{F}l}\rightdash\check{T}\times\mathbb{G}_{m}) $$
$$ D^{b,G\times\mathbb{G}_{m}}(Coh(\tilde{\mathfrak{g}}))\tilde{\to} \hat{D}_{m}^{b}((I_{u}^{-},\psi)\backslash\tilde{\mathcal{F}l}\rightdash\check{T}) $$
\[
D^{b,G\times\mathbb{G}_{m}}(Coh(\tilde{\mathcal{N}}))\tilde{\to}D_{IW,m}
\]
 Where the varieties on the left are taken over the field $k=\bar{\mathbb{Q}}_{l}$.
These equivalences take tilting modules to tilting modules and they
respect the grading in the sense that the $\mathbb{G}_{m}$-shift
on the left corresponds to shifting the mixed structure on the right. \end{thm}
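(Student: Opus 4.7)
The plan is to deduce all three equivalences simultaneously by identifying both sides with the homotopy category $K^{b}(\mathcal{M}_{asp})$ of complexes of singular Soergel bimodules for $W_{aff}'$. On the coherent side, this identification is exactly the main theorem of section 4.6: the Kostant--Whittaker functor $\kappa$ induces an equivalence $D^{b}(Mod^{G\times\mathbb{G}_{m}}(\tilde{D}_{h}))\tilde{\to}K^{b}(\mathcal{M}_{asp})$, taking the tilting subcategory $\mathcal{T}$ onto $\mathcal{M}_{asp}$ and intertwining the $\mathbb{G}_{m}$-shift with the Soergel grading shift. For the constructible side, the analogous identification is provided by \cite{key-12}: their deformed Whittaker categories $\hat{D}_{m}^{b}((I_{u}^{-},\psi)\backslash\check{\mathcal{F}l}\rightdash\check{T}\times\mathbb{G}_{m})$ are also generated by a tilting collection $\{\tilde{T}_{w}\}$, whose endomorphism algebra they compute in terms of the very same category $\mathcal{M}_{asp}$ of singular Soergel bimodules (their ``Soergel functor'' $\mathbb{V}$ plays the role of our $\kappa$). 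Thus the first equivalence of the theorem follows by composing the two descriptions:
\[
D^{b,G\times\mathbb{G}_{m}}(Mod(\tilde{D}_{h}))\xrightarrow{\kappa}K^{b}(\mathcal{M}_{asp})\xleftarrow{\mathbb{V}}\hat{D}_{m}^{b}((I_{u}^{-},\psi)\backslash\check{\mathcal{F}l}\rightdash\check{T}\times\mathbb{G}_{m}).
\]

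The remaining two equivalences should then be deduced by base change, using the corollary at the end of section 4.6 which describes all three coherent categories uniformly as $Perf^{gr}$ over $\mathcal{B}_{h}$, $\mathcal{B}_{h}\otimes_{k[h]}k_{0}$, and $\mathcal{B}_{h}\otimes_{O(\mathfrak{h}^{*}\times\mathbb{A}^{1})}k_{0}$ respectively. On the constructible side, the results of \cite{key-12} provide the parallel statement: the undeformed category $D_{IW,m}$ corresponds to specializing the logarithm-of-monodromy action by $Sym(\check{\mathfrak{t}}\times\mathbb{A}^{1})$ to zero on both factors, while $\hat{D}_{m}^{b}((I_{u}^{-},\psi)\backslash\tilde{\mathcal{F}l}\rightdash\check{T})$ corresponds to specializing only the central $\mathbb{G}_{m}$-direction. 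Matching these specializations to the $h \to 0$ and $h \to 0, \mathfrak{h}^{*} \to 0$ reductions on the coherent side follows from the identification of the endomorphism algebras $\mathcal{B}_{h}$ on both sides with the same Soergel endomorphism ring, together with the observation that $h$ on the coherent side corresponds to the central-extension parameter on the perverse side, while the $\mathfrak{h}^{*}$-deformation corresponds to the $\check{T}$-monodromy deformation.

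To make this work I would (a) check that the tilting objects match under $\kappa$ and $\mathbb{V}$: this is essentially forced because on both sides the tilting objects are characterized as having filtrations by standard and costandard objects labelled by $W_{aff}'$, and on both sides $\kappa$ and $\mathbb{V}$ send these to the standard generating objects $R_{\alpha_{1}}\cdots R_{\alpha_{n}}(J_{\omega})$ of $\mathcal{M}_{asp}$; (b) check the compatibility of the grading shifts with the Tate twist, which is straightforward since both are characterized as the shift making the $R_{\alpha}$-sequences exact in degree $2$; (c) verify that base change along $k[h]\to k_{0}$ and $O(\mathfrak{h}^{*}\times\mathbb{A}^{1})\to k_{0}$ commutes with the equivalences, which on the coherent side is the content of lemmas \ref{lem:Cohomology1} and \ref{cor:Cohomology2} and on the constructible side is provided by the completion and specialization formalism of \cite{key-12}.

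The main obstacle, as I see it, is not the algebraic matching of Soergel bimodules (both papers set this up compatibly) but rather the bookkeeping identifying the correct deformation parameters: one must verify that the parameter $h$ on the quantization side corresponds precisely to the Kac--Moody central extension parameter on the perverse side, and that the $\mathfrak{h}^{*}$-deformation of $\tilde{\mathfrak{g}}$ corresponds to the $\check{\mathfrak{t}}$-monodromy deformation after Langlands duality. This is essentially a normalization check, but it is the step where the dualities $G\leftrightarrow\check{G}$ and $\mathbb{X}\leftrightarrow\mathbb{Y}$ get pinned down, and where one confirms that tilting objects labelled by $W_{aff}'$ on both sides are matched with the same index (including the handling of the extended part $\Omega$).
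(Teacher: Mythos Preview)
Your proposal is correct and follows essentially the same route as the paper's own proof: both sides are identified with $K^{b}(\mathcal{M}_{asp})$ (equivalently, $Perf^{gr}$ over the tilting endomorphism algebra) via $\kappa$ on the coherent side and the Soergel-type functor $\mathbb{V}$ of \cite{key-12} on the constructible side, and the two remaining equivalences are then read off by base change along $k[h]\to k_{0}$ and $O(\mathfrak{h}^{*}\times\mathbb{A}^{1})\to k_{0}$, using the corollary of section 4.6 on one side and the specialization results in the appendices of \cite{key-12} on the other. Your caution about matching the deformation parameters is well placed but is exactly what \cite{key-12} provides; the paper simply cites those appendices rather than re-verifying the normalization.
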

\begin{proof}
The proof of this theorem is almost immediate from the results of
\cite{key-12} and the previous section. 

The first statement follows from the equivalence $D^{b,G\times\mathbb{G}_{m}}(Mod(\tilde{D}_{h}))\tilde{\to}Perf^{gr}(\mathcal{B}_{h})$.
This equivalence takes tilting modules to summands of $\mathcal{B}_{h}$
and the $\mathbb{G}_{m}$-shift to the shift of grading on the right.
On the other hand, there is a description, proved in \cite{key-12},
chapter 4, of the category on the right of the first statement as
$Perf^{gr}(\mathcal{W}_{h})$ where $\mathcal{W}_{h}$ is an explicitly
defined ind-algebra. This equivalence takes tilting modules to finitely
generated summands of $\mathcal{W}_{h}$, and it takes the shift of
mixed structure to the shift of grading. 

Thus it remains to identify $\mathcal{W}_{h}$ and $\mathcal{B}_{h}$.
This is done via the functor $\mathbb{V}$ of \cite{key-12}; this
functor takes sheaves to modules over the ring $\mbox{Sym}(\check{\mathfrak{t}}\times\mathbb{A}^{1}\times\check{\mathfrak{t}}/W)$,
and is fully faithful on tilting modules. Further, the result of \cite{key-12},
appendix C, says that the image of the tilting modules in $\mbox{Sym}(\check{\mathfrak{t}}\times\mathbb{A}^{1}\times\check{\mathfrak{t}}/W)$
is exactly the category $\mathcal{M}_{asp}$ described above. The
first statement follows. 

We can deduce the other two statements from the first if we know that
$$ \hat{D}_{m}^{b}((I_{u}^{-},\psi)\backslash\tilde{\mathcal{F}l}\rightdash\check{T})\tilde{\to}Perf^{gr}(\mathcal{W}_{h}\otimes_{k[h]}k_{0})$$
and that 
\[
D_{IW,m}\tilde{\to}Perf^{gr}(\mathcal{W}_{h}\otimes_{O(\mathfrak{h}^{*}\times\mathbb{A}^{1})}k_{0})
\]
This is indeed the case, and follows from the description of the deformation
categories in \cite{key-12}, appendices A and B.
\end{proof}
As a corollary, we can give a coherent sheaf interpretation of the
non-completed versions of the categories in \cite{key-12} as well.
These will be the full subcategories of the completed categories on
objects such that $\mbox{Sym(}\check{\mathfrak{t}})$ and $\mbox{Sym}(\check{\mathfrak{t}}\times\mathbb{A}^{1})$
act nilpotently. On the coherent side, one sees from the definitions
that the corresponding full subcategories are those on objects which
are set theoretically supported on $\tilde{\mathcal{N}}$. Thus we
see 
\begin{cor}
We have equivalences of categories $$ D_{\tilde{\mathcal N}}^{b,G\times\mathbb{G}_{m}}(Mod(\tilde{D}_{h}))\tilde{\to}D_{m}^{b}((I_{u}^{-},\psi)\backslash\check{\mathcal{F}l}\rightdash\check{T}\times\mathbb{G}_{m}) $$

$$ D_{\tilde{\mathcal N}}^{b,G\times\mathbb{G}_{m}}(Coh(\tilde{\mathfrak{g}}))\tilde{\to} D_{m}^{b}((I_{u}^{-},\psi)\backslash\tilde{\mathcal{F}l}\rightdash\check{T}) $$

where the subscript $\tilde{\mathcal{N}}$ denotes the full subcategories
on objects set-theoretically supported on $\tilde{\mathcal{N}}$. 

In addition, by looking at the ungraded versions of the underlying
$DG$-algebras, we conclude that are equivalences $$ D_{\tilde{\mathcal N}}^{b,G}(Mod(\tilde{D}_{h}))\tilde{\to}D^{b}((I_{u}^{-},\psi)\backslash\check{\mathcal{F}l}\rightdash\check{T}\times\mathbb{G}_{m}) $$

$$ D^{b,G}(Coh(\tilde{\mathfrak{g}}))\tilde{\to} D^{b}((I_{u}^{-},\psi)\backslash\tilde{\mathcal{F}l}\rightdash\check{T}) $$
\end{cor}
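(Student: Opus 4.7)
The plan is to deduce the corollary directly from the preceding theorem by matching, on each side of the stated equivalence, the non-completed category with an appropriate full subcategory of its completed counterpart. On the perverse sheaf side, the completion procedure of \cite{key-12} enlarges the non-completed category precisely by allowing modules over the formal completion of $\mathrm{Sym}(\check{\mathfrak{t}})$ (respectively $\mathrm{Sym}(\check{\mathfrak{t}}\times\mathbb{A}^{1})$) whose augmentation ideal acts non-nilpotently; hence the non-completed category is recovered as the full subcategory of the completed one on objects whose log-monodromy action is locally nilpotent.

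On the coherent side, the actions of $O(\mathfrak{h}^{*})$ on $D^{b,G\times\mathbb{G}_{m}}(Coh(\tilde{\mathfrak{g}}))$ and of $O(\mathfrak{h}^{*}\times\mathbb{A}^{1})$ on $D^{b,G\times\mathbb{G}_{m}}(Mod(\tilde{D}_{h}))$ are induced by Grothendieck's morphism $\tilde{\mathfrak{g}}\to\mathfrak{h}^{*}$ and, in the quantised case, by the central inclusion $O(\mathfrak{h}^{*}\times\mathbb{A}^{1})\to\Gamma(\tilde{D}_{h})$ recorded in section 2. Since $\tilde{\mathcal{N}}=\tilde{\mathfrak{g}}\times_{\mathfrak{h}^{*}}\{0\}$, an object is set-theoretically supported on $\tilde{\mathcal{N}}$ exactly when the augmentation ideal of $O(\mathfrak{h}^{*})$ acts locally nilpotently on its cohomology, and identically for the $\tilde{D}_{h}$-module version with the extra $\mathbb{A}^{1}$-variable. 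Thus the subscript-$\tilde{\mathcal{N}}$ categories on the coherent side are full subcategories characterised by exactly the same nilpotency condition as the non-completed categories on the perverse side.

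The next step is to observe that the equivalences of the main theorem intertwine these two actions. This is built into the identification of the endomorphism algebras $\mathcal{B}_{h}\cong\mathcal{W}_{h}$: both are naturally algebras over $O(\mathfrak{h}^{*}\times\mathbb{A}^{1}\times\mathfrak{h}^{*}/W)$ via the functors $\kappa$ and $\mathbb{V}$ respectively, and the identification used in the proof of the main theorem matches them as bimodules over this polynomial ring. Consequently the equivalences restrict to the full subcategories where the augmentation ideal acts nilpotently, yielding the first two equivalences of the corollary. For the ungraded statements, one passes from $\mathcal{B}_{h}$ (equivalently $\mathcal{W}_{h}$) viewed as a graded DG-algebra to its underlying ungraded DG-algebra, so that $D^{b,G}(Mod(\tilde{D}_{h}))$ becomes $\mathrm{Perf}(\mathcal{B}_{h})$ in the ungraded sense, and similarly on the mixed perverse side; all the steps above go through verbatim without the $\mathbb{G}_{m}$-grading, since none of the identifications depend on it.

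The main obstacle is the intertwining step: one must verify that the two independently constructed algebras $\mathcal{B}_{h}$ and $\mathcal{W}_{h}$ are isomorphic \emph{as bimodules over} $O(\mathfrak{h}^{*}\times\mathbb{A}^{1}\times\mathfrak{h}^{*}/W)$, so that the support/nilpotency conditions transfer cleanly across the equivalence. This compatibility is precisely the content of the matching between $\kappa$ and $\mathbb{V}$ on tilting generators that already underlies the proof of the main theorem, with the left action encoding the log-monodromy on the perverse side and the right action encoding the central character/Frobenius on $\check{T}$; once that matching is in place, the remainder of the corollary is essentially formal.
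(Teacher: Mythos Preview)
Your proposal is correct and follows essentially the same approach as the paper: identify the non-completed perverse categories as the full subcategories of the completed ones where the log-monodromy acts nilpotently, identify the support-on-$\tilde{\mathcal{N}}$ condition on the coherent side with nilpotency of the augmentation ideal of $O(\mathfrak{h}^{*})$ (resp.\ $O(\mathfrak{h}^{*}\times\mathbb{A}^{1})$), and then restrict the main theorem. You are more explicit than the paper about the intertwining step---that the identification $\mathcal{B}_{h}\cong\mathcal{W}_{h}$ respects the $O(\mathfrak{h}^{*}\times\mathbb{A}^{1}\times\mathfrak{h}^{*}/W)$-bimodule structure because both $\kappa$ and $\mathbb{V}$ land in this bimodule category---but this is exactly what the paper leaves implicit in its one-sentence justification.
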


\subsection{Strict Braid Group Action}

We shall now discuss how our results fit in with the main result of
the paper \cite{key-29}, mentioned several times above. Let us recall
some results from that paper. As discussed in section 4, the action
of $W_{aff}^{'}$ on $A$ induces a strict action of $W_{aff}^{'}$
on the category $D^{b}(A-mod)$. This can be written in terms of bimodules
as follows: for any collection of generators $\{s_{1},\ldots,s_{n},\omega\}$,
the multiplication map induces an isomorphism 
\[
A_{s_{1}}\otimes\cdot\cdot\cdot\otimes A_{s_{n}}\otimes A_{\omega}\tilde{\to}A_{w}
\]
 where $w=s_{1}\cdot\cdot\cdot s_{n}\cdot\omega$ in $W_{aff}^{'}$.
This map has a unique inverse. Thus any relation $s_{1}\cdot\cdot\cdot s_{n}\cdot\omega=s_{1}^{'}\cdot\cdot\cdot s_{m}^{'}\cdot\omega^{'}$
yields an isomorphism 
\[
A_{s_{1}}\otimes\cdot\cdot\cdot\otimes A_{s_{n}}\otimes A_{\omega}\tilde{\to}A_{w}\tilde{\to}A_{s_{1}^{'}}\otimes\cdot\cdot\cdot\otimes A_{s_{m}^{'}}\otimes A_{\omega^{'}}
\]
 and this collection of isomorphisms is compatible with the multiplication
of bimodules. Thus this collection yields a strict action of $W_{aff}^{'}$
on $D^{b}(A-mod).$ 

Rouquier's observation is that, if we consider the complexes $R_{\alpha}\to J_{Id}$
(denoted $F_{s_{\alpha}}^{-1}$), which satisfy weak braid relations
(proved in \cite{key-29}, section 3), then for any braid relation
$s_{1}\cdot\cdot\cdot s_{n}=s_{1}^{'}\cdot\cdot\cdot s_{m}^{'}$ we
have an isomorphism 
\[
Hom_{K^{b}(A\otimes A-mod^{gr})}(F_{s_{1}}^{-1}\cdot\cdot\cdot F_{s_{n}}^{-1},F_{s_{1}}^{-1}\cdot\cdot\cdot F_{s_{m}}^{-1})\tilde{\to}End_{K^{b}(A\otimes A-mod^{gr})}(A)=k
\]
 
\[
\tilde{\to}Hom_{D^{b}(A\otimes A-mod^{gr})}(F_{s_{1}}^{-1}\cdot\cdot\cdot F_{s_{n}}^{-1},F_{s_{1}}^{-1}\cdot\cdot\cdot F_{s_{m}}^{-1})
\]

where the second line is by the quasi-isomorphism $F_{s_{\alpha}}^{-1}\tilde{\to}(R_{\alpha}\to J_{Id})(-2)$.
Therefore we can lift the collection of isomorphisms given for $W_{aff}^{'}$,
and obtain strict braid relations. 

Our presentation of the group $\mathbb{B}_{aff}^{'}$ was slightly
different from Rouquier's, but let us note that the isomorphism 
\[
J_{b^{-1}}(R_{\alpha}\to Id)J_{b}\tilde{=}R_{\alpha_{0}}\to Id
\]
 comes from the multiplication map itself; therefore the same proof
shows that this presentation gives strict braid relations as well.
We further deduce the independence of the affine reflection functor
from the choice of $b$. Given this, we can deduce right away the 
\begin{thm}
There are strict actions of the group $\mathbb{B}_{aff}^{'}$ on the
categories \linebreak{}
$D^{b,G\times\mathbb{G}_{m}}(Mod(\tilde{D}_{h}))$, $D^{b,G\times\mathbb{G}_{m}}(Coh(\tilde{\mathfrak{g}}))$,
$D^{b,G\times\mathbb{G}_{m}}(Coh(\tilde{\mathcal{N}}))$, as well
as their ungraded versions. Further, there is a strict action on the
categories $D^{b}(Mod(\tilde{D}_{h}))$, $D^{b}(Coh(\tilde{\mathfrak{g}}))$
and $D^{b}(Coh(\tilde{\mathcal{N}}))$. The actions by braid generators
are given by the functors of section 3.\end{thm}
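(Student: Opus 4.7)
The plan is to import Rouquier's strict braid group construction on the bimodule side through the equivalence $\kappa$ of section 4.6, and then descend the resulting strict action to the other listed categories by base change. The starting point is the equivalence $D^{b}(Mod^{G\times\mathbb{G}_{m}}(\tilde{D}_{h}))\tilde{\to}K^{b}(\mathcal{M}_{asp})$ together with the identifications $\kappa(\mathcal{R}_{\alpha}M)\tilde{=}R_{\alpha}\kappa(M)$ and $\kappa(M\otimes O(\lambda))\tilde{=}J_{\lambda}\kappa(M)$ from section 4.3, and the analogous identification for the affine reflection functor $\mathcal{R}_{\alpha_{0}}$ obtained by conjugation. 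These identifications, together with the compatibility of the adjunction morphisms established in Remark \ref{rem:A-natural-question-2}, show that $\kappa$ intertwines the weak braid group action on the $\tilde{D}_{h}$-module side (built out of the triangles $s_{\alpha}^{-1}(-2)\to\mathcal{R}_{\alpha}\to Id$ and $Id\to\mathcal{R}_{\alpha}(2)\to s_{\alpha}$) with the action on $K^{b}(\mathcal{M}_{asp})$ given by the complexes $F_{s_{\alpha}}^{-1}=(R_{\alpha}\to J_{Id})$ and by tensoring with $J_{\lambda}$ and $J_{\omega}$.

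Next, I would invoke Rouquier's argument in its extended (including $\Omega$) form. Given any two positive expressions $s_{1}\cdots s_{n}\omega=s_{1}^{'}\cdots s_{m}^{'}\omega^{'}$ for the same element of $W_{aff}^{'}$, multiplication of bimodules identifies $J_{s_{1}}\otimes\cdots\otimes J_{s_{n}}\otimes J_{\omega}\tilde{\to}J_{w}\tilde{\to}J_{s_{1}^{'}}\otimes\cdots\otimes J_{s_{m}^{'}}\otimes J_{\omega^{'}}$, yielding a strict action of $W_{aff}^{'}$ on bimodules. To lift this to $\mathbb{B}_{aff}^{'}$, one uses the quasi-isomorphism $F_{s_{\alpha}}^{-1}\tilde{\to}(R_{\alpha}\to J_{Id})(-2)$ and the computation
\[
Hom_{K^{b}(A\otimes A\text{-mod}^{gr})}(F_{b_{1}}^{-1}\cdots J_{\omega_{1}},F_{b_{2}}^{-1}\cdots J_{\omega_{2}})\tilde{\to} End_{K^{b}}(A_{w})=k,
\]
which follows from the standard/costandard filtration computations of section 4.6 and Remark \ref{rem:Extended-Hecke-claim} (the second factor shows that Hom vanishes unless $\omega_{1}=\omega_{2}$). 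This one-dimensional Hom space gives a canonical choice of isomorphism for each braid relation, and coherence (associativity) of these choices with respect to further composition follows automatically because they all come from the underlying multiplication of bimodules in $W_{aff}^{'}$. For the affine generator, the isomorphism $J_{b^{-1}}(R_{\alpha}\to J_{Id})J_{b}\tilde{=}R_{\alpha_{0}}\to J_{Id}$ also comes from bimodule multiplication, so the same computation shows that the resulting affine reflection is canonically independent of the choice of $b$.

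Transporting these canonical isomorphisms back through $\kappa$ produces the strict $\mathbb{B}_{aff}^{'}$-action on $D^{b,G\times\mathbb{G}_{m}}(Mod(\tilde{D}_{h}))$ whose underlying weak action is the one of section 3.1. To obtain the strict action on $D^{b,G\times\mathbb{G}_{m}}(Coh(\tilde{\mathfrak{g}}))$ and $D^{b,G\times\mathbb{G}_{m}}(Coh(\tilde{\mathcal{N}}))$, I would use the corollary of section 4.6 identifying these with $Perf^{gr}(\mathcal{B}_{h}\otimes_{k[h]}^{L}k_{0})$ and $Perf^{gr}(\mathcal{B}_{h}\otimes_{O(\mathfrak{h}^{*}\times\mathbb{A}^{1})}^{L}k_{0})$ respectively; since the chosen braid isomorphisms on $\mathcal{M}_{asp}$ are natural in the ambient ring, they descend along the specialization morphisms to give strict actions on the degenerations, and these match the reflection/twist functors of section 3.2 by the tilting generation argument. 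The ungraded versions and the non-$\mathbb{G}_{m}$-equivariant versions follow by forgetting grading and by noting that the Fourier-Mukai kernels constructed in section 3.5 (and their mod-$h$ reductions) are intrinsically defined, so the strict coherence transferred above specializes accordingly.

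The main obstacle will be the last step of descending from $\mathcal{B}_{h}$ to its specializations while keeping track that the specific isomorphisms supplied by Rouquier's $k$-dimensional $Hom$ computation really are the ones induced by the structural multiplication of kernels (so that the strict action restricts to the already-defined weak action on $\tilde{\mathcal{N}}$ and $\tilde{\mathfrak{g}}$ rather than differing by some scalar or braid automorphism). Concretely, the check is that the distinguished generator of $End_{K^{b}}(A)=k$ corresponds under $\kappa$ to the identity endomorphism of $\tilde{D}_{h}$, which is forced by the explicit description $\kappa(\tilde{D}_{h})=O(\mathfrak{h}^{*}\times\mathbb{A}^{1})$ together with the fact that $R\Gamma^{G}(\tilde{D}_{h})=O(\mathfrak{h}^{*}\times\mathbb{A}^{1})$ from section 2; once this normalization is fixed, the rest of the coherence data is determined by the braid presentation.
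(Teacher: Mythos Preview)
Your argument for the equivariant categories is essentially the paper's: transport Rouquier's strict action through the equivalence $\kappa$, and then specialize along $h=0$ and along $\mathfrak{h}^{*}\to 0$ using the tilting description. The paper itself says that, given the main theorem, ``the only thing that remains to be done is to see that the action extends to the non-equivariant categories''.

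The gap is precisely in your treatment of the non-equivariant categories $D^{b}(Mod(\tilde{D}_{h}))$, $D^{b}(Coh(\tilde{\mathfrak{g}}))$, $D^{b}(Coh(\tilde{\mathcal{N}}))$. Saying ``the Fourier--Mukai kernels are intrinsically defined, so the strict coherence transferred above specializes accordingly'' does not do the job. What you transported through $\kappa$ is a coherent system of isomorphisms between \emph{endofunctors of the equivariant category}. There is no forgetful map from endofunctors of $D^{b,G\times\mathbb{G}_{m}}$ to endofunctors of $D^{b}$, so these isomorphisms do not automatically give isomorphisms of the corresponding functors on the non-equivariant category; nor is it automatic that an isomorphism of Fourier--Mukai functors is induced by an isomorphism of kernels.

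The paper closes this gap by working one level up, directly with the kernels. For a braid relation (written, for notational ease, as $s\cdot\lambda=\lambda\cdot s$) one must pick a preferred element of
\[
Hom_{D^{b}(Mod^{G\times\mathbb{G}_{m}}(\tilde{D}_{h}\boxtimes\tilde{D}_{h}^{opp}))}(M_{s}\star M_{\lambda},\,M_{\lambda}\star M_{s}).
\]
The chain of isomorphisms from \prettyref{cor:Weak-braid-for-D} (using that $M_{-\lambda}\star M_{s^{-1}}\star M_{\lambda}\star M_{s}$ is scheme-theoretically supported on the diagonal, so that $Rp_{*}$ identifies bimodule $Hom$ with module $Hom$) collapses this to
\[
Hom_{D^{b}(Mod^{G\times\mathbb{G}_{m}}(\tilde{D}_{h}))}(\tilde{D}_{h},\,\theta_{-\lambda}\cdot s^{-1}\cdot\theta_{\lambda}\cdot s\cdot\tilde{D}_{h}),
\]
and \emph{now} $\kappa$ applies and identifies this with the $A$-bimodule $Hom$ in which Rouquier's canonical element lives. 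The upshot is a preferred isomorphism of \emph{equivariant kernels}, and such an isomorphism induces an isomorphism of the associated Fourier--Mukai functors on every category those kernels act on, in particular the non-equivariant ones. That is the step your sketch is missing.
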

\begin{proof}
Given the main theorem, the only thing that remains to be done is
to see that the action extends to the non-equivariant categories;
it clearly suffices to consider $D^{b}(Mod(\tilde{D}_{h}))$; the
other two follow by restriction. 

As above we denote, for $?$= $s\in S$ or $\lambda\in\mathbb{X}$,
the bimodule $M_{?}\in D^{b}(\tilde{D}_{h}\boxtimes\tilde{D}_{h}^{opp})$
which represents the functor associated to $?$. For any element of
the braid group, $b$, we wish to give a preferred isomorphism between
any two bimodules given by convolutions of different decompositions
of $b$. As in section 3, we choose, for notational convenience, the
element $b=s\cdot\lambda$ for $<s,\lambda>=0$. So, we wish to give
a preferred isomorphism between $M_{s}\star M_{\lambda}$ and $M_{\lambda}\star M_{s}$. 

We have 
\[
Hom_{D^{b}(Mod^{G\times\mathbb{G}_{m}}(\tilde{D}_{h}\boxtimes\tilde{D}_{h}^{opp}))}(M_{s}\star M_{\lambda},M_{\lambda}\star M_{s})\tilde{=}
\]
\[
Hom_{D^{b}(Mod^{G\times\mathbb{G}_{m}}(\tilde{D}_{h}\boxtimes\tilde{D}_{h}^{opp}))}(\tilde{D}_{h},M_{-\lambda}\star M_{s^{-1}}\star M_{\lambda}\star M_{s})\tilde{=}
\]

\[
Hom_{D^{b}(Mod^{G\times\mathbb{G}_{m}}(\tilde{D}_{h}))}(\tilde{D}_{h},Rp_{*}(M_{-\lambda}\star M_{s^{-1}}\star M_{\lambda}\star M_{s}))\tilde{=}
\]

\[
Hom_{D^{b}(Mod^{G\times\mathbb{G}_{m}}(\tilde{D}_{h}))}(\tilde{D}_{h},\theta_{-\lambda}\cdot s^{-1}\cdot\theta_{\lambda}\cdot s\cdot\tilde{D}_{h})
\]
 exactly as in \prettyref{cor:Weak-braid-for-D}. Now, we can apply
$\kappa$ to this isomorphism and obtain 
\[
Hom_{D^{b}(Mod^{G\times\mathbb{G}_{m}}(\tilde{D}_{h}))}(\tilde{D}_{h},\theta_{-\lambda}\cdot s^{-1}\cdot\theta_{\lambda}\cdot s\cdot\tilde{D}_{h})\tilde{=}
\]
\[
Hom_{D^{b}(A\otimes A-mod^{gr})}(A,F_{-\lambda}\cdot F_{s}^{-1}\cdot F_{\lambda}\cdot F_{s}\cdot A)
\]

The strict braid group action discussed above then produces a preferred
isomorphism in the last $Hom$ space, which we transfer to the first.
It follows easily that the collection of isomorphisms obtained this
way produces a strict braid group action on $D^{b}(Mod(\tilde{D}_{h}))$. 
\end{proof}
By the results of section 5.1, we obtain a compatible action on all
the categories of perverse sheaves considered. In fact, this action
is the same as the usual one, constructed, e.g., in \cite{key-3}
(c.f. also \cite{key-29}, section 6). Let us recall this action:
for each affine simple root $s_{\alpha}$, we let $j_{s_{\alpha}*}$
and $j_{s_{\alpha}!}$ denote the standard and costandard objects,
respectively, of the $I$-orbit associated to $s_{\alpha}$. As noted
above, these objects have deformations to the category $\hat{D}_{m}^{b}((I_{u}^{-},\psi)\backslash\check{\mathcal{F}l}\rightdash\check{T}\times\mathbb{G}_{m})$,
which we denote $\tilde{\Delta}_{s_{\alpha}}$ and $\tilde{\nabla}_{s_{\alpha}}$.
Then the action of the braid generators is given by convolution with
respect to $\tilde{\nabla}_{s_{\alpha}}$, with the inverse being
given by convolution with respect to $\tilde{\Delta}_{s_{\alpha}}$;
this is of course consistent with the description of the affine Hecke
algebra in terms of perverse sheaves. 

Let us denote by $\tilde{\delta}$ the deformation of the constant
sheaf on the trivial orbit. Then, according to \cite{key-12}, appendix
C, there are exact sequences 
\[
0\to\tilde{\Delta}_{s_{\alpha}}\to\tilde{\mathcal{T}}_{s_{\alpha}}\to\tilde{\delta}(1/2)\to0
\]
 
\[
0\to\tilde{\delta}(1/2)\to\tilde{\mathcal{T}}_{s_{\alpha}}\to\tilde{\nabla}_{s_{\alpha}}\to0
\]
 where $\tilde{\mathcal{T}}_{s_{\alpha}}$ is the (free-monodromic)
tilting sheaf associated to $s_{\alpha}$, and the $(1/2)$ denotes
Tate twist. The explicit calculation done there confirms that the
action of functor $\mathbb{V}$ transforms these exact sequences into
the ones which define the braid generators on $K^{b}(A-mod)$. Further,
we recall that $\mathbb{V}$ respects convolution. Thus we conclude
that the actions coincide. 

An immediate corollary of this is that the equivalence constructed
here corresponds, at least on objects, with the one constructed in
\cite{key-1}, and therefore that the perversely exotic $t$-structure
corresponds to the heart of the perverse $t$-structure of $D_{IW}$
(as was also proved in \cite{key-9}). 

\medskip{}
Department of Mathematics, Massachusetts Institute of Technology 

cdodd@math.mit.edu
\end{document}